%% file: nonsmooth-bilevel.tex
\documentclass[a4paper,english]{jnsao} 
\usepackage[utf8]{inputenc}
\usepackage[T1]{fontenc}
\usepackage{csquotes}
\usepackage{babel}
\usepackage{enumitem}
\usepackage{comment}
\usepackage{mathtools}
\usepackage[indLines=false,noEnd=false]{algpseudocodex}
\usepackage[nameinlink,capitalize]{cleveref}
\usepackage{todonotes}
\usepackage{float}
\usepackage{tikz, pgfplots}
\usepgfplotslibrary{fillbetween}
\usepgfplotslibrary{colorbrewer}
\usepackage[createShortEnv,commandRef = cref,conf={
    restate,
    text link section,
    text proof = {Proof},
}]{proof-at-the-end}
\newif\ifAtEnd\AtEndfalse

\input{symbols}
\input{symbols-texonly}

\usepackage{nicematrix}
\usepackage{booktabs}
\usepackage{mdframed}

\mdfdefinestyle{examplebg}{
    backgroundcolor=structure!7,%
    hidealllines=true,%
    innertopmargin=-.3em,%
    innerbottommargin=.7em,%
    innerleftmargin=.7em,%
    innerrightmargin=.7em,%
}

\surroundwithmdframed[style=examplebg]{example}

\manuscriptcopyright{}
\manuscriptlicense{}

\floatstyle{ruled}
\floatname{algorithm}{Algorithm}
\newfloat{algorithm}{tbp!}{loa}
\crefname{algorithm}{Algorithm}{Algorithms}
\numberwithin{algorithm}{section}

\newcommand{\obtodo}[2][]{%
}

\newenvironment{limiting}{}{}

\author{
    Ensio Suonperä\thanks{%
        Department of Mathematics and Statistics, University of Helsinki, Finland.
        \email{ensio.suonpera@helsinki.fi},
        \orcid{0000-0002-2111-0239}
    }
    \and
    Tuomo Valkonen\thanks{%
        MODEMAT Research Center in Mathematical Modeling and Optimization, Quito, Ecuador
        \emph{and}
        Department of Mathematics and Statistics, University of Helsinki, Finland,
        \email{tuomo.valkonen@iki.fi},
        \orcid{0000-0001-6683-3572}
    }
}
\shortauthor{E.~Suonperä, T.~Valkonen}

\title{Single-loop approaches to nonsmooth bilevel optimisation}

\date{2026-06-17}

\acknowledgements{%
    This research has been supported by the Academy of Finland grants 314701, 320022, and 345486, and the Vilho, Yrjö and Kalle Väisälä Foundation grant for PhD studies.
}

\begin{document}

\maketitle

\begin{abstract}
    We study bilevel optimisation problems in which the inner problem is represented as a set-valued, parametric constraint. We develop relevant optimistic and pessimistic calculus rules, derive corresponding optimality conditions, and formulate nonsmooth adjoint inclusions based on both the Fréchet and limiting coderivatives. Founded on these results, we propose a single-loop algorithm that accommodates a wide range of inner and adjoint steps, including those of primal-dual methods. We prove its convergence.
    Numerical experiments on total variation regularised inverse problems demonstrate the practicality of the approach.
\end{abstract}

\section{Introduction}
\label{sec:intro}

We consider bilevel optimisation problems of the form
\begin{equation}
    \label{eq:intro:bilevel-problem}
    \min_{u, x} J(u) + R(x)\quad\text{subject to}\quad 0 \in G(u, x)
\end{equation}
in normed spaces $U$ and $X.$
The set-valued function $G$ typically encodes the optimality conditions for an inner problem; for example, for $g$ convex in the first argument, we can take $G=\subdiff g(u; x)$ corresponding to the parametric inner problem $\min_u g(u; x)$.
Very few works, so far, allow $g$ to be nonsmooth; $G$ to be set-valued.
Most bilevel optimisation works impose strong differentiability, smoothness, and second-order growth assumptions on $G$.

Moreover, numerically, bilevel optimisation can be very expensive.
At the outset, it requires solving the inner problem several times to find a solution to the problem \cref{eq:intro:bilevel-problem}. Therefore, in recent years, there has been a growing interest in “single-loop” methods \cite{suonpera2022bilevel,suonpera2024general,dizonvalkonen2024tracking,online-tracking,chen2021single,hong2020two,li2022fully,yang2021provably,dagreou2022framework,ji2021bilevel,ji2022lower,ghadimi2018approximation}, as well as methods that solve the subproblems to a set accuracy \cite{kwon2023fully,salehi2024maid,ehrhardt2021inexact}.
Single-loop methods take only a single or a fixed number of steps of a conventional optimisation method towards a solution of the inner objective on each step of an algorithm to solve the outer problem.
Most works take $G(u,x) = \grad_u g(u,x)$ for a smooth $g$, and use (stochastic) gradient descent as the inner optimisation method.
In \cite{suonpera2024general}, we took $G$ as primal-dual optimality conditions of $\min_u f(u; x)+ g(Ku; x)$, which let us use the Primal-Dual Proximal Splitting (PDPS) of Chambolle and Pock \cite{chambolle2010first} for the inner steps.
We also allowed using the steps of standard algorithms for the approximate solution of an adjoint problem.
The latter is used to compute $[J \circ S]'(x)$, where the solution mapping $S$ is defined through the satisfaction of the inner problem, $0=G(S(x), x)$.
However, we still required $G$ to be smooth.

\emph{Our present results, based on coderivative calculus and nonsmooth adjoint inclusions, allow $G$ and $S$ to be set-valued; for $g$ to be nonsmooth while also weakening second-order growth assumptions to metric subregularity.}
The theory has connections to the recent \cite{villacis2025variational}, where, for inner problems of the form $\min_u f(u) + g(u;x)$ with strongly convex and smooth $f$ and convex $g$, the authors take $G(u, x) = \inv\gamma(u - \prox_{\gamma g(\freevar,x)}(u - \tau \grad f(u)))$ as a proximal reformulation of the first-order necessary and sufficient optimality conditions.
Then they employ limiting coderivatives of $G$.
Such a proximal reformulation is commonly used with semismooth Newton methods, see, e.g., \cite{clason2020introduction}, as well as in the SparseHO of \cite{bertrand2022implicit} for bilevel problems.
Unlike \cite{villacis2025variational}, the latter, however, requires significant differentiability from the proximal reformulation.
Our theory also applies to $G$ as above while also allowing it to be more general and set-valued.

Algorithmically, both  \cite{villacis2025variational,bertrand2022implicit} are based on exact or near-exact differentials or coderivatives of the inner solution mapping in an outer forward-backward method or a nonsmooth trust region method.
We propose a highly-efficient single-loop method instead, based on the previous, still smooth, works \cite{suonpera2022bilevel,suonpera2024general,dizonvalkonen2024tracking}.
Our approach can work both with the Fréchet and limiting coderivatives of $G$.
Moreover, our numerical examples would not be practical to solve with forward-backward splitting as the inner algorithm, as the involved proximal map would not have closed-form solution.
Our approach allows the use of (the steps of) more appropriate primal-dual methods.

There are a few relevant recent more analytical works -- that do not represent algorithms -- for nonsmooth bilevel optimisation based on the solution mapping.
In particular, \cite{bolte2021nonsmooth,bolte2024differentiating} treat nonsmooth inner problems through proximal reformulations.
They rely on Rademacher's theorem to obtain differentiability almost everywhere in $\R^n$.
However, those tools are not available in our infinite dimensional functional analytic setting.
On the other hand, \cite{hang2024chain,hang2024role} show that metric regularity of an inner problem with linear dependence on the outer parameters (i.e., tilt stability), is, often, equivalent to the Lipschitz continuity of the solution mapping.

Methods of more second-order flavour have also been proposed: \cite{christof2017non} study bundle-based trust region methods in special cases, while \cite{jolaoso2024fresh} propose a Levenberg--Marquardt method based on Newton-derivatives. The latter is based on finding roots of the optimality conditions of an alternative constrained single-level reformulation, known as the value function reformulation \cite{outrata1990numerical}.
Although such a simple reformulation may sound attractive, not requiring the solution mapping, it still depends on knowing the value function $\phi(x) \defeq \min_u g(u, x)$, which is no easier to compute than the solution mapping $S(u)= \argmin_u g(u, x)$.
Here we recall that $g$ is the inner objective when $G$ arises as $\subdiff g(u; x)$.
For smooth problems, similarly to our approach \cite{suonpera2022bilevel,suonpera2024general} based on the solution mapping, \cite{kwon2023fully} avoids knowing $\phi$ through solution quality control.
Moreover, for nonsmooth problems, the value function reformulation poses several challenges with the satisfaction of constraint qualifications.
This is also true of the conventional approach of reformulating \eqref{eq:intro:bilevel-problem} as an MPCC (Mathematical Program with Complementarity Constraints) \cite{luo1996mpec,mangasarian2001complementarity}, and finding roots of the full-system KKT conditions.
That reformulation is also no longer completely equivalent to the original problem, while for convex $g$, \eqref{eq:intro:bilevel-problem} remains equivalent.
We do not, however, entirely avoid difficulties with coderivative calculus closely related to constraint qualifications.
Alternative duality-based reformulations for nonsmooth problems have recently been studied in, e.g., \cite{dempe2025duality}.

We start our journey by developing optimistic and pessimistic calculus rules for the Fréchet subdifferential and coderivative in \cref{sec:calculus}: it turns out that the directions of inclusions the calculus correspond to optimistic and pessimistic solutions of bilevel problems.
We then use this calculus in \cref{sec:oc} to derive optimality conditions for nonsmooth bilevel optimisation problems of form \cref{eq:intro:bilevel-problem}. Based on these optimality conditions, we devote \cref{sec:algorithm} to discussing a general single-loop algorithms for \eqref{eq:intro:bilevel-problem}.
We base this on the general “tracking theory” of differential estimation from \cite{dizonvalkonen2024tracking}. \cref{sec:tv} focuses providing an actual algorithm when the inner problem is total variation regularised inverse problem. We describe our numerical experiments and their results in \cref{sec:numerical}.

\subsection*{Notation and basic concepts}

We write $\dualprod{x^*}{x}_{X^*,X} \defeq x^*(x)$, or, for short, $\dualprod{x^*}{x}$ for the dual pairing in a normed space $X$ and its dual $X^*$.
The notation $\iprod{x}{y}$ is reserved for inner products in Hilbert spaces, where we also write $\mathscr{R}x^* \in X$ for the Riesz representation of $x^* \in X^*$.
The notation $\linear(X; Y)$ stands for the space of bounded linear operators between $X$ and $Y$.
The identity operator is written $\Id$.

We now introduce several concepts of set-valued analysis; for further details we refer to \cite{clason2020introduction}.
Let $F: X \to \extR \defeq (-\infty, \infty]$ in a normed space $X$.
We write $\Dom F \defeq \{ x \in X \mid F(x) < \infty\}$ for the effective domain and say that $F$ is proper if $\Dom F \ne \emptyset$.
The Fenchel conjugate of $F$ is written $F^*$, and the proximal map by $\prox_F$.

The convex subdifferential of $F$ at $x$ reads
\begin{equation}
    \label{eq:intro:convex-subdiff}
    \subdiff F(x) \defeq \{ x^* \in X^* \mid F(\tilde x) - F(x) \ge \dualprod{x^*}{\tilde x - x} \text{ for all } \tilde x\}.
\end{equation}
The $\epsilon$-subdifferential, for a $\epsilon \ge 0$, has the definition
\begin{equation}
    \label{eq:intro:frechet-subdiff}
    \subdiff_\epsilon F(x) \defeq \left\{
    x^* \in X^*
    \,\middle|\,
    \liminf_{\tilde x \to x}
    \frac{F(\tilde x) - F(x) - \dualprod{x^*}{\tilde x - x}}{\norm{\tilde x - x}} \ge -\epsilon
    \right\}.
\end{equation}
The Fréchet subdifferential is $\frechetSubdiff F(x)=\subdiff_0 F(x)$.
The limiting subdifferential is then
\[
    \limitingSubdiff F(x) \defeq \{ x^* \in X^* \mid \text{there exist } \tilde x \to x,\, \epsilon \downto 0,\text{ and } \tilde x^* \in \subdiff_\epsilon F(\tilde x) \text{ with }
    \tilde x^* \weaktostar x^* \text{ and } F(\tilde x) \to F(x) \}.
\]
If $X$ is reflexive, we can simplify
\[
    \limitingSubdiff F(x) = \{ x^* \in X^* \mid \text{there exist} \tilde x \weaktostar x \text{ and } \tilde x^* \in \subdiff F(\tilde x) \text{ with } \tilde x^* \weaktostar x^*\}.
\]
For convex $F$, $\frechetSubdiff F=\limitingSubdiff F = \subdiff F$.
For continuously differentiable $F$, $\frechetSubdiff F(x)=\limitingSubdiff F(x) = \{F'(x)\}$.

For a set-valued $S: X \setto Y$, where also $Y$ is a normed space, the $\epsilon$-Fréchet coderivative \emph{at $x$ for $y$}, denoted by $\frechetCod_\epsilon F(x|y): Y^* \setto X^*$, is defined through
\begin{equation}
    \label{eq:intro:frechet-coderivative}
    \frechetCod_\epsilon F(x|y)(y^*) \defeq \left\{
    x^* \in X^*
    \,\middle|\,
    \limsup_{\tilde x \to x,\, F(\tilde x) \ni \tilde y \to y}
    \frac{\dualprod{x^*}{\tilde x - x} - \dualprod{y^*}{\tilde y - y}}{\norm{\tilde x - x} + \norm{\tilde y - y}} \le \epsilon
    \right\}.
\end{equation}
That is, $(x^*, -y^*) \in \frechetNormal_{\graph F}^\epsilon(x, y)$, where $\frechetNormal_{\graph F}^\epsilon$ is the ($\epsilon$-)Fréchet normal cone to $\graph F$.
We call $\frechetCod F(x|y) \defeq \frechetCod_0 F(x|y)$ the Fréchet coderivative at $x$ for $y$.
We do not impose $y \in F(x)$: it is possible that $\graph \frechetCod F(x|y) \ne \emptyset$ for $y \in \closure F(x)$.

The (normal) limiting/Mordukhovich coderivative is then defined as
\[
    D^* F(x|y)(y^*) \defeq \weakstarlimsup_{\substack{(\tilde x, \tilde y) \to (x, y),
            \\
            \tilde y^* \weaktostar y^*,\,\epsilon \downto 0}}
    \frechetCod_\epsilon F(\tilde x|\tilde y)(\tilde y^*).
\]
It also has the smaller “mixed” variant
\[
    D^*_M F(x|y)(y^*) \defeq \weakstarlimsup_{\substack{(\tilde x, \tilde y) \to (x, y),
            \\
            \tilde y^* \to y^*,\,\epsilon \downto 0}}
    \frechetCod_\epsilon F(\tilde x|\tilde y)(\tilde y^*).
\]
We call $F$ $N$-regular at $(x, y)$ if $D_M F(x|y)=\frechetCod F(x|y)$.

We recall that if $F$ is Gateaux-differentiable, and the differential $DF: X \to X^*$ is Lipschitz continuous with factor $L$, then the \term{descent inequality} holds (see, e.g., \cite[Lemma 7.1]{clason2020introduction})
\[
    F(y) \le F(x) + \dualprod{DF(x)}{y-x} + \frac{L}{2}\norm{x-y}^2.
\]
If $F$ is convex and $X$ is reflexive, the two properties are equivalent.
If $F$ is Fréchet differentiable, we write $F'$ instead of $DF$ for the differential.
We write partial derivatives (generally Fréchet) of $G: X \times Y \to Z$, $G: (x, y) \to z$ as $G^{(x)}(x, y) \in \linear(X; Z)$ and $G^{(y)}(x, y) \in \linear(Y; Z)$.

For a set $B$ and a point $a$ in a metric space $(X, d)$, we define
\[
    \dist(B, a) \defeq \dist(a,B) \defeq \inf_{b \in B} d(a, b).
\]

When $M \in \linear(X; X^*)$ is self-adjoint and positive (semi-)definite, we write $\norm{x}_M \defeq \sqrt{\dualprod{Mx}{x}}$ as well as $\dist_M(A, u)=\inf_{\tilde u \in A} \norm{\tilde u -u}_M$. We also write $\B_M(u, \delta) = \{ \tilde u \in U \mid \norm{u}_M \le \delta \}$.

Finally, we write $\proj_A(x) \defeq \{z \in A \mid \dist(A, x)=d(z, x)\}$.

\section{Calculus for the Fréchet subdifferential and coderivative}
\label{sec:calculus}

In this section, we provide chain and sum rules for the Fréchet subdifferential and coderivative, for which results are generally much less developed than for the limiting coderivative.
We refer to \cite{rockafellar-wets-va,clason2020introduction,mordukhovich2006variational} for such classical results.
In case of the sum rule, we exploit convexity of the second component.
In case of the chain rule, we obtain otherwise relaxed assumptions by considering optimistic and pessimistic variants of scalar functions.
Towards this end, for a function $F: X \setto \extR$, we define the optimistic and pessimistic selections
\[
    \infmap{F}(x) \defeq \inf\{ v \mid v \in F(x) \}
    \quad\text{and}\quad
    \supmap{F}(x) \defeq \sup\{ v \mid v \in F(x) \}.
\]
For the composition $J \circ S$ with $J: U \to \extR$ and $S: X \setto U$, we also define the optimistic and pessimistic inner maps
\[
    \Sopt(\tilde x) \defeq \{ u \in S(\tilde x) \mid J(u) = \infmap{J \circ S}(\tilde x) \}
\]
and
\[
    \Spess(\tilde x) \defeq \{ u \in S(\tilde x) \mid J(u) = \supmap{J \circ S}(\tilde x) \}.
\]

We start by introducing the regularity properties that we will work with.

\subsection{Regularity properties}
\label{sec:regularity}

We call $F: X \setto Y$ between normed spaces $X$ and $Y$ \term{inner Lipschitz} \emph{at} $x$ relative to a set $A \subset Y$ if for \emph{every} $y \in A$ there exist $\lambda_y \ge 0$ and $\rho_y>0$ (which may depend on the point $y$) such that
\[
    \dist(F(\tilde x), y) \le \lambda_y \norm{x-\tilde x}
    \quad\text{for all}\quad \tilde x \in B(x, \rho_y) \isect \Dom F.
\]
We call $F$ \term{inner semi-Lipschitz} \emph{at} $x$ relative to a set $A \subset Y$ if, given a sequence $\Dom F \ni \this x \to x$, there exists a subsequence, unrelabelled, and \emph{some} $y \in A$ and $\lambda_y \ge 0$, such that
\[
    \dist(F(\this x), y) \le \lambda_y \norm{x-\this x}
    \quad\text{for all}\quad k \in \N.
\]
If $A=F(x)$, we drop the specification “relative to $A$”.

\begin{remark}
    Aside from \cref{lemma:calculus:section-chain} in the next subsection, where it is possible that $A \isect \closure F(x) \setminus F(x) \ne \emptyset$, we will take $A \subset F(x)$.
\end{remark}

Clearly an inner Lipschitz mapping is also inner semi-Lipschitz relative to any $A \subset F(x)$, and a single-valued mapping that is Lipschitz \emph{at} the point $x$, is inner Lipschitz at $x$.

\begin{remark}[Outer Lipschitz, linear recession and inner calmness properties]
    The inner Lipschitz properties bear superficial resemblance to outer Lipschitz and linear recession properties, however, are distinct; see \cite[§3.4]{dontchev2014implicit} and \cite{ioffe2017variational}. If $\inv F$ is inner Lipschitz at $y$ (relative to a set $A\subset X$) then $F$ satisfies inner calmness (w.r.t. $A$) property introduced in \cite{benko2019calculus,benko2021inner}. The latter is weaker property as it doesn't require Lipschitz inequality to hold in entire neighbourhood but only for some sequence in it.
\end{remark}

\begin{remark}[The Aubin property]
    \label{rem:regularity:hausdorff}
    If $F$ has the Aubin property at $\bar x$ for $\bar y$, i.e.,
    \[
        \dist(F(\tilde x), \tilde y)  \le \kappa \dist(\inv F(\tilde y), \tilde x)
        \quad\text{for all}\quad
        \tilde x \in B(\bar x, \delta_x) \text{ and } \tilde y \in B(\bar y, \delta_y),
    \]
    for some factor (called the \term{graphical modulus}) $\kappa>0$ and radii $\delta_x, \delta_y>0$,
    then it also has the inner Lipschitz property at $\bar x$ relative to $A=B(\bar y, \delta_y)$.
    However, the inner Lipschitz property does not imply the Aubin property, especially as it only considers $\tilde x \in \Dom F$, and allows the factors $\lambda_y$ to depend on $y$.
\end{remark}



The next lemma considers inverses of translated radial functions, in particular, inverses of $J(x)=\frac{1}{2}\norm{x-b}^2$.

\begin{lemma}
    \label{lemma:calculus-inner-lipschitz-norm}
    In a normed space $Y$, let $J: Y \to \R$, $J(y)=\phi(\norm{y-b})$ for some $\phi: \R \to \R$ with a $\kappa$-Lipschitz inverse.
    Then $\inv J$ is inner Lipschitz.
\end{lemma}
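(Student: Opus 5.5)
We unwind the definition with $F=\inv J:\R\setto Y$, $\inv J(t)=\{y\in Y\mid \phi(\norm{y-b})=t\}$. Fix $t\in\Dom \inv J$ and $y\in\inv J(t)$, so $\phi(\norm{y-b})=t$. We must find $\lambda_y\ge0$ and $\rho_y>0$ with
\[
    \dist(\inv J(\tilde t), y)\le \lambda_y\abs{t-\tilde t}
    \quad\text{for all } \tilde t\in B(t,\rho_y)\isect\Dom \inv J.
\]
The idea is to move $y$ radially with respect to the centre $b$: only the distance $r\defeq\norm{y-b}$ matters, and the Lipschitz inverse of $\phi$ controls how much $r$ has to change.

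First, since $\phi$ has an inverse, $r=\inv\phi(t)$. Next take $\tilde t\in\Dom\inv J$. Then there is $\tilde y$ with $\phi(\norm{\tilde y-b})=\tilde t$, so $\tilde t$ lies in the range of $\phi$ and $\tilde r\defeq\inv\phi(\tilde t)=\norm{\tilde y-b}\ge0$. The Lipschitz property of the inverse gives $\abs{\tilde r-r}\le\kappa\abs{\tilde t-t}$.

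We then construct a candidate point in $\inv J(\tilde t)$. If $r>0$, set $\hat y\defeq b+\frac{\tilde r}{r}(y-b)$. Then $\norm{\hat y-b}=\tilde r$, so $J(\hat y)=\tilde t$, and $\norm{\hat y-y}=\abs{\tilde r-r}\le\kappa\abs{\tilde t-t}$. If $r=0$, so that $y=b$, we use the point $\tilde y$ from above, for which $\norm{\tilde y-y}=\tilde r=\abs{\tilde r-r}\le\kappa\abs{\tilde t-t}$. In both cases $\dist(\inv J(\tilde t),y)\le\kappa\abs{t-\tilde t}$, so we may take $\lambda_y=\kappa$ and any $\rho_y>0$, indeed $\rho_y=\infty$.

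The only delicate point, and the step I expect to need the most care, is the interpretation of "$\phi$ has a $\kappa$-Lipschitz inverse." I will read it as: $\phi$ is injective and $\inv\phi$ is $\kappa$-Lipschitz on the range of $\phi$. I must also check that the values $\inv\phi(\tilde t)$ for $\tilde t\in\Dom\inv J$ are nonnegative; this is automatic because such $\tilde t$ are attained at some norm value, as shown above. The degenerate case $y=b$ is exactly why I use the point $\tilde y$ there: the direction $(y-b)/r$ is undefined, and some nonzero direction is needed. Since $\tilde y$ exists by the definition of the domain, no assumption on $Y\ne\{0\}$ is needed.
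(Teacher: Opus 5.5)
Your proof is correct and follows the same route as the paper: rescale $y$ radially about $b$ to the point $b+\frac{\inv\phi(\tilde t)}{\inv\phi(t)}(y-b)\in\inv J(\tilde t)$, then use $\norm{y-b}=\inv\phi(t)$ and the Lipschitz bound on $\inv\phi$ to get $\lambda_y=\kappa$ for any radius. Your separate treatment of the degenerate case $y=b$, where you take a point $\tilde y\in\inv J(\tilde t)$ directly, is a genuine improvement: the paper's one-line argument divides by $\inv\phi(v)=0$ in that case.
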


\begin{proof}
    Let $v = J(y)$ for some $y$.
    Then
    \[
        \inv J(\tilde v) = \{ \tilde y \mid \norm{\tilde y-b}=\inv \phi (\tilde v) \} \ni \frac{\inv\phi(\tilde v)}{\inv\phi(v)}(y  - b) + b.
    \]
    It follows for any $\tilde v \in \Dom \inv J=\range J$ that
    \begin{equation}
        \label{eq:calculus-inner-lipschitz-norm:1}
        \dist(\inv J(\tilde v), y)
        \le \adaptabs{\frac{\inv\phi(\tilde v)}{\inv\phi(v)} - 1} \norm{y  - b}
        = \abs{\inv\phi(\tilde v) - \inv\phi(v)}
        \le \kappa\abs{\tilde v - v}.
    \end{equation}
    That is, $\inv J$ is inner Lipschitz.
\end{proof}

\subsection{Chain rules}

We now start our calculus results with chain rules.
Our main result follows from the next lemma when we take $g=\infmap{J \circ S}$ or  $g=\supmap{J \circ S}$, but it can also be applied to derive calculus rules for arbitrary selections of $J \circ S$.
In \cref{item:calculus:selection-chain:ii,item:calculus:selection-chain:iii}, note that it is possible that $u \in A \subset \Spess_g(x)$ with $u \in \closure \Sopt_g(x) \setminus \Sopt_g(x)$, and $\Sopt_g(x|u)(u^*) \ne \emptyset$.

\begin{lemma}
    \label{lemma:calculus:section-chain}
    Let $S: X \setto U$, $J: U \to \R$, and $g: X \to \extR$ on normed spaces $U$ and $X$.
    Define
    \[
        \Sopt_g(x) \defeq \{  u \in S(x) \mid g(x) \ge J(u) \}
        \quad\text{and}\quad
        \Spess_g(x) \defeq \{  u \in S(x) \mid g(x) \le J(u) \}.
    \]
    Then the following hold at any $x \in X$:
    \begin{enumerate}[label=(\roman*)]
        \item\label{item:calculus:selection-chain:i}
              If $J$ is continuously differentiable near $x$ and $\Sopt_g(x)$ is non-empty, then
              \[
                  \frechetSubdiff g(x)
                  \subset
                  \Isect_{u \in \Sopt_g(x)} \frechetCod \Spess_g(x|u)(J'(u)).
              \]
        \item\label{item:calculus:selection-chain:ii}
            If $\Sopt_g$ is inner Lipschitz at $x$ relative to an $A \subset \Spess_g(x)$, then
            \[
                \Union_{u \in A,\, u^* \in \frechetSubdiff J(u)} \frechetCod \Sopt_g(x|u)(u^*)
                \subset
                \frechetSubdiff g(x).
            \]
            \item\label{item:calculus:selection-chain:iii}
                If $\Sopt_g$ is inner semi-Lipschitz at $x$ relative to an $A \subset \Spess_g(x)$, then
                \[
                    \Isect_{u \in A} \Union_{u^* \in \frechetSubdiff J(u)} \frechetCod \Sopt_g(x|u)(u^*) \
                    \subset
                    \frechetSubdiff g(x).
                \]
    \end{enumerate}
\end{lemma}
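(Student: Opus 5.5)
All three parts come straight from the definitions of $\frechetSubdiff$ and $\frechetCod$. For \cref{item:calculus:selection-chain:i} we compare difference quotients of $g$ with those of $J$ along $\graph \Spess_g$. For \cref{item:calculus:selection-chain:ii,item:calculus:selection-chain:iii} we lift a sequence $\tilde x \to x$ to points $\tilde u \in \Sopt_g(\tilde x)$ close to $u$, using the inner (semi-)Lipschitz property.

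\emph{Part \cref{item:calculus:selection-chain:i}.} Fix $x^* \in \frechetSubdiff g(x)$ and $u \in \Sopt_g(x)$, so that $J(u) \le g(x)$. Take $\tilde x \to x$ and $\Spess_g(\tilde x) \ni \tilde u \to u$, so that $g(\tilde x) \le J(\tilde u)$. Then
\[
    \dualprod{x^*}{\tilde x - x} - \dualprod{J'(u)}{\tilde u - u}
    \le g(\tilde x) - g(x) + o(\norm{\tilde x - x}) - \dualprod{J'(u)}{\tilde u - u}
    \le J(\tilde u) - J(u) - \dualprod{J'(u)}{\tilde u - u} + o(\norm{\tilde x - x}).
\]
Fréchet differentiability of $J$ at $u$ makes the right-hand side $o(\norm{\tilde u - u}) + o(\norm{\tilde x - x})$. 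Dividing by $\norm{\tilde x - x} + \norm{\tilde u - u}$ gives $\limsup \le 0$, that is, $x^* \in \frechetCod\Spess_g(x|u)(J'(u))$. One case needs care: if $\tilde x \notin \Dom g$, then $\Spess_g(\tilde x) = \emptyset$, so such points do not enter the $\limsup$. Only differentiability at $u$ is really needed here.

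\emph{Part \cref{item:calculus:selection-chain:ii}.} Let $u \in A$, $u^* \in \frechetSubdiff J(u)$ and $x^* \in \frechetCod \Sopt_g(x|u)(u^*)$. Since $u \in \Spess_g(x)$, we have $g(x) \le J(u)$. We must show $\liminf_{\tilde x \to x} [g(\tilde x) - g(x) - \dualprod{x^*}{\tilde x - x}]/\norm{\tilde x - x} \ge 0$.
\begin{itemize}
    \item If $\tilde x \notin \Dom \Sopt_g$, we want $g(\tilde x) = +\infty$. Indeed, if $g(\tilde x) = \infty$ then every element of $S(\tilde x)$ lies in $\Sopt_g(\tilde x)$; so the points to handle are those with $g(\tilde x) < \infty$ and $\Sopt_g(\tilde x) = \emptyset$. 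This is the delicate point. The inner Lipschitz bound is only asserted on $\Dom \Sopt_g$, so I would argue (or note the implicit convention) that only $\tilde x \in \Dom \Sopt_g$ matter.
    \item For $\tilde x \in \Dom \Sopt_g$ near $x$, inner Lipschitzness gives $\tilde u \in \Sopt_g(\tilde x)$ with $\norm{\tilde u - u} \le \lambda_u \norm{\tilde x - x} + \norm{\tilde x - x}^2$, so $\tilde u \to u$.
\end{itemize}
In the second case,
\[
    g(\tilde x) - g(x) \ge J(\tilde u) - J(u) \ge \dualprod{u^*}{\tilde u - u} - o(\norm{\tilde u - u}),
\]
and the coderivative definition gives
\[
    \dualprod{x^*}{\tilde x - x} - \dualprod{u^*}{\tilde u - u} \le o(\norm{\tilde x - x} + \norm{\tilde u - u}).
\]
Combining these and using $\norm{\tilde u - u} \le (\lambda_u + 1)\norm{\tilde x - x}$ converts all the error terms into $o(\norm{\tilde x - x})$, which yields the $\liminf$ inequality.

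\emph{Part \cref{item:calculus:selection-chain:iii}.} Argue by contradiction using sequences. Suppose $x^*$ lies in the intersection but $x^* \notin \frechetSubdiff g(x)$. Then there are $\delta > 0$ and $\this x \to x$ with
\[
    g(\this x) - g(x) - \dualprod{x^*}{\this x - x} \le -\delta \norm{\this x - x}.
\]
By inner semi-Lipschitzness, after passing to a subsequence there is some $u \in A$ with $\dist(\Sopt_g(\this x), u) \le \lambda_u \norm{x - \this x}$. Since $x^*$ is in the intersection, there is $u^* \in \frechetSubdiff J(u)$ with $x^* \in \frechetCod \Sopt_g(x|u)(u^*)$. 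Now choose $\this u$ as in part \cref{item:calculus:selection-chain:ii} and run the same estimate along this sequence, which contradicts the $-\delta$ bound.

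The main obstacle is the domain issue in part \cref{item:calculus:selection-chain:ii}, i.e.\ controlling points $\tilde x$ with $g(\tilde x) < \infty$ outside $\Dom \Sopt_g$. I expect that the intended reading restricts to $\Dom \Sopt_g$, or that $\Dom \Sopt_g \supset \Dom g$ holds in the applications ($g = \infmap{J\circ S}$ attained). I would state this convention explicitly.
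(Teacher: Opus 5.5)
Your proof follows the paper's argument. In (i) you push the subdifferential inequality for $g$ through $g(\tilde x)\le J(\tilde u)$ and $J(u)\le g(x)$, using Fréchet differentiability of $J$ at $u$ where the paper uses the mean value theorem and continuity of $J'$. In (ii)--(iii) you lift $\tilde x\to x$ to $\tilde u\in\Sopt_g(\tilde x)$ via the inner (semi-)Lipschitz property, and your contradiction argument in (iii) replaces the paper's subsequence-of-subsequences argument.

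The domain issue you flag is real, and the paper's proof passes over it silently by picking $u^k\in\Sopt_g(x^k)$ for an arbitrary sequence $x^k\to x$. Without a hypothesis such as $\Dom g\subset\Dom\Sopt_g$ near $x$, parts (ii) and (iii) are false. For example, take $S\equiv\{0\}$, $J\equiv 0$, $g=-\abs{\freevar}$ on $\R$, $x=0$ and $A=\{0\}$. Then $\graph\Sopt_g=\{(0,0)\}$, so $\Sopt_g$ is trivially inner (semi-)Lipschitz and $\frechetCod\Sopt_g(0|0)(0)=\R$, whereas $\frechetSubdiff g(0)=\emptyset$. So you are right to add this assumption explicitly.

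One correction to your guess about the applications: the paper invokes (ii)--(iii) with $g=\supmap{J\circ S}$, not $\infmap{J\circ S}$. There $\Sopt_g=S$, and the needed inclusion holds once $\supmap{J\circ S}$ is set to $+\infty$ off $\Dom S$.
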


\begin{proof}
    \cref{item:calculus:selection-chain:i}:
    Let  $x^*\in \frechetSubdiff g(x)$.
    Then, by definition
    \begin{equation}
        \label{eq:calculus:selection-chain:1}
        \limsup_{\this x \to x}
        \frac{\dualprod{x^*}{\thisx - x} - (g(\thisx) - g(x))}
        {\norm{\thisx - x}} \le 0.
    \end{equation}
    By assumption there exists some $u \in \Sopt_g(x)$. Take any.
    Then $J(u) \le g(x)$.
    Since $J(\thisu) \ge g(\thisx)$ for any $\thisu \in \Spess_g(\thisx)$, the characterisation \eqref{eq:calculus:selection-chain:1} implies
    \begin{equation*}
        \limsup_{\this x \to x,\, \Spess_g(\thisx) \ni \this u \to u}
        \frac{\dualprod{x^*}{\thisx - x} - (J(\this u) - J(u))}
        {\norm{\thisx - x} + \norm{\this u - u}} \le 0.
    \end{equation*}
    By the mean value theorem, for some $\this\zeta \in [u, \this u]$, this implies
    \[
        \limsup_{\this x \to x,\, \Spess_g(\thisx) \ni \this u \to u}
        \frac{\dualprod{x^*}{\thisx - x} - \dualprod{J'(u)}{\this u - u} + \dualprod{J'(u)-J'(\this\zeta)}{\this u -u}}
        {\norm{\thisx - x} + \norm{\this u - u}} \le 0.
    \]
    By the continuity of $J'$, we have
    \[
        \lim_{\this u \to u} \frac{\dualprod{J'(u)-J'(\this\zeta)}{\this u -u}}{\norm{\this u - u}}  = 0.
    \]
    Thus,
    \begin{equation}
        \label{eq:calculus:selection-chain:2}
        \limsup_{\this x \to x,\, \Spess(\thisx) \ni \this u \to u}
        \frac{\dualprod{x^*}{\thisx - x} - \dualprod{J'(u)}{\this u - u}}
        {\norm{\thisx - x} + \norm{\this u - u}} \le 0,
    \end{equation}
    which is to say $x^*\in \frechetCod \Spess_g(x|u)(J'(u))$.
    Since $u \in \Sopt_g(x)$, we obtain  \cref{item:calculus:selection-chain:i}.

    \cref{item:calculus:selection-chain:ii}:
    Suppose $x^* \in \frechetCod \Sopt_g(x|u)(u^*)$ for a $u \in A \subset \Spess_g(x)$ and $u^* \in \frechetSubdiff J(u)$.
    The former is characterised by the inequality analogous to \eqref{eq:calculus:selection-chain:2},
    \begin{equation}
        \label{eq:calculus:selection-chain:ii:0}
        \limsup_{\this x \to x,\, \Sopt_g(\thisx) \ni \this u \to u}
        \frac{\dualprod{x^*}{\thisx - x} - \dualprod{u^*}{\this u - u}}
        {\norm{\thisx - x} + \norm{\this u - u}} \le 0.
    \end{equation}
    We need to show that $x^* \in \frechetSubdiff g(x)$.
    By definition, this holds if and only if \eqref{eq:calculus:selection-chain:1} holds.
    Therefore, take any $\this x \to x$.
    Since $\Sopt_g$ is inner Lipschitz at $x$ relative to $A \ni u$ there exists $\kappa_u>0$ such that, for any $\kappa'>\kappa_u$, for large enough $k$, we can find $\this u \in \Sopt_g(\thisx)$ with $\norm{\thisu - u} \le \kappa' \norm{\thisx-x}$. Thus, in particular, $\Sopt_g(\thisx) \ni \this u \to u$.
    But then \eqref{eq:calculus:selection-chain:ii:0} implies
    \begin{equation}
        \label{eq:calculus:selection-chain:ii:1}
        \limsup_{k \to \infty}
        \frac{\dualprod{x^*}{\thisx - x} - \dualprod{u^*}{\this u - u}}
        {\norm{\thisx - x} + \norm{\this u - u}} \le 0.
    \end{equation}
    By the definition \eqref{eq:intro:convex-subdiff} of the Fréchet subdifferential, we have
    \[
        \limsup_{k \to \infty}
        \frac{\dualprod{u^*}{\thisu-u}- (J(\thisu) - J(u))}{\norm{\this x - x}} \le 0.
    \]
    Using this and  $\norm{\thisu - u} \le \kappa' \norm{\thisx-x}$ in \eqref{eq:calculus:selection-chain:ii:1}, we obtain
    \begin{equation}
        \label{eq:calculus:selection-chain:ii:2}
        \limsup_{k \to \infty}
        \frac{\dualprod{x^*}{\thisx - x} - (J(\this u) - J(u))}
        {\norm{\thisx - x}} \le 0.
    \end{equation}
    Keeping in mind that $g(\thisx) \ge J(\this u)$ by the definition of $\Sopt(\thisx) \ni \this u$, and $g(x) \le J(u)$ by the definition of $\Spess(x) \ni u$, since the sequence $\this x \to x$ was arbitrary, this proves \eqref{eq:calculus:selection-chain:1}.

    \cref{item:calculus:selection-chain:iii}: We proceed analogously to \cref{item:calculus:selection-chain:ii}, except in \cref{eq:calculus:selection-chain:ii:1}, after the use of the inner semi-Lipschitz property relative to $\Sopt_g(x)$, we have passed to a subsequence, unrelabelled, and $u \in A \subset \Spess_g(x)$ is no longer arbitrary, but determined by the property.
    To justify passing from \eqref{eq:calculus:selection-chain:ii:2} to \eqref{eq:calculus:selection-chain:1}, we argue that we can repeat the argument for an arbitrary subsequence of the original.
    Note that the choice of $u$ does not depend on $u^* \in \frechetSubdiff J(u)$, therefore, the union over $u^*$ can be inside the intersection over $u$ in the claim.
\end{proof}

\begin{theorem}
    \label{thm:calculus:single-valued-chain}
    Let $S: X \setto U$, and $J: U \to \R$ on normed spaces $U$ and $X$.
    Then the following hold:
    \begin{enumerate}[label=(\roman*)]
        \item\label{item:calculus:single-valued-chain:i}
              If $J$ is continuously differentiable near $x$ and $\Sopt(x)$ is non-empty, then
              \[
                  \frechetSubdiff \infmap{J \circ S}(x)
                  \subset
                  \Isect_{u \in \Sopt(x)} \frechetCod S(x|u)(J'(u)).
              \]
        \item\label{item:calculus:single-valued-chain:ii}
            If $S$ is inner Lipschitz at $x$ relative to an $A \subset \Spess(x) \subset S(x)$, then
            \[
                \Union_{u \in A,\, u^* \in \frechetSubdiff J(u)} \frechetCod S(x|u)(u^*)
                \subset
                \frechetSubdiff \supmap{J \circ S}(x).
            \]
            \item\label{item:calculus:single-valued-chain:iii}
                If $S$ is inner semi-Lipschitz at $x$ relative to an $A \subset \Spess(x) \subset S(x)$, then
                \[
                    \Isect_{u \in A} \Union_{u^* \in \frechetSubdiff J(u)}\frechetCod S(x|u)(u^*)
                    \subset
                    \frechetSubdiff \supmap{J \circ S}(x).
                \]
    \end{enumerate}
\end{theorem}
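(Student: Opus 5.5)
The plan is to reduce each item to \cref{lemma:calculus:section-chain}, choosing $g$ to be the optimistic or the pessimistic selection and then comparing the selection maps $\Sopt_g$, $\Spess_g$ with $S$ and with $\Sopt$, $\Spess$.

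For \cref{item:calculus:single-valued-chain:i}, I will take $g = \infmap{J \circ S}$. Then $\Spess_g(\tilde x) = \{u \in S(\tilde x) \mid \infmap{J\circ S}(\tilde x) \le J(u)\} = S(\tilde x)$ for every $\tilde x$, because the infimum never exceeds any attained value. Likewise $\Sopt_g(x) = \{u \in S(x) \mid J(u) \le \infmap{J\circ S}(x)\} = \Sopt(x)$. With these identifications, \cref{lemma:calculus:section-chain}\,\cref{item:calculus:selection-chain:i} gives the claim directly, since $\frechetCod \Spess_g(x|u) = \frechetCod S(x|u)$.

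For \cref{item:calculus:single-valued-chain:ii,item:calculus:single-valued-chain:iii}, I will take $g = \supmap{J \circ S}$. Now $\Sopt_g(\tilde x) = \{u \in S(\tilde x) \mid J(u) \le \supmap{J\circ S}(\tilde x)\} = S(\tilde x)$ for all $\tilde x$. Also $\Spess_g(x) = \{u \in S(x) \mid \supmap{J\circ S}(x) \le J(u)\} = \Spess(x)$. The hypothesis that $S$ is inner (semi-)Lipschitz at $x$ relative to $A \subset \Spess(x)$ is therefore exactly the hypothesis on $\Sopt_g$ relative to $A \subset \Spess_g(x)$. Items \cref{item:calculus:selection-chain:ii,item:calculus:selection-chain:iii} of \cref{lemma:calculus:section-chain} then yield the two inclusions, with $\frechetCod \Sopt_g(x|u) = \frechetCod S(x|u)$.

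The only step needing care is checking that these set identities hold at every point near $x$, not just at $x$. Pointwise equality of the maps is what makes the coderivatives and the Lipschitz properties coincide.

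A second point is the value $+\infty$ of the pessimistic selection, which can occur when $S(\tilde x)$ is empty or $J$ is unbounded on it. In the inner Lipschitz items this does no harm. The relevant sequences lie in $\Dom S$, and the lemma is stated for $g$ valued in $\extR$.

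There is also the convention $\inf\emptyset = +\infty$ in the optimistic item. When $S(\tilde x) = \emptyset$, both sides of the identity $\Spess_g(\tilde x) = S(\tilde x)$ are empty, so it still holds.
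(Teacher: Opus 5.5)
Your proposal is correct and is exactly the paper's argument: for item (i) apply \cref{lemma:calculus:section-chain} with $g=\infmap{J\circ S}$, where $\Sopt_g=\Sopt$ and $\Spess_g=S$, and for items (ii)--(iii) apply it with $g=\supmap{J\circ S}$, where $\Sopt_g=S$ and $\Spess_g=\Spess$. Your side remark that $\supmap{J\circ S}(\tilde x)=+\infty$ when $S(\tilde x)=\emptyset$ is a convention that the paper leaves implicit (the usual convention is $\sup\emptyset=-\infty$), and it is the right one here, since with $\sup\emptyset=-\infty$ item (ii) can fail when $\Dom S$ is not a neighbourhood of $x$.
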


\begin{proof}
    \cref{item:calculus:single-valued-chain:i}:
    Apply \cref{lemma:calculus:section-chain}\,\cref{item:calculus:selection-chain:i} with $g=\infmap{J \circ S}$. Then $\Sopt_g=\Sopt$, and $\Spess_g=S$.

    \cref{item:calculus:single-valued-chain:ii,item:calculus:single-valued-chain:iii}:
    Apply \cref{lemma:calculus:section-chain}\,\cref{item:calculus:selection-chain:ii,item:calculus:selection-chain:iii} with $g=\supmap{J \circ S}$.
    Then $\Sopt_g=S$, and $\Spess_g=\Spess$.
\end{proof}

\subsection{Sum rules}

We now provide a sum rule that relaxes assumptions in known sum rules; see, e.g, \cite{clason2020introduction,rockafellar-wets-va}.
For $A \subset X$, we set,
\begin{equation}
    \label{eq:calculus:f-a}
    F_A(x) \defeq
    \begin{cases}
        \emptyset,    & x \not \in A,
        \\
        F(x), & x \in A.
    \end{cases}
\end{equation}
Then we define the \term{$A$-lower subdifferential}
\[
    \lsubdiff{A} R(x) \defeq \left\{
    x^*\in \frechetSubdiff R(x)
    \,\middle|\,
    \limsup_{A \ni \thisx \to x}
    \frac{R(\thisx)-R(x)-\dualprod{x^*}{\thisx-x}}{\norm{\thisx-x}} \le 0
    \right\}.
\]

Despite relaxing standard results, we still have the relatively strict assumption  $\lsubdiff{A} R(x) \ne \emptyset$, which, in particular, excludes the one-norm, but does allow several forms of constraints.

\begin{example}
    \label{example-lower-subdiff-1}
    $\lsubdiff{\Dom R} R(x) =\frechetSubdiff R(x)$ if $R$ is Lipschitz differentiable.
\end{example}

\begin{example}
    \label{example-lower-subdiff-2}
    Since $0 \in \subdiff \delta_C(x)$ for all $x \in C$ for a non-empty closed convex set $C$, we always have $0 \in \lsubdiff{\Dom \delta_C} \delta_C(x)$.
\end{example}

\begin{example}
    \label{example-lower-subdiff-3}
    For $R(x)=\abs{x}$ on $\R$, we have $\lsubdiff{[0, \infty)}R(0)=\{1\}$ and $\lsubdiff{(-\infty,0]}R(0)=\{-1\}$.
\end{example}

\begin{theorem}
    \label{thm:calculus:single-valued-sum}
    Suppose $R: X \to \extR$ is convex, proper, and locally Lipschitz on its effective domain (not only the interior of the domain), and $F: X \to \R$ on a normed space $X$.
    Let$x \in \Dom R$. Then the following hold:
    \begin{enumerate}[label=(\roman*)]
        \item
              \label{item:calculus:single-valued-sum:sum-d-subset-d-sum}
              The inclusion\footnote{This inclusion holds in general.}
              \[
                  \frechetSubdiff F(x) + \frechetSubdiff R(x)
                  \subset \frechetSubdiff[F + R](x)
              \]
        \item
              \label{item:calculus:single-valued-sum:d-sum-subset-sum-d}
              If, $\lsubdiff{A} R(x) \ne \emptyset$ for a set $A \subset X$, the inclusion
              \[
                  \frechetSubdiff[F + R](x) \subset \frechetSubdiff F_A(x) + \tilde x^*
                  \quad\text{for any}\quad \tilde x^* \in \lsubdiff{A} R(x).
              \]
    \end{enumerate}
\end{theorem}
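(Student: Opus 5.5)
Both parts reduce to manipulating the liminf/limsup quotients in the definition \eqref{eq:intro:frechet-subdiff} of $\frechetSubdiff = \subdiff_0$. I expect neither convexity nor the local Lipschitz property of $R$ to be essential beyond guaranteeing that the quotients are well defined. Part (ii) holds for an arbitrary $\tilde x^* \in \lsubdiff{A} R(x)$.

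For \cref{item:calculus:single-valued-sum:sum-d-subset-d-sum}, take $x_F^* \in \frechetSubdiff F(x)$ and $x_R^* \in \frechetSubdiff R(x)$. I will write the difference quotient of $F+R$ at $x$ with slope $x_F^*+x_R^*$ as the sum of the two separate quotients. I will then use superadditivity of $\liminf$: the $\liminf$ of a sum is at least the sum of the $\liminf$'s, provided the sum is not of the indeterminate form $\infty - \infty$. Here both $\liminf$'s are bounded below by $0$, so this causes no difficulty. This gives a $\liminf$ that is $\ge 0$, which is the claim. As the footnote says, this part needs no hypotheses.

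For \cref{item:calculus:single-valued-sum:d-sum-subset-sum-d}, fix $x^* \in \frechetSubdiff[F+R](x)$ and $\tilde x^* \in \lsubdiff{A} R(x)$, and set $y^* \defeq x^* - \tilde x^*$. The goal is to show $y^* \in \frechetSubdiff F_A(x)$. Here I interpret $F_A$ as a function with value $+\infty$ off $A$, which is the natural reading of \eqref{eq:calculus:f-a} when $F$ is scalar. With this convention, the $\liminf$ in the definition of $\frechetSubdiff F_A(x)$ only needs to be checked along sequences $\thisx \to x$ with $\thisx \in A$, since elsewhere the quotient is $+\infty$. The argument then goes as follows:
\begin{enumerate}
\item Use $F(x) = F_A(x)$, which requires $x \in A$. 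This point needs care: if $x \notin A$ then $F_A(x)$ is empty or $+\infty$, and I would either add $x \in A$ as an implicit assumption or observe that $\lsubdiff{A} R(x) \ne \emptyset$ is only meaningful in that case.
\item For $\thisx \in A$, decompose
\[
\frac{F(\thisx)-F(x)-\dualprod{y^*}{\thisx-x}}{\norm{\thisx-x}}
= \frac{[F+R](\thisx)-[F+R](x)-\dualprod{x^*}{\thisx-x}}{\norm{\thisx-x}}
- \frac{R(\thisx)-R(x)-\dualprod{\tilde x^*}{\thisx-x}}{\norm{\thisx-x}}.
\]
\item Bound the two terms. The first has $\liminf \ge 0$ because $x^*$ is a Fréchet subgradient of $F+R$. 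The second has $\limsup \le 0$ along $A \ni \thisx \to x$ by the definition of the $A$-lower subdifferential.
\item Conclude with $\liminf(a_k - b_k) \ge \liminf a_k - \limsup b_k \ge 0$.
\end{enumerate}

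The main obstacle is making sure the quotients are finite so that this subtraction is legitimate. This is where $x \in \Dom R$ and the local Lipschitz continuity of $R$ on its domain enter. Along sequences in $A \setminus \Dom R$, the value $R(\thisx) = \infty$ would make the $A$-lower condition fail, since the $\limsup$ there would be $+\infty$. Hence the condition forces, at least eventually, $A \isect \Dom R$ near $x$, and there the quotient of $R$ is bounded by the Lipschitz constant plus $\norm{\tilde x^*}$. On the remaining sequences, where $R(\thisx) = \infty$, the quotient for $F_A$ is either vacuous or the argument is restricted to $A \isect \Dom R$. Once these cases are handled, the inclusion $x^* = y^* + \tilde x^* \in \frechetSubdiff F_A(x) + \tilde x^*$ follows.
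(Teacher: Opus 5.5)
Your argument is correct and is essentially the paper's proof. Part (i) is the same additivity of the difference quotients. For part (ii) the paper likewise restricts \eqref{eq:calculus:single-valued-sum:0} to $A \ni \thisx \to x$, subtracts the $R$-quotient with slope $\tilde x^*$, and invokes the $A$-lower limsup bound. It only writes the $F$-quotient with the graphical denominator $\norm{(\thisx - x, F(\thisx) - F(x))}$, i.e.\ effectively in the coderivative form $\frechetCod F_A(x|F(x))(1)$ used in \cref{thm:optimistic:weak-oc}, and your liminf version implies that form.

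Two small corrections. First, the local Lipschitz property is not needed, since $\tilde x^* \in \lsubdiff{A} R(x)$ by itself forces $A \isect B(x,\rho) \subset \Dom R$ for some $\rho>0$. Second, your alternative claim that $\lsubdiff{A} R(x) \ne \emptyset$ is only meaningful for $x \in A$ is false: if $x \notin \closure A$ the condition is vacuous and $\lsubdiff{A} R(x) = \frechetSubdiff R(x)$. So under your $+\infty$-convention you must simply assume $x \in A$ (as in the applications, e.g.\ $A = \Dom R$), whereas the paper's coderivative reading does not need it.
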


\begin{proof}
    By definition, $x^* \in \frechetSubdiff[F + R](x)$ if and only if
    \begin{equation}
        \label{eq:calculus:single-valued-sum:0}
        \limsup_{\this x \to x}
        \frac{\dualprod{x^*}{\thisx - x} - \left([F+R](\thisx) - [F+R](x)\right)}
        {\norm{\thisx - x}} \le 0.
    \end{equation}

    \emph{\cref{item:calculus:single-valued-sum:sum-d-subset-d-sum}:}
    Let $\tilde x^* \in \frechetSubdiff R(x)$.
    Using the definition \eqref{eq:intro:frechet-subdiff} of the latter, we see \eqref{eq:calculus:single-valued-sum:0} to hold if
    \begin{equation}
        \label{eq:calculus:single-valued-sum:2}
        \limsup_{\this x \to x}
        \frac{\dualprod{x^*-\tilde x^*}{\thisx - x} - (F(\thisx) - F(x))}
        {\norm{\thisx - x}} \le 0.
    \end{equation}
    This says exactly that $x^* - \tilde x^*\in \frechetSubdiff F(x)$, proving \cref{item:calculus:single-valued-sum:sum-d-subset-d-sum}.

    \emph{\cref{item:calculus:single-valued-sum:d-sum-subset-sum-d}:}
    In the other direction, restricting $\thisx \in A$, \eqref{eq:calculus:single-valued-sum:0} implies for any $\tilde x^* \in X^*$ that
    \begin{equation}
        \label{eq:calculus:single-valued-sum:3}
        \limsup_{A \ni \this x \to x}
        \frac{\dualprod{x^*-\tilde x^*}{\thisx - x}  - (F(\thisx) - F(x))}
        {\norm{(\thisx - x, F(\thisx) - F(x))}}
        \le
        \limsup_{A \ni \this x \to x}
        \frac{R(x^k)-R(x)-\dualprod{\tilde x^*}{\thisx - x}}
        {\norm{\thisx - x}}.
    \end{equation}
    The right-hand-side is non-positive for any $\tilde x^* \in \lsubdiff{A} R(x) \subset \frechetSubdiff R(x)$.
    We have assumed $\lsubdiff{A} R(x) $ non-empty.
    Therefore, such a choice of $\tilde x^*$ exists.
    From \eqref{eq:calculus:single-valued-sum:3} we now get that $x^* \in \frechetSubdiff F_A(x) +\tilde x^*$.
    This proves \cref{item:calculus:single-valued-sum:d-sum-subset-sum-d}.
\end{proof}

\begin{remark}
    Similar sum rule to \cref{item:calculus:single-valued-sum:d-sum-subset-sum-d} is obtained for $\tilde x^*\in -\frechetSubdiff (-R(x))$ in \cite{mordukhovich2006frechet}. The function $x \mapsto -\frechetSubdiff (-R(x))$ is called  Fréchet upper subdifferential therein, and it behaves like $A$-lower subdifferential in \cref{example-lower-subdiff-1,example-lower-subdiff-2,example-lower-subdiff-3}.
\end{remark}

\section{Optimality conditions for nonsmooth bilevel problems}
\label{sec:oc}

Throughout this section, on normed spaces $U$, $W_*, W = (W_*)^*$, and $X$, where $W_*$ is a predual space of $W$, we consider the bilevel optimisation problem \eqref{eq:intro:bilevel-problem}, i.e.,
\begin{equation}
    \label{eq:oc:bilevel-problem}
    \min_{u, x} J(u) + R(x)\quad\text{subject to}\quad 0 \in G(u, x).
\end{equation}
Here $G: U \times X \setto W_*$, $J: U \to \R$, and $R: X \to \extR$.

We start in \cref{sec:optimistic} by deriving optimality conditions for \eqref{eq:oc:bilevel-problem} or, more precisely, the optimistic and pessimistic variants
\[
    \min_{x} [\infmap{J \circ S} + R](x)
    \quad\text{and}\quad
    \min_{x} [\supmap{J \circ S} + R](x),
\]
where the optimistic and pessimistic selections $\infmap{J \circ S}$ and $\supmap{J \circ S}$,
are defined in \cref{sec:calculus} along with the corresponding slices $\Sopt$ and $\Spess$ of the inner solution mapping
\begin{equation}
    \label{eq:oc:solution-mapping}
    S: X \setto U,
    \quad
    S(x) \defeq \{u \in U \mid 0 \in G(u, x)\}.
\end{equation}
In \cref{sec:adjoint} we further characterise these optimality conditions with the help of nonsmooth adjoint inclusions.
We finish the section in \cref{sec:fullsystem} by writing out the full inner-adjoint-outer optimality system with the help of additional variables.

\subsection{Criticality conditions}
\label{sec:optimistic}

By the Fermat principles for the Fréchet and limiting subdifferentials \cite[Theorem 16.2 and Corollary 16.6]{clason2020introduction}, for $\bar x$ to be an optimistic minimiser of \eqref{eq:oc:bilevel-problem}, the condition
\begin{equation}
    \label{eq:optimistic:weak-oc-nec}
    0 \in \anySubdiff[\infmap{J \circ S} + R](\bar x)
\end{equation}
is necessary for both $\anySubdiff=\limitingSubdiff, \frechetSubdiff$.
Likewise for $\bar x$ to be a pessimistic minimiser of \eqref{eq:oc:bilevel-problem}, it is necessary that
\[
    0 \in \anySubdiff[\supmap{J \circ S} + R](\bar x).
\]

The next results provide further expanded conditions under additional assumptions.
For the first result, recall the definition of $S_A$ from \eqref{eq:calculus:f-a}.

\begin{theorem}
    \label{thm:optimistic:weak-oc}
    Let $J$, $S$, and $R$ be as in \cref{eq:oc:bilevel-problem,eq:oc:solution-mapping} with $J$ continuously differentiable.
    Suppose $\infmap{J \circ S} + R$ is minimised at $\bar x$.
    If $\lsubdiff{A} R(\bar x) \ne \emptyset$ for a set $A \subset X$, then
    \begin{equation}
        \label{eq:optimistic:weak-oc}
        0 \in \frechetCod S_A(\bar x|\bar u)(J'(\bar u)) + \lsubdiff{A} R(\bar x)
        \quad\text{for all}\quad \bar u \in \Sopt(\bar x).
    \end{equation}
    If, in particular, $\Dom R=X$ with $\lsubdiff{X} R(\bar x) \ne \emptyset$ (e.g., if $R$ is Lipschitz continuously differentiable), then
    \begin{equation}
        \label{eq:optimistic:weak-oc:lc}
        0 \in \frechetCod S(\bar x|\bar u)(J'(\bar u)) + \frechetSubdiff R(\bar x)
        \quad\text{for all}\quad \bar u \in \Sopt(\bar x).
    \end{equation}
\end{theorem}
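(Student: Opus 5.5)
The plan is to combine the Fermat principle with the sum rule \cref{thm:calculus:single-valued-sum} and the chain rule \cref{thm:calculus:single-valued-chain}\,\cref{item:calculus:single-valued-chain:i}, the latter applied to the restricted map $S_A$.

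Since $\bar x$ minimises $\infmap{J \circ S} + R$, the Fréchet Fermat principle gives $0 \in \frechetSubdiff[\infmap{J \circ S} + R](\bar x)$. We need $\bar x \in \Dom R$, which holds because $\lsubdiff{A} R(\bar x) \ne \emptyset$ forces $\frechetSubdiff R(\bar x) \ne \emptyset$. I would not invoke \cref{thm:calculus:single-valued-sum}\,\cref{item:calculus:single-valued-sum:d-sum-subset-sum-d} as a black box, since it is stated for real-valued $F$ and convex $R$. Instead I would rerun its short argument directly with $F = \infmap{J \circ S}$. Fix $\tilde x^* \in \lsubdiff{A} R(\bar x)$, and restrict the $\limsup$ in the Fermat inequality to $A \ni \this x \to \bar x$. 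Subtracting the defining inequality of $\lsubdiff{A} R$ then gives
\[
    \limsup_{A \ni \this x \to \bar x}
    \frac{\dualprod{-\tilde x^*}{\this x - \bar x} - (\infmap{J\circ S}(\this x) - \infmap{J \circ S}(\bar x))}{\norm{\this x - \bar x}} \le 0.
\]
That is, $-\tilde x^* \in \frechetSubdiff g(\bar x)$ with $g \defeq \infmap{J \circ S_A}$, where $g = \infmap{J\circ S}$ on $A$ and $g = +\infty$ off $A$ (the infimum over the empty set). The step that needs care is the finiteness of $\infmap{J\circ S}(\bar x)$. It is finite because $\Sopt(\bar x) \ni \bar u$ is assumed nonempty in the claim, and this also gives $\bar x \in \Dom S$.

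Next, apply \cref{lemma:calculus:section-chain}\,\cref{item:calculus:selection-chain:i} with this $g$ and $S$ replaced by $S_A$. We must handle the case $\bar x \notin A$: the definition of $\frechetCod$ does not require $\bar u \in S_A(\bar x)$, so the statement still makes sense. The lemma's proof only uses two facts. First, $J(\bar u) \le g(\bar x)$, which requires $g(\bar x) = \infmap{J\circ S}(\bar x)$. Second, $J(\this u) \ge g(\this x)$ for $\this u \in S_A(\this x)$, which is immediate. For the first fact, the cleanest route is to define $g(\bar x) \defeq \infmap{J\circ S}(\bar x)$ at $\bar x$ itself, since the Fréchet subdifferential compares against the base value; equivalently, work with $(S_A)$ augmented at $\bar x$. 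The main obstacle I expect is bookkeeping around whether $\bar x \in A$. If $\bar x \in A$, everything is literal.

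With $\Spess_g = S_A$ and $\bar u \in \Sopt_g(\bar x) = \Sopt(\bar x)$, the lemma gives $-\tilde x^* \in \frechetCod S_A(\bar x|\bar u)(J'(\bar u))$. Therefore
\[
    0 \in \frechetCod S_A(\bar x|\bar u)(J'(\bar u)) + \tilde x^* \subset \frechetCod S_A(\bar x|\bar u)(J'(\bar u)) + \lsubdiff{A} R(\bar x),
\]
which is \eqref{eq:optimistic:weak-oc}. For \eqref{eq:optimistic:weak-oc:lc}, take $A = X$, so that $S_A = S$. Then use $\lsubdiff{X} R(\bar x) \subset \frechetSubdiff R(\bar x)$, which holds directly from the definition of $\lsubdiff{A}$. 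In the Lipschitz-differentiable case, \cref{example-lower-subdiff-1} supplies nonemptiness of $\lsubdiff{X} R(\bar x)$.
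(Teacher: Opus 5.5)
Your proposal is correct and follows the paper's route: the Fermat principle, then the $A$-restricted sum-rule argument of \cref{thm:calculus:single-valued-sum}\,\cref{item:calculus:single-valued-sum:d-sum-subset-sum-d}, then \cref{lemma:calculus:section-chain}\,\cref{item:calculus:selection-chain:i} applied to $S_A$. You are more careful than the paper in two places: you rerun the sum-rule argument rather than citing it, since its stated hypotheses (real-valued $F$, convex $R$) do not hold here, and you handle $\bar x\notin A$ explicitly. The paper covers the latter case only implicitly, by writing the intermediate object as the coderivative $\frechetCod[J\circ S_A](\bar x|\bar v)(1)$, which does not require $\bar x$ to lie in the graph.

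One bookkeeping step should be stated: with $S_A$ augmented at $\bar x$, the lemma literally yields $\frechetCod S_{A\union\{\bar x\}}(\bar x|\bar u)(J'(\bar u))$. This set is contained in $\frechetCod S_A(\bar x|\bar u)(J'(\bar u))$ because $\graph S_A\subset\graph S_{A\union\{\bar x\}}$.
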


\begin{proof}
    Let $\bar v \defeq \infmap{J \circ S}(\bar x)$.
    By \eqref{eq:optimistic:weak-oc-nec}, \cref{thm:calculus:single-valued-sum}\,\cref{item:calculus:single-valued-sum:d-sum-subset-sum-d} and
    \cref{thm:calculus:single-valued-chain}\,\cref{item:calculus:single-valued-chain:i}
    \[
        \begin{split}
            0
            \in
            \frechetSubdiff[\infmap{J \circ S} + R](\bar x)
             &
            \subset
            \frechetCod [J \circ S]_A(\bar x|\bar v)(1)
            + \lsubdiff{A} R(\bar x)
            \\
             &
            =
            \frechetCod [J \circ S_A](\bar x|\bar v)(1)
            + \lsubdiff{A} R(\bar x)
            \\
             &
            \subset
            \Isect_{u \in \Sopt(\bar x)}
            \frechetCod S_A(\bar x|u)(J'(u)) + \lsubdiff{A} R(\bar x).
        \end{split}
    \]
    This proves \eqref{eq:optimistic:weak-oc}. For \eqref{eq:optimistic:weak-oc:lc} we take $A=X$.
\end{proof}

\begin{limiting}
Since the limiting coderivative and subdifferential contain the Fréchet coderivative and subdifferential, the following is immediate.
\begin{corollary}
    \label{cor:optimistic:weak-oc:limiting}
    \Cref{thm:optimistic:weak-oc} holds with the limiting coderivative in place of the Fréchet coderivative.
\end{corollary}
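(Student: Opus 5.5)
The argument is a direct enlargement of \cref{thm:optimistic:weak-oc}; no new calculus is needed. First we record the inclusion $\frechetCod F(x|y)(y^*) \subset D^*F(x|y)(y^*)$ for any set-valued $F$ and any point $(x,y)$. This follows from the definition of $D^*$ as a weak-$*$ outer limit: take the constant sequences $(\tilde x,\tilde y)=(x,y)$ and $\tilde y^*=y^*$. For $\epsilon\downto 0$ we use $\frechetCod_\epsilon F(x|y)(y^*)\supset \frechetCod_0 F(x|y)(y^*)$, which is clear because enlarging $\epsilon$ relaxes the $\limsup$ bound in \eqref{eq:intro:frechet-coderivative}. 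Each $x^*\in\frechetCod F(x|y)(y^*)$ is then the constant limit of admissible elements. The same argument gives $\frechetSubdiff R\subset\limitingSubdiff R$, using the constant sequence with $F(\tilde x)=F(x)$.

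Next, apply \cref{thm:optimistic:weak-oc} verbatim. Under its hypotheses this yields, for every $\bar u\in\Sopt(\bar x)$, elements $x^*\in\frechetCod S_A(\bar x|\bar u)(J'(\bar u))$ and $\tilde x^*\in\lsubdiff{A}R(\bar x)$ with $0=x^*+\tilde x^*$. By the inclusion just established, $x^*\in D^*S_A(\bar x|\bar u)(J'(\bar u))$. Hence $0\in D^*S_A(\bar x|\bar u)(J'(\bar u))+\lsubdiff{A}R(\bar x)$, which is the limiting version of \eqref{eq:optimistic:weak-oc}. For \eqref{eq:optimistic:weak-oc:lc} we additionally replace $\frechetSubdiff R(\bar x)$ by $\limitingSubdiff R(\bar x)\supset\frechetSubdiff R(\bar x)$ if the statement is read with limiting objects throughout.

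The only point needing care is the monotonicity in $\epsilon$ of the $\epsilon$-coderivative together with the admissibility of constant sequences in the outer limit. Here $\bar u\in S(\bar x)$ holds, so $(\bar x,\bar u)$ is a legitimate base point. The same reasoning covers the mixed coderivative $D^*_M$, since constant sequences converge strongly as well. No other obstacle is expected.
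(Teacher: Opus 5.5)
Your argument is correct and is exactly the paper's. The paper justifies the corollary in one line by noting that the limiting coderivative and subdifferential contain their Fréchet counterparts, which is the inclusion you verify via constant sequences and the monotonicity of $\frechetCod_\epsilon$ in $\epsilon$. One small remark: your check that $(\bar x,\bar u)$ is a legitimate base point uses $\bar u\in S(\bar x)$, whereas the coderivative is of $S_A$. This is harmless, because the paper's definitions of $\frechetCod_\epsilon$ and $D^*$ do not require the base point to lie on the graph, so your constant-sequence argument applies unchanged.
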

\end{limiting}

We will, however, wish to avoid the $A=\Dom R$ restriction on $S$, and the restrictions on $R$, so base our work with the Fréchet coderivative mainly on the condition of the next lemma, which is neither necessary nor sufficient, but is a \emph{sufficient condition for the pessimistic necessary condition \eqref{eq:optimistic:weak-oc-nec}}.\obtodo{If we can solve one, sufficient-for-necessary is better than necessary-for-necessary.}
It will be necessary in each case to verify that \eqref{eq:optimistic:weak-critical} or \eqref{eq:optimistic:strong-critical} can be satisfied (e.g., through becoming necessary): it is no way necessary that they can be satisfied.

\begin{theorem}
    \label{thm:optimistic:weak-oc-necsuf}
    Consider $F \defeq  J \circ S + R$, where $J$, $S$, and $R$ are as in \cref{eq:oc:bilevel-problem,eq:oc:solution-mapping}.
    Then the pessimistic necessary optimality condition $0 \in \frechetSubdiff[\supmap{J \circ S} + R](\bar x)$ holds in the following cases:
    \begin{enumerate}[label=(\roman*)]
        \item\label{item:optimistic:weak-oc-necsuf:strong}
              $S$ is inner Lipschitz at $\bar x$ relative to $\{\bar u\}$ for a $\bar u \in \Spess(\bar x)$, and
              \begin{equation}
                  \label{eq:optimistic:weak-critical}
                  0 \in \Union_{u^* \in \frechetSubdiff J(\bar u)} \frechetCod S(\bar x|\bar u)(u^*) + \frechetSubdiff R(\bar x)
              \end{equation}
        \item\label{item:optimistic:weak-oc-necsuf:weak}
              $S$ is inner semi-Lipschitz at $\bar x$ relative to an $A \subset \Spess(\bar x)$, and
              {\belowdisplayskip=0pt
              \begin{equation}
                  \label{eq:optimistic:strong-critical}
                  0 \in \Isect_{\bar u \in A} \Union_{u^* \in \frechetSubdiff J(\bar u)} \frechetCod S(\bar x|\bar u)(u^*) + \frechetSubdiff R(\bar x).
              \end{equation}}
    \end{enumerate}
\end{theorem}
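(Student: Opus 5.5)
The plan is to combine the pessimistic chain rule of \cref{thm:calculus:single-valued-chain} with the general sum inclusion of \cref{thm:calculus:single-valued-sum}\,\cref{item:calculus:single-valued-sum:sum-d-subset-d-sum}. Write the assumed criticality condition as $0 = x_1^* + x_2^*$. In case \cref{item:optimistic:weak-oc-necsuf:strong}, $x_1^* \in \frechetCod S(\bar x|\bar u)(u^*)$ for some $u^* \in \frechetSubdiff J(\bar u)$. In case \cref{item:optimistic:weak-oc-necsuf:weak}, $x_1^* \in \Isect_{\bar u \in A} \Union_{u^* \in \frechetSubdiff J(\bar u)} \frechetCod S(\bar x|\bar u)(u^*)$. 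In both cases $x_2^* \in \frechetSubdiff R(\bar x)$.

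First, I would show $x_1^* \in \frechetSubdiff \supmap{J \circ S}(\bar x)$. In case \cref{item:optimistic:weak-oc-necsuf:strong} this is \cref{thm:calculus:single-valued-chain}\,\cref{item:calculus:single-valued-chain:ii} with $A = \{\bar u\} \subset \Spess(\bar x)$: the union on the left-hand side there contains $\frechetCod S(\bar x|\bar u)(u^*)$. In case \cref{item:optimistic:weak-oc-necsuf:weak} it is directly \cref{thm:calculus:single-valued-chain}\,\cref{item:calculus:single-valued-chain:iii}.

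Second, I would add the two pieces. The sum inclusion $\frechetSubdiff F(x) + \frechetSubdiff R(x) \subset \frechetSubdiff[F+R](x)$ holds in general (the footnote of \cref{thm:calculus:single-valued-sum}), and its proof only adds the two defining $\liminf$ inequalities. So we get $0 = x_1^* + x_2^* \in \frechetSubdiff[\supmap{J \circ S} + R](\bar x)$, which is the claim.

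The main obstacle is the bookkeeping around extended values. \cref{thm:calculus:single-valued-sum} is stated with $F: X \to \R$ and with convexity and local Lipschitz assumptions on $R$, whereas $\supmap{J \circ S}$ may take the value $+\infty$ (or $-\infty$) off $\Dom S$ and $R$ is arbitrary here. I would handle this by redoing the one-line argument of \cref{item:calculus:single-valued-sum:sum-d-subset-d-sum} directly. We need $\supmap{J\circ S}(\bar x)$ and $R(\bar x)$ finite: the former holds since $\bar u \in \Spess(\bar x)$ gives $\supmap{J\circ S}(\bar x)=J(\bar u)$, and the latter holds since $\frechetSubdiff R(\bar x) \ne \emptyset$ presupposes $\bar x \in \Dom R$. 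The $\liminf$ of a sum is at least the sum of the $\liminf$s, so the sum rule goes through without any convexity or Lipschitz hypotheses.
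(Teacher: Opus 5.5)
Your proposal is correct and follows the paper's proof: combine \cref{thm:calculus:single-valued-chain}\,\cref{item:calculus:single-valued-chain:ii} (with $A=\{\bar u\}$) or \cref{item:calculus:single-valued-chain:iii} with the general sum inclusion \cref{thm:calculus:single-valued-sum}\,\cref{item:calculus:single-valued-sum:sum-d-subset-d-sum}. Two of your details are slightly more careful than the paper's write-up: you pass explicitly through $\frechetSubdiff \supmap{J \circ S}(\bar x)$, where the paper's display writes $\frechetCod[J\circ S](\bar x|\bar v)(1)$, and you re-check the sum inclusion without the convexity and Lipschitz hypotheses on $R$, which the paper justifies only by its footnote.
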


\begin{proof}
    \cref{item:optimistic:weak-oc-necsuf:strong}:
    Let $\bar v \defeq J(\bar u)$.
    By \cref{thm:calculus:single-valued-sum}\,\cref{item:calculus:single-valued-sum:sum-d-subset-d-sum} and  \cref{thm:calculus:single-valued-chain}\,\cref{item:calculus:single-valued-chain:ii}, we have
    \begin{equation}
        \label{eq:optimistic:weak-oc-necsuf:0}
        \frechetSubdiff[\supmap{J \circ S} + R](\bar x)
        \supset
        \frechetCod [J \circ S](\bar x|\bar v)(1)
        + \frechetSubdiff R(\bar x)
        \supset
        \Union_{u^* \in \frechetSubdiff J(\bar u)} \frechetCod S(\bar x|\bar u)(u^*) + \frechetSubdiff R(\bar x).
    \end{equation}

    \cref{item:optimistic:weak-oc-necsuf:weak}:
    We use  \cref{thm:calculus:single-valued-chain}\,\cref{item:calculus:single-valued-chain:iii} instead of \cref{item:calculus:single-valued-chain:ii} in the final equality of \eqref{eq:optimistic:weak-oc-necsuf:0}.
\end{proof}

\begin{remark}[Limiting coderivative]
    If $\limitingSubdiff \supmap{J \circ S}(\bar x) \subset D^* [J \circ S](\bar x|\bar v)(1)$ for $\bar v = \supmap{J \circ S}(\bar x)$, such as when $S$ is single-valued, and various qualification conditions hold, the sum and chain rules of \cite[Theorems 3.10 and 3.13]{mordukhovich2006variational} can be used to used to derive expanded necessary optimality conditions.
    We do not pursue this route here, concentrating on optimalistic optimality conditions for the limiting coderivative.
\end{remark}

\begin{remark}[Optimistic/pessimistic, necessary/sufficient, and limiting/Fréchet]
    We note that with the limiting coderivative, we easily get necessary optimality conditions for the optimistic bilevel problem.
    The smaller Fréchet coderivative, on the other hand, seems to better correspond to the pessimistic bilevel problem, but -- according to what we have been able to derive here -- does gives neither a necessary nor sufficient optimality condition, but a sufficient-for-necessary optimality condition. Under additional assumptions, it can be made either sufficient or necessary.
\end{remark}

\subsection{Nonsmooth adjoint inclusions}
\label{sec:adjoint}

The next lemma establishes for both $\anyCod = \frechetCod$ and $\anyCod = D^*$ the \term{nonsmooth adjoint inclusion}
\[
    (-u^*, x^*) \in \range \anyCod G(u, x | 0).
\]
For the limiting coderivative this is necessary for $x^* \in D^* S(x|u)(u^*)$, subject to a qualification condition, while for the Fréchet coderivative it is only sufficient.
In the latter case it provides a way to show that $\frechetCod S(x|u)(u^*)$ is non-empty, and thus to a suggestion that the sufficient-for-necessary condition \eqref{eq:optimistic:weak-critical} could be satisfied.

\begin{lemma}
    \label{lemma:solution-map:nonempty-coderivative}
    Suppose $u \in S(x)$. Then, for all $u^* \in U^*$,
    \[
        \{ x^* \in X^* \mid  (-u^*, x^*) \in \range \frechetCod G(u, x | 0, 0) \}
        \subset
        \frechetCod S(x|u)(u^*)
        \subset
        D^* S(x|u)(u^*).
    \]
    If
    \begin{equation}
        \label{eq:solution-map:nonempty-coderivative:qc}
        D_M^*\inv G(0|u, x)(0) = \{ 0 \},
    \end{equation}
    then
    \[
        D^* S(x|u)(u^*)
        \subset
        \{ x^* \in X^* \mid  (-u^*, x^*) \in \range D^* G(u, x | 0) \}
    \]
\end{lemma}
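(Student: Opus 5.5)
The plan is to use the identity $\graph S = \{(x,u) \mid (u,x,0) \in \graph G\}$, i.e., $\graph S$ is the preimage of $\graph G$ under the embedding $\iota: (x,u) \mapsto (u,x,0)$. The coderivatives of $S$ and $G$ can then be compared through normal cones.

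\textbf{First inclusion.} Let $(-u^*, x^*) \in \frechetCod G(u,x|0)(w)$ for some $w \in W = (W_*)^*$. By definition,
\[
\limsup \frac{\dualprod{-u^*}{\tilde u-u} + \dualprod{x^*}{\tilde x - x} - \dualprod{w}{\tilde w}}{\norm{\tilde u - u}+\norm{\tilde x - x}+\norm{\tilde w}} \le 0
\]
as $(\tilde u,\tilde x)\to(u,x)$ with $G(\tilde u,\tilde x)\ni \tilde w \to 0$. I restrict this limsup to $\tilde w = 0$, which is allowed exactly when $\tilde u \in S(\tilde x)$. The term $\dualprod{w}{\tilde w}$ then vanishes, and the denominator becomes $\norm{\tilde x - x}+\norm{\tilde u - u}$. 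The result is precisely the defining inequality \eqref{eq:intro:frechet-coderivative} of $x^* \in \frechetCod S(x|u)(u^*)$, with the sign convention $\dualprod{x^*}{\tilde x-x}-\dualprod{u^*}{\tilde u-u}$. So this step is a one-line restriction of the limsup.

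\textbf{Second inclusion.} This is immediate from the definition of $D^*$: take the constant sequences $\tilde x = x$, $\tilde u = u$, $\tilde u^* = u^*$, and $\epsilon_k \downto 0$, noting that $\frechetCod_0 \subset \frechetCod_{\epsilon}$.

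\textbf{Third inclusion.} This is the main step. Since $\graph S = \iota^{-1}(\graph G)$, I would apply the limiting normal cone preimage rule \cite[Theorem 3.8 / Corollary 3.9]{mordukhovich2006variational}, or equivalently the coderivative chain rule of \cite{rockafellar-wets-va,clason2020introduction}, to obtain $N_{\graph S}(x,u) \subset \iota^* N_{\graph G}(u,x,0)$. Here $\iota^*(p,q,w) = (q,p)$. In coderivative form, $(x^*,-u^*) \in N_{\graph S}(x,u)$ gives some $w$ with $(-u^*,x^*,-w) \in N_{\graph G}(u,x,0)$, i.e.\ $(-u^*,x^*) \in D^*G(u,x|0)(w)$, which is the claim.

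The obstacle is the qualification condition for this preimage rule. The natural form is
\[
N_{\graph G}(u,x,0) \cap \ker \iota^* = \{0\},
\]
i.e., $(0,0,w) \in N_{\graph G}$ implies $w=0$. Written in coderivative form this reads $0 \in D^*G(u,x|0)(w) \Rightarrow w=0$, which is \eqref{eq:solution-map:nonempty-coderivative:qc} after swapping to $\inv G$, because $\graph \inv G$ is a coordinate permutation of $\graph G$. The paper uses the mixed coderivative $D_M^*$, which is weaker and corresponds to the infinite-dimensional version requiring norm convergence of $\tilde w^*$. I expect to need this together with a sequential normal compactness or Asplund-type argument: take $\epsilon$-Fréchet normals $(\tilde x^*_k, -\tilde u^*_k)$ to $\graph S$ at nearby points, lift each via the (fuzzy) first-step reasoning reversed to approximate normals $(-\tilde u^*_k,\tilde x^*_k,-\tilde w_k)$ of $\graph G$, and show that $\tilde w_k$ is bounded. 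The boundedness argument is by contradiction: if $\norm{\tilde w_k}\to\infty$, normalising would produce a nonzero element of $D_M^*\inv G(0|u,x)(0)$. Bounded $\tilde w_k$ then have a weak-$*$ cluster point in $W=(W_*)^*$ by Banach--Alaoglu, and passing to the limit yields the required $w$. The delicate point is the lifting from Fréchet normals of a preimage to approximate normals of $\graph G$, which requires a fuzzy calculus rule. I would cite it from \cite{mordukhovich2006variational} rather than reprove it.
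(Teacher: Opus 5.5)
Your first inclusion is exactly the paper's argument (restrict the limsup defining $\frechetCod G(u,x|0)(w)$ to $\tilde w=0$). The middle inclusion is the standard fact the paper takes for granted.

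For the third inclusion the paper does not pass through $\iota$. It writes $\graph\inv S=\inv G(\Theta)$ with $\Theta=\{0\}$ and applies \cite[Theorem 3.8]{mordukhovich2006variational} with $F=G$. With that choice the qualification condition of the theorem is $\ker\tilde D^*_M G(u,x|0)=\{0\}$ for the reverse mixed coderivative, which is literally \eqref{eq:solution-map:nonempty-coderivative:qc}. The paper then records that the remaining hypotheses hold: partial sequential normal compactness, and inner semicompactness of $(u,x)\mapsto G(u,x)\cap\{0\}$ in the domain-restricted sense.

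Your choice $F=\iota$, $\Theta=\graph G$ instead produces the condition $(0,0,w)\in N_{\graph G}(u,x,0)\Rightarrow w=0$, i.e.\ $\ker D^*G(u,x|0)=\{0\}$. This coincides with \eqref{eq:solution-map:nonempty-coderivative:qc} when $U$ and $X$ are finite-dimensional. In general it is more demanding, since $D^*_M\inv G(0|u,x)(0)\subset-\ker D^*G(u,x|0)$. The mixed coderivative imposes norm convergence on the $(u^*,x^*)$-components, not on $w$ as you suggest; the $w$-components still converge only weak-$*$.

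\textbf{The gap.} Your direct argument breaks at the normalisation step. After normalising, $\hat w_k\defeq\tilde w_k/\norm{\tilde w_k}$ has unit norm, the $(u,x)$-components tend to zero in norm, and a weak-$*$ limit $\hat w$ lies in $-D^*_M\inv G(0|u,x)(0)$. But \eqref{eq:solution-map:nonempty-coderivative:qc} then only gives $\hat w=0$. A weak-$*$ limit of unit vectors may be zero when $\dim W_*=\infty$, so you obtain neither a contradiction nor a bound on $\tilde w_k$.

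The missing ingredient is a partial sequential normal compactness property of $\inv G$ at $(0;u,x)$: $\hat w_k\weaktostar 0$ together with norm convergence of the other components must force $\norm{\hat w_k}\to 0$. You mention needing ``sequential normal compactness'' but never say of what, and this is exactly where it enters. It does not follow from the lemma's hypotheses. It has to come either from the hypotheses of a cited preimage theorem, as in the paper, or from $\dim W_*<\infty$.

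Two further points depend on the same issue:
\begin{itemize}
\item The same compactness property is what rules out a vanishing multiplier in the fuzzy intersection rule you intend to use for the lifting.
\item Extracting a weak-$*$ convergent \emph{subsequence} of bounded $\tilde w_k$ requires sequential compactness of bounded sets in $W$ (e.g.\ $W_*$ Asplund). A Banach--Alaoglu cluster point is not enough, because $D^*$ is defined through sequences.
\end{itemize}

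When $U$, $X$ and $W_*$ are all finite-dimensional, your route is complete and simpler than the paper's. Applying \cite[Theorem 6.14]{rockafellar-wets-va} to $\graph S=\inv\iota(\graph G)$, with $\graph G$ locally closed, needs only $\ker D^*G(u,x|0)=\{0\}$. By the remark following the lemma this is \eqref{eq:solution-map:nonempty-coderivative:qc}, and no inner semicompactness bookkeeping is required.
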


\begin{proof}
    By definition, we have $x^* \in \frechetCod S(x|u)(u^*)$ if and only if
    \begin{equation}
        \label{eq:solution-map:S-coderivative-def}
        \limsup_{\this x \to x,\, \this u \to u,\, G(\this u, \thisx) \ni 0}
        \frac{\dualprod{x^*}{\thisx-x}-\dualprod{u^*}{\this u - u}}
        {\norm{(\thisx-x,\this u - u)}}
        \le 0.
    \end{equation}
    Likewise $(-u^*, x^*) \in \frechetCod G(u, x | 0)(w)$ for some $w \in W$ if and only if
    \begin{equation}
        \label{eq:G-coderivative-def}
        \limsup_{\this x \to x,\, \this u \to u,\, G(\this u, \thisx) \ni \this w_* \to 0}
        \frac{-\dualprod{w}{\this w_* - 0} + \dualprod{x^*}{\thisx-x}-\dualprod{u^*}{\this u - u}}
        {\norm{(\thisx-x,\this u - u,\this w_* - 0)}}
        \le 0.
    \end{equation}
    Restricting attention to $\this w_* = 0$ in \cref{eq:G-coderivative-def}, it implies \eqref{eq:solution-map:S-coderivative-def}.
    This proves the first inclusion.

    For the second inclusion, we use \cite[Theorem 3.8]{mordukhovich2006variational} with $\Theta=\{0\}$ and $F=G$.
    Since $N_{\Theta}(0)=X^*$, the qualification condition in the result requires that $\ker \tilde D_M^*G(u, x|0) = \{ 0 \}$ for the “reverse mixed coderivative”, defined for $F: X \setto Y$ as \cite[(1.40)]{mordukhovich2006variational}
    $
        \tilde D_M^*F(x|y)(y^*) \defeq \{ x^* \in X^* \mid y^* \in -D_M^*\inv F(y|x)(-x^*)\}.
    $
    That is, the qualification condition reads
    $w_* \in  -D_M^*\inv G(0|u,x)(0) \implies w_*=0$ with $0 \in -D_M^*\inv G(0|u,x)(0)$.
    This can be equivalently be written \eqref{eq:solution-map:nonempty-coderivative:qc}.
    Noting that $\graph \inv S = \inv G(\Theta)$, that $\Theta$ is partially sequentially normally compact, and $(u, x) \mapsto G(u, x) \isect \Theta \subset \{0\}$ is inner semicompact,\footnote{%
        There are inconsistencies in the definition of inner semicompactness in \cite{mordukhovich2006variational}. According to Definition 1.63 therein, $S: X \setto Y$ is inner semicompact at $\bar x$ if for every $x^k \to \bar x$, there exist $y^k \in S(x^k)$ and a convergent subsequence $\{y^{k_\ell}\}_{\ell \in \N}$.
        Our mapping does not satisfy this definition.
        However, in practise in the proofs, $x^k \in \Dom S$.
        Our mapping satisfies this domain-restricted definition.
        Indeed, in the proof of \cite[Theorem 3.8]{mordukhovich2006variational}, one takes $x^k \in \inv F(\Theta)$, which guarantees the existence of $y^k \in \Theta \isect F(x^k)$.
        In our case, $y^k \equiv 0$.
    }
    it follows that 
    \[
        \begin{split}
            D^*S(x|u)(u^*)
             &
            =
            \{ x^* \mid (x^*, -u^*) \in N_{\graph S}(x, u) \}
            =
            \{ x^* \mid (-u^*, x^*) \in N_{\graph \inv S}(u, x) \}
            \\
             &
            =
            \{ x^* \mid (-u^*, x^*) \in N_{\inv G(\Theta)}(u, x) \}
            \\
             &
            \subset
            \{x ^* \mid (-u^*, x^*) \in D^*G(u, x|w)(w^*),\, w^* \in N_\Theta(w), \, w \in G(u, x) \isect \Theta \}
            \\
             &
            =
            \{ x^* \in X^* \mid  (-u^*, x^*) \in \range D^* G(u, x | 0) \}.
            \qedhere
        \end{split}
    \]
\end{proof}

\begin{remark}
    The “mixed” limiting coderivative is used in \eqref{eq:solution-map:nonempty-coderivative:qc}.
    When $Y^*$ is finite-dimensional, and the “mixed” and “normal” limiting coderivatives agree, the condition can be written as
    \[
        0 \in D^* G(u, x|0)(w) \implies w = 0
        \quad\text{i.e.}\quad
        \ker D^* G(u, x|0) = \{0\}.
    \]
\end{remark}

\begin{corollary}
    \label{cor:solution-map:nonempty-coderivative:regular}
    Suppose $u \in S(x)$.
    If the qualification condition \eqref{eq:solution-map:nonempty-coderivative:qc} holds and $G$ is $N$-regular at $(u, x)$ for $0$, then $S$ is $N$-regular at $x$ for $u$, and we have
    \[
        \frechetCod S(x|u)(u^*)
        =
        \{ x^* \in X^* \mid  (-u^*, x^*) \in \range \frechetCod G(u, x | 0) \}.
    \]
\end{corollary}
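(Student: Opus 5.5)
The plan is to sandwich $\frechetCod S(x|u)(u^*)$ between the two sets of \cref{lemma:solution-map:nonempty-coderivative}, and then use $N$-regularity of $G$ to collapse the outer set onto the inner one. In order, \cref{lemma:solution-map:nonempty-coderivative} together with \eqref{eq:solution-map:nonempty-coderivative:qc} gives the chain
\[
    \{ x^* \mid (-u^*, x^*) \in \range \frechetCod G(u, x | 0) \}
    \subset \frechetCod S(x|u)(u^*)
    \subset D^*_M S(x|u)(u^*)
    \subset D^* S(x|u)(u^*)
    \subset \{ x^* \mid (-u^*, x^*) \in \range D^* G(u, x | 0) \}.
\]
Here the inclusion of the Fréchet coderivative in the mixed one is immediate from the definition, since we may take constant sequences with $\epsilon=0$. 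The mixed coderivative lies in the normal one because norm convergence implies weak-$*$ convergence.

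The key step is to replace $D^*G$ by $\frechetCod G$ in the last set. $N$-regularity of $G$ only gives $D^*_M G(u,x|0)=\frechetCod G(u,x|0)$, so I need the last set with the mixed coderivative. I would therefore redo the final step of the proof of \cref{lemma:solution-map:nonempty-coderivative} with mixed coderivatives. Mordukhovich's preimage rule \cite[Theorem 3.8]{mordukhovich2006variational} has a mixed version. Since $\Theta=\{0\}$ lies in a finite-dimensional-like setting where the multiplier $w^* \in N_\Theta(0)=W$ is arbitrary, the normal-cone calculus reads $N_{\inv G(\Theta)}(u,x)\subset \Union_{w}D^*G(u,x|0)(w)$. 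If the dual space $W$ is finite-dimensional, the normal and mixed coderivatives of $G$ coincide, and then $N$-regularity gives $D^*G=\frechetCod G$.

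In infinite dimensions I would try the following instead. The exact claim only asks that $S$ be $N$-regular, meaning $D^*_M S(x|u)=\frechetCod S(x|u)$, and that the displayed equality hold. Both follow from the chain once
\[
    \{ x^* \mid (-u^*, x^*) \in \range D^*_M G(u,x|0)\}
\]
(or its normal analogue) is contained in the leftmost set. For this I would argue that the qualification condition \eqref{eq:solution-map:nonempty-coderivative:qc} forces the multipliers $w$ to be bounded along approximating sequences. This lets the approximating sequence used in the preimage rule be chosen with strongly convergent $\tilde u^*$. Substituting $D^*_MG=\frechetCod G$ then closes the chain, so all inclusions are equalities. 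In particular $\frechetCod S(x|u)(u^*)=D^*_M S(x|u)(u^*)$ for every $u^*$, which is the $N$-regularity of $S$.

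The main obstacle I expect is exactly this mismatch between the normal limiting coderivative produced by the preimage rule and the mixed coderivative appearing in the definition of $N$-regularity. I would first try to invoke the mixed form of the preimage rule in \cite{mordukhovich2006variational} directly. Failing that, I would restrict to the case where the relevant dual spaces are finite-dimensional, where the issue disappears.
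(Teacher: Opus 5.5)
Your sandwich through \cref{lemma:solution-map:nonempty-coderivative} is the right argument, and it is the one the corollary relies on. The obstacle you then work around, however, comes from reading $N$-regularity with the mixed coderivative.

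The term is Mordukhovich's. $N$-regularity of $G$ at $(u,x)$ for $0$ means $D^*G(u,x|0)=\frechetCod G(u,x|0)$ for the \emph{normal} limiting coderivative; equality with the mixed coderivative is called $M$-regularity. The ``$D_M$'' in the paper's definition should be read as $D^*$. With that reading, the right-most set of your chain coincides with the left-most one, so every inclusion becomes an equality. In particular $\frechetCod S(x|u)(u^*)=D^*S(x|u)(u^*)$ for all $u^*$, which is $N$-regularity of $S$. Since $\frechetCod S\subset D^*_M S\subset D^*S$, the mixed equality then follows as well. No mixed preimage rule is needed.

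The repair you propose for infinite dimensions would not work.
\begin{itemize}
    \item To land in $D^*_M G(u,x|0)(w)$, the quantity that must converge in norm is the multiplier $\tilde w\to w$, because $w$ is the \emph{argument} of the coderivative of $G$.
    \item The outputs $(-\tilde u^*,\tilde x^*)$ only need to converge weak-$*$. So strong convergence of $\tilde u^*$, which you already have from the definition of $D^*_M S$, buys nothing.
    \item The qualification condition \eqref{eq:solution-map:nonempty-coderivative:qc} only yields boundedness of the multipliers. That gives a weak-$*$ cluster point $\tilde w\weaktostar w$ in $W=(W_*)^*$, not a norm limit.
\end{itemize}
This is exactly why \cite[Theorem 3.8]{mordukhovich2006variational} concludes with the normal coderivative, and there is no general mixed version of it to invoke. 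Under your literal mixed reading, the argument therefore closes only when $W$ is finite-dimensional, as you suspected. Under the intended normal reading, the issue never arises.
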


The following example demonstrates that the non-smooth (optimality) condition $0\in \anyCod S(x|u)(J'(u))$ generalises the smooth one, e.g. \cite{suonpera2024general}.

\begin{example}
    Suppose $G$ is single-valued and Fréchet differentiable. Then by, e.g., \cite[Theorem 20.12]{clason2020introduction}, when $0 = G(u, x)$, i.e. $u \in S(x)$, we have, for $\anyCod=\frechetCod,D^*,$
    \[
        \anyCod G(u, x | 0)(w)
        = \{G'(u, x)^*w\}
        =: \{(\diffwrt{G}{u}(u, x)^*w, \diffwrt{G}{x}(u, x)^*w)\}.
    \]
    Consequently, by the lemma, for any $w \in W$ and
    \[
        u^* = -\diffwrt{G}{u}(u, x)^*w
        \quad\text{and}\quad
        x^* = \diffwrt{G}{x}(u, x)^*w,
    \]
    we have $x^* \in \anyCod S(x|u)(u^*)$.
    In particular, if $\diffwrt{G}{u}(u, x)^*$ is invertible, for arbitrary $u^*$, solving $w=\diffwrt{G}{u}(u, x)^{*,-1}u^*$, we have $-\diffwrt{G}{ x}(u, x)^*\diffwrt{G}{u}(u, x)^{*,-1}u^* \in \anyCod S(x|u)(u^*)$.
\end{example}

\subsection{Full system optimality conditions}
\label{sec:fullsystem}

We can now write our full-system optimality conditions.
We start with a sufficient-for-pessimistic necessary condition based on the Fréchet coderivative.
For simplicity, we now always take $J$ continuously differentiable.

\begin{theorem}
    \label{thm:fullsystem:frechet}
    Consider $F \defeq  J \circ S + R$, where $J$, $S$, and $R$ are as in \cref{eq:oc:bilevel-problem,eq:oc:solution-mapping} with $J$ continuously differentiable.
    Then the pessimistic necessary optimality condition $0 \in \frechetSubdiff[\supmap{J \circ S} + R](\bar x)$ for the problem \eqref{eq:oc:bilevel-problem} holds if $S$ is inner Lipschitz at $\bar x$ relative to $\{\bar u\}$, and
    \begin{equation}
        \label{eq:optimistic:weak-oc-necsuf:further}
        (-J'(\bar u), \bar x^*) \in \range \frechetCod G(\bar u, \bar x | 0)
        \quad\text{with}\quad
        0 \in \bar x^* + \frechetSubdiff R(\bar x)
        \quad\text{and}\quad
        \bar u \in \Spess(\bar x).
    \end{equation}
\end{theorem}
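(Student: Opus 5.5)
The plan is to reduce the statement to \cref{thm:optimistic:weak-oc-necsuf}\,\cref{item:optimistic:weak-oc-necsuf:strong} by way of the first inclusion of \cref{lemma:solution-map:nonempty-coderivative}. The hypotheses already contain the regularity part of that theorem: $S$ is inner Lipschitz at $\bar x$ relative to $\{\bar u\}$, and $\bar u \in \Spess(\bar x)$. What remains is to check the criticality condition \eqref{eq:optimistic:weak-critical} at this $\bar u$, namely
\[
    0 \in \Union_{u^* \in \frechetSubdiff J(\bar u)} \frechetCod S(\bar x|\bar u)(u^*) + \frechetSubdiff R(\bar x).
\]

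First, $J$ is continuously differentiable, so $\frechetSubdiff J(\bar u) = \{J'(\bar u)\}$. The union therefore reduces to the single set $\frechetCod S(\bar x|\bar u)(J'(\bar u))$.

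Second, since $\bar u \in \Spess(\bar x) \subset S(\bar x)$, we may apply \cref{lemma:solution-map:nonempty-coderivative} with $u = \bar u$, $x = \bar x$ and $u^* = J'(\bar u)$. The hypothesis $(-J'(\bar u), \bar x^*) \in \range \frechetCod G(\bar u, \bar x|0)$ then gives $\bar x^* \in \frechetCod S(\bar x|\bar u)(J'(\bar u))$. The first inclusion of that lemma needs no qualification condition, so nothing more has to be verified here.

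Third, we combine the two facts. The hypothesis $0 \in \bar x^* + \frechetSubdiff R(\bar x)$ means $-\bar x^* \in \frechetSubdiff R(\bar x)$, and hence
\[
    0 = \bar x^* + (-\bar x^*) \in \frechetCod S(\bar x|\bar u)(J'(\bar u)) + \frechetSubdiff R(\bar x).
\]
This is exactly \eqref{eq:optimistic:weak-critical}, and \cref{thm:optimistic:weak-oc-necsuf}\,\cref{item:optimistic:weak-oc-necsuf:strong} then yields $0 \in \frechetSubdiff[\supmap{J \circ S} + R](\bar x)$.

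The argument is essentially bookkeeping, and I do not expect a real obstacle. The only delicate point is matching conventions for the coderivative of $G$: the lemma's statement writes $\frechetCod G(u,x|0,0)$ while the theorem writes $\frechetCod G(\bar u,\bar x|0)$, and the sign and ordering of $(-u^*, x^*)$ must agree with the lemma's proof. Both refer to the graph of $G$ at the point $((\bar u,\bar x),0)$ with the pairing used in \eqref{eq:G-coderivative-def}, so the identification should be immediate. One should also confirm that \cref{thm:optimistic:weak-oc-necsuf} implicitly requires $\bar u\in\Spess(\bar x)$ so that the chain rule \cref{thm:calculus:single-valued-chain}\,\cref{item:calculus:single-valued-chain:ii} applies with $A=\{\bar u\}$; this is supplied by the hypothesis in \eqref{eq:optimistic:weak-oc-necsuf:further}.
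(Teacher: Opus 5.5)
Your proposal is correct and follows the paper's own proof, which simply combines \cref{thm:optimistic:weak-oc-necsuf}\,\cref{item:optimistic:weak-oc-necsuf:strong} with the first, qualification-free inclusion of \cref{lemma:solution-map:nonempty-coderivative}. You have spelled out the bookkeeping the paper leaves implicit: that $\frechetSubdiff J(\bar u)=\{J'(\bar u)\}$, and that the mismatch between $\frechetCod G(\bar u,\bar x|0)$ and $\frechetCod G(u,x|0,0)$ is only notational.
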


\begin{proof}
    Combine \cref{thm:optimistic:weak-oc-necsuf}\,\cref{item:optimistic:weak-oc-necsuf:strong} with \cref{lemma:solution-map:nonempty-coderivative}.
\end{proof}

We next provide a necessary optimistic optimality condition for the problem \eqref{eq:oc:bilevel-problem}.

\begin{theorem}
    \label{thm:fullsystem:limiting}
    Consider $F \defeq  J \circ S + R$, where $J$, $S$, and $R$ are as in \cref{eq:oc:bilevel-problem,eq:oc:solution-mapping} with $J$ continuously differentiable.
    Suppose $\infmap{J \circ S} + R$ is minimised  at $\bar x \in X$, and $\bar u \in \Sopt(\bar x)$.
    If $R$ is Lipschitz differentiable (or, more generally $\lsubdiff{\Dom R} R \ne \emptyset$) with $\Dom R=X$, and the qualification condition \eqref{eq:solution-map:nonempty-coderivative:qc} holds, then
    \begin{equation}
        \label{eq:optimistic:weak-oc-necsuf:further:limiting}
        (-J'(\bar u), \bar x^*) \in \range D^* G(\bar u, \bar x | 0)
        \quad\text{with}\quad
        0 \in \bar x^* + \frechetSubdiff R(\bar x)
        \quad\text{and}\quad
        \bar u \in \Sopt(\bar x)
    \end{equation}

\end{theorem}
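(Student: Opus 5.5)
The plan is to chain three earlier results: \cref{thm:optimistic:weak-oc} (Fréchet necessary condition), the inclusion of the Fréchet coderivative in the limiting one, and the second part of \cref{lemma:solution-map:nonempty-coderivative}, which converts a limiting coderivative of $S$ into a nonsmooth adjoint inclusion for $G$.

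\emph{Step 1: necessary condition on $S$.} Since $\infmap{J \circ S} + R$ is minimised at $\bar x$ and $J$ is continuously differentiable, we apply \cref{thm:optimistic:weak-oc} with $A = \Dom R = X$. The hypothesis $\lsubdiff{X} R(\bar x) \ne \emptyset$ holds: in the Lipschitz-differentiable case by \cref{example-lower-subdiff-1}, and otherwise by assumption. Because $S_X = S$, the conclusion \eqref{eq:optimistic:weak-oc:lc} gives, for our $\bar u \in \Sopt(\bar x)$, some $\bar x^* \in \frechetCod S(\bar x|\bar u)(J'(\bar u))$ with $-\bar x^* \in \lsubdiff{X} R(\bar x) \subset \frechetSubdiff R(\bar x)$. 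That is, $0 \in \bar x^* + \frechetSubdiff R(\bar x)$.

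\emph{Step 2: pass to the limiting coderivative.} By the first chain of inclusions in \cref{lemma:solution-map:nonempty-coderivative}, which is simply the definition of $D^*$ taking the constant sequence with $\epsilon = 0$, we have $\bar x^* \in D^* S(\bar x|\bar u)(J'(\bar u))$. Here $\bar u \in S(\bar x)$ holds since $\Sopt(\bar x) \subset S(\bar x)$.

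\emph{Step 3: adjoint inclusion.} Under the qualification condition \eqref{eq:solution-map:nonempty-coderivative:qc}, the second part of \cref{lemma:solution-map:nonempty-coderivative} with $u^* = J'(\bar u)$ yields $(-J'(\bar u), \bar x^*) \in \range D^* G(\bar u, \bar x | 0)$. Combined with Step 1, this is exactly \eqref{eq:optimistic:weak-oc-necsuf:further:limiting}.

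The argument is essentially bookkeeping. The only delicate point is Step 1: we must ensure that the element of $\lsubdiff{A} R(\bar x)$ appearing in \eqref{eq:optimistic:weak-oc} sits inside $\frechetSubdiff R(\bar x)$, which holds by definition. We must also ensure that the requirement $\Dom R = X$ is what allows us to drop the restriction $S_A$, so that the coderivative obtained is of $S$ itself and not of $S_{\Dom R}$. Without that, the lemma in Step 3 would apply to the wrong mapping.
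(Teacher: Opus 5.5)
Your proposal is correct and is essentially the paper's proof: the paper combines \cref{cor:optimistic:weak-oc:limiting}, which is exactly your Steps 1--2 (\cref{thm:optimistic:weak-oc} with $A=X$ followed by the inclusion $\frechetCod S \subset D^* S$), with the qualification-condition part of \cref{lemma:solution-map:nonempty-coderivative}, as in your Step 3. Your additional bookkeeping, namely $S_X = S$ and $\lsubdiff{X} R(\bar x) \subset \frechetSubdiff R(\bar x)$, makes explicit what the paper's one-line proof leaves implicit.
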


\begin{proof}
    Combine \cref{cor:optimistic:weak-oc:limiting} with \cref{lemma:solution-map:nonempty-coderivative}.
\end{proof}

To work with either the Fréchet coderivative or the limiting coderivative, we now recall the notation $\anyCod$ to refer to either $\frechetCod$ or $D^*$.

\begin{remark}
    \label{rem:composition-subdiff}
    Consider for $\anyCod=D^*, \frechetCod$ and $\square=\sharp,\flat$, the sets
    \begin{align}
        \nonumber
        \anyCOsubdiffS_1^\square F(x) & \defeq \{x^*\in X^* \mid x^*\in \anyCod S(x|u)(J'(u)),\, u\in S^\square(x)\}
        \quad\text{and}
        \\
        \nonumber
        \anyCOsubdiffS_2^\square F(x) & \defeq \{x^*\in X^* \mid (-J'(u),x^*) \in \range \anyCod G(u,x|0),\, u\in S^\square(x)\}.
    \end{align}
    The following table indicates our results on when $0 \in [\COsubdiffS F +\subdiff R](x)$ for $\COsubdiffS=\anyCOsubdiffS_i^\square$ is a meaningful optimality condition for the problem \cref{eq:oc:bilevel-problem,eq:oc:solution-mapping}.
    \begin{center}
    \begin{tabular}{llllll}
        \toprule
        & & \multicolumn{2}{c}{Fréchet} & \multicolumn{2}{c}{Limiting}
        \\
        \cmidrule(lr){3-4} \cmidrule(lr){5-6}
        & $\square$ & $\widehat\COsubdiffS_1^\square F$ & $\widehat\COsubdiffS_2^\square F$ & $\COsubdiffS_1^\square F$ & $\COsubdiffS_2^\square F$
        \\
        \midrule
        suff.-for-nec. pessimistic & $\flat$ & \cref{thm:optimistic:weak-oc-necsuf}  & \cref{thm:fullsystem:frechet} &  &
        \\
        necessary optimistic & $\sharp$ & \cref{thm:optimistic:weak-oc} & & \cref{cor:optimistic:weak-oc:limiting} & \cref{thm:fullsystem:limiting}
        \\
        \bottomrule
    \end{tabular}
    \end{center}
\end{remark}

Practically, we will relax $u \in \Sopt(x)$ and $u \in \Spess(x)$ to $u \in S(x)$, i.e., $0 \in G(u, x)$, and work with
\begin{equation}
    \label{def:set-valued-target}
    \anyCOsubdiffS_2 F(x) \defeq \{x^*\in X^* \mid (-J'(u),x^*) \in \range \anyCod G(u,x|0),\, u\in S(x)\}
\end{equation}
that contains both  $\anyCOsubdiffS_2^\sharp F(x)$ and $\anyCOsubdiffS_2^\flat F(x)$.
Then $0 \in [\anyCOsubdiffS_2 F + \subdiff R](x)$ can also be written as
\begin{equation}
    \label{eq:algorithm:full-oc}
    0 \in \breve H(u, x, x^*, w)
    \defeq
    \begin{pmatrix}
        G(u, x)
        \\
        \anyCod G(u,x|0)(w) +  (J'(u), -x^*)
        \\
        x^* + \subdiff R(x)
    \end{pmatrix}.
\end{equation}
For the limiting coderivative, subject to the conditions of  \cref{thm:fullsystem:limiting}, this still remains a necessary optimalistic optimality condition.
If $S(x)$ is a singleton, then the version with the Fréchet coderivative provides a sufficient-for-necessary optimality condition.
In the next section, we will build algorithms to solve this system based on combining standard algorithms for each component inclusion.

\begin{remark}
    For smooth functions, \eqref{eq:algorithm:full-oc} differs from \cite{suonpera2022bilevel,suonpera2024general} through a different lower-dimensional adjoint variable.
    Indeed, \eqref{eq:algorithm:full-oc} directly gives the dimension-reduced adjoint inclusion as used in \cite[Example 2.1]{dizonvalkonen2024tracking} for bilevel optimisation, or classically derived for PDE-constrained optimisation in, e.g., \cite[§1.6.2]{hinze2009pde} or \cite[§1.2]{delosreyes2015numerical}.
    It directly calculates $x^*$ that corresponds to $J'(S(x))S'(x)$ in the smooth case, instead of first, separately, calculating $p=S'(x)$ as in \cite{suonpera2022bilevel,suonpera2024general}.
\end{remark}

\begin{remark}[Proximal optimality conditions in Hilbert spaces]
    \label{rem:optimistic:prox-oc}
    Suppose $R$ is convex, proper, and lower semicontinuous, and $X$ is a Hilbert space.
    Then, following standard results, \cite[Lemma 6.22]{clason2020introduction}, we can for any $\tau>0$ reinterpret \cref{eq:optimistic:weak-oc-necsuf:further} as
    \[
        \bar x = \prox_{\tau R}(\bar x - \tau \mathscr{R}\bar x^*)
        \quad\text{for some}\quad
        \bar x^* \in \widehat\COsubdiffS_2^\flat F(\bar x),
    \]
    and \eqref{eq:optimistic:weak-oc-necsuf:further:limiting} as
    \[
        \bar x = \prox_{\tau R}(\bar x - \tau \mathscr{R}\bar x^*)
        \quad\text{for some}\quad
        \bar x^* \in \COsubdiffS_2^\sharp F(\bar x).
    \]
    Likewise, \eqref{eq:optimistic:weak-oc:lc} and  \eqref{eq:optimistic:weak-oc:lc} can be given the proximal form in terms of $\anyCOsubdiffS_1^\square F(x)$.
\end{remark}

\section{Algorithm}
\label{sec:algorithm}

We now build a general approach to single-loop algorithms for \eqref{eq:intro:bilevel-problem}, based on the optimality conditions \eqref{eq:algorithm:full-oc}.
First in \cref{sec:implicit-abstract-algorithm}, we lay out the approach.
Then in \cref{sec:tracking}, we recall the \term{tracking theory} of \cite{dizonvalkonen2024tracking} for the construction of single-loop algorithms based on differential estimation.
This involves so-called \term{inner tracking}, \term{adjoint tracking}, and \term{differential transformation} conditions, which we state in \cref{ass:tracking:main}, and further interpret in \cref{sec:inner-tracking,sec:adjoint-tracking,sec:differential-transformation}.
In \cref{sec:inexact-convergence} we then show how these components give convergence to an outer forward-backward method.

\subsection{An general single-loop algorithm}
\label{sec:implicit-abstract-algorithm}

\begin{subequations}
    \label{eq:algorithm:abstract}
    Suppose  $U, W_*$ and $X$ are normed spaces, and denote $W=(W_*)^*$.
    Let $\anyCod = \frechetCod$ or $\anyCod = D^*$.
    We assume that $J: U \to \R$ is continuously differentiable and $R: X \to \extR$ convex.
    We divide the solution of \eqref{eq:algorithm:full-oc} into the following rough components:
    \begin{enumerate}
        \item an \term{inner algorithm}, which, on each iteration $k \in \N$, given the history of (outer, inner, adjoint, approximate subderivative) iterates $\{(x^j, u^j, w^j, \tilde x^*_j)\}_{j=0}^k$, produces $\nextu$ that approximately solves the first line, i.e.,
              \begin{equation}
                  \label{eq:algorithm:abstract:inner}
                  0 \approx \Delta\nexxt w_* \in G(\nextu, \thisx);
              \end{equation}

        \item an \term{adjoint algorithm}, which, given the previous iterates and $\nextu$, produces $w^{k+1}$ and $\tilde x^*_{k+1}$ that approximately solve the second line, i.e., for $\anyCod=\frechetCod$ or $\anyCod=D^*$,
            \begin{equation}
                \label{eq:algorithm:abstract:adjoint}
                0 = \anyCod G(\nextu,\thisx|\Delta\nexxt w_*)(w^{k+1}) +  (J'(\nextu) - \Delta u_{k+1}^*, -\tilde x_{k+1}^*)
                \quad\text{for some}\quad\Delta u_{k+1}^* \approx 0;
            \end{equation}
        \item an \term{outer algorithm}, which, given the previous iterates, produces $\nextx$ that approximately solves the third line, i.e.,
              \begin{equation}
                  \label{eq:algorithm:abstract:outer}
                  0 \approx \Delta x_{k+1}^* \in \tilde x_{k+1}^* + \subdiff R(\nextx).
              \end{equation}
    \end{enumerate}
\end{subequations}

The variable  $\tilde x^*_{k+1}$ estimates some element of $\anySubdiff\infmap{J \circ S}(\thisx)$ or $\anySubdiff\supmap{J \circ S}(\thisx)$.
Adopting the terminology of \cite{dizonvalkonen2024tracking}, we call it a \term{differential estimate}, despite possible lack of differentiability.
The perturbations $\Delta\nexxt w_* \approx 0$, $\Delta u^*_{k+1} \approx 0$, and $\Delta x_{k+1}^* \approx 0$ arise from different algorithms for the realisation of these steps, as demonstrated in the following examples.
We will make the allowed level of perturbation more precise in \cref{ass:tracking:main}.
In \eqref{eq:algorithm:abstract:adjoint}, for simplicity, and for reasons that will become apparent in \cref{sec:tv}, we only include inexactness in the first component. We could also include a perturbation to $\tilde x_{k+1}^*$, but, in fact, $\tilde x_{k+1}^*$ is already an estimate of $x_{k+1}^* \in \anyCOsubdiffS_2 F(\thisx)$ that solves the steps with $\Delta\nexxt w_* = 0$ and $\Delta u^*_{k+1} = 0$, i.e., the first two lines of \eqref{eq:algorithm:full-oc}.

\begin{example}[Outer forward-backward splitting]
    \label{ex:outer-fb}
    Assuming $X$ to be Hilbert, for a forward-backward algorithm, as an instance of \eqref{eq:algorithm:abstract:outer} we would solve
    $
        -(\nextx-\thisx)/\tau \in \mathscr{R}[\tilde x_{k+1}^* + \subdiff R(\nextx)]
    $
    for a step length parameter $\tau>0$.
    Here we recall that $\mathscr{R} \in \linear(X^*; X)$ is the Riesz map.
    This corresponds to
    \[
        \nexxt x = \prox_{\tau R}(\thisx - \tau\mathscr{R}\tilde x_{k+1}^*).
    \]
\end{example}

\begin{example}[Inner primal-dual proximal splitting, PDPS]
    \label{ex:inner-pdps}
    Let $U_p, U_d$ and $X$ be Hilbert spaces, $f \defeq f_0 + e$ for $f_0, e: U_p \times X \to \extR$ and $g: U_d \times X \to \extR$ that are convex, proper, and lower semicontinuous in their first parameter. Moreover, let $e$ have $L$-Lipschitz gradient with respect to its first parameter and $K\in \linear(U_p;U_d)$.
    We consider the problem
    \[
        \min_u f(u, x) + g(Ku; x),
    \]
    which can be equivalently written as the saddle point problem
    \[
        \min_{u_p\in U_p} \max_{u_d\in U_d^*} f(u_p; x) + \dualprod{u_d}{Ku_p} - g^*(u_d; x).
    \]
    Following \cite[Theorem 5.11]{he2012convergence,clason2020introduction} and writing $u=(u_p,u_d)\in U_p\times U_d^*$, this problem has primal-dual optimality conditions of the form $0\in G(u,x)$ with
    \begin{equation}
        \label{PDPS-A-and-B}
        G(u; x)
        :=
        \begin{pmatrix}
            \partial_{u_p} f_0(u_p;x) + \grad_{u_p} e(u_p;x) + K^*u_d
            \\
            \partial_{u_d} g^*(u_d;x) - Ku_p
        \end{pmatrix}
        .
    \end{equation}
    Defining for step length parameters $\tau_x,\tau_y>0$ the preconditioning operator
    \begin{equation}
        \label{PDPS-M}
        M :=
        \begin{pmatrix}
            \inv\tau_p\Id & - K^*
            \\
            - K           & \inv\tau_d\Id
        \end{pmatrix}
        ,
    \end{equation}
    and taking $\Delta w_*^{k+1}=-M(\nextu - \thisu)$, the inner algorithm \cref{eq:algorithm:abstract:inner} then becomes the primal-dual proximal splitting (PDPS) method of \cite{chambolle2010first}, here with an additional forward step with respect to $e$:
    \begin{equation}
        \label{alg:PDPS}
        \begin{cases}
            u_p^{k+1} & = \prox_{\tau_p f_0(\freevar; x)}(u_p^k - \tau_p(K^*u_d^k + \grad_{u_p} e(u_p^k;x)),
            \\
            u_d^{k+1} & = \prox_{\tau_d g^*(\freevar; x)} (u_d^k + \tau_d K(2u_p^{k+1} -  u_p^k)).
        \end{cases}
    \end{equation}
    The method converges (for fixed $x$) if $\tau_p L/2 + \tau_p\tau_d\norm{K}^2 \le 1$; see, e.g., \cite{clason2020introduction}.
\end{example}

\subsection{Tracking inequalities}
\label{sec:tracking}

We next discuss conditions that the inner and adjoint algorithms \cref{eq:algorithm:abstract:inner,eq:algorithm:abstract:adjoint} have to satisfy, for us to be able to prove the convergence of standard outer methods.
We assume that the inner and adjoint algorithms satisfy the following “tracking inequalities”, adapted from \cite{dizonvalkonen2024tracking,suonpera2024general}.
The idea in \cite{dizonvalkonen2024tracking} is to construct single-loop estimates $\nextEstF$ of differentials $F'(\thisx)$ of smooth functions $F=J \circ S$.
These tracking inequalities ensure that the accumulated errors remain controlled and that the approximate differential remains meaningful for descent.

To adapt the theory to our nonsmooth and multi-valued case, we replace $F'(\thisx)$ by a “target” $\targetF$ in one of the sets $\anyCOsubdiffS_i$ of \cref{rem:composition-subdiff}, modelling Fréchet or limiting subdifferentials or their relaxations, depending on the exactness of the calculus rules used in their derivation.
Thus, $0 \in \targetF + \subdiff R(\thisx)$ models a notion of (relaxed) criticality of $\thisx$ for the problem $\min(F+R)$.
We estimate this target by $\nextEstF$
as generated by the general algorithm \cref{eq:algorithm:abstract}.
We also work with single-valued selections $S_u(\thisx)$ of the inner solution mapping $S(\thisx)$, and a single-valued selection $S_w(\thisx)$ of the adjoint solution mapping $\breve Z(S_u(\thisx), \thisx, 0)$, where we set
\begin{equation}
    \label{eq:tracking:adjoint-solution-map}
    \breve Z(u, x, w_*)
    \defeq
    \{
    w\in W
    \mid
    0 \in \anyCod G(u,x|w_*)(w) +  (J'(u), -x^*),\,
    x^*\in X^*
    \}
    \quad\text{when}\quad
    w_* \in G(u,x).
\end{equation}

\begin{assumption}
    \label{ass:tracking:main}
    Let $X$, $X^*$, $U$, and $W_*$ be normed spaces, $W=(W_*)^*$, and both $\localset \subset X$ and $\Omega_U \subset U$.
    For some norms $\norm{\freevar}_*$ in $W_*$ and $\norm{\freevar}_{**}$ in $U^*$, define
    \begin{subequations}
        \label{eq:tracking:defs}
        \begin{align}
            \label{eq:tracking:du-def}
            \nextDistU
             &
            \defeq
            \norm{\nextu - S_u(\thisx)}_U + \norm{\Delta w_*^{k+1}}_*,
            \\
            \label{eq:tracking:dw-def}
            \nextDistW
             &
            \defeq
            \norm{w^{k+1} - S_w(\thisx)}_W + \norm{\Delta u_{k+1}^*}_{**},
            \quad
            \text{ and }
            \\
            \label{eq:tracking:dx-def}
            \thisDistXprev
             &
            \defeq
            \norm{\thisx - \prev x}_X.
        \end{align}
    \end{subequations}
    Then on each iteration $k \ge 0$, given $\{(x^n, u^n, w^n)\}_{n=0}^k \subset \localset \times \Omega_U \times W$ from the previous iterations, we are given:
    \begin{enumerate}[label=(\roman*)]

        \item\label{item:tracking:main:inner-tracking}
              \textbf{(Inner tracking property)}
              An \term{inner algorithm} that produces $\nextu \in U$ satisfying, when $k \ge 1$, for some $\pi_u>0$ and  $\kappa_u>1$ that
              \[
                  \kappa_u \nextDistU
                  \le
                  \thisDistU
                  + \pi_u\thisDistXprev.
              \]

        \item\label{item:tracking:main:adjoint-tracking}
              \textbf{(Adjoint tracking property)}
              An \term{adjoint algorithm} that produces $\nexxt w \in W$ satisfying, when $k \ge 1$, for some ${\primaldifffact}, \pi_w>0$ and $\kappa_w>1$ that
              \[
                  \kappa_w \nextDistW
                  \le
                  \thisDistW
                  + {\primaldifffact} \nextDistU
                  + \pi_w\thisDistXprev.
              \]

        \item\label{item:tracking:main:differential-transformation}
             \textbf{(Differential transformation)}
              A \term{differential transformation} that produces $\nextEstF \in X^*$ that, for a \term{target} $\targetF \in X^*$, satisfies for some $\alpha_u,\alpha_w \ge 0$ the bound
              \[
                  \norm{\nextEstF-\targetF}_{X^*}
                  \le
                  \alpha_u \nextDistU
                  + \alpha_w \nextDistW.
              \]
    \end{enumerate}
\end{assumption}

The condition \cref{item:tracking:main:inner-tracking} formalises what the rough inner algorithm \cref{eq:algorithm:abstract:inner} has to satisfy, and \cref{item:tracking:main:differential-transformation,item:tracking:main:adjoint-tracking} formalises what the rough adjoint algorithm \cref{eq:algorithm:abstract:adjoint} has to satisfy.
The inner and adjoint tracking conditions \cref{item:tracking:main:inner-tracking,item:tracking:main:adjoint-tracking} are parameter change aware contractivity conditions for the inner and adjoint algorithms:
if $\thisx=\prev x$, the former reduces to a standard contractivity condition. Both are trivial for exact inner and adjoint solutions $\nextu=S_u(\thisx)$ and $\nexxt w=S_w(\thisx)$.
The condition \cref{item:tracking:main:differential-transformation} allows converting the construction error of $\nextEstF$ to the tracking errors of the inner and adjoint algorithms.

We next discuss how \cref{ass:tracking:main} can be satisfied, then in \cref{thm:main-convergence} we show how it can be used to prove the convergence of an outer optimisation method \eqref{eq:algorithm:abstract:outer}, especially the forward-backward splitting of \cref{ex:outer-fb}.
The proof depends on scalar tracking results from \cite{dizonvalkonen2024tracking}, quoted in \cref{sec:scalar-tracking}.

\begin{remark}[Exact solutions and iterative solutions]
    \label{rem:tracking:exact}
    Both the adjoint and inner tracking properties \cref{ass:tracking:main}\,\cref{item:tracking:main:inner-tracking,item:tracking:main:adjoint-tracking} are always satisfied if we solve the adjoint and inner inclusion exactly, as then $\nexxt u \in S(\thisx)$ and  $\nexxt w \in Z(\thisx)$ so we can take $S_u(\thisx) = \nexxt u$ and $S_w(\thisx) = \nexxt w$.
    Given contractive inner and adjoint algorithms for the fixed parameter $\thisx$, we can also for any $\pi_u,\pi_w>0$ take sufficiently many iterations of those algorithms, until \cref{ass:tracking:main}\,\cref{item:tracking:main:inner-tracking,item:tracking:main:adjoint-tracking} are satisfied.
\end{remark}

\begin{remark}[Relaxed distances]
    It is possible to relax the distance in $X$ to any semi-norm, in which case the norm $\norm{\freevar}_{X^*}$ in \cref{item:tracking:main:differential-transformation} is replaced by its support function.
    We refer to \cite{dizonvalkonen2024tracking} for details.
\end{remark}

\subsection{Inner tracking property}
\label{sec:inner-tracking}

We now provide simpler component conditions to satisfy the inner tracking \cref{ass:tracking:main}\,\cref{item:tracking:main:inner-tracking}.
We split it into a regularity property of the solution mapping together with the simpler non-parametric \textbf{inner algorithm contractivity}
\begin{equation}
    \label{ineq:linear-inner-convergence}
    \kappa_u \dist(u^{k+1}, S(x^k)) + \kappa_u\norm{\Delta w_*^{k+1}}_* \le \dist(u^k, S(x^k)) \quad \text{for a}\quad \kappa_u > 1
\end{equation}
for some (semi-)norm $\norm{\freevar}_*$ on $W_*$.
In practical algorithms it is common to have
\[
    \kappa_u\norm{\Delta w_*^{k+1}}_*=\lambda\norm{\nextu-\thisu}_M
\]
for some $\lambda > 0$ and a self-adjoint positive (semi-)definite operator $M$.
This follows from the following theorems for forward-backward splitting and primal-dual proximal splitting.
In both results, the first factor $\kappa_u = 2\sqrt{(1+\rho)/(2+\alpha)} > 1$ if $0<\alpha<2$.
The second factor may be small, but can be absorbed into the factor $\lambda$ in the definition of the auxiliary norm $\norm{\freevar}_*$ employed in \eqref{ineq:linear-inner-convergence}.

We recall from \cite{aragon2008characterization} (a theme also treated in \cite[§28.2]{clason2020introduction}) that convex subdifferentials are metrically subregular when they satisfy a version of strong local subdifferentiability where the growth term is relaxed to be with respect to a set, instead of a single point.

\begin{theoremEnd}[category=inner-fb]{theorem}[Contractivity of forward-backward splitting under metric subregularity]
    \label{thm:subreg:convergence-result-sub-peb:fb}
    For convex functions $g, f: U \to \extR$ on a Hilbert space $U$, where $f$ has an $L$-Lipschitz gradient, suppose $T \defeq \subdiff g + \grad f$ is metrically subregular \emph{at a $\optu\in\inv T(0)$ for $\hat w=0$} with the factor $\kappa>0$ and radius $\delta>0$.
    Given $\thisu \in \B(\optu, \delta)$, let $\nextu \defeq \prox_{\tau g}(\thisu-\tau\grad f(\thisu))$ be generated through the forward-backward method with $\tau>0$ satisfying  $L\tau \le 2(1-\epsilon)$ for an $\epsilon>0$.
    Then  $\nextu \in \B(\optu, \delta)$, and, for $\rho=(1-L\tau/2 - \epsilon)\tau^2/(2\kappa^2(1+L^2\tau^2))>0$ and any $\alpha>0$, we have
    \begin{equation}
        \label{eq:ubreg:convergence-result-sub-peb:fb}
        2\sqrt{\frac{1+\rho}{2+\alpha}}\dist(\nextu, \inv T(0))
        +
        2\sqrt{\frac{\epsilon}{2+\inv\alpha}}\norm{\nextu-\thisu}
        \le
        \dist(\thisu, \inv T(0)).
    \end{equation}
\end{theoremEnd}

\begin{proofE}
    We take $\Xi=0$ and $M=\inv\tau\Id$ in \cref{thm:subreg:convergence-result-sub-peb:general}, observing that metric subregularity (with respect to the standard norms) with the modulus $\kappa$ translates to metric subregularity with respect to the $M$ and $\inv M$ norms with modulus $\kappa_M=\kappa/\tau$ and radius $\delta/\tau$.
    Likewise, the Lipschitz property of $\grad f$ translates to the one with respect to these norms with factor $L_M=L\tau$.
\end{proofE}

\begin{theoremEnd}[category=inner-pdps]{theorem}[Contractivity of primal-dual proximal splitting under metric subregularity]
    \label{thm:subreg:convergence-result-sub-peb:pdps}
    Let $G$ and $M$ be as in \cref{ex:inner-pdps}.
    For a fixed $x$, suppose $T=G(\freevar; x)$ is metrically subregular with respect to the $M$ and $\inv M$-norms at $\opt u$ for $\opt w=0$ with the modulus $\kappa_M>0$ and radius $\delta>0$\ifAtEnd. \else, i.e.,
    \[
        \dist_M(u, \inv T(0)) \le \kappa_M \dist_{\inv M}(0, T(u))
        \quad (u \in \B_M(\optu, \delta)).
    \]
    \fi
    Given $\thisu = (\this u_p, \this u_d) \in \B_M(\optu, \delta)$, let $\nextu$ be generated through the PDPS \eqref{alg:PDPS} with $\tau_p,\tau_d>0$ satisfying $\tfrac{1}{2(1-\epsilon)}L\tau_p + \tau_p\tau_d\norm{K}^2 \le 1$ for an $\epsilon \in (0, 1)$. Then $\nextu \in \B_M(\optu, \delta)$ and \ifAtEnd \eqref{eq:subreg:contractivity} holds for \else
    \[
        2\sqrt{\frac{1+\rho}{2+\alpha}}\dist_M(\nextu, \inv T(0))
        +
        2\sqrt{\frac{\epsilon}{2+\inv\alpha}}\norm{\nextu-\thisu}_M
        \le
        \dist_M(\thisu, \inv T(0))
    \]
    for $\rho=(1-L_M/2 - \epsilon)/(2(1+L_M^2)\kappa_M^2)>0$,
    \fi
    $L_M = L\tau_p/(1-\tau_p\tau_d\norm{K}^2)$ and any $\alpha>0$.
\end{theoremEnd}

\begin{proofE}
    We can write the PDPS \eqref{alg:PDPS} as \eqref{eq:subreg:fb} with $\Xi =
        \begin{psmallmatrix}
            K & 0
            \\
            -K & 0
        \end{psmallmatrix}
        ,
    $
    and the liftings $\bar f(u) = (e(u_p; x), 0)$ and  $\bar g(u) = (f_0(u_p; x), g^*(u_d; x))$.
    We have for $N=\inv{(\Id - \tau_d\tau_p KK^*)}$ that
    \[
        \inv M =
        \begin{pmatrix}
            A_{11}            & \tau_p \tau_d K^* N
            \\
            \tau_p \tau_d N K & \tau_d N
        \end{pmatrix}
        \quad\text{for}\quad
        A_{11} \defeq \tau_p \Id + \tau_p^2 \tau_d K^* N K.
    \]
    We have
    \[
        A_{11}
        \le
        \tau_p[1 + \tau_p\tau_d\norm{K}^2/(1-\tau_p\tau_d\norm{K}^2)]\Id
        =
        [\tau_p/(1-\tau_p\tau_d\norm{K}^2)]\Id.
    \]
    We also have
    $
        \norm{u}_M^2 \ge \inv\tau_p(1-\tau_p\tau_d\norm{K})\norm{u_p}^2.
    $
    Since $e$ has an $L$-Lipschitz gradient, it follows
    \[
        \begin{split}
            \norm{\grad \bar f(u) - \grad \bar f(\tilde u)}_{\inv M}^2
             &
            =\norm{\grad e(u_p; x) - \grad e(\tilde u_p; x)}_{A_{11}}^2
            \\
             &
            \le
            [\tau_p/(1-\tau_p\tau_d\norm{K}^2)]L^2\norm{u_p-\tilde u_p}^2
            \le
            [\tau_pL/(1-\tau_p\tau_d\norm{K}^2)]^2\norm{u-\tilde u}_M^2.
        \end{split}
    \]
    That is, the lifted function $\bar f$ is $L_M$-Lipschitz in the $\inv M$ and $M$ norms.

    The step length condition guarantees $\tau_p\tau_d\norm{K}^2 <1 $, hence by \cite[Lemma 9.12] {clason2020introduction} the positive definiteness (in addition to self-adjointness by construction) of $M$.
    The result now follows from \cref{thm:subreg:convergence-result-sub-peb:general}.
\end{proofE}

\begin{remark}[Compatibility of neighbourhoods and moduli]
    \label{rem:subreg:chaining}
    To apply \cref{thm:subreg:convergence-result-sub-peb:pdps,thm:subreg:convergence-result-sub-peb:fb} (with $T=G(\freevar, \thisx)$, hence $\inv T(0)=S(\thisx)$) to proving \eqref{ineq:linear-inner-convergence}, it should be verified, in a case-by-case basis for each problem of interest that:
    \begin{enumerate}[label=(\roman*),nosep]
        \item The moduli of metric subregularity, hence the factors $\kappa_u$, can be chosen uniformly over $k \in \N$.
        \item The neighbourhoods of metric subregularity of $G(\freevar, \thisx)$ and $G(\freevar, \nextx)$ are compatible: $\nextu \in B(\this{\opt u}, \delta_k)$ guarantees $\nextu \in B(\opt u^{k+1}, \delta_{k+1})$, where $0 \in G(\this{\opt u}, \thisx)$ and $\delta_k>0$ is the corresponding radius of metric subregularity.
    \end{enumerate}
    The theorems guarantee that $\nextu \in B(\this{\opt u}, \delta_k)$ when $\thisu \in B(\this{\opt u}, \delta_k)$, but since the parametric problem changes on each iteration, the neighbourhood of metric subregularity may change.
    Obviously, it is enough that the neighbourhoods are global, $\delta_k \equiv \infty$, such as when $G(\freevar, \thisx)$ is strongly monotone with a uniform factor; see \cref{lemma:subreg:monotonicity-subregularity,lemma:subreg:monotonicity-subregularity:basic}.
\end{remark}


We can now split the inner tracking property into several simpler conditions:
the inner algorithm contractivity that we have just treated, a Lipschitz-like condition on the inner solution mapping, and the optimality of the inner selection.

\begin{theorem}
    \label{thm:inner-tracking}
    Assume the setup of \,\cref{sec:implicit-abstract-algorithm}.
    Let $u^{k+1}\in U$ generated by \cref{eq:algorithm:abstract:inner} 
    satisfy the \textbf{inner algorithm contractivity} \eqref{ineq:linear-inner-convergence}.
    Suppose the \textbf{inner solution mapping one-sided Lipschitz-like property}
    \begin{equation}
        \label{eq:inner-tracking:lip-cond}
        \dist(S(x^k), \thisu)
        \le
        \dist(S(x^{k-1}), \thisu)
        +
        \pi_u
        \norm{x^{k-1} - x^k}_X
    \end{equation}
    for some constants $\pi_u > 0, \kappa_u>1$,
    and that $S_u(\thisx) \in S(\thisx)$ is such that this \textbf{inner selection is optimal},
    \begin{equation*}
        \dist(S(\thisx), \nextu) = \norm{\nextu - S_u(\thisx)}_U
        \quad
        \text{ for all }\,
        k\in\N.
    \end{equation*}
    Then \cref{ass:tracking:main}\,\cref{item:tracking:main:inner-tracking} holds.
\end{theorem}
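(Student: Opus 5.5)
We have to show $\kappa_u \nextDistU \le \thisDistU + \pi_u\thisDistXprev$ for $k\ge1$, where $\nextDistU = \norm{\nextu - S_u(\thisx)}_U + \norm{\Delta w_*^{k+1}}_*$. The plan is a direct chain of the three hypotheses.

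The first step rewrites the left-hand side using the optimality of the inner selection. It gives $\norm{\nextu - S_u(\thisx)}_U = \dist(S(\thisx),\nextu)$, hence
\[
\kappa_u \nextDistU = \kappa_u\dist(\nextu, S(\thisx)) + \kappa_u\norm{\Delta w_*^{k+1}}_*.
\]
The second step applies the inner algorithm contractivity \eqref{ineq:linear-inner-convergence}, which bounds this quantity by $\dist(\thisu, S(\thisx))$.

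The third step moves the distance back to the previous parameter $\prev x$. Here we use the one-sided Lipschitz-like property \eqref{eq:inner-tracking:lip-cond}, which gives
\[
\dist(\thisu, S(\thisx)) \le \dist(\thisu, S(\prev x)) + \pi_u\norm{\prev x-\thisx}_X.
\]

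The fourth step compares this with $\thisDistU$. Applying the optimal-selection hypothesis at index $k-1$ gives $\dist(S(\prev x),\thisu) = \norm{\thisu - S_u(\prev x)}_U$. Since $\norm{\Delta w_*^k}_*\ge 0$, this is at most $\thisDistU$. Also, the remaining term $\pi_u\norm{\prev x - \thisx}_X$ is exactly $\pi_u\thisDistXprev$ by \eqref{eq:tracking:dx-def}. Chaining the four steps yields the claim with the same $\kappa_u>1$ and $\pi_u$.

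The main point needing care is the index bookkeeping. The selection optimality hypothesis must be used at index $k-1$; this is legitimate since $k\ge1$ and the hypothesis holds for all $k\in\N$ (including $0$). Likewise, \eqref{ineq:linear-inner-convergence} must be available at iteration $k$ with the parameter $\thisx$. Finally, $\thisDistU$ must be interpreted with the same selection $S_u(\prev x)$, which it is by definition \eqref{eq:tracking:du-def}. Beyond this, the argument is a direct chain of inequalities.
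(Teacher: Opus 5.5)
Your proposal is correct and is the same argument as the paper's proof. The paper chains selection optimality at index $k$, the contractivity \eqref{ineq:linear-inner-convergence}, the one-sided Lipschitz-like property \eqref{eq:inner-tracking:lip-cond}, and selection optimality at index $k-1$, and then adds the nonnegative term $\norm{\Delta w_*^{k}}_*$ to reach $\thisDistU + \pi_u\thisDistXprev$, exactly as you do.
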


\begin{proof}
    For the proof, recall the abbreviations \eqref{eq:tracking:defs}.
    We combine  \cref{eq:inner-tracking:lip-cond,ineq:linear-inner-convergence} to obtain
    \[
        \begin{split}
            \kappa_u \nextDistU
             &
            =
            \kappa_u\norm{\nextu - S_u(\thisx)}_U + \kappa_u\norm{\Delta w_*^{k+1}}_*
            \\
             &
            \le
            \dist(u^k, S(x^k))
            \\
             &
            \le
            \dist(S(x^{k-1}), \thisu)
            +
            \pi_u
            \norm{x^{k-1} - x^k}_X
            \\
             &
            =
            \norm{\thisu - S_u(\prev x)}_U
            +
            \pi_u
            \norm{x^{k-1} - x^k}_X
            \\
             &
            \le
            \norm{\thisu - S_u(\prev x)}_U
            +
            \norm{\Delta w_*^{k}}_*
            +
            \pi_u
            \norm{x^{k-1} - x^k}_X
            \\
             &
            =
            \thisDistU
            +
            \pi_u
            \thisDistXprev.
            \qedhere
        \end{split}
    \]
\end{proof}

It can be shown that the Pompeui–Hausdorff distance between sets $A$ and $B$,
\[
    \dist(A, B)
    =
    \sup_{u \in U} \abs{\dist(A, u) - \dist(B, u)}.
\]
Therefore, in the worst case, if the inner iterates $\thisu$ are unbounded, \eqref{eq:inner-tracking:lip-cond} becomes a Pompeui--Hausdorff--Lipschitz property. However, usually, we can assume that $\thisu$ belongs to a bounded set $\Omega_U$, relaxing the requirement.
The next lemma shows that  \eqref{eq:inner-tracking:lip-cond} arises from the Aubin property of $S$ in a suitable set. However -- in the bounded setting -- it is conceivable that there are cases where \eqref{eq:inner-tracking:lip-cond} holds but the Aubin property of $S$ does not hold.

\begin{lemma}
    \label{lemma:inner-tracking:aubin}
    Suppose $S: X \setto U$ has the Aubin property in $\Omega_X$ and $\Omega_U$ with the factor $\pi_u>0$ in the form\footnote{For spherical neighbourhoods $\Omega_X=B(\bar x, \delta)$, the equivalence of this form to the the standard form $\dist(S(x), \tilde u) \le \kappa_u\dist(\inv S(\tilde u), x)$ is proved in \cite[Theorem 27.3]{clason2020introduction}. This latter superficially stricter form always implies the one we assume.}
    \[
         \dist(S(x), \tilde u) \le \pi_u \dist(\inv S(\tilde u) \isect \Omega_X, x)
         \quad\text{for all}\quad
         x \in \Omega_X \text{ and } \tilde u \in \Omega_U,
    \]
    then, for any $u \in U$, $x, \tilde x \in \Omega_X$,
    \[
        \dist(S(x), u)
        \le
        \dist(S(\tilde x) \isect \Omega_U, u)
        +
        \pi_u
        \norm{x - \tilde x}.
    \]
\end{lemma}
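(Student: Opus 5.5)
The plan is a direct triangle-inequality argument, passing through an arbitrary point of $S(\tilde x)\isect\Omega_U$ and then applying the assumed Aubin-type inequality at that point. Fix $u \in U$ and $x, \tilde x \in \Omega_X$. If $S(\tilde x) \isect \Omega_U = \emptyset$, the right-hand side is $+\infty$ and there is nothing to prove, so we may assume this set is non-empty.

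Pick any $\tilde u \in S(\tilde x) \isect \Omega_U$. Since $\dist(S(x), \freevar)$ is $1$-Lipschitz in its point argument (distance to a set in a normed space), we have
\[
    \dist(S(x), u) \le \dist(S(x), \tilde u) + \norm{u - \tilde u}.
\]
For the first term, the hypothesis applies, because $x \in \Omega_X$ and $\tilde u \in \Omega_U$. It gives $\dist(S(x), \tilde u) \le \pi_u \dist(\inv S(\tilde u) \isect \Omega_X, x)$. Now $\tilde u \in S(\tilde x)$ means $\tilde x \in \inv S(\tilde u)$, and $\tilde x \in \Omega_X$ by assumption. Hence $\tilde x \in \inv S(\tilde u)\isect\Omega_X$, and so $\dist(\inv S(\tilde u)\isect\Omega_X, x) \le \norm{x - \tilde x}$. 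Combining,
\[
    \dist(S(x), u) \le \norm{u - \tilde u} + \pi_u \norm{x - \tilde x}.
\]

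Finally, take the infimum over $\tilde u \in S(\tilde x)\isect\Omega_U$. The term $\pi_u\norm{x-\tilde x}$ does not depend on $\tilde u$, so this infimum turns $\norm{u-\tilde u}$ into $\dist(S(\tilde x)\isect\Omega_U, u)$, which is the claim.

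There is no real obstacle here. The only points needing care are the two membership checks that make the hypothesis applicable, namely $\tilde u \in \Omega_U$ and $\tilde x \in \Omega_X$; this is precisely why the intersection with $\Omega_U$ appears on the right-hand side of the conclusion. Apart from that, one must allow the empty or infinite cases, which are handled by convention.
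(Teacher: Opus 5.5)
Your proof is correct and follows the paper's argument. The paper also bounds $\dist(S(x),u)-\dist(S(\tilde x)\isect\Omega_U,u)$ by $\sup_{\tilde u\in S(\tilde x)\isect\Omega_U}\dist(S(x),\tilde u)$ via the triangle inequality, then applies the Aubin inequality using $\tilde x\in\inv S(\tilde u)\isect\Omega_X$. Your pointwise-then-infimum version is the same computation, and it has the minor advantage of avoiding the possibly undefined difference $\infty-\infty$ when $S(\tilde x)\isect\Omega_U$ is empty.
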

\begin{proof}
    Expanding the definitions of $\dist$, we have
    \[
        \begin{split}
        d
        &
        \defeq
        \dist(S(x), u)
        -
        \dist(S(\tilde x) \isect \Omega_U, u)
        =
        \sup_{\tilde u \in S(\tilde x) \isect \Omega_U} \inf_{u' \in S(x)} \norm{u'-u}_U - \norm{\tilde u-u}_U
        \\
        &
        \le
        \sup_{\tilde u \in S(\tilde x)  \isect \Omega_U} \inf_{u' \in S(x)} \norm{u' - \tilde u}_U
        =
        \sup_{\tilde u \in S(\tilde x)  \isect \Omega_U} \dist(S(x), \tilde u).
        \end{split}
    \]
    Now, since $x \in \Omega_X$, and $S(\tilde x)  \isect \Omega_U \subset \Omega_U$, the Aubin property gives
    \[
        d
        \le
        \sup_{\tilde u \in S(\tilde x)  \isect \Omega_U} \pi_u \dist(\inv S(\tilde u) \isect \Omega_X, x)
        \le
        \pi_u \norm{\tilde x - x}.
        \qedhere
    \]
\end{proof}

\begin{remark}[Single-valued Lipschitz solution mappings]
    In particular, if $S$ is single-valued and Lipschitz, then \cref{eq:inner-tracking:lip-cond} holds.
    It is not uncommon that a primal-only inner solution mapping is single-valued and Lipschitz, compare \cite[Theorem 2]{villacis2025variational} and \cite[Theorem 4.51]{bonnans2000perturbation}, or combine \cite[Theorem 28.6]{clason2020introduction} with a strict convexity assumption. 
\end{remark}


\subsection{Adjoint tracking property}
\label{sec:adjoint-tracking}

Similarly to the inner tracking property, we split the adjoint tracking property into regularity properties of the solution mappings together with the simpler non-parametric contractivity condition on the algorithm.
Recall the definition of $\breve Z$ in \eqref{eq:tracking:adjoint-solution-map}.

\newcommand{\Zunext}{Z(\nextu, \thisx, \Delta w_*^{k+1})}
\newcommand{\anyZunext}{\breve{Z}(\nextu, \thisx, \Delta w_*^{k+1})}
\newcommand{\Zutilde}{Z(\tilde u, \thisx, \Delta \tilde w_*)}
\newcommand{\ZunextInd}{Z(\nextu,\, \thisx,\, \Delta w_*^{k+1})}
\newcommand{\ZutildeInd}{Z(\tilde u,\, \thisx,\, \Delta \tilde w_*)}

\begin{theorem}
    \label{thm:adjoint-ZZ}
    Assume the setup of \cref{sec:implicit-abstract-algorithm}.
    Let $\breve Z: U \times X \times W_* \setto W,$
    abbreviating $Z(x) \defeq \breve Z(S_u(x),x, 0)$, satisfy for some $\pi_w, \mu_u>0 $ and $\kappa_w > 1$ 
    the \textbf{adjoint solution mapping one-sided bi-Lipschitz-like property}
    \begin{equation}
        \label{ineq:adjoint-ZZ}
        \begin{split}
            \kappa_w \dist( Z(x^k), \nexxt w) & - \dist(Z(x^{k-1}), \this w)
            \\
                                                      &
            \le
            \kappa_w \dist(\anyZunext, \nexxt w) - \dist(\anyZunext, \this w)
            \\
            \MoveEqLeft[-1]
            +
            \pi_w\norm{x^k - x^{k-1}}
            + \mu_u\left(
            \norm{\nextu - S_u(\thisx)}_U + \norm{\Delta \nexxt w}_*
            \right).
        \end{split}
    \end{equation}
    Moreover, suppose $w^{k+1}\in W$ generated by \cref{eq:algorithm:abstract:adjoint} satisfy \textbf{adjoint algorithm contractivity}
    \begin{equation}
        \label{ineq:linear-adjoint-convergence-lem}
        \kappa_w \dist(\anyZunext, w^{k+1}) + \kappa_w \norm{\Delta u_{k+1}^*}_{**} \le \dist(\anyZunext, w^k)
    \end{equation}
    and that the \textbf{adjoint selection is optimal}, i.e., $S_w(\thisx) \in Z(\thisx)$ is such that
    \begin{equation}
        \label{eq:linear-adjoint-convergence-lem:selection}
        \dist(Z(\thisx), w^{k+1}) = \norm{w^{k+1} - S_w(\thisx)}_W
        \quad
        \text{ for all }\,
        k\in\N.
    \end{equation}
    Then \cref{ass:tracking:main}\,\cref{item:tracking:main:adjoint-tracking} holds.
\end{theorem}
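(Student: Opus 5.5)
Following the proof of \cref{thm:inner-tracking}, we chain the three hypotheses and then convert distances to the abbreviations \eqref{eq:tracking:defs}. We want to bound $\kappa_w \nextDistW = \kappa_w\norm{w^{k+1}-S_w(\thisx)}_W + \kappa_w\norm{\Delta u^*_{k+1}}_{**}$. By the optimal adjoint selection \eqref{eq:linear-adjoint-convergence-lem:selection}, the first term equals $\kappa_w\dist(Z(\thisx), w^{k+1})$. Hence
\[
    \kappa_w \nextDistW
    = \kappa_w\dist(Z(\thisx), w^{k+1}) + \kappa_w\norm{\Delta u^*_{k+1}}_{**}.
\]

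Next we add and subtract $\dist(Z(\prev x), \thisw)$ and apply the one-sided bi-Lipschitz-like property \eqref{ineq:adjoint-ZZ}. This gives
\[
    \kappa_w \nextDistW
    \le \dist(Z(\prev x), \thisw)
    + \kappa_w\dist(\anyZunext, w^{k+1}) - \dist(\anyZunext, \thisw)
    + \kappa_w\norm{\Delta u^*_{k+1}}_{**}
    + \pi_w\thisDistXprev + \mu_u\nextDistU,
\]
where the last term is read with the norm of $\Delta w_*^{k+1}$ that appears in $\nextDistU$; the two terms in \eqref{ineq:adjoint-ZZ} are the components of \eqref{eq:tracking:du-def}. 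The adjoint algorithm contractivity \eqref{ineq:linear-adjoint-convergence-lem} says exactly that the perturbed-map terms $\kappa_w\dist(\anyZunext, w^{k+1}) + \kappa_w\norm{\Delta u^*_{k+1}}_{**} - \dist(\anyZunext, \thisw)$ are nonpositive, so they can be dropped.

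What remains is $\dist(Z(\prev x), \thisw) + \pi_w\thisDistXprev + \mu_u\nextDistU$. For the first term, $S_w(\prev x) \in Z(\prev x)$ gives $\dist(Z(\prev x), \thisw) \le \norm{\thisw - S_w(\prev x)}_W$. Adding the nonnegative $\norm{\Delta u^*_k}_{**}$ bounds this by $\thisDistW$. Therefore
\[
    \kappa_w \nextDistW \le \thisDistW + \mu_u\nextDistU + \pi_w\thisDistXprev,
\]
which is \cref{ass:tracking:main}\,\cref{item:tracking:main:adjoint-tracking} with the factor of $\nextDistU$ equal to $\mu_u$.

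There is no real obstacle; the only care needed is bookkeeping. Consistency of notation matters: $\Delta \nexxt w$ in \eqref{ineq:adjoint-ZZ} must be read as $\Delta w_*^{k+1}$, and the norms must match those in \eqref{eq:tracking:defs}. The bound is required only for $k\ge1$, so that $\prev x$, $\thisw$ and $S_w(\prev x)$ are defined.
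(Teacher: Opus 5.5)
Your proof is correct and follows the paper's argument: combine \eqref{ineq:adjoint-ZZ} with the contractivity \eqref{ineq:linear-adjoint-convergence-lem}, then use the selection property and the definitions \eqref{eq:tracking:defs}. For the previous-iterate term you use only $S_w(x^{k-1}) \in Z(x^{k-1})$ and drop $\norm{\Delta u_k^*}_{**} \ge 0$; this is slightly more careful than the paper, which calls that final step an equivalence when it is really an implication.
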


\begin{proof}
    For the proof, recall the abbreviations \eqref{eq:tracking:defs}.
    Combining \cref{ineq:adjoint-ZZ} with \eqref{ineq:linear-adjoint-convergence-lem}, we obtain
    \[
        \kappa_w \norm{\Delta u_{k+1}^*}_{**}
        +
        \kappa_w \dist( Z(x^k), \nexxt w) - \dist(Z(x^{k-1}), \this w)
        \le
        \pi_w\norm{x^k - x^{k-1}}
        + \mu_u\nextDistU.
    \]
    Using \cref{eq:tracking:dw-def,eq:tracking:dx-def,eq:linear-adjoint-convergence-lem:selection} we find this equivalent to
    $
        \kappa_w \nextDistW
        \le
        \thisDistW
        + \mu_u\nextDistU
        +
        \pi_w\thisDistXprev,
    $
    i.e., \cref{ass:tracking:main}\,\cref{item:tracking:main:adjoint-tracking}.
\end{proof}

As in the case of  \eqref{eq:inner-tracking:lip-cond}, if $\this w$ and $\nexxt w$ are unbounded, the respective distance differences in \eqref{ineq:adjoint-ZZ} can be reformulated as Pompeui--Hausdorff distances. Then the property holds if $\breve Z$ is Pompeui--Hausdorff--Lipschitz, and the inner selection $S_u$ is Lipschitz.
Next lemma shows, that, if the latter holds, then similarly to \cref{lemma:inner-tracking:aubin}, the condition \eqref{ineq:adjoint-ZZ} can also be verified through two applications of the Aubin property. However, there are cases where condition \eqref{ineq:adjoint-ZZ} holds but the assumptions of next lemma does not hold. We encounter such an example in \cref{sec:simple-example}.
In particular, the Lipschitz assumption on $S_u$, where $S_u(x) \subset S(x)$, in practise forces $S$ to be single-valued. This is not the case with \eqref{ineq:adjoint-ZZ}.

\begin{lemma}
    \label{lemma:adjoint-tracking:aubin}
    Suppose  $U, W_*$ and $X$ are normed spaces, 
    $W=(W_*)^*.$ Let $J: U \to \R$ and $G: U\times X \setto W_*.$
    Moreover, let $S_u: X \to U$ be $L$-Lipschitz and satisfy $0\in G(S_u(x), x)$ for all $x\in X,$
    $\breve Z: U \times X \times W_* \setto W$ be given by \cref{eq:tracking:adjoint-solution-map} and
    abbreviate $Z(x) \defeq \breve Z(S_u(x),x, 0).$ Assume $\breve Z$  has the Aubin property in $\Omega
    \defeq \{(u,x,w_*)\in  \Omega_U \times \Omega_X \times W_*\mid w_*\in G(u,x)\}$
    and $\Omega_W$ with the factor $C_w>0$ in the form 
    \[
    \dist(\breve Z(u,x,w_*), \tilde w) \le C_w \dist(\inv{\breve Z}(\tilde w) \isect \Omega, (u,x,w_*))
    \quad\text{for all}\quad
    (u,x,w_*) \in \Omega \text{ and } \tilde w \in \Omega_W.
    \]
    Then, for any $w, \bar w \in W, u\in \Omega_U, \tilde x \in \Omega_X, x\in \Omega_X \isect \inv S_u(\Omega_U), \kappa_w>0$ and $\pi_w\defeq C_w(1+L), \mu_u\defeq C_w(1+\kappa_w)$
    \begin{equation}
        \label{ineq:adjoint-ZZ-lemma}
        \begin{split}
            \kappa_w \dist( Z(x), w) & - \dist(Z(\tilde x)\isect \Omega_W, \bar w)
            \\
            &
            \le
            \kappa_w \dist(\breve Z(u, x, w_*)\isect \Omega_W, w) - \dist(\breve Z(u, x, w_*), \bar w)
            \\
            \MoveEqLeft[-1]
            +
            \pi_w\norm{x - \tilde x}
            + \mu_u\left(
            \norm{u - S_u(x)}_U + \norm{w_*}_*
            \right).
        \end{split}
    \end{equation}
\end{lemma}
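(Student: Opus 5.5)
Split the left-hand side of \eqref{ineq:adjoint-ZZ-lemma} by inserting the intermediate quantities $\dist(\breve Z(u,x,w_*)\isect\Omega_W, w)$ and $\dist(\breve Z(u,x,w_*),\bar w)$. It then suffices to prove two separate one-sided estimates:
\[
    \dist(Z(x), w) \le \dist(\breve Z(u,x,w_*)\isect \Omega_W, w) + C_w\bigl(\norm{u - S_u(x)}_U + \norm{w_*}_*\bigr),
\]
\[
    \dist(\breve Z(u,x,w_*), \bar w) \le \dist(Z(\tilde x)\isect\Omega_W, \bar w) + C_w\bigl(\norm{S_u(x)-S_u(\tilde x)}_U + \norm{x-\tilde x}\bigr) + C_w\bigl(\norm{u-S_u(x)}_U + \norm{w_*}_*\bigr).
\]
Multiply the first by $\kappa_w$ and subtract the second. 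The $\norm{x-\tilde x}$ terms combine, via the $L$-Lipschitz property of $S_u$, into $C_w(1+L)\norm{x-\tilde x} = \pi_w\norm{x-\tilde x}$. The terms in $\norm{u-S_u(x)}_U+\norm{w_*}_*$ collect the factor $C_w\kappa_w + C_w = \mu_u$. This is exactly \eqref{ineq:adjoint-ZZ-lemma}, and it explains the stated constants.

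Both estimates follow the pattern of \cref{lemma:inner-tracking:aubin}. For any sets $A,B$ we have $\dist(A,w) - \dist(B,w) \le \sup_{\tilde w\in B}\dist(A,\tilde w)$, using only the triangle inequality. For the first estimate, take $A = Z(x) = \breve Z(S_u(x),x,0)$ and $B = \breve Z(u,x,w_*)\isect\Omega_W$. The point $(S_u(x),x,0)$ lies in $\Omega$: indeed $x\in\Omega_X$, $S_u(x)\in\Omega_U$ because $x\in\inv S_u(\Omega_U)$, and $0\in G(S_u(x),x)$. For each $\tilde w\in B\subset\Omega_W$ we have $(u,x,w_*)\in \inv{\breve Z}(\tilde w)$. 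The Aubin property at the point $(S_u(x),x,0)$ then gives
\[
    \dist(A,\tilde w)\le C_w\dist(\inv{\breve Z}(\tilde w)\isect\Omega,(S_u(x),x,0)) \le C_w\norm{(u,x,w_*)-(S_u(x),x,0)}.
\]
Here I take the product norm to be the sum of the component norms.

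For the second estimate, swap the roles: take $A=\breve Z(u,x,w_*)$ and $B=Z(\tilde x)\isect\Omega_W$. Apply the Aubin property at $(u,x,w_*)$ with $\tilde w\in B$, using the candidate preimage $(S_u(\tilde x),\tilde x,0)\in\inv{\breve Z}(\tilde w)$. The distance $\norm{(S_u(\tilde x),\tilde x,0)-(u,x,w_*)}$ is bounded through the triangle inequality $\norm{S_u(\tilde x)-u}\le \norm{S_u(\tilde x)-S_u(x)}+\norm{S_u(x)-u}$ together with the Lipschitz bound on $S_u$.

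The main obstacle is the domain bookkeeping. The second application requires $(u,x,w_*)\in\Omega$, which needs $w_*\in G(u,x)$; this is implicit in the lemma (compare the definition \eqref{eq:tracking:adjoint-solution-map} of $\breve Z$ and the use with $w_*=\Delta w_*^{k+1}$), so I would make it explicit. It also requires the preimage $(S_u(\tilde x),\tilde x,0)$ to lie in $\Omega$, i.e., $S_u(\tilde x)\in\Omega_U$. The hypotheses only give $\tilde x\in\Omega_X$, so I would either add $\tilde x\in\inv S_u(\Omega_U)$ or argue that $\Omega_U$ can be taken large enough. Empty intersections are harmless, since the corresponding distances are then $+\infty$ on the right-hand side.
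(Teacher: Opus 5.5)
Your proposal is correct and follows essentially the same route as the paper: you split the left-hand side into the same two differences. You bound each by the triangle-inequality estimate from \cref{lemma:inner-tracking:aubin}. You then apply the Aubin property at $(S_u(x),x,0)$ and at $(u,x,w_*)$, with candidate preimages $(u,x,w_*)$ and $(S_u(\tilde x),\tilde x,0)$. Finally, the $L$-Lipschitz bound on $S_u$ assembles $\pi_w=C_w(1+L)$ and $\mu_u=C_w(1+\kappa_w)$.

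Your bookkeeping caveats are also justified. The paper's proof tacitly uses $w_*\in G(u,x)$ and $S_u(\tilde x)\in\Omega_U$; the latter is needed for $(S_u(\tilde x),\tilde x,0)\in\Omega$. The statement guarantees neither, so adding $\tilde x\in\inv S_u(\Omega_U)$ and $w_*\in G(u,x)$ to the hypotheses is the right repair.
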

\begin{proof}
    Again expanding the definitions of $\dist$ like in the proof of \cref{lemma:inner-tracking:aubin}, we have
    \begin{align*}
    d_1
    &
    \defeq
    \dist(Z(x), w)
    -
    \dist(\breve Z(u, x, w_*) \isect \Omega_W, w)
    =
    \sup_{\tilde w \in \breve Z(u, x, w_*) \isect \Omega_W} \inf_{w' \in Z(x)} \norm{w'-w}_W - \norm{\tilde w-w}_W
    \\
    &
    \le
    \sup_{\tilde w \in \breve Z(u, x, w_*)\isect \Omega_W} \inf_{w' \in Z(x)} \norm{w' - \tilde w}_W
    =
    \sup_{\tilde w \in \breve Z(u, x, w_*) \isect \Omega_W} \dist(\breve Z(S_u(x), x, 0), \tilde w)
    \\
    \shortintertext{and}
        d_2
        &
        \defeq
        \dist(\breve Z(u, x, w_*), \bar w)
        -
        \dist(Z(\tilde x)\isect \Omega_W, \bar w)
         =
        \sup_{\tilde w \in  Z(\tilde x) \isect \Omega_W} \inf_{w' \in \breve Z(u, x, w_*)} \norm{w'-\bar w}_W - \norm{\tilde w-\bar w}_W
        \\
        &
        \le
        \sup_{\tilde w \in  Z(\tilde x) \isect \Omega_W} \inf_{w' \in \breve Z(u, x, w_*)} \norm{w' - \tilde w}_W
        =
        \sup_{\tilde w \in \breve Z(S_u(\tilde x), \tilde x, 0) \isect \Omega_W} \dist(\breve Z(u, x, w_*), \tilde w).
        \end{align*}
    Now, since $x \in \Omega_X \isect \inv S_u(\Omega_U), u\in \Omega_U$, and $\tilde w \in \Omega_W$ in both cases, the Aubin property gives
    \begin{align}
        \label{aling-row-1}
        \kappa_w d_1
        &
        \le
        \kappa_w  \sup_{\tilde w \in \breve Z(u, x, w_*) \isect \Omega_W} C_w \dist(\inv{\breve Z}(\tilde w) \isect \Omega, (S_u(x),x,0))
        \le
        \kappa_w C_w \left(
            \norm{u - S_u(x)}_U + \norm{w_*}_*
        \right)
        \\
        \shortintertext{and}
        \label{aling-row-2}
        d_2
        &
        \le
        \sup_{\tilde w \in \breve Z(S_u(\tilde x), \tilde x, 0) \isect \Omega_W} C_w \dist(\inv{\breve Z}(\tilde w) \isect \Omega, (u,x,w_*))
        \\
        \nonumber
        &
        \le
        C_w
        \left(
             \norm{u - S_u(\tilde x)}_U +
             \norm{\tilde x - x}
             +
             \norm{w_*}_*
        \right).
    \end{align}
    The claim follows from summing up and rearranging \cref{aling-row-1,aling-row-2}, and applying Lipschitz-continuity of $S_u$
    \[
      \norm{u - S_u(\tilde x)}_U
      \le
      \norm{u - S_u(x)}_U
      +
      \norm{S_u(x) - S_u(\tilde x)}_U
      \le
      \norm{u - S_u(x)}_U
      +
      L\norm{x - \tilde x}
      .
       \qedhere
    \]
\end{proof}

\subsection{Differential transformation}
\label{sec:differential-transformation}

To motivate the next construction, recall that in the algorithm of \cref{sec:implicit-abstract-algorithm}, specifically \eqref{eq:algorithm:abstract:adjoint},
the differential estimate $\tilde x_{k+1}^*$ arises jointly with the adjoint variable $\nexxt w$ from the adjoint inclusion
\begin{equation}
    \label{eq:exact-adjoint}
    0 \in \anyCod G(\nexxt{u},\thisx|\Delta w^{k+1}_*)(w^{k+1}) +  (J'(\nexxt{u}) - \Delta u_{k+1}^*, -\tilde x_{k+1}^*)
\end{equation}
for either $\anyCod=\frechetCod,D^*$.
We can equivalently write this as
\begin{equation}
    \label{eq:adjoint-tk}
    \tilde x_{k+1}^* \in \breve T(\nexxt{u}, \nexxt w, \Delta u_{k+1}^*, \Delta w^{k+1}_*; \thisx),
\end{equation}
where $\breve T: U \times W \times U^* \times W_* \to X^*$,
\begin{equation*}
    \breve T(u,w,\Delta u^* \Delta w_*; x) \defeq \{ x^* \in X^* \mid 0 \in \anyCod G(u,x|\Delta w_*)(w) +  (J'(u)- \Delta u^*, -x^*) \}.
\end{equation*}
Likewise, we pick the target $x_{k+1}^* \in \anyCOsubdiffS_2 F(\thisx)$ such that
\begin{equation}
    \label{eq:adjoint-t-exact}
    x_{k+1}^* \in \breve T(S_u(\thisx), S_w(\thisx), 0, 0; \thisx).
\end{equation}
Such a choice exists if there exists an adjoint solution $S_w(\thisx) \in \breve Z(\thisx) \defeq \breve Z(S_u(\thisx), \thisx, 0)$.
(We do not have this implicit expression for $\tilde x_{k+1}^*$, as $\breve Z$ lacks the perturbation parameter $\Delta u_{k+1}^*$.)
We can, thus, use Lipschitz-like properties of $\breve T$ to construct the target $\targetF$, and to ensure \cref{ass:tracking:main}\,\cref{item:tracking:main:differential-transformation}:

\begin{theorem}
    \label{thm:tracking:diff-transformation}
    Let $U, X$ and $W_*$  be normed spaces, $W=(W_*)^*, \Omega_X\subset X, \Omega_U\subset U$, and   $G:U\times X \setto W_*$.
    Let $S_u$ and $S_w$ be the single-valued selections of the inner and adjoint solution mapping as in \cref{sec:tracking}.
    Assume that there exist exact and algorithmic adjoint solutions, i.e., \eqref{eq:adjoint-t-exact} and \eqref{eq:adjoint-tk}.
    Moreover, assume whenever $\thisx \in \Omega_X$ and $\nextu \in \Omega_U$ that $T(\freevar; \thisx)$ is inner Lipschitz at $v \defeq (\nextu, \nexxt w, \Delta u_{k+1}^*, \Delta w^{k+1}_*)$ with the factor $L_T$ and radius $\rho>0$ such that $(S_u(\thisx), S_w(\thisx), 0, 0) \in \B(v, \delta)$, more precisely,
    \begin{multline}
        \label{eq:tracking:diff-transformation:lipschitz}
        \dist(\breve T(S_u(\thisx), S_w(\thisx), 0, 0; \thisx), \tilde x_{k+1}^*)
        \\
        \le
        L_T( \norm{S_u(\thisx)-\nextu}_U
        + \norm{\Delta w^{k+1}_*}_*
        + \norm{S_w(\thisx)-\nexxt w}_W
        + \norm{\Delta u_{k+1}^*}_{**}
        ).
    \end{multline}
    Then \cref{ass:tracking:main}\,\cref{item:tracking:main:differential-transformation} holds for any $\alpha_u=\alpha_w>L_T$ and some $x_{k+1}^* \in \breve T(S_u(\thisx), S_w(\thisx), 0; \thisx)$.
\end{theorem}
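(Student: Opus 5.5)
The plan is to pass from the distance bound \eqref{eq:tracking:diff-transformation:lipschitz} to the existence of a nearly optimal point in the target set, and then collect terms by the abbreviations \eqref{eq:tracking:defs}. First, by assumption \eqref{eq:adjoint-t-exact} the set $\breve T(S_u(\thisx), S_w(\thisx), 0, 0; \thisx)$ is non-empty. Also, \eqref{eq:adjoint-tk} shows that $\tilde x_{k+1}^*$ is a well-defined element of $X^*$. So the distance on the left of \eqref{eq:tracking:diff-transformation:lipschitz} is finite. We write $\nextEstF \defeq \tilde x_{k+1}^*$ for the differential transformation output.

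Second, fix $\alpha \defeq \alpha_u=\alpha_w > L_T$ and set $r \defeq \nextDistU + \nextDistW$. By \eqref{eq:tracking:du-def} and \eqref{eq:tracking:dw-def}, the right-hand side of \eqref{eq:tracking:diff-transformation:lipschitz} equals exactly $L_T r$. We split into two cases.
\begin{itemize}
\item If $r>0$, then $\alpha r > L_T r$. The definition of the infimum in $\dist$ gives some $\targetF \in \breve T(S_u(\thisx), S_w(\thisx), 0, 0; \thisx)$ with $\norm{\targetF - \tilde x_{k+1}^*}_{X^*} \le \alpha r = \alpha_u \nextDistU + \alpha_w \nextDistW$. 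This is \cref{ass:tracking:main}\,\cref{item:tracking:main:differential-transformation}.
\item If $r=0$, then $\nextu=S_u(\thisx)$, $\nexxt w=S_w(\thisx)$ and both perturbations vanish. Hence $\tilde x_{k+1}^*$ itself lies in $\breve T(S_u(\thisx), S_w(\thisx),0,0;\thisx)$ by \eqref{eq:adjoint-tk}, and we take $\targetF=\tilde x_{k+1}^*$.
\end{itemize}
This strict-inequality slack is exactly why the statement requires $\alpha_u=\alpha_w>L_T$ rather than $\ge$: the infimum need not be attained, since we do not assume closedness of the coderivative range.

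The only point needing care is that the inner Lipschitz hypothesis is invoked legitimately. Its domain requirement is $\thisx\in\Omega_X$ and $\nextu\in\Omega_U$, which holds because the iterates lie in $\localset\times\Omega_U$ as in \cref{ass:tracking:main}. Its radius requirement is that $(S_u(\thisx),S_w(\thisx),0,0)$ lies within the neighbourhood, which is assumed. Given these, \eqref{eq:tracking:diff-transformation:lipschitz} is available verbatim.

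Finally, note that the target so chosen lies in $\anyCOsubdiffS_2 F(\thisx)$, because $S_u(\thisx)\in S(\thisx)$ and the definition of $\breve T$ with zero perturbations matches \eqref{def:set-valued-target}. I expect no real obstacle; the main subtlety is the non-attainment of the distance, which the strict inequality on $\alpha$ handles.
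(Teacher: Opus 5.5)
Your proposal is correct and is essentially the paper's argument. It combines the Lipschitz-like bound \eqref{eq:tracking:diff-transformation:lipschitz} with the strict slack $\alpha_u=\alpha_w>L_T$, so that the distance need not be attained, and your separate treatment of the case $\nextDistU+\nextDistW=0$ via \eqref{eq:adjoint-tk} fills in a detail the paper's one-line proof leaves implicit; one small inaccuracy is that \cref{ass:tracking:main} only places the history up to $\thisu$ in $\Omega_U$, so $\nextu\in\Omega_U$ is not supplied by it but is part of the theorem's conditional hypothesis (ensured in applications via \cref{lemma:main-convergence} and \cref{remark:main-convergence}).
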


\begin{proof}
    The claim is immediate from the assumed Lipschitz-like property and the existence of the exact and algorithmic adjoint solutions.
    (We take any $\alpha_u=\alpha_w>L_T$ to avoid having to assume any compactness of $\breve T(S_u(\thisx), S_w(\thisx), 0, 0; \thisx)$).
\end{proof}

\begin{remark}[Existence of solutions]
    \label{rem:tracking:adjoint-existence}
    Subject to the conditions of  \cref{thm:fullsystem:limiting}, for the limiting coderivative, \cref{eq:adjoint-t-exact} holds for a choice of $S_u(\thisx)$ when $\thisx$ (locally) minimises the tilted functional $\infmap{J \circ S} + x^*$ for some $R=x^* \in X^*$.
    Likewise, \cref{eq:adjoint-tk} holds when $\thisx$ (locally) minimises $\infmap{[J + \dir u_{k+1}^*]\circ S} + x^*$ for the inner problem $0 \in G(u, x) -\dualprod{\Delta w_*^{k+1}}{u}$.
\end{remark}

\subsection{A forward-backward type outer method}
\label{sec:inexact-convergence}

We now prove the convergence of a forward-backward type outer method, assuming inner and adjoint methods that satisfy \cref{ass:tracking:main}.
The next convergence result does not depend on our specific problem structure; it applies to general problems of the form
\begin{equation}
    \label{eq:fb:problem}
    \min_{x \in X} E(x) + R(x),
\end{equation}
for which \cref{ass:tracking:main} holds with $R: X \to \extR$ is convex and possibly nonsmooth, and $E: X \to \R.$
Example choices for $E$ are the optimistic and pessimistic selections $\infmap{J \circ S}$ and $\supmap{J \circ S}$, which
are defined in \cref{sec:calculus}, with $J:U\to\R$ and a solution mapping $S$ as in \cref{eq:oc:solution-mapping}.

Given a step length parameter $\tau>0$, we take as the (inexact) outer algorithm \cref{eq:algorithm:abstract:outer} the forward-backward splitting
\begin{equation}
    \label{inexact-step}
    \nextx
    \defeq
    \argmin_{z\in X}\left\{
    \tau\dualprod{\nextEstF}{z-\thisx} + \tau R(z) + \frac{1}{2}\norm{z-\thisx}^2
    \right\},
\end{equation}
In Hilbert spaces 
this reduces to the familiar
\begin{equation*}
    \label{inexact-step-Hilbert}
    \nextx \defeq \prox_{\tau R}(x^k - \tau \mathscr{R}\nextEstE).  
\end{equation*}

In our analysis, we do not use any specific coderivative or subdifferential of $E$, instead introducing a “target set” $\COsubdiffS E(\thisx) \ni \targetE$, where we recall that $\targetE$ is the tracking target from \cref{ass:tracking:main}.
Informally, $0 \in [\COsubdiffS E +\subdiff R](\bar x)$ should be a meaningful optimality condition for \eqref{eq:fb:problem}.
We have discussed meaningful choices of $\COsubdiffS E$ of $E=J \circ S$ in \cref{rem:composition-subdiff}.
A trivial choice is $\COsubdiffS E(x) = \{ \targetE\}$, although we will impose further technical restrictions that need to be satisfied.
In particular, we require the set-valued descent inequality
\begin{equation}
    \label{ineq:set-valued-descent-inexact}
    \inf_{x^* \in \COsubdiffS E(x^k)} \dualprod{x^*}{x-x^k} \ge E(x) - E(x^k) - \frac{L}{2}\norm{x - x^k}^2
    \quad \text{for all }  k\in\N \text{ and } x\in \Dom R.
\end{equation}
This property -- that obviously benefits from a smaller choice of $\COsubdiffS E(x^k)$ -- reduces to the standard descent inequality for functions with $L$-Lipschitz continuous Fréchet differential \cite[Lemma 7.1]{clason2020introduction} by taking $\COsubdiffS E(x^k)=\{E'(\thisx)\}$. In \cref{sec:simple-example} we study a positive nonsmooth example.

With all of the above constructions and assumptions at hand, we can finally show the convergence of subdifferentials to zero.
The result is global -- even for nonconvex $E$ -- if we can take $\Omega_X=X$ in \cref{ass:tracking:main}.
It does not say anything about the existence of limiting points of $\{\thisx\}_{k \in N}$, or their optimality.
However, the result of the theorem proves that under appropriate compactness and outer semicontinuity assumptions, any limiting point of the sequence of iterates will satisfy the relevant condition discussed in \cref{rem:composition-subdiff}. We prove first a lemma, and then the convergence theorem.

\begin{lemma}
    \label{lemma:main-convergence}
    Let $X$ and $U$ be normed spaces, $E:X \to \R$, and $R:X\to\extR$ convex, proper and lower semicontinuous.
    Given an initial iterate $x^0 \in \Omega_X, N\ge 1$ and $\tau>0$, generate $\{x^k\}_{k=1}^{N+1}$ through \cref{inexact-step}, choosing for each iteration corresponding target sets $\COsubdiffS E(x^k)$ for which \cref{ineq:set-valued-descent-inexact} holds for an $L > 0$, as well as differential estimates $\nextEstE$ 
    for which \cref{ass:tracking:main} holds until $N$.
    Assume, moreover, that $\Omega_X$ contains the $([E+R](x^0) + \Psi_1)$-sublevel set of $E+R$, i.e.,
    \begin{equation}
        \label{eq:main-convergence:locality-lemma}
        \{ x \in X \mid [E+R](x) \le  [E+R](x^0) + \Psi_1 \} \subset \Omega_X.
    \end{equation}
    Then, for each $k\in\{0,...,N+1\},$ we have $\thisx \in \Omega_X$ and
    \begin{equation}
        \label{ineq:finite-series-inexact}
        [E+R](x^{k})
        +
        \tau\left(1 - \frac{\tau (L+1+\trackingressum[1]^2)}{2}\right)
        \sum_{j=0}^{k-1}
        \norm{
            \widetilde x^*_{j+1}
             + q^{j+1}}^2
        \le
        [E+R](x^0)
        + \sum_{j=0}^{k-1}e_j
    \end{equation}
    for some $q^{j+1}\in \subdiff R(x^{j+1})$ and $e_j$, ($j=0,\ldots,N$), satisfying
    \begin{equation}
        \label{eq:main-convergence:e}
        \max_{k \in \{0,\ldots,N\}}\sum_{j=0}^{k}e_j
        \le
        \Psi_1
        \defeq
        \frac{\initDistUsq}{\pi_u} \bigg(\frac{\trackingressum[1]\alpha_u\kappa}{\kappa-1} + \frac{\trackingressum[1]\alpha_w{\primaldifffact}}{(\kappa-1)^2}\bigg)
        +
        \frac{\initDistWsq}{\pi_w} \bigg(\frac{\trackingressum[1]\alpha_w\kappa}{\kappa-1}\bigg),
    \end{equation}
    where $\kappa \defeq\min\{\kappa_u, \kappa_w\}>1$, and $\MAX\kappa \defeq\max\{\kappa_u, \kappa_w\}>1$, the first-step distances $\initDistU$ and $\initDistW$ are given in \eqref{eq:tracking:defs}, and the constant
    \begin{equation}
        \label{eq:tracking:ressum}
        \trackingressum[1]
        \le
        \frac{(\alpha_u\pi_u+\alpha_w\pi_w)\kappa\overline\kappa}{\kappa-1}
        + \frac{\alpha_w {\primaldifffact}\pi_u\overline\kappa}{(\kappa-1)^2}.
    \end{equation}

    Finally, suppose, additionally, that the step length $\tau$ satisfies $\tau (L+1+\trackingressum[1]^2) < 2$ and $\inf [E+R]>-\infty.$ Then $\sup_{k\le N}\sum_{j=0}^{k-1}\norm{x^{j+1} - x^j}^2<\infty.$
    If additionally $\sup_{x\in\Omega_X}\norm{S_u(x)}_U < \infty,$  then also $\sup_{k \le N}\norm{\nextu}_U < r_U <\infty,$
    with
    \begin{equation}
        \label{eq:rU-def}
        r_U
        \defeq
        \sup_{x\in\Omega_X}\norm{S_u(x)}_U
        +
        \norm{u^1- S(x^0)}_U
        +
        \norm{\Delta w_*^1}_*
        +
        \pi_u
        \sqrt{
            \frac{2\tau ([E+R](x^0)-\inf [E+R]+\Psi_1)}{(1 - \kappa_u^{-2})\left(2 - \tau (L+1+\trackingressum[1]^2)\right)}
        }.
    \end{equation}

\end{lemma}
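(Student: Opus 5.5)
The plan is a standard inexact forward-backward descent estimate, with the error terms controlled by the scalar tracking results of \cite{dizonvalkonen2024tracking} quoted in \cref{sec:scalar-tracking}, and locality maintained by induction on $k$.

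\textbf{Step 1 (one-step descent).} Fix $k$ and assume $\thisx\in\Omega_X$. The optimality condition of \cref{inexact-step} gives some $q^{k+1}\in\subdiff R(\nextx)$ with $\nextx-\thisx = -\tau(\nextEstF+q^{k+1})$ in the Riesz sense. This is exact in Hilbert spaces; in normed spaces I would work through the corresponding dual-pairing identity. Then:
\begin{itemize}
\item by convexity, $R(\nextx)-R(\thisx)\le \dualprod{q^{k+1}}{\nextx-\thisx}$;
\item by \cref{ineq:set-valued-descent-inexact} applied to the target $\targetF\in\COsubdiffS E(\thisx)$, $E(\nextx)-E(\thisx)\le \dualprod{\targetF}{\nextx-\thisx}+\frac L2\norm{\nextx-\thisx}^2$.
\end{itemize}
Summing, inserting $\targetF = \nextEstF + (\targetF-\nextEstF)$, and applying Young's inequality to the error term gives
\[
[E+R](\nextx)+\tau\Bigl(1-\tfrac{\tau(L+1)}{2}\Bigr)\norm{\nextEstF+q^{k+1}}^2
\le [E+R](\thisx)+\tfrac{\tau}{2}\norm{\nextEstF-\targetF}^2 .
\]
By \cref{ass:tracking:main}\,\cref{item:tracking:main:differential-transformation}, the last term is bounded by $\tfrac\tau2(\alpha_u\nextDistU+\alpha_w\nextDistW)^2$.

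\textbf{Step 2 (summing the tracking errors).} The inner and adjoint tracking inequalities form a linear recursion in $(\nextDistU,\nextDistW)$ driven by $\thisDistXprev=\norm{\thisx-\prev x}$. Unrolling it with the scalar tracking lemma bounds the accumulated error $\sum_j(\alpha_u \mathrm{d}^u_{j+1}+\alpha_w \mathrm{d}^w_{j+1})^2$ by a geometric initialisation part, giving the $\initDistUsq,\initDistWsq$ terms of $\Psi_1$, plus $\trackingressum[1]^2\sum_j \norm{x^{j}-x^{j-1}}^2$.

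Since $\norm{x^{j}-x^{j-1}}=\tau\norm{\tilde x^*_{j}+q^{j}}$, this last sum is absorbed into the left-hand side. Doing so is what produces the factor $L+1+\trackingressum[1]^2$, and the remainder is collected as the $e_j$. The geometric ratios $\kappa_u^{-1},\kappa_w^{-1}$ and the coupling constant $\primaldifffact$ generate the factors $\kappa/(\kappa-1)$ and $1/(\kappa-1)^2$ appearing in $\Psi_1$ and $\trackingressum[1]$. Telescoping then yields \cref{ineq:finite-series-inexact}.

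\textbf{Step 3 (locality).} This is an induction on $k$. If $x^0,\dots,x^k\in\Omega_X$, then the tracking assumptions apply up to step $k$, and \cref{ineq:finite-series-inexact} gives $[E+R](x^{k+1})\le[E+R](x^0)+\Psi_1$. Hence $x^{k+1}\in\Omega_X$ by \cref{eq:main-convergence:locality-lemma}.

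\textbf{Step 4 (consequences).} If $\tau(L+1+\trackingressum[1]^2)<2$ and $\inf[E+R]>-\infty$, then \cref{ineq:finite-series-inexact} together with $\norm{x^{j+1}-x^j}=\tau\norm{\tilde x^*_{j+1}+q^{j+1}}$ bounds $\sum_j\norm{x^{j+1}-x^j}^2$ by
\[
\frac{2\tau\bigl([E+R](x^0)-\inf[E+R]+\Psi_1\bigr)}{2-\tau(L+1+\trackingressum[1]^2)} .
\]
For the bound on $\nextu$, write $\norm{\nextu}\le\norm{S_u(\thisx)}+\nextDistU$. Unrolling the inner tracking inequality gives
\[
\nextDistU\le \kappa_u^{-k}\initDistU+\pi_u\sum_j\kappa_u^{-(k-j)}\norm{x^j-x^{j-1}},
\]
and Cauchy--Schwarz with $\sum_i\kappa_u^{-2i}\le 1/(1-\kappa_u^{-2})$ gives the square-root term in \cref{eq:rU-def}.

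\textbf{Main obstacle.} I expect Step 2 to be the hardest: the exact bookkeeping of the coupled recursion (the adjoint error depends on $\nextDistU$ through $\primaldifffact$), so that the constants match \cref{eq:main-convergence:e} and \cref{eq:tracking:ressum}. I would rely directly on the quoted scalar tracking theorem of \cite{dizonvalkonen2024tracking} rather than rederive it, checking only that its hypotheses are met by \cref{ass:tracking:main} on $\Omega_X$.
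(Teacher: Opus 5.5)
Your proposal follows essentially the same route as the paper's proof: the one-step estimate from \cref{ineq:set-valued-descent-inexact}, convexity of $R$, the identity $\norm{\nextx-\thisx}=\tau\norm{\nextEstE+q^{k+1}}$ and Young's inequality, locality by induction, and the bound on $\nextu$ by unrolling the inner tracking inequality with Cauchy--Schwarz; the tracking error is absorbed through the scalar tracking lemmas, where the paper splits it per step via \cref{lemma:scalar-tracking:error-sum} and your sum-then-absorb version is the same bookkeeping as \cref{lemma:scalar-tracking:error-one}. Two small remarks: the error term in your Step~1 display should carry the factor $\tfrac12$ rather than $\tfrac{\tau}{2}$, since unweighted Young is what produces exactly $L+1+\trackingressum[1]^2$ after absorption; and, just as in the paper, your Step~3 drops the sum on the left of \cref{ineq:finite-series-inexact}, which tacitly requires $\tau(L+1+\trackingressum[1]^2)\le 2$ already for the first part of the lemma.
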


\begin{proof}
    We use the results of \cref{sec:scalar-tracking} from \cite{dizonvalkonen2024tracking} with the choices \eqref{eq:tracking:defs},
    and $\scalarTrackingErrorThis = 
    \norm{\nextEstE - \targetE}$.
    To verify \cref{ass:scalar-tracking:main} through \cref{ass:tracking:main}, we need to show that $\{x^j\}_{j=0}^k\subset \localset$ for each $k\in\{0,...,N+1\}.$
    We do this inductively along with the inductive proof of \cref{ineq:finite-series-inexact}.
    Clearly both hold for $k=0$, since by assumption, $x^0 \in \Omega_X$.
    This takes care of the inductive basis.

    For the inductive step, let $k\ge 0$ and $k\le N$, and suppose we have proved  $\{x^j\}_{j=0}^k\subset \localset$ and \eqref{ineq:finite-series-inexact}.
    By \cite[Lemma 3.8]{wachsmuth2025proximalgradientmethodsbanach}, we have
    \begin{equation}
        \label{eq:exact-implicit-step-inexact}
        \norm{\nextx - x^k}^2 = \tau^2 \norm{\nextEstE + q^{k+1}}^2
        = - \tau\dualprod{\nextEstE + q^{k+1}}{\nextx - x^k}
        \quad \text{ for some } q^{k+1}\in\subdiff R(\nextx).
    \end{equation}
    Young's inequality gives
    \[
    \dualprod{\nextEstE- 
        \targetE
        }{\nextx - x^k}
    \ge
    -
    \frac{1}{2}\norm{\nextEstE- 
        \targetE
        }^2
    -\frac{1}{2}\norm{\nextx - \thisx}^2
    \]
    Continuing with \cref{lemma:scalar-tracking:error-sum} gives
    \[
    \dualprod{\nextEstE- 
        \targetE
        }{\nextx - x^k}
    \ge
    -\frac{1 +\trackingressum[1]^2}{2}\norm{\nextx - \thisx}^2 
    - e_k.
    \]
    Now \cref{ineq:set-valued-descent-inexact} implies
    \begin{equation}
        \label{ineq:inexact-nonsmooth-smoothess}
        \begin{split}
            \dualprod{\nextEstE}{\nextx - x^k}
            &
            =
            \dualprod{\nextEstE- 
                \targetE
                }{\nextx - x^k}
            +
            \dualprod{
                \targetE
            }{\nextx - x^k}
            \\
            &
            \ge
            -\frac{1 +\trackingressum[1]^2}{2}\norm{\nextx - \thisx}^2 - e_k
            +
            \inf_{x^* \in \COsubdiffS E(x^k)} \dualprod{x^*}{\nextx-x^k}
            \\
            &
            \ge
            E(\nextx) - E(x^k) - \frac{L+1+\trackingressum[1]^2}{2}\norm{\nextx - x^k}^2
            - e_k.
        \end{split}
    \end{equation}
    Using the definition of the convex subdifferential as well as \cref{ineq:inexact-nonsmooth-smoothess,eq:exact-implicit-step-inexact}, we get
    \begin{align*}[E+R](\nextx) - & [E+R](x^k) + \tau\norm{\nextEstE + q^{k+1}}^2
        \\
        &
        = [E+R](\nextx) - [E+R](x^k) - \dualprod{\nextEstE + q^{k+1}}{\nextx - x^k}
        \\
        &
        \le E(\nextx) - E(x^k) -
        \dualprod{\nextEstE}{\nextx - x^k}
        \\
        &
        \le
        \frac{L+1+\trackingressum[1]^2}{2}\norm{\nextx - x^k}^2
        + e_k
        \\
        &
        =
        \frac{\tau^2 (L+1+\trackingressum[1]^2)}{2}\norm{\nextEstE + q^{k+1}}^2
        + e_k.
    \end{align*}
    Therefore,
    \begin{equation*}
        [E+R](\nextx) -  [E+R](x^k)
        +
        \tau\left(1 - \frac{\tau (L+1+\trackingressum[1]^2)}{2}\right)
        \norm{\nextEstE + q^{k+1}}^2 \le e_k.
    \end{equation*}
    Adding this 
    to \eqref{ineq:finite-series-inexact}, we get the same for $k+1.$ 
    Then \cref{eq:main-convergence:locality-lemma} imply $x^{k+1} \in \Omega_X,$ which
    concludes the induction.
    Thus \cref{ineq:finite-series-inexact} holds for each $k\in\{0,...,N+1\},$ 
    and $\{x^k\}_{k=0}^{N+1} \subset \localset$. Consequently, \cref{ass:tracking:main} implies \cref{ass:scalar-tracking:main} for each $k\in\{0,...,N+1\}.$  
    Therefore \cref{lemma:scalar-tracking:error-sum} gives
    \eqref{eq:main-convergence:e}.
    The exact expression for $\trackingressum[1]$ is in \eqref{eq:scalar-tracking:ressum}, and satisfies \eqref{eq:tracking:ressum}.

    Using \cref{eq:exact-implicit-step-inexact,ineq:finite-series-inexact,eq:main-convergence:e} we obtain
        \begin{equation}
        \label{ineq:finite-series-inexact-iterate}
        [E+R](x^{k})
        +
        \inv\tau\left(1 - \frac{\tau (L+1+\trackingressum[1]^2)}{2}\right)
        \sum_{j=0}^{k-1} \norm{x^{j+1} - x^j}^2
        \le
        [E+R](x^0)
        + \sum_{j=0}^{k-1}e_j
        <
        [E+R](x^0) + \Psi_1.
    \end{equation}
    We have $ \inv\tau\left(1 - \tau (L+1+\trackingressum[1]^2)/2\right)>0$ 
    by assumption $\tau (L+1+\trackingressum[1]^2) < 2.$ 
    Therefore \cref{ineq:finite-series-inexact-iterate} implies $\sup_{k\le N}\sum_{j=0}^{k-1}\norm{x^{j+1} - x^j}^2<\infty.$

    Within the aforementioned inductive step we have $\{x^j\}_{j=0}^k\subset \localset$ for some $k\in\{1,...,N\}.$ Therefore we can iterate \cref{ass:tracking:main}\,\cref{item:tracking:main:inner-tracking}, the inner tracking property, to obtain
    \begin{align}
        \label{ineq:inner-tracking-iterating}
        &\sup_{k\le N}\norm{u^{k+1}}_U
        \le
        \sup_{k\le N}\left\{
        \norm{S_u(x^k)}_U + \norm{u^{k+1} - S_u(x^k)}_U +
        \norm{\Delta \nexxt w_*}_*
        \right\}
        \\
        \nonumber
        &\le
        \sup_{k\le N}\norm{S_u(x^k)}_U
        +
        \sup_{k\le N}\kappa_u^{-k}\left(
        \norm{u^1- S(x^0)}_U
        +
        \norm{\Delta w_*^1}_*
        \right)
        +
        \sup_{k\le N}
        \sum_{j=1}^k \kappa_u^{-(k+1-j)}\pi_u\norm{x^{j} - x^{j-1}}
        \\
        \nonumber
        &\le
        \sup_{x\in\Omega_X}\norm{S_u(x)}_U
        +
        \norm{u^1- S(x^0)}_U
        +
        \norm{\Delta w_*^1}_*
        +
        \sup_{k\le N}
        \sum_{j=1}^k \kappa_u^{-(k+1-j)}\pi_u\norm{x^{j} - x^{j-1}}
        .
    \end{align}
    Hölder's inequality and the fact $\{\kappa_u^{-j}\}_{j=1}^\infty$ is a converging geometric sequence due to $\kappa_u>1$  imply
    \begin{align*}
         \sum_{j=1}^k \kappa_u^{-(k+1-j)}\pi_u\norm{x^{j} - x^{j-1}}
         \le
         \pi_u
         \sqrt{\sum_{j=1}^k \kappa_u^{-2j}}
         \sqrt{\sum_{j=1}^k \norm{x^{j} - x^{j-1}}^2}
         <
         \frac{\pi_u}{\sqrt{1 - \kappa_u^{-2}}}\sqrt{\sum_{j=0}^{k-1} \norm{x^{j+1} - x^{j}}^2}.
    \end{align*}
    Combining this with \cref{ineq:inner-tracking-iterating}, the assumption $\sup_{x\in\Omega_X}\norm{S_u(x)}_U < \infty$ and \cref{ineq:finite-series-inexact-iterate} imply $\sup_{k\le N}\norm{\nextu}_U < r_U$ with $r_U$ defined in \eqref{eq:rU-def}.
\end{proof}

\begin{remark}[Bounds on the inner variable]
    \label{remark:main-convergence}
    \Cref{lemma:main-convergence} requires that the differential transformation \cref{ass:tracking:main}\,\cref{item:tracking:main:differential-transformation} holds. We can obtain it for a special case using \cref{thm:tracking:diff-transformation}.
    Its assumption \eqref{eq:tracking:diff-transformation:lipschitz} may depend on $\Omega_U$ to be bounded, as we will see in the applications of \cref{sec:tv:summary}.
    By an inductive argument based on \cref{lemma:main-convergence}, we can, indeed, choose $\Omega_U=B(0, r_U)$, where $r_U$ is the bound \eqref{eq:rU-def} provided the lemma, independent of $N$, for which $\sup_{k \le N}\norm{\nextu}_U < r_U$.

    To avoid circular reasoning, we have to, however, be careful when using $r_U$ to construct $\Omega_U$ and prove \cref{ass:tracking:main}: the radius depends on factors from  \cref{ass:tracking:main}, that may depend on $\Omega_U$.
    However, studying \eqref{eq:rU-def} in more detail, independent of these factors, we see that we can make $r_U$ arbitrarily close to $\sup_{x\in\Omega_X}\norm{S_u(x)}_U$ by (a) either or both taking $\tau>0$ small enough, or initialising $x^0$ close to a minimiser, and (b) making high-quality first steps.
    In (a), the closeness and smallness still depend on these factors, or rough estimates for them.
\end{remark}

\begin{theorem}
    \label{thm:main-convergence}
    Let $X$ and $U$ be normed spaces, $E:X \to \R$, and $R:X\to\extR$ convex, proper and lower semicontinuous.
    Assume additionally that $\inf [E+R] > -\infty$ and \cref{eq:main-convergence:locality-lemma} hold.
    Given an initial iterate $x^0 \in \Omega_X$ and $\tau>0$, generate $\{x^k\}_{k\in\N}$ through \cref{inexact-step}, choosing for each iteration corresponding target sets $\COsubdiffS E(x^k)$ for which \cref{ineq:set-valued-descent-inexact} holds for an $L > 0$, as well as differential estimates $\nextEstE$ and targets $\targetE\in\COsubdiffS E(x^k)$ for which \cref{ass:tracking:main} holds.
    Suppose the step length $\tau$ satisfies $\tau (L+1+\trackingressum[1]^2) < 2$.
    Then $\norm{\nextu - S_u(\thisx)}_U \to 0$.
    Moreover:
    \begin{enumerate}[label=(\roman*)]
        \item\label{item:convergence:main:prox}
        If $X$ is a Hilbert space, then (recall \cref{rem:optimistic:prox-oc})
        \begin{equation}
            \label{eq:main-convergence:prox}
            \inf_{x^*\in\COsubdiffS E(x^k)} \norm{\prox_{\tau R}(\thisx - \tau \mathscr{R}x^*)-\thisx} \to 0.
        \end{equation}

        \item\label{item:convergence:main:subdiff}
        Suppose that either
        \begin{enumerate}[label=(\alph*)]
            \item\label{item:main-convergence:F}
            For all $\epsilon>0$ there exists $\delta>0$ such that, for some $k_N\in \N$ for any $k \ge k_N,$
            \begin{equation}
                \label{ineq:set-valued-descent-cont}
                \norm{\nextx-\thisx} \le \delta
                \implies
                \dist(\COsubdiffS E(\nextx), \targetE) \le \epsilon
                .
            \end{equation}
            \item\label{item:main-convergence:R}
            $E$ is bounded from below and $\subdiff R$ is single-valued
            Lipschitz on the restricted sublevel sets $\mathop{\mathrm{sub}}_R(c|\Omega_X) \defeq \{x \in \Omega_X \mid R(x) \le c\}$ for any $c \in \R$.
        \end{enumerate}
        Then also $\inf_{x^* \in [\COsubdiffS E+\subdiff R](\nextx)}\norm{x^* }\to 0.$
    \end{enumerate}
\end{theorem}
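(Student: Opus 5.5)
The plan is to apply \cref{lemma:main-convergence} with arbitrary $N$. Since its constants ($\Psi_1$, $\trackingressum[1]$, the step-length condition) do not depend on $N$, the bounds pass to $N\to\infty$:
\[
    \sum_{j=0}^{\infty}\norm{x^{j+1}-x^j}^2
    \le \frac{\tau\bigl([E+R](x^0)-\inf[E+R]+\Psi_1\bigr)}{1-\tau(L+1+\trackingressum[1]^2)/2}
    < \infty .
\]
Hence $\thisDistXprev\to 0$, and in fact $\sum_k(\thisDistXprev)^2<\infty$. Unrolling the inner tracking property \cref{ass:tracking:main}\,\cref{item:tracking:main:inner-tracking} gives
\[
    \nextDistU \le \kappa_u^{-k}\initDistU+\sum_{j\le k}\kappa_u^{-(k+1-j)}\pi_u\norm{x^j-x^{j-1}},
\]
which is a convolution of a summable geometric kernel with a sequence tending to zero. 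Therefore $\nextDistU\to0$, and in particular $\norm{\nextu-S_u(\thisx)}_U\to0$. The same argument applied to the adjoint tracking property, now with the extra input $\primaldifffact\nextDistU\to0$, gives $\nextDistW\to0$. By the differential transformation \cref{ass:tracking:main}\,\cref{item:tracking:main:differential-transformation} we then get $\norm{\nextEstE-\targetE}\to0$.

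For \cref{item:convergence:main:prox}: in Hilbert space the step is $\nextx=\prox_{\tau R}(\thisx-\tau\mathscr R\nextEstE)$. Since $\prox_{\tau R}$ is nonexpansive,
\[
    \norm{\prox_{\tau R}(\thisx-\tau\mathscr R\targetE)-\thisx}
    \le \tau\norm{\nextEstE-\targetE}+\norm{\nextx-\thisx}\to0 .
\]
Because $\targetE\in\COsubdiffS E(\thisx)$, the infimum in \eqref{eq:main-convergence:prox} is bounded by the left-hand side, which proves the claim.

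For \cref{item:convergence:main:subdiff} I use \eqref{eq:exact-implicit-step-inexact}, i.e., $\norm{\nextEstE+q^{k+1}}=\norm{\nextx-\thisx}/\tau\to0$ with $q^{k+1}\in\subdiff R(\nextx)$. Then
\[
    \norm{\targetE+q^{k+1}}\le \norm{\nextEstE-\targetE}+\norm{\nextx-\thisx}/\tau\to0 .
\]
It remains to move $\targetE$, which lies in $\COsubdiffS E(\thisx)$, to an element of $[\COsubdiffS E+\subdiff R](\nextx)$.

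Under \cref{item:main-convergence:F}, given $\epsilon>0$, take the corresponding $\delta$. For $k$ large we have $\norm{\nextx-\thisx}\le\delta$, so there is $y^*\in\COsubdiffS E(\nextx)$ with $\norm{y^*-\targetE}\le2\epsilon$. Then $y^*+q^{k+1}$ lies in the target set and has norm at most $2\epsilon+o(1)$.

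Under \cref{item:main-convergence:R}, I instead shift the $R$ part. Choose $y^*\in\COsubdiffS E(\thisx)$, namely $y^*=\targetE$, and evaluate $\subdiff R$ at $\thisx$: the element $\targetE+\subdiff R(\thisx)$ lies in $[\COsubdiffS E+\subdiff R](\thisx)$ and satisfies
\[
    \norm{\subdiff R(\thisx)-q^{k+1}}\le L_R\norm{\nextx-\thisx} .
\]
This needs $\thisx,\nextx$ to lie in a common restricted sublevel set $\mathop{\mathrm{sub}}_R(c|\Omega_X)$. That follows from \eqref{ineq:finite-series-inexact}, which bounds $[E+R](\thisx)\le[E+R](x^0)+\Psi_1$, together with $E$ bounded below, so $R(\thisx)\le c$ uniformly. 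This yields the claim along the shifted index, which is equivalent. The main obstacle I expect is exactly this index mismatch in case (b) and the uniform sublevel-set membership; case (a) is designed to handle it directly.
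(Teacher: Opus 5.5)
Your proof is correct, and most of it matches the paper's. The shared steps are: \cref{lemma:main-convergence} with $N$-independent constants, unrolling the inner tracking inequality as a geometric convolution, nonexpansiveness of $\prox_{\tau R}$ in part (i), the triangle inequality in case (ii)(a), and in case (ii)(b) the same shift to index $k$ with the Lipschitz bound on a fixed restricted sublevel set.

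The genuine difference is how you obtain $\norm{\nextEstE-\targetE}\to0$. The paper uses the imported scalar tracking results. \cref{lemma:scalar-tracking:inner-product-error-estimate} gives $\norm{\nextEstE-\targetE}^2\le\breve e_{1,k}$. \cref{lemma:scalar-tracking:error-one}, together with $\sum_k\norm{\nextx-\thisx}^2<\infty$, then gives $\sum_k\breve e_{1,k}<\infty$. You instead feed the vanishing input $\primaldifffact\nextDistU+\pi_w\thisDistXprev$ into the adjoint tracking inequality. You unroll it the same way to get $\nextDistW\to0$, and finish with the differential transformation.

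Your route is more elementary: beyond what already enters through \cref{lemma:main-convergence}, it needs only $\kappa_u,\kappa_w>1$ and $\thisDistXprev\to0$. It also yields, as a by-product, that the adjoint iterates track $S_w(\thisx)$. The paper's route gives square-summability of $\norm{\nextEstE-\targetE}$ rather than mere convergence. Your convolution bounds would give that too, by Young's convolution inequality, since $\{\thisDistXprev\}$ is square-summable. In any case only convergence to zero is needed.

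Finally, your estimate in part (i) correctly keeps the factor $\tau$ in front of $\norm{\nextEstE-\targetE}$, which the paper's displayed inequality omits.
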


\begin{proof}
    We first prove that
    \begin{equation}
        \label{eq:main-convergence:initial}
        \norm{x^{k+1} - x^k} \to 0,
        \quad
        \norm{\nextEstE + q^{k+1}} \to 0,
        \quad\text{and}\quad
        \norm{\nextEstE - \targetE} \to 0.
    \end{equation}
    To do so, we start by applying \cref{lemma:main-convergence}.
    We have $\sup_{N \in \N}\sum_j^N e_j \le \Psi_1 <\infty,$ from \eqref{eq:main-convergence:e}, and \cref{ineq:finite-series-inexact}.
    Since, by assumption, $\tau (L+1+\trackingressum^2) < 2$ and $\inf [E+R] > -\infty,$ it follows from \eqref{ineq:finite-series-inexact} that $\norm{\nextEstE + q^{k+1}}\to 0.$
    We have again $\tau\norm{\nextEstE + q^{k+1}}=\norm{\nextx-\thisx}$ by \cite[Lemma 3.8]{wachsmuth2025proximalgradientmethodsbanach}, and thus also $\norm{\nextx - x^k}\to 0.$
    By \cref{lemma:scalar-tracking:inner-product-error-estimate} we have $\norm{\nextEstE - \targetE }^2 \le \breve e_{p,k}$ for some $\breve e_{p,k} \ge 0$.
    Since, by \cref{lemma:main-convergence}, $\sup_{N\in\N}\sum_{k=0}^{N-1}\norm{\nextx - \thisx}^2<\infty$, by \cref{lemma:scalar-tracking:error-one}, these quantities satisfy $\sum_{k=0}^\infty \breve e_{p,k} < \infty$. The final convergence of \eqref{eq:main-convergence:initial} follows.

    Next we prove $\norm{\nextu - S_u(\thisx)}_U \to 0$.
    Iterating the inner tracking \cref{ass:tracking:main}\,\cref{item:tracking:main:inner-tracking}, yields
    \begin{equation}
        \label{ineq:u-convergence}
        \norm{u^{k+1} - S_u(x^k)}_U + \norm{\Delta w_*^{k+1}}_*
        \le
        \kappa_u^{-k}\left(
        \norm{u^1- S(x^0)}_U
        +
        \norm{\Delta w_*^1}_*
        \right)
        +
        \sum_{j=1}^k \kappa_u^{-(k+1-j)}\pi_u\norm{x^{j} - x^{j-1}}
    \end{equation}
    where $\kappa_u^{-k}\left(
    \norm{u^1- S(x^0)}_U
    +
    \norm{\Delta w_*^1}_*
    \right)\to 0$ since $\kappa_u>1.$ Let $\epsilon>0.$ We have $\norm{x^{j} - x^{j-1}}<\epsilon$ for all $j\ge n_{\epsilon}$ for some $n_{\epsilon}\in\N$ because $\norm{x^{j} - x^{j-1}} \to 0.$ The norms $\norm{x^{j} - x^{j-1}}$ are bounded for any $j\in\N$ since $\sup_{k\le N}\sum_{j=0}^{k-1}\norm{x^{j+1} - x^j}^2<\infty$ by \cref{lemma:main-convergence}. Using additionally the fact that $\{\kappa_u^{-j}\}_{j=1}^\infty$ is a converging geometric sequence we obtain for any $k>n_{\epsilon}$ that
    \begin{align*}
        \sum_{j=1}^k \kappa_u^{-(k+1-j)}\norm{x^{j} - x^{j-1}}
        &
        =
        \sum_{j=1}^{n_{\epsilon}} \kappa_u^{-(k+1-j)}\norm{x^{j} - x^{j-1}}
        +
        \sum_{j=n_{\epsilon}+1}^k \kappa_u^{-(k+1-j)}\norm{x^{j} - x^{j-1}}
        \\
        &
        \le
        n_{\epsilon}\kappa_u^{-(k+1-n_{\epsilon})}\sup_{1\le j\le n_\epsilon}\norm{x^{j} - x^{j-1}}
        +
        \frac{\epsilon}{1 - \inv\kappa_u}.
    \end{align*}
    Therefore, $\sum_{j=1}^k \kappa_u^{-(k+1-j)}\norm{x^{j} - x^{j-1}}\to 0$ and $\norm{\nextu - S_u(\thisx)}_U \to 0$ follow from \cref{ineq:u-convergence}.

    We now divide the argument into cases for the alternative assumptions.

    \textbf{Case \cref{item:convergence:main:prox}:}
    Since proximal maps are $1$-Lipschitz continuous \cite[Corollary 6.18]{clason2020introduction} with factor 1, we have
    \[
        \begin{split}
        \norm{\prox_{\tau R}(\thisx-\mathscr{R}\targetE) - \thisx}
        &
        \le
        \norm{\prox_{\tau R}(\thisx-\mathscr{R}\nextEstE) - \thisx}
        + \norm{\mathscr{R}\nextEstE-\mathscr{R}\targetE}
        \\
        &
        =
        \norm{\nextx - \thisx}
        + \norm{\nextEstE-\targetE}.
        \end{split}
    \]
    Using \eqref{eq:main-convergence:initial}, the claim follows.

    \textbf{Case \cref{item:convergence:main:subdiff}\ref{item:main-convergence:F}:}
    We have
    \begin{align}
        \label{ineq:option-a-main-convergence-1}
        \begin{split}
        \inf_{x^* \in [\COsubdiffS E+\subdiff R](\nextx)}\norm{x^* }
        &
        \leq
        \norm{\nextEstE + q^{k+1}}
        + \inf_{x^* \in [\COsubdiffS E + \subdiff R](\nextx)}\norm{x^*  - (\nextEstE + q^{k+1})}
        \\
        &
        \le
        \norm{\nextEstE + q^{k+1}}
        + \inf_{x^*\in \COsubdiffS E(\nextx)}\norm{x^* - \nextEstE}
        \\
        &
        \le
        \norm{\nextEstE + q^{k+1}}
         +
        \dist(\COsubdiffS E(\nextx), \targetE)
        +
        \norm{\targetE - \nextEstE}.
        \end{split}
    \end{align}
    We have $\dist(\COsubdiffS E(\nextx), \targetE) \to 0$ from \eqref{ineq:set-valued-descent-cont} and $\norm{\nextx-\thisx} \to 0$.
    The latter holds by  \eqref{eq:main-convergence:initial}, which also establishes that the remaining terms converge to zero.

    \textbf{Case \cref{item:convergence:main:subdiff}\ref{item:main-convergence:R}:}
    We have
    \[
        \begin{split}
        \inf_{x^* \in [\COsubdiffS E+\subdiff R](\thisx)}\norm{x^*}
        &
        \le
        \inf_{q \in \subdiff R(\thisx)}
        \norm{\targetE + q}
        \\
        &
        \le
        \norm{\nextEstE - \targetE} + \norm{\nextEstE + q^{k+1}} +  \inf_{q \in \subdiff R(\thisx)} \norm{q^{k+1} - q}.
        \end{split}
    \]
    Since $E$ is bounded from below, \cref{ineq:finite-series-inexact,eq:main-convergence:e} establish a bound $c \ge R(\thisx)$ for all $k \in \N$.
    By \cref{lemma:main-convergence}, we have $\thisx \in \Omega_X$ for all $k \in \N$.
    Since $\subdiff R$ is single-valued Lipschitz\footnote{We could virtually relax this assumption to a “uniform semi-Lipschitz” property, but such a property does not appear to hold for any nonsmooth functions of interest.}
    on $\mathop{\mathrm{sub}}_R(c|\Omega_X)$, $\norm{q^{k+1} - q^k} \to 0$ follows from $\norm{x^{k+1} - x^k} \to 0$.
    The latter holds by \eqref{eq:main-convergence:initial}, which also establishes that the remaining terms converge to zero.
\end{proof}

\begin{remark}
    \label{rem:convergence:barrier}
    The Lipschitz property on sublevel sets in \cref{item:convergence:main:subdiff}\cref{item:main-convergence:R} holds for, e.g., barrier functions, which are Lipschitz on their sublevel sets, but whose differentials' Lipschitz factors blow up towards the boundary.
\end{remark}

\begin{remark}
    If we are content with subsequential convergence, $\liminf_{k \to \infty} \inf_{x^* \in [\COsubdiffS E+\subdiff R](\nextx)}\norm{x^* } = 0$, then the argument of \cref{item:convergence:main:subdiff}\ref{item:main-convergence:R} can be generalised to functions $R$ that have a Lipschitz differential on their domain, provided the domain has a Lipschitz boundary, and we ensure that $\{\nextEstE\}$ is bounded. This includes, in particular, indicator functions of Lipschitz domains.

    Indeed, we only need to show that $\liminf_{k \to \infty} \inf_{q \in \subdiff R(\thisx)} \norm{q^{k+1} - q} \to 0$.
    Now, the boundedness of $\{\nextEstE\}$ and \eqref{eq:main-convergence:initial} ensure that also $\{\this q\}$ is bounded.
    Using this boundedness when on the boundary of $\Dom R$ with the assumed Lipschitz properties in a case by case analysis of transitions between $\bd\Dom R$ or $\interior\Dom R$, we see that only when $\nextx \in \bd\Dom R$ and $\thisx \in \interior \Dom R$, we cannot bound $ \inf_{q \in \subdiff R(\thisx)} \norm{q^{k+1} - q} \le L \norm{x^{k+1} - \thisx}$ for some $L>0$. This can happen at most on every second step.
\end{remark}

\subsection{Example}
\label{sec:simple-example}

We now study the assumptions of \cref{thm:main-convergence} in a simple one-dimensional example, reminiscent of optimally choosing the regularisation parameter for the Lasso problem.
That is, we consider the problem
\begin{equation}
    \label{eq:simple-example}
    \min_{x,u}  \underbrace{\frac{1}{2}(u-2)^2}_{J(u)} + \underbrace{\delta_{\ge \epsilon}(x)}_{R(x)}
    \quad\text{subject to}\quad
    u = \argmin_v \underbrace{\frac{1}{2}(v-5)^2}_{f(v; x)} + \underbrace{x|v|}_{g(v; x)}
\end{equation}
with a small $\epsilon>0.$
The inner objective is (strongly) convex for any $x \ge 0$.
Therefore, the inner problem can equivalently be written as $0=G(u, x)$ for
\begin{equation}
    \label{eq:G-example}
    G(u,x) =
    (u-5) +
    x
    \begin{cases}
        \{-1\}  & \text{if } u<0,
        \\
        [-1, 1] & \text{if } u=0,
        \\
        \{1\}   & \text{if } u>0.
    \end{cases}
\end{equation}

The next lemma provides the inner solution mapping $S_u.$

\begin{lemma}
    \label{ex:S-lemma}
    Let $G:\R\times \R_+ \setto \R$ given by \cref{eq:G-example} and define the solution mapping $S_u:\R_+ \setto \R$ through the satisfaction of $0\in G(S_u(x), x).$
    Then $S_u(x) = \max(0, 5-x).$
\end{lemma}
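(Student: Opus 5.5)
We will prove the statement by a direct case analysis on the sign of the candidate solution $u$, for a fixed $x \ge 0$. First we note that the inner objective $v \mapsto \frac{1}{2}(v-5)^2 + x\abs{v}$ is strongly convex for $x \ge 0$. Hence its subdifferential, which by the sum rule for convex functions (one term being differentiable) is exactly $G(\freevar, x)$ in \eqref{eq:G-example}, has a unique root. So $0 \in G(u,x)$ has exactly one solution $u$. It therefore suffices to verify that $u = \max(0, 5-x)$ satisfies the inclusion; uniqueness then gives that $S_u(x)$ is well defined and single-valued.

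We verify the inclusion in two cases.
\begin{enumerate}[label=(\roman*)]
\item If $x < 5$, then $u = 5 - x > 0$, and the third branch of \eqref{eq:G-example} gives $G(u,x) = \{u - 5 + x\} = \{0\}$.
\item If $x \ge 5$, then $u = 0$, and the middle branch gives $G(0,x) = -5 + x[-1,1] = [-5-x, -5+x]$. This interval contains $0$ precisely because $x \ge 5$.
\end{enumerate}

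For completeness, one may also rule out the branch $u < 0$ directly: there $0 = u - 5 - x$ forces $u = 5 + x > 0$, a contradiction. Similarly, in the branch $u > 0$ the equation forces $u = 5 - x$, which is admissible only when $x < 5$. This gives an alternative uniqueness argument that does not appeal to strong convexity.

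There is no real obstacle here. The only point requiring care is the boundary value $x = 5$, where both formulas agree on $u = 0$, and checking that $0$ lies in the interval $[-5-x, -5+x]$ in the nonsmooth case $u = 0$.
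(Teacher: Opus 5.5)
Your proof is correct. The paper argues only by the exhaustive case split on the sign of $u$. There, $u<0$ forces $u=5+x$, which is impossible for $x\ge 0$; $u>0$ forces $u=5-x$, which is admissible iff $x<5$; and $u=0$ requires $5\in x[-1,1]$, i.e.\ $x\ge 5$. This is exactly your ``alternative'' argument, and it gives existence and uniqueness simultaneously from the formula for $G$ alone.

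Your primary route is different. You get uniqueness from strong convexity of $v\mapsto\frac12(v-5)^2+x\abs{v}$, after identifying $G(\cdot,x)$ with its convex subdifferential via the sum rule, so that only existence (the two verifications) remains to be checked. This is more conceptual and carries over to inner problems where an explicit case split is unavailable. The cost is that it relies on the sum rule, the convex Fermat principle, and uniqueness of minimisers of strongly convex functions, whereas the case analysis is entirely self-contained.

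If you rely on the case analysis alone, you need the ``only if'' direction of your case (ii), namely $0\notin[-5-x,-5+x]$ for $x<5$, to exclude $u=0$ in that range. Your phrase ``precisely because $x\ge 5$'' covers this, and it is the paper's third bullet.
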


\begin{proof}
    Studying the optimality condition $0\in G(u,x)$ yields the following conclusions:
    \begin{itemize}[nosep]
        \item If $u<0$, we must have $u=5+x,$. Given that we restrict $x\ge0$, it follows that $u<0$ cannot be a solution for any admissible $x$.
        \item If $u>0$, we must have $u=5-x$. Such an $u$ is indeed a solution when $0 \le x<5$.
        \item If $u=0$, we must have $5\in x[-1,1]$. Thus $u=0$ is a solution for any $x\ge 5$.
    \end{itemize}
    Therefore we obtain the (single-valued inner) solution mapping $S_u(x) = \max(0, 5-x).$  
\end{proof}
\begin{remark}
    Similarly to \cref{ex:S-lemma}, $\tilde S_u:\R_+ \setto \R$ solving $\Delta w_*^{k+1}\in G(\tilde S_u(\thisx), \thisx)$ can be shown to be $\tilde S_u(x) = \max(0, 5-x+\Delta w_*^{k+1}).$ When $x-\Delta w_*^{k+1}>0$ it holds  $\max(0, 5-x+\Delta w_*^{k+1}) = S_u(x-\Delta w_*^{k+1}),$ which demonstrates that $\tilde S_u(x)$ and $S_u$ as translation relationship, and thus we can deduce properties related to $\Delta w_*^{k+1}\in G(\nextu, \thisx)$ from the ones related to $0\in G(u,x).$
\end{remark}

\begin{figure}[t!]
    \input{Tikz_image1.tex}
    \caption{%
        Illustrations for \cref{sec:simple-example}.
        The left subfigures visualise single-valued Lipschitz continuous solution mapping $S_u$ and corresponding outer objective $E=J\circ S_u$ with important normal vectors for $x_1=1$ and $x_2=6.$ The $y$-directions of the normal vectors are such that for $i=1,2$ it holds $x^*_i\in D^*E(x_i|E(x_i))(1)$ and $x^*_i\in D^* S_u(x_i|S_u(x_i))(J'(S_u(x_i))).$ In this case different versions of $\COsubdiffS E(x)$ from \cref{rem:composition-subdiff} are equal. It is also visually clear that at $x=5$ there does not exist vector orthogonal to $ \graph S_u$ with positive $y$-coordinate or orthogonal to $\graph E$ with negative $y$-coordinate. Therefore $\frechetCod E(5|E(5))(1)=\emptyset$ and $\frechetCod S_u(5|S(5))(J'(S_u(5)))=\emptyset.$
    }
    \label{fig:simple-example}
\end{figure}

Thus taking $J(u) = \frac{1}{2}(u-2)^2$ simplifies the first term in the outer objective into the form
\begin{equation}
    \label{eq:example:F}
    E(x) = [J\circ S_u](x) = \frac{1}{2}(\max(0, 5-x)-2)^2 =
    \begin{cases}
        \frac{1}{2}(x-3)^2 & \text{if } 0 \le x \le 5
        \\
        2                  & \text{if } x>5.
    \end{cases}
\end{equation}
The functions $S_u$ and $E$ are visualised in \cref{fig:simple-example}. Since $S_u$ is Lipschitz continuous (and single-valued), \cref{thm:inner-tracking} implies the inner tracking property provided the inner algorithm satisfies the basic contractivity property \cref{ineq:linear-inner-convergence}. This is the case for, e.g., forward-backward splitting by \cref{thm:subreg:convergence-result-sub-peb:general}.
Moreover, because $S_u$ is single valued and Lipschitz,  \cref{thm:calculus:single-valued-chain} gives
\begin{equation*}
    \frechetCod[J\circ S_u](x|J(u))(1) = \frechetCod S_u(x|u)(J'(u)),
    \quad
    \text{for}
    \quad
    x>0,\, u=S_u(x).
\end{equation*}
This is also visualised in \cref{fig:simple-example}.
We take next a closer look at the adjoint based on $G$, as formulated in \cref{eq:algorithm:full-oc}, and used as basis of our algorithms.

\begin{lemma}
    \label{theorem:example-adjoint}
    Let $G$ be given by \cref{eq:G-example}.
    Suppose $u, x \in \R$ satisfy $0\in G(u,x)$ and $x>0.$
    Let  $u^*=J'(u) = u-2$ and $x^*, w \in \R$.
    Then
    \begin{equation}
        \label{eq:ex-adjoint-frechet}
        (-u^*, x^*)\in\frechetCod G(u,x|0)(w)
    \end{equation}
    is equivalent to
    \begin{equation}
        \label{ex:adjoint-result}
        x^* = w \in
        \begin{cases*}
            \{-u^*\} =\{ 2-u \} & \text{if $x\in(0,5)$ and $u=5-x$}
            \\
            \{0 \}              & \text{if $x>5$ and $u=0$},
            \\
            \emptyset           & \text{otherwise}.
        \end{cases*}
    \end{equation}
    Likewise, for the limiting coderivative, $(-u^*, x^*)\in D^* G(u,x|0)(w)$ is equivalent to
    \begin{equation}
        \label{ex:adjoint-result-limiting}
        x^* = w \in
        \begin{cases*}
            \{-u^*\} = \{ 2-u\} & \text{if $x\in(0,5)$ and $u=5-x$}
            \\
            \{0\}               & \text{if $x>5$ and $u=0$},
            \\
            \{0,2\}             & \text{if $x=5$ and $u=0$}
            \\
            \emptyset           & \text{otherwise}.
        \end{cases*}
    \end{equation}
\end{lemma}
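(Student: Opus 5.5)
The plan is to compute the (Fréchet and limiting) normal cone to $\graph G \subset \R^3$ directly at the points $(u,x,0)$ with $0\in G(u,x)$. We use the characterisation $(a,b)\in \frechetCod G(u,x|0)(w) \iff (a,b,-w)\in \frechetNormal_{\graph G}(u,x,0)$ and then substitute $a=-u^*=2-u$, $b=x^*$. By \cref{ex:S-lemma}, with $x>0$ the only such points are $u=5-x$ for $x\in(0,5)$, and $u=0$ for $x\ge 5$. At every other $(u,x)$ the set $\range\frechetCod G(u,x|0)$ is empty (note $0\notin G(u,x)$ there), which gives the ``otherwise'' branches. The graph decomposes into three pieces: $\{u>0,\ w_*=u-5+x\}$, $\{u<0,\ w_*=u-5-x\}$, and $\{u=0,\ w_*\in[-5-x,-5+x]\}$.

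\emph{Case $x\in(0,5)$, $u=5-x>0$.} Locally $\graph G$ is the smooth plane $w_*=u-5+x$. By the example preceding \cref{sec:fullsystem}, the coderivative is $\{(w,w)\}$. Hence $-u^*=w$ and $x^*=w$, i.e.\ $x^*=w=2-u$, and the limiting coderivative agrees.

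\emph{Case $x>5$, $u=0$.} Here $0$ lies in the interior of $[-5-x,-5+x]$. The other two pieces have $w_*$ near $x-5>0$ or $-5-x<0$, so they are bounded away from $0$. Locally the graph is therefore the plane $\{u=0\}$, with normal cone $\R\times\{0\}\times\{0\}$. This gives $x^*=0$ and $w=0$, with $a=2$ admissible, and again Fréchet and limiting coincide.

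\emph{Case $x=5$, $u=0$.} This is the main obstacle, since two pieces meet at a corner. Near $(0,5,0)$ the graph equals $\{h\ge0,\ l=h+k\}\cup\{h=0,\ l\le k\}$ in shifted coordinates; the lower bound $-5-x\approx-10$ is irrelevant locally. Because this set is a union of polyhedral cones, it coincides with its tangent cone, so the Fréchet normal cone is its polar. Testing $ah+bk+cl\le0$ on the first piece forces $b=-c$ and $a+c\le0$; testing on the second piece then forces $c\ge 0$. With $c=-w$ this reads $b=w$, $w\le 0$ and $a\le w$. Since $a=-u^*=2$, we would need $2\le w\le 0$, which is impossible, so the Fréchet coderivative is empty, proving \eqref{ex:adjoint-result}.

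For the limiting coderivative I take limits of Fréchet normals at nearby graph points, and all of these are finite-dimensional, so weak-* limits are ordinary limits. Three kinds of nearby points contribute. Points on the smooth piece $u>0$ give normals $(w,w,-w)$, and $a=2$ forces $w=x^*=2$. Points $u=0$ with $w_*$ in the interior of the interval, which occur for $x$ near $5$ on either side, give $(a,0,0)$ and hence $w=x^*=0$. The corner point itself gives the Fréchet cone computed above, which contributes nothing with $a=2$. Finally I check that no other limits arise, because the normal cones on each piece are closed, and conclude $x^*=w\in\{0,2\}$, i.e.\ \eqref{ex:adjoint-result-limiting}.
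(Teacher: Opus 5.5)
Your proof is correct. It uses the same case split as the paper: the smooth branch $u=5-x$, the flat branch $u=0$ with $0$ interior to $[-5-x,-5+x]$, the kink at $(u,x)=(0,5)$, and then limits of normals at nearby graph points. The tool is different, though.

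The paper works directly with the $\limsup$ in the definition of $\frechetCod$. At the kink it tests two hand-picked sequences: $x^k\downarrow 5$ with $u^k=0$ gives $x^*\le 0$, and $x^k\uparrow 5$ with $u^k=5-x^k$ gives $x^*\ge -u^*=2$. In the flat case it derives necessity of $x^*=w=0$ along sequences and checks sufficiency separately.

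You instead observe that $\graph G$ is locally a translate of a closed polyhedral cone and take its polar. This gives both inclusions at once and the whole Fréchet normal cone at the kink, $\{(a,b,c)\mid b=-c,\ c\ge 0,\ a\le -c\}$. That makes visible that the emptiness at $x=5$ comes from the sign of $-J'(0)=2$. It also makes the limiting step cleaner: the local cones are fixed closed sets, so any sequence of nearby normals has a subsequence in one of them, and the limit stays there even when the $u$-components $a_k\to 2$ are arbitrary. The paper's list of nearby normals fixes these $u$-components to $2$ or $2-u^k$, which is harmless but only implicitly justified. On the other hand, the paper phrases its flat and kink computations for a general base value $w_*\in G(u,x)$ near $0$, and reuses them for the limiting step and for the perturbed sets in \cref{ex:Z-corollary}.

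A few small precision points:
\begin{itemize}
\item The nearby kink points form the whole line $\{(0,x',x'-5)\}$, not just $(0,5,0)$. You should say explicitly that the local cone at each of them is the same translate; your limiting argument relies on this.
\item Under the lemma's hypotheses $0\in G(u,x)$ and $x>0$, the only ``otherwise'' case is $(u,x)=(0,5)$. Your kink computation already covers it, so the remark about points with $0\notin G(u,x)$ is not needed.
\item Like the paper, you tacitly replace the $\epsilon$-normals in the definition of $D^*$ by Fréchet normals. This is the standard reduction for closed sets in finite dimensions, but it is worth stating.
\end{itemize}
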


\begin{proof}
    The condition $0\in G(u,x)$ implies $u=\max(0, 5-x)$ by \cref{ex:S-lemma}. Therefore $u>0$ when $x\in(0,5)$ and $u=0$ when $x\ge5.$
    We consider the case $u>0$ first. Then $G$ is differentiable at $(u, x)$, so by \cite[Theorem 20.14]{clason2020introduction} we have $\frechetCod G(u,x|0)(w) = D^* G(u,x|0)(w) = \{G'(u,x)^*w\}=\{(w, w)\}$ . Therefore, as claimed, $x^*=w=-u^*$.


    We next find $(-u^*, x^*)\in\frechetCod G(u,x|w_*)(w)$ for $u=0$ and any $w_* \in G(u, x)$ close to zero.
    By $w_* \in G(u, x)$, this requires $-x \le w_* + 5 \le x$.
    We separate the study into two cases: $-x < w_* + 5 < x$ and $w_* + 5 = x$, the lower bound not being possible for $w_* \approx 0$.
    In either case, by the definition of the Fréchet coderivative
    \begin{equation}
        \label{eq:G-coderivative-def-2}
        \limsup_{\this x \to x,\, \this u \to u,\, G(\this u, \thisx) \ni \this w_* \to 0}
        \frac{-\dualprod{w}{\this w_* - w_*} + \dualprod{x^*}{\thisx-x}-\dualprod{u^*}{\this u - u}}
        {\norm{(\thisx-x,\this u - u,\this w_* - 0)}}
        \le 0.
    \end{equation}
    We consider the case $-x < w_* + 5 < x$ first.
    First of all, taking $u^k\to u=0$ and $x^k\to x$, we can only have $G(u^k,x^k) \ni w_*^k \to w_*$ in \cref{eq:G-example} if $u^k=0$ and $x^k>0$ for all $k$ large enough.
    Taking also $w^k_*=w_*$ and passing to the limit, it follows that, necessarily, $x^*=0$, as claimed.
    On the other hand,
    we can take $\this w_*\in G(\thisu, \thisx) = \thisx[-1,1] - 5, \this w_*\to w_*$  such that either $\this w_*>w_*$ or $\this w_*<w_*$ for all $k\in\N.$ Therefore for any $w\in\R$ we have $\sup -\dualprod{w}{\this w_*-w_*} = |w(\this w_*-w_*)|$ and thus \cref{eq:G-coderivative-def-2} can hold only if also $w=0$.
    Clearly, on the other hand, the choice $x^*=w=0$ satisfies \cref{eq:G-coderivative-def-2}.
    Thus, taking $w_*=0$, we prove the Fréchet coderivative expression for $u=0$ and $x \in (0, 5)$.

    Consider the case $u=0$ and $w_* + 5 = x$.
    Take $x^k>x$ and $u^k=0$.
    Then $0\in G(u^k, x^k)$, so we can take $w_k^*=0$ in \cref{eq:G-coderivative-def-2}.
    It follows that \cref{eq:G-coderivative-def-2} can hold only if $\limsup_{k \to \infty} \dualprod{x^*}{x^k-x}/\norm{x^k-x}\le 0.$
    Since, we have taken any $x<x^k\to x$, we must have $x^*\le 0$.
    On the other hand, taking $x^k<x$ and $\thisu = w_* + 5 -\thisx$ we have $\this w_* \in G(u^k, x^k)=\{0\}$.
    Therefore we get that the limsup in \cref{eq:G-coderivative-def-2} is at least
    \begin{align*}
        \lim_{x>\thisx\to x}
        \frac{\dualprod{x^*}{\thisx-x}-\dualprod{u^*}{\this u}}
        {\norm{(\thisx-x,\this u,0)}}
        =
        \lim_{5>\thisx\to x}
        \frac{(x^*+u^*)(\thisx - x)}
        {\norm{(\thisx-x,x-\thisx,0)}}
        = -\frac{1}{\sqrt{2}}(x^*+u^*),
    \end{align*}
    which is non-positive only if $x^*\ge -u^* = 2 - 0 = 2.$
    However, this can not hold simultaneously with $x^*\le 0.$
    Therefore, no solution exists for $u=0$ and $x=w^*+5$.

    Finally, we consider the case $x=5$ for the limiting coderivative, i.e., we find which $x^*$ and $w$ satisfy $(-u^*, x^*)\in D^* G(u,x|0)(w).$
    By the definition of limiting coderivative this is equivalent to the existence of $(u^*_k, x^*_k)\in\frechetCod G(\thisu, \thisx|\this w_*)(\this w)$ such that $(-u^*_k, x^*_k, \this w_*)\to (-u^*, x^*, 0)$ with $\thisu\to u, \thisx \to x$ and $\this w \to w.$
    By the proof above, the only possibilities are
    \begin{itemize}
        \item $(-u^*_k, x^*_k)=(2, 0)$ with $\thisu=0,\thisx > 5 + \this w_*$, $\this w_* \to 0$,  and $\this w=0.$
        \item $(-u^*_k, x^*_k)=(2-\this u, 2-\this u)\to (2,2)$ with $\thisu=5-\thisx\to 0,\thisx < 5$ and $\this w=2-\this u\to 2.$
    \end{itemize}
    Therefore, $x^*=w=0$ and $x^*=w=2$ are the solutions that satisfy  $(-u^*, x^*)\in D^* G(u,x|0)(w).$
\end{proof}

Recalling definitions of $Z$ and $\COsubdiffS_2 F$ for the limiting coderivative from \cref{eq:tracking:adjoint-solution-map,def:set-valued-target} 
we obtain the following corollary.

\begin{corollary}
    \label{ex:Z-corollary}
    Let $G$ be given by \cref{eq:G-example}. Then the  $\COsubdiffS_2 E$ and $Z$ corresponding to the limiting coderivative satisfies
    \[
        \COsubdiffS_2 E(x) = Z(x) =
        \begin{cases}
            \{x-3\}  & \text{if } 0 < x < 5,
            \\
            \{0, 2\} & \text{if } x = 5,
            \\
            \{0\}    & \text{if } x>5,
        \end{cases}
    \]
    as well as
    \[
        \Zunext =
        \begin{cases}
            \{2 - \nextu\} & \text{if } 0 < x < 5 + \Delta w_*^{k+1} \text{ and } \nextu > 0,
            \\
            \{0, 2\}       & \text{if } x = 5 + \Delta w_*^{k+1} \text{ and } \nextu = 0,
            \\
            \{0\}          & \text{if } x>5 + \Delta w_*^{k+1} \text{ and } \nextu = 0.
        \end{cases}
    \]
\end{corollary}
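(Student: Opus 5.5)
This follows by reading off \cref{theorem:example-adjoint}, limiting-coderivative part, against the definitions \eqref{eq:tracking:adjoint-solution-map} and \eqref{def:set-valued-target}. For $x>0$, \cref{ex:S-lemma} shows that $S(x)=\{S_u(x)\}=\{\max(0,5-x)\}$ is a singleton. So in \eqref{def:set-valued-target} the union over $u\in S(x)$ reduces to the single point $u=S_u(x)$, and $Z(x)=\breve Z(S_u(x),x,0)$ is evaluated at that same point. With $u^*=J'(u)=u-2$, the defining condition of both sets is $(-u^*,x^*)\in\range D^*G(u,x|0)$, i.e.\ $(-J'(u),x^*)\in D^*G(u,x|0)(w)$ for some $w$. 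Membership $x^*\in\COsubdiffS_2E(x)$ asks for the existence of some $w$, and membership $w\in Z(x)$ asks for the existence of some $x^*$. By \eqref{ex:adjoint-result-limiting} the pairs satisfy $x^*=w$, so the two projections coincide and $\COsubdiffS_2E(x)=Z(x)$.

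I would then go through the cases of \eqref{ex:adjoint-result-limiting}. For $0<x<5$ we have $u=5-x$, hence $x^*=w=2-u=x-3$. For $x>5$ we have $u=0$ and the value is $\{0\}$. For $x=5$ we have $u=0$ and the value is $\{0,2\}$. As a sanity check, the first case is consistent with $E'(x)=x-3$ from \eqref{eq:example:F}.

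For the perturbed set $\Zunext$, I would use the translation observation in the remark after \cref{ex:S-lemma}. Since $G(u,x)-\Delta w_*^{k+1}=(u-(5+\Delta w_*^{k+1}))+x\,\partial|\cdot|(u)$, the constant $5$ is simply replaced by $5+\Delta w_*^{k+1}$. I would therefore redo the proof of \cref{theorem:example-adjoint} for $D^*G(\nextu,\thisx|\Delta w_*^{k+1})$ with $5$ replaced by $5+\Delta w_*^{k+1}$. That argument only uses the local graph structure of $G$ near the base point, and adding a constant leaves the coderivative unchanged up to shifting the base value. This gives the three cases: $x<5+\Delta w_*^{k+1}$ with $\nextu>0$ giving $\{2-\nextu\}$, equality with $\nextu=0$ giving $\{0,2\}$, and $x>5+\Delta w_*^{k+1}$ with $\nextu=0$ giving $\{0\}$. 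In the first case I keep the value as $2-\nextu=-J'(\nextu)$ and do not rewrite it in terms of $x$.

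The main obstacle is making sure the perturbed computation is a genuine translation. In the proof of \cref{theorem:example-adjoint}, the kink case used the specific number $2=-u^*$ at $u=0$. In the perturbed setting $u^*=J'(\nextu)$ is still $-2$ when $\nextu=0$, so the two limiting branches still produce $\{0,2\}$. I would state this explicitly when transcribing the argument.
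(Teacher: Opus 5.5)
Your proposal is correct and takes the same route as the paper. The paper gives no separate proof: it reads the corollary off the limiting-coderivative case of \cref{theorem:example-adjoint} via the definitions \eqref{eq:tracking:adjoint-solution-map} and \eqref{def:set-valued-target}, exactly as you do using $x^*=w$ and the single-valuedness of $S$ from \cref{ex:S-lemma}. Your handling of the perturbed set — translating the graph of $G$ by $\Delta w_*^{k+1}$ while $J'(0)=-2$ stays unchanged — is precisely the translation observation the paper records in the remark after \cref{ex:S-lemma}.
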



We next show that $E$ satisfies the set-valued descent condition \cref{ineq:set-valued-descent-inexact}.

\begin{lemma}
    \label{lemma:ex-set-valued-descent}
    Let function $R=\delta_{[\epsilon,\infty)}.$ The function $E$ given in \eqref{eq:example:F}, satisfies the inequality \cref{ineq:set-valued-descent-inexact} for $\COsubdiffS E= \COsubdiffS_2 E$ and $L=1.$ 
\end{lemma}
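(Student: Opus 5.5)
We have to show, for every $x^k\in\Dom R=[\epsilon,\infty)$ and every $x\in[\epsilon,\infty)$,
\[
    \inf_{x^*\in \COsubdiffS_2 E(x^k)} x^*(x-x^k) \ge E(x)-E(x^k)-\tfrac12 (x-x^k)^2 .
\]
We use \cref{ex:Z-corollary} for $\COsubdiffS_2 E$. Since $\Dom R\subset(0,\infty)$, we only need the three regimes $x^k<5$, $x^k=5$ and $x^k>5$. The target set is a singleton except at $x^k=5$, where both elements must be checked. Throughout, write $E=\min(q,2)$ only in the sense of a comparison: $E$ is the piecewise function \eqref{eq:example:F}, and $q(x)\defeq\frac12(x-3)^2$.

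\textbf{Case $x^k\in[\epsilon,5)$.} Here $\COsubdiffS_2 E(x^k)=\{x^k-3\}=\{q'(x^k)\}$. Since $q$ is quadratic with curvature $1$,
\[
    q(x)=q(x^k)+(x^k-3)(x-x^k)+\tfrac12(x-x^k)^2 .
\]
It therefore suffices to show $E(x)\le q(x)$ for all $x\ge\epsilon$. For $x\le5$ we have equality. For $x>5$ we have $E(x)=2<q(x)$, since $q(5)=2$ and $q$ is increasing on $[3,\infty)$. With $E(x^k)=q(x^k)$, the required inequality follows directly with $L=1$.

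\textbf{Case $x^k>5$.} Here $\COsubdiffS_2 E(x^k)=\{0\}$ and $E(x^k)=2$, so we need $0\ge E(x)-2-\frac12(x-x^k)^2$. This holds because $E\le2$ on $[\epsilon,\infty)$: on $[\epsilon,5]$ we have $q\le\max(q(\epsilon),q(5))$ with $q(5)=2$, and $q(\epsilon)=\frac12(3-\epsilon)^2<2$ provided $\epsilon>1$.

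This last point is the main obstacle. For a genuinely small $\epsilon$, $q(\epsilon)\approx 4.5>2$, and the $\frac12(x-x^k)^2$ term must compensate. Writing $x<3$ and $x^k>5$, the needed inequality is
\[
    \tfrac12(x-3)^2-2\le\tfrac12(x^k-x)^2 .
\]
This holds because $x^k-x>5-x\ge 3-x$, so $\frac12(x^k-x)^2\ge\frac12(3-x)^2>\frac12(x-3)^2-2$. The subcase $x\in[3,5]$ follows from $q\le2$ there, and $x>5$ gives $E(x)=2$ directly. So the case holds for every $\epsilon>0$.

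\textbf{Case $x^k=5$.} The infimum is over $\{0,2\}$, i.e.\ it equals $\min(0,2(x-5))$, and $E(5)=2$. For $x\ge5$ the infimum is $0$ and $E(x)-2=0$, so the inequality holds. For $x<5$ the infimum is $2(x-5)$, and we use the quadratic identity of the first case at $x^k=5$:
\[
    q(x)-2=2(x-5)+\tfrac12(x-5)^2 ,
\]
which gives exactly the required bound since $E(x)=q(x)$ there. This completes the verification with $L=1$.
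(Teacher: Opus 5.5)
Your verification is correct and is essentially the paper's own elementary case check with the target sets of \cref{ex:Z-corollary}, including the identical computation at $x^k=5$. The differences are bookkeeping: for $x^k<5$ your single majorant $E\le q$ with $E(x^k)=q(x^k)$ covers at once the paper's two subcases (the descent lemma when $5$ is not between $x$ and $x^k$, and the explicit identity when $x>5$), and for $x^k>5$ you split at $x=3$ where the paper splits at $x=1$. In a write-up, drop the sentence asserting $E=\min(q,2)$, which is false on $[0,1)$ and never used. Also drop the preliminary ``provided $\epsilon>1$'' argument, since your bound $\tfrac12(x^k-x)^2\ge\tfrac12(3-x)^2=q(x)$ for $x<3$ already covers every $\epsilon>0$.
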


\begin{proof}
    We divide the verification into several cases, depending on the values of the base point $x^k$, and of $x$:
    \begin{itemize}
        \item If $\thisx\neq 5$ and $5$ is not in the interval between $x$ and $\thisx,$
              then $E$ is Lipschitz-differentiable, and we obtain \cref{ineq:set-valued-descent-inexact} for given $x^k$and $x$ by \cite[Lemma 7.1]{clason2020introduction}.
        \item If $\thisx>5$ and $x<5,$ then $x^*\in \COsubdiffS E(x^k) =\{0\}$ by \cref{ex:Z-corollary} and \cref{ineq:set-valued-descent-inexact} reduces to
              \begin{equation}
                  \label{ex:set-valued-descent-1}
                  0 \ge E(x) - E(\thisx) - \frac{1}{2}(x-\thisx)^2.
              \end{equation}
              When $x<1$ we have, as required,
              \[
                  E(x) - E(\thisx) - \frac{1}{2}(x - \thisx)^2 < \frac{9}{2} - 2  - \frac{1}{2}(4)^2 < 3-8 < 0.
              \]
              When $x\ge 1$, \cref{ex:set-valued-descent-1} also holds since
              $E(x) = \frac{1}{2}(x-3)^2 \le 2 = E(\thisx).$
        \item If $\thisx < 5$ and $x>5$, we verify the claim from
              \[
                  \begin{split}
                      0
                       &
                      \ge 2 - \frac{1}{2}(x-3)^2
                      = 2 - \frac{1}{2}(\thisx - 3 - (\thisx - x))^2
                      \\
                       &
                      = 2 - \frac{1}{2}(\thisx-3)^2 -\frac{1}{2}(x-\thisx)^2 + (x^k - 3)(\thisx - x).
                      \\
                       &
                      = E(x) - E(x^k) - \frac{1}{2}(x-x^k)^2 - \inf_{x^* \in \COsubdiffS E(x^k)} \dualprod{x^*}{x-x^k}
                  \end{split}
              \]
              where the last equality follows from the definition of $E$ and \cref{ex:Z-corollary}.
        \item If $\thisx=5,$ then it holds
              \[
                  \inf_{x^* \in \COsubdiffS E(x^k)}\dualprod{x^*}{x-x^k} =
                  \begin{cases}
                      0      & \text{if } x\ge 5,
                      \\
                      2(x-5) & \text{if } x < 5
                  \end{cases}
              \]
              by \cref{ex:Z-corollary}.
              When $x\ge 5$, \cref{ineq:set-valued-descent-inexact} again reads as \cref{ex:set-valued-descent-1} and holds since $E(x) = 2 = E(x^k).$
              When $x<5$, \cref{ineq:set-valued-descent-inexact} reads as
              \[
                  2(x-5) \ge \frac{1}{2}(x-3)^2 - 2 - \frac{1}{2}(x-5)^2.
              \]
              We verify this condition and finish the proof with the help of
              \[
                  \frac{1}{2}(x-3)^2 - \frac{1}{2}(x-5)^2
                  = \frac{1}{2}([x-3]+[x-5])([x-3]-[x-5])
                  =  \frac{1}{2}(2x-8)2
                  = 2(x-4).
                  \qedhere
              \]
    \end{itemize}
\end{proof}

The next lemma proves that the adjoint tracking \cref{ass:tracking:main}\,\cref{item:tracking:main:adjoint-tracking} holds with the forward-backward inner steps and exact adjoint solver.

\begin{lemma}
    \label{lemma:example-adjoint-tracking}
    Let $f$, $g$, and $J$ be given by \eqref{eq:simple-example} as well as $G$ be given by \cref{eq:G-example}.
    Moreover, let $u^0\ge 0, x^0\ge\epsilon>0$ and
    \begin{itemize}
        \item the inner iterate $\nextu$ be produced by the forward-backward method with step length $\tau \in (0,1),$ i.e. $\nextu = \prox_{\tau g(\freevar,\thisx)}(\thisu - \tau\grad_u f(\thisu,\thisx)),$
        \item the adjoint iterate $\nexxt w\in \Zunext$ for  $Z$ given by \cref{ex:Z-corollary} and $\Delta \nexxt w_*\in G(\nextu,\thisx),$ 
        \item the outer iterate $\thisx=\proj_{[\epsilon,\infty)}(\prev x - \sigma \widetilde x^*_k)$ be produced with the forward-backward method for $\sigma > 0$ and the differential estimate $\widetilde x^*_k$ satisfying $(-J'(\thisu), \widetilde x^*_k)\in D^*G(\thisu, \prev x| \Delta \this w_*)(\this w).$
    \end{itemize}
    Then the adjoint tracking \cref{ass:tracking:main}\,\cref{item:tracking:main:adjoint-tracking} holds for $\mu_u, \pi_w>0$ and $\kappa_w>1$ satisfying $ \mu_u \ge 1 + \kappa_w,$ \\ $(1+\kappa_w)\norm{\freevar}_U \le \mu_u \inv\tau\norm{\freevar}_*$ and $\pi_w \ge \inv\sigma\kappa_w.$
\end{lemma}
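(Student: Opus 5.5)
The plan is to go through \cref{thm:adjoint-ZZ}: since the adjoint iterate is exact, $\nexxt w\in\Zunext$, the adjoint algorithm contractivity \eqref{ineq:linear-adjoint-convergence-lem} holds trivially with $\Delta u^*_{k+1}=0$. The left-hand side of \eqref{ineq:linear-adjoint-convergence-lem} is then zero. It therefore suffices to verify the one-sided bi-Lipschitz-like property \eqref{ineq:adjoint-ZZ}, and to choose the optimal selection $S_w(\thisx)\in Z(\thisx)$ as a nearest point, which exists because $Z(\thisx)$ is finite by \cref{ex:Z-corollary}.

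With $\dist(\Zunext,\nexxt w)=0$, \eqref{ineq:adjoint-ZZ} reduces to
\[
\kappa_w\dist(Z(\thisx),\nexxt w)\le \dist(Z(\prev x),\this w)-\dist(\Zunext,\this w)+\pi_w\abs{\thisx-\prev x}+\mu_u\bigl(\abs{\nextu-S_u(\thisx)}+\abs{\Delta\nexxt w_*}\bigr).
\]
I would bound the two differences separately. For the first, use $\dist(Z(\prev x),\this w)-\dist(\Zunext,\this w)\ge -\sup_{z\in\Zunext}\dist(Z(\prev x),z)$, i.e.\ the reverse of the expansion in \cref{lemma:adjoint-tracking:aubin}. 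Then I would do a case analysis using the explicit sets of \cref{ex:Z-corollary}, according to the regimes $x<5$, $x=5$, $x>5$ for $\thisx$ and $\prev x$, and $\nextu>0$ versus $\nextu=0$.

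In the smooth regime ($\thisx<5$, $\nextu>0$) both $Z(\thisx)=\{\thisx-3\}=\{2-S_u(\thisx)\}$ and $\Zunext=\{2-\nextu\}$ are singletons. The left-hand side is then $\kappa_w\abs{\nextu-S_u(\thisx)}$, and the other difference is at most $\abs{\nextu-S_u(\thisx)}+\abs{\thisx-\prev x}$. So $\mu_u\ge1+\kappa_w$ and $\pi_w\ge1$ suffice; in fact $\pi_w\ge\kappa_w\inv\sigma$ will cover this. In the regime $\thisx>5$ with $\nextu=0$, all sets equal $\{0\}$ and the only issue is whether $\prev x$ lies on the other side of $5$.

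For the forward-backward inner step I would use the explicit form $\nextu=\max(0,\thisu-\tau(\thisu-5)-\tau\thisx)$ and $\Delta\nexxt w_*=\inv\tau(\thisu-\nextu)$ (up to sign). This gives the relation between $\norm{\cdot}_U$ and $\inv\tau\norm{\cdot}_*$ needed in the hypothesis $(1+\kappa_w)\norm{\cdot}_U\le\mu_u\inv\tau\norm{\cdot}_*$.

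The main obstacle is the mismatch cases: $\thisx$ and $\prev x$ on opposite sides of $5$, or $\nextu=0$ while $\thisx<5$, so that $\Zunext=\{0\}$ but $Z(\thisx)=\{\thisx-3\}$, a jump of size about $2$. There the jump must be paid for either by the inner error term or by $\pi_w\abs{\thisx-\prev x}$. For the first, I would show that $\nextu=0$ with $S_u(\thisx)>0$ forces $\abs{\Delta\nexxt w_*}$, via the $\inv\tau$ scaling, to dominate the gap. For the second, I would use the outer step $\thisx-\prev x=-\sigma\widetilde x^*_k$ together with $\widetilde x^*_k\in\{0,2-\thisu\}$ from \cref{theorem:example-adjoint}. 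This ties the outer jump to the adjoint value, and the condition $\pi_w\ge\inv\sigma\kappa_w$ gives $\pi_w\abs{\thisx-\prev x}\ge\kappa_w\abs{\widetilde x^*_k}$, which should absorb the left-hand side $\kappa_w\dist(Z(\thisx),\nexxt w)\le\kappa_w\cdot 2$. I would check each such crossing case individually; this is where I expect the most work and possible need for sharper bookkeeping.
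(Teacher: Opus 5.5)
Your reduction via \cref{thm:adjoint-ZZ} to \eqref{ineq:adjoint-ZZ} with exact adjoints matches the paper. The problem is how you control the $w^k$-terms: it does not work under the stated hypotheses.

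You bound $\dist(Z(x^{k-1}),w^k)-\dist(Z(u^{k+1},x^k,\Delta w^{k+1}_*),w^k)$ by an excess between the two adjoint sets. This discards the fact that $w^k$ is itself an exact adjoint from the previous iteration, $w^k\in\breve Z(u^k,x^{k-1},\Delta w^k_*)$, so by \cref{ex:Z-corollary} $w^k=2-u^k$ whenever $u^k>0$. Without it, in the smooth regime you must pay $|S_u(x^k)-S_u(x^{k-1})|=|x^k-x^{k-1}|$ with $\pi_w|x^k-x^{k-1}|$, i.e.\ you need $\pi_w\ge1$. This does not follow from $\pi_w\ge\kappa_w\sigma^{-1}$ when $\sigma>\kappa_w$, so your remark that the latter ``covers'' it is false.

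Concretely, take $\tau=1/2$, $\sigma=10$, $x^0=4.1$, $u^0=2.9$. Then $u^1=1.9$, $w^1=\widetilde x^*_1=0.1$, $x^1=3.1$, $u^2=1.9=S_u(x^1)$ and $\Delta w^2_*=0$. At $k=1$ your lower bound for the difference is $-1$, and the only other positive term is $\pi_w|x^1-x^0|=\pi_w$. With $\pi_w=\kappa_w/10$ your estimate therefore fails, although the actual difference is $+1$.

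Moreover, the excess inequality you wrote has the roles reversed. The valid bound is $-\sup_{z\in Z(x^{k-1})}\dist(Z(u^{k+1},x^k,\Delta w^{k+1}_*),z)$ (compare $d_2$ in \cref{lemma:adjoint-tracking:aubin}). Yours fails, e.g., for $Z(x^{k-1})=\{0,2\}$, $Z(u^{k+1},x^k,\Delta w^{k+1}_*)=\{0\}$, $w^k=2$, which is exactly the situation at the kink $x=5$.

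The paper avoids this by organising the case analysis by $u^k$ (namely $u^k=0$ with $w^k\in\{0,2\}$; $u^k\in(0,2]$; $u^k>2$) and computing $\dist(Z(u^{k+1},x^k,\Delta w^{k+1}_*),w^k)$ directly. In the smooth regime this equals $|u^{k+1}-u^k|$. It is absorbed by $\mu_u\norm{\Delta w^{k+1}_*}_*$ via $(1+\kappa_w)\norm{\cdot}_U\le\mu_u\tau^{-1}\norm{\cdot}_*$, while $\dist(Z(x^{k-1}),w^k)\ge0$ is simply dropped, so $\pi_w$ plays no role there.

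The $\pi_w$ term is needed only when $x^k>5$ (so $x^k=x^{k-1}-\sigma w^k$) but $w^{k+1}=2-u^{k+1}$. There one must use $\kappa_w|2-u^{k+1}|\le\kappa_w|2-u^k|+\kappa_w|u^{k+1}-u^k|$ together with $|x^k-x^{k-1}|=\sigma|2-u^k|$. Your crude bound $2\kappa_w$ cannot be absorbed by $\kappa_w|\widetilde x^*_k|$ when $|2-u^k|$ is small; e.g.\ $u^k=2$ gives $x^k=x^{k-1}$.

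Finally, the case $u^{k+1}=0$ with $x^k<5$, which you plan to pay for through $\Delta w^{k+1}_*$, is vacuous. Since $u^k\ge0$, the explicit forward-backward step gives $u^{k+1}\ge\tau(5-x^k)>0$.
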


\begin{proof}
    To prove that \cref{ass:tracking:main}\,\cref{item:tracking:main:adjoint-tracking}, it suffices to use \cref{thm:adjoint-ZZ}, for which we need to verify \cref{ineq:adjoint-ZZ}, which we can equivalently write as
    \begin{multline}
        \label{ineq:ex-adjoint-proof-1}
        \kappa_w \dist( Z(x^k), \nexxt w)
        + \dist(\Zunext, \this w)
        \\
        \le
        \dist(Z(x^{k-1}), \this w)
        +
        \pi_w\norm{x^k - x^{k-1}}
        + \mu_u\left(
        \norm{\nextu - S_u(\thisx)}_U +
        \norm{\Delta\nexxt w_*}_*
        \right).
    \end{multline}
    The implicit form of the forward-backward method
    \[
        \Delta \nexxt w = \inv\tau (\nextu - \thisu) \in \subdiff_u g(\nextu,\thisx) + \grad_{u}f(\thisu,\thisx).
    \]
    and the assumption $(1+\kappa_w)\norm{\freevar}_U \le \mu_u \inv\tau\norm{\freevar}_*$
    let us show \cref{ineq:ex-adjoint-proof-1} holds by proving
    \begin{multline}
        \label{ineq:ex-adjoint-proof}
        \kappa_w \dist( Z(x^k), \nexxt w)
        + \dist(\Zunext, \this w)
        \\
        \le
        \dist(Z(x^{k-1}), \this w)
        +
        \pi_w\norm{x^k - x^{k-1}}
        + \mu_u
        \norm{\nextu - S_u(\thisx)}_U +
        (1+\kappa_w)\norm{\nextu - \thisu}_U.
    \end{multline}

    We use the following facts throughout the proof:
    \begin{enumerate}[label=(\roman*)]
        \item
              \label{ex:item-i}
              $S_u(x) = \max(0, 5-x)$ from \cref{ex:S-lemma}, which combined with \cref{ex:Z-corollary} gives the form
              \[
                  Z(x) =
                  \begin{cases}
                      \{2 - S_u(x)\}    & \text{if } 0 < x < 5,
                      \\
                      \{0, 2 - S_u(x)\} & \text{if } x = 5,
                      \\
                      \{0\}             & \text{if } x>5.
                  \end{cases}
              \]
        \item
              \label{ex:item-ii}
              \cref{theorem:example-adjoint} proves that $x^*=w$ for $x^*$ and $w$ satisfying $(-u^*, x^*)\in D^* G(u,x|0)(w).$ Since $w^k\in Z(\thisu, \prev x, \Delta \this w_*),$ we obtain similarly that
              the differential estimate $\widetilde x^*_k = w^k.$ Therefore we get for $\thisx > 5$ that the outer step
              $\thisx = \proj_{[\epsilon,\infty)}(\prev x - \sigma \widetilde x^*_k) = \prev x - \sigma w^k,$
        \item
              \label{ex:item-iv}
              \cref{thm:subreg:convergence-result-sub-peb:general} implies $\kappa_u\norm{\nextu - S_u(\thisx)}_U \le \norm{\thisu - S_u(\thisx)}_U$ for some $\kappa_u>1,$ since $f$ is Lipschitz-differentiable with $L=1,$
        \item Since the adjoint is solved exactly,  $\dist(\Zunext, \nexxt w) = 0.$
        \item If $\thisu\ge0$ and $\tau\in(0,1),$ then $\thisu - \tau(\thisu - 5) \ge 0.$ Thus
        the forward-backward step reads as
              \[
                  \begin{split}
                      \nextu
                       &
                      =
                      \prox_{\tau\thisx|\freevar|}(\thisu - \tau(\thisu - 5))
                      \\
                       &
                      =
                      \max(0, \thisu - \tau(\thisu - 5) - \tau\thisx)
                      \\
                       &
                      = \max(0, (1-\tau)\thisu + \tau(5 - \thisx)).
                  \end{split}
              \]
              Therefore $u^{(0)}>0$ implies that $\thisu\ge0$ for all $k\in\N.$
    \end{enumerate}

    We now prove \eqref{ineq:ex-adjoint-proof} through various cases:
    \begin{enumerate}[label=(\alph*)]
        \item $\thisu=0,$ in which case we consider the sub-cases:
              \begin{enumerate}[label=(\alph{enumi}.\arabic*)]
                  \item $w^k=0,$ then we have $\thisx = \prev x$ by the structure of the forward-backward updates, as observed above. We consider the following two sub-cases:
                        \begin{enumerate}[label=(\alph{enumi}.\arabic{enumii}.\arabic*)]
                            \item
                                  $\thisx \ge 5,$ then $S_u(\prev x) = S_u(\thisx) = 0$ by \cref{ex:item-i}, and also $\nextu=0$ because \cref{ex:item-iv} implies
                                  \[
                                      \norm{\nextu - 0}_U = \norm{\nextu - S_u(\thisx)}_U \le  \norm{\thisu - S_u(\prev x)}_U = \norm{0 - 0}_U = 0.
                                  \]
                                  Since $\nextu = S_u(\thisx)$ and $\Delta w_*^{k+1} = \inv\tau(\nextu - \thisu)=0,$ we have $\Zunext = Z(\thisx).$ Therefore, we have $\nexxt w = 0\in Z(\thisx)$ and $w^k\in\Zunext$ implying that the left side of \cref{ineq:ex-adjoint-proof} is zero, verifying the condition.

                            \item $\thisx < 5,$ then $S_u(\thisx)>0$  by \cref{ex:item-i}, and thus also $\nextu>0$ since it is closer to $S_u(\thisx)>0$ than $\thisu=0.$ Therefore, $\nexxt w = 2 - \nextu = \Zunext$ and $Z(\thisx) = 2 - S_u(\thisx).$ The left side of the inequality \cref{ineq:ex-adjoint-proof} is now
                                  \begin{align*}
                                      \kappa_w\norm{2 - S_u(\thisx) - (2 - & \nextu)}_U + \norm{2- \nextu - 0}_U
                                      \\
                                                                           &
                                      \le
                                      (1+\kappa_w)\norm{\nextu - S_u(\thisx)}_U
                                      +
                                      \norm{2-S_u(\thisx)}_U
                                      \\
                                                                           &
                                      =
                                      (1+\kappa_w)\norm{\nextu - S_u(\thisx)}_U
                                      +
                                      \norm{2-S_u(\prev x)}_U
                                      ,
                                  \end{align*}
                                  and the right side of \cref{ineq:ex-adjoint-proof} is
                                  \begin{align*}
                                      \norm{2-S_u(\prev x)}_U +
                                      \mu_u\left(
                                      \norm{\nextu - S_u(\thisx)}_U +  \tau\norm{\nextu - \thisu}_U
                                      \right).
                                  \end{align*}
                                  Therefore, \cref{ineq:ex-adjoint-proof} holds with $\mu_u \ge (1+\kappa_w).$
                        \end{enumerate}
                  \item $w^k= 2 - \thisu = 2,$ then $\thisx = \prev x - \sigma 2 < \prev x.$
                        We again consider two cases:
                        \begin{enumerate}[label=(\alph{enumi}.\arabic{enumii}.\arabic*)]
                            \item
                                  \label{item:a21}
                                  $\thisx <5,$ then  \cref{ex:item-i} again gives $S_u(\thisx)>0$ implying $\nextu > 0.$
                                  The left side of \cref{ineq:ex-adjoint-proof} is
                                  \[
                                      \kappa_w\norm{2 - S_u(\thisx) - (2 - \nextu)}_U + \norm{2- \nextu - (2 - \thisu)}_U
                                      =
                                      \kappa_w\norm{\nextu - S_u(\thisx)}_U + \norm{\nextu -\thisu}_U,
                                  \]
                                  implying the inequality holds for $\mu_u \ge \kappa_w.$

                            \item $\thisx \ge 5,$ then $\prev x >5$ implying $S_u(\prev x) = S_u(\thisx) = 0.$ Thus also $\nextu=0.$ Since $\nextu = S_u(\thisx)$ and $\Delta w_*^{k+1} = 0,$ we have $\Zunext = Z(\thisx)$ implying  $\nexxt w\in Z(\thisx).$ We can have $\Zunext=0$ and thus the left side of \cref{ineq:ex-adjoint-proof} equal $2$. However, also $Z(\thisx) = \{0\}$ implying $\dist(Z(\prev x).w^k) = 2$ and that the right side of \cref{ineq:ex-adjoint-proof} is larger than $2.$
                        \end{enumerate}
              \end{enumerate}
        \item $\thisu \in (0,2],$ implies $\this w=2-\thisu \le 0,$ and thus $\thisx \le \prev x.$ We check again sub-cases separately:
              \begin{enumerate}[label=(\alph{enumi}.\arabic*)]
                  \item \label{item:b1}
                        $\thisx \le 5,$ then $2-S(\thisx)\in Z(\thisx).$ We obtain
                        \[
                            \nextu \ge (1-\tau)\thisu + \tau(5 - \thisx) \ge (1-\tau)\thisu > 0
                        \]
                        from the inner step. Thus $\nexxt w = 2 - \nextu$ and this case is analogous to \cref{item:a21}.
                  \item $\thisx > 5,$ then $Z(\thisx) = 0$ and $Z(\prev x) = 0.$ Since $\nexxt w\in \Zunext \subset \{0, 2-\nextu\}$ by \cref{ex:Z-corollary}, we can have only either:
                        \begin{enumerate}[label=(\alph{enumi}.\arabic{enumii}.\arabic*)]
                            \item $\nexxt w = 0,$ then the left side of \cref{ineq:ex-adjoint-proof} is at most $\norm{2-\thisu}_U$, which is less than the right side that involves $\dist(Z(\prev x), \this w) = \norm{2-\thisu}_U.$
                            \item $\nexxt w = 2 - \nextu,$ then the left side of \cref{ineq:ex-adjoint-proof} is
                                  \[
                                      \kappa_w\norm{2-\nextu}_U + \norm{\nextu - \thisu}_U
                                      \le
                                      \kappa_w\norm{2-\thisu}_U + (1+\kappa_w)\norm{\nextu - \thisu}_U.
                                  \]
                                  The right side is
                                  \[
                                      \norm{2-\thisu}_U + \pi_w\norm{x^k - x^{k-1}}
                                      + \mu_u
                                      \norm{\nextu - S_u(\thisx)}_U +  (1 + \kappa_w)\norm{\nextu - \thisu}_U
                                      .
                                  \]
                                  Using the fact \cref{ex:item-ii} we have $x^k - x^{k-1} = \sigma w^k = \sigma(2-\thisu)$, which verifies \cref{ineq:ex-adjoint-proof} for $\pi_w \ge \inv\sigma(\kappa_w - 1).$
                        \end{enumerate}
              \end{enumerate}
        \item $\thisu > 2,$ then $\this w = 2- \thisu < 0.$ We consider the sub-cases:
              \begin{enumerate}[label=(\alph{enumi}.\arabic*)]
                  \item $\thisx \le 5,$ which is analogous to \cref{item:b1}.
                  \item $\thisx > 5,$ which we divide into two further sub-cases based on the implication of \cref{ex:Z-corollary},  $\Zunext \subset \{0, 2-\nextu\}$:
                        \begin{enumerate}[label=(\alph{enumi}.\arabic{enumii}.\arabic*)]
                            \item $\nexxt w \in\Zunext = \{0\},$ then either $\nextu = 0$ or $\nextu = 2$.
                                  In both cases
                                  \[
                                      \kappa_w \dist( Z(x^k), \nexxt w)
                                      +
                                      \dist(\Zunext, \this w) = 0 + \norm{2-\thisu}_U \le \norm{\nextu-\thisu}_U,
                                  \]
                                  which immediately verifies \cref{ineq:ex-adjoint-proof}.
                            \item $(2 - \nextu) \in \Zunext,$ then (if $0\notin\Zunext$) the left side of \cref{ineq:ex-adjoint-proof} is at most
                                  \begin{align*}
                                      \kappa_w\norm{2 - \nextu}_U + \norm{\nextu - \thisu}_U
                                       &
                                      \le
                                      \kappa_w\norm{2 - \thisu}_U + (1+\kappa_u)\norm{\nextu - \thisu}_U
                                      \\
                                       & =
                                      \kappa_w\inv\sigma\norm{\thisx - \prev x} + (1+\kappa_u)\norm{\nextu - \thisu}_U
                                  \end{align*}
                                  where the last equality follows from \cref{ex:item-ii}.
                                  Therefore, \cref{ineq:ex-adjoint-proof} holds for $\pi_w \ge \inv\sigma\kappa_w$ .
                                  \qedhere
                        \end{enumerate}
              \end{enumerate}
    \end{enumerate}
\end{proof}

\begin{remark}
    If the inner iterates $\nextu$ are good enough approximations of solutions $S_u(\thisx),$ the error is less than $2,$ the case can't happen. Then we can take less strict assumptions for $\pi_w,$ i.e. $\pi_w = \inv\sigma (\kappa_w -1)$ is enough.
\end{remark}

Finally, we can prove the assumptions of our main convergence \cref{thm:main-convergence}.

\begin{theorem}
    Let $J, R, f, g$, and $G$ be as in the example problem \cref{eq:simple-example,eq:G-example}, and $E$ be given by \cref{eq:example:F}, i.e $E=J\circ S_u$ for the inner solution mapping $S_u$ satisfying $0\in G(S_u(x),x)$.
    Moreover, let
    \begin{itemize}
        \item the inner iterate $\nextu$ be produced by the forward-backward method with step length $\tau \in (0,1)$, i.e., $\nextu = \prox_{\tau g(\freevar,\thisx)}(\thisu - \tau\grad_u f(\thisu,\thisx)),$
        \item the adjoint iterate $\nexxt w\in Z(\nextu, \thisx, \inv\tau(\nextu - \thisu))$ for  $Z$ given by \cref{ex:Z-corollary}, 
        \item the outer iterate $\thisx=\proj_{[0,\infty)}(\prev x - \sigma x^*_k)$ be produced with the forward-backward method for $\sigma > 0$ and the differential estimate $\widetilde x^*_k$ arising from $(-J'(\thisu), \widetilde x^*_k)\in D^*G(\thisu, \prev x| \inv\tau(\nextu - \thisu))(\this w).$
    \end{itemize}
    Assume the step lengths $\tau, \sigma$ and the constants $\mu_u, \pi_w>0$ and $\kappa_w>1$ to satisfy $ \mu_u \ge 1 + \kappa_w,$ \\ $(1+\kappa_w)\norm{\freevar}_U \le \mu_u \inv\tau\norm{\freevar}_*, \pi_w \ge \inv\sigma\kappa_w$,
    and $\sigma (2+\trackingressum[1]^2) < 2$ where $\kappa \defeq\min\{\kappa_u, \kappa_w\}>1$ and $\trackingressum[1]>0$ satisfies the bound \cref{eq:tracking:ressum}.
    Then $\norm{\nextu - S_u(\thisx)}_U \to 0$ and $\inf_{x^* \in [\COsubdiffS E+\subdiff R](\nextx)}\norm{x^* }\to 0.$
\end{theorem}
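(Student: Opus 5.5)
The plan is to verify the hypotheses of \cref{thm:main-convergence} for $E=J\circ S_u$, $R=\delta_{\ge\epsilon}$, with target sets $\COsubdiffS E=\COsubdiffS_2 E$ from \cref{ex:Z-corollary}. Once these hold, we read off both conclusions: $\norm{\nextu - S_u(\thisx)}_U\to 0$ and, via alternative \cref{item:convergence:main:subdiff}\ref{item:main-convergence:F}, $\inf_{x^* \in [\COsubdiffS E+\subdiff R](\nextx)}\norm{x^*}\to 0$.

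The global hypotheses are straightforward. $R$ is convex, proper and lower semicontinuous. Since $E\ge 0$ and $R\ge 0$, we have $\inf[E+R]>-\infty$. Taking $\Omega_X=X$ (or $[\epsilon,\infty)$), the locality condition \cref{eq:main-convergence:locality-lemma} is trivial. The set-valued descent inequality \cref{ineq:set-valued-descent-inexact} holds with $L=1$ by \cref{lemma:ex-set-valued-descent}, so the step condition $\sigma(L+1+\trackingressum[1]^2)<2$ becomes exactly the assumed $\sigma(2+\trackingressum[1]^2)<2$.

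Next I would check \cref{ass:tracking:main} item by item.

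\textbf{Inner tracking.} $S_u(x)=\max(0,5-x)$ is single-valued and $1$-Lipschitz by \cref{ex:S-lemma}, so \eqref{eq:inner-tracking:lip-cond} holds with $\pi_u=1$, and the inner selection is trivially optimal. The inner objective $\frac12(v-5)^2+x|v|$ is $1$-strongly convex, so $G(\freevar,x)$ is strongly monotone with a uniform factor. Hence \cref{thm:subreg:convergence-result-sub-peb:fb} applies with global neighbourhoods (\cref{rem:subreg:chaining}) and gives \eqref{ineq:linear-inner-convergence} with $\kappa_u>1$ uniform in $k$, where $\norm{\Delta w_*^{k+1}}_*$ is taken as a multiple of $\inv\tau|\nextu-\thisu|$. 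Then \cref{thm:inner-tracking} yields item \cref{item:tracking:main:inner-tracking}.

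\textbf{Adjoint tracking.} Item \cref{item:tracking:main:adjoint-tracking} is exactly \cref{lemma:example-adjoint-tracking} under the stated constraints on $\mu_u,\pi_w,\kappa_w$.

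\textbf{Differential transformation.} By \cref{theorem:example-adjoint}, the estimate satisfies $\tilde x^*_{k+1}=w^{k+1}$, and every target in $\COsubdiffS_2E(\thisx)$ equals the corresponding adjoint element. Choosing $\targetE\defeq S_w(\thisx)$, the optimal selection of $Z(\thisx)=\COsubdiffS_2E(\thisx)$, gives
\[
\norm{\nextEstE-\targetE}=\norm{w^{k+1}-S_w(\thisx)}\le\nextDistW,
\]
so $\alpha_u=0$ and $\alpha_w=1$. We also need $\targetE\in\COsubdiffS E(\thisx)$, which holds by construction.

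The main obstacle is alternative \ref{item:main-convergence:F}, the continuity-type condition \eqref{ineq:set-valued-descent-cont}. Alternative \ref{item:main-convergence:R} is unavailable because $\subdiff\delta_{\ge\epsilon}$ is not single-valued Lipschitz. Away from $x=5$, $\COsubdiffS_2E$ is a singleton depending $1$-Lipschitzly on $x$: it is $x-3$ on $(0,5)$ and $0$ on $(5,\infty)$, so small steps change the target by at most $\delta$. The difficulty is near $x=5$, where a step from just left of $5$ to just right of $5$ moves the target from about $2$ to $0$. I would handle this using the set value $\{0,2\}$ at $5$ together with a case analysis.

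If iterates eventually stay on one side of $5$, or hit $5$, the distance is small. If they cross $5$ infinitely often, then $\norm{\nextEstE-\targetE}\to 0$ and $\norm{\nextx-\thisx}\to 0$ (from \eqref{eq:main-convergence:initial}) force the estimates near both $2$ and $0$. With the outer step reading $\nextx-\thisx\approx-\sigma\nextEstE$, a target near $2$ pushes $x$ left, away from $5$, so crossings from right to left cannot persist with a target value $2$ on the right. I expect to need to exploit this sign structure carefully. If it fails, I would fall back on relaxing to the subsequential or $\liminf$ version, or use the $\{0,2\}$ slack to place $\targetE$ in the set at the crossing.
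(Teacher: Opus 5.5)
Your route is the paper's: you verify the hypotheses of \cref{thm:main-convergence} as follows.
\begin{itemize}
\item You take target sets $\COsubdiffS_2E$ and $L=1$ from \cref{lemma:ex-set-valued-descent}.
\item Inner tracking comes from Lipschitz continuity of $S_u$ plus forward--backward contractivity.
\item Adjoint tracking comes from \cref{lemma:example-adjoint-tracking}.
\item The differential transformation holds with $\alpha_u=0$, $\alpha_w=1$.
\item You correctly note that only the continuity alternative \eqref{ineq:set-valued-descent-cont} is available.
\end{itemize}
All of that is fine. The gap is that you never establish \eqref{ineq:set-valued-descent-cont}, and this is the only step requiring a real argument. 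Your description of the mechanism is garbled: to the right of $5$ the target is $0$, not $2$. Your fallbacks also would not prove the statement. A $\liminf$ version gives only subsequential convergence, not $\inf_{x^*\in[\COsubdiffS E+\subdiff R](x^{k+1})}\norm{x^*}\to0$. The slack in $\COsubdiffS_2E(5)=\{0,2\}$ exists only at the single point $x=5$, so it cannot absorb a crossing $x^k>5>x^{k+1}$.

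Here is how the argument closes. Use \eqref{eq:main-convergence:initial}; the proof of \cref{thm:main-convergence} derives it before either continuity alternative is used, so invoking it is not circular. Fix $\eta<\min\{1/2,\sigma,4-\epsilon\}$ and $K$ such that $\norm{x^{k+1}-x^k}<\eta$ and $\abs{\widetilde x^*_{k+1}-x^*_{k+1}}<\eta$ for all $k\ge K$.

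(1) There is no crossing from $(0,5)$ into $[5,\infty)$. Such a crossing would need $x^k>5-\eta$, so $x^*_{k+1}=x^k-3>2-\eta$ and hence $\widetilde x^*_{k+1}>0$. Then $x^{k+1}=\proj_{[\epsilon,\infty)}(x^k-\sigma\widetilde x^*_{k+1})<5$, a contradiction.

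(2) A crossing from $[5,\infty)$ into $(0,5)$ cannot be excluded by the step at iteration $k$. For $x^k>5$ the target and estimate are near $0$, so the step is tiny. Instead, look one step ahead. Landing at $x^{k+1}\in(5-\eta,5)$ gives $\widetilde x^*_{k+2}>2-2\eta\ge1$. Hence $\norm{x^{k+2}-x^{k+1}}\ge\min\{\sigma,x^{k+1}-\epsilon\}>\eta$, a contradiction. Similarly, $x^k=5$ with target $2$ forces $\norm{x^{k+1}-x^k}\ge\min\{\sigma,5-\epsilon\}>\eta$.

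So for $k\ge K$ there are two possibilities:
\begin{itemize}
\item All iterates lie in $[5,\infty)$. Then the target is $0\in\COsubdiffS E(x^{k+1})$.
\item All iterates lie in $(0,5)$. There $\COsubdiffS E(x)=\{x-3\}$, so $\dist(\COsubdiffS E(x^{k+1}),x^*_{k+1})=\norm{x^{k+1}-x^k}\to0$.
\end{itemize}
Either way \eqref{ineq:set-valued-descent-cont} holds. The paper's proof handles (1) essentially this way but dismisses (2) as ``similar''. Your remark that such crossings ``cannot persist'' is the right instinct, and the one-step look-ahead is what makes it rigorous.
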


\begin{proof}
    The condition $\inf[E+R]>-\infty$ holds because $E, R \ge0$.
    We can take $\Omega_X = [\epsilon, \infty),$ which implies the condition \cref{eq:main-convergence:locality-lemma}.
    Moreover, the set-valued descent inequality \cref{ineq:set-valued-descent-inexact} holds for $\COsubdiffS E= \COsubdiffS_2 E$ and $L=1$ by \cref{lemma:ex-set-valued-descent}. Therefore, we can take $\COsubdiffS_2 E(\thisx)$ given by \cref{ex:Z-corollary} as the target sets.
    Our $X=\R$ is also Hilbert space.

    We next consider \cref{ass:tracking:main}:
    \begin{itemize}
        \item  Since $S_u$ is Lipschitz continuous (and single-valued) by \cref{ex:S-lemma} and the forward-backward method satisfies the basic contractivity property \cref{ineq:linear-inner-convergence} by \cref{thm:subreg:convergence-result-sub-peb:general}, \cref{thm:inner-tracking} implies the inner tracking property, \cref{ass:tracking:main}\,\cref{item:tracking:main:inner-tracking}
        \item  We verify the adjoint tracking property, \cref{ass:tracking:main}\,\cref{item:tracking:main:adjoint-tracking}, in \cref{lemma:example-adjoint-tracking}.
        \item \cref{theorem:example-adjoint,ex:Z-corollary} demonstrate that we have $\nextEstE = \nexxt w\in\Zunext$ and $\COsubdiffS_2 E(\thisx)\ni\targetE=S_w(\thisx)\in Z(\thisx).$ Therefore, taking $d_{X^*} = \norm{\freevar}$, we see that
        \[
            \norm{\nextEstE - \targetE} \le
            \norm{\nexxt w - S_w(\thisx)}_W.
        \]
        This shows that the differential transformation \cref{ass:tracking:main}\,\cref{item:tracking:main:differential-transformation} holds for $\alpha_u=0$ and $\alpha_w=1.$
    \end{itemize}
    Therefore, \cref{ass:tracking:main} holds.


    We finally verify the condition \cref{item:convergence:main:subdiff}\cref{item:main-convergence:F} of \cref{thm:main-convergence}, i.e., for all $\epsilon>0$ there exists $\delta>0$ such that, for some $k_N\in \N$ for any $k \ge k_N,$
    \begin{equation}
        \label{ineq:set-valued-descent-cont-ex}
        \norm{\nextx-\thisx} \le \delta
        \implies
        \dist(\COsubdiffS E(\nextx), \targetE) \le \epsilon.
    \end{equation}
    If either $\thisx>5$ or $\thisx<5$ for all $k\ge k_N,$ the implication \cref{ineq:set-valued-descent-cont-ex} holds since $\COsubdiffS E$ is Lipschitz-continuous by \cref{ex:Z-corollary}.
    We consider next the case $\COsubdiffS E(\nextx) = 0$ and $\targetE = x-3 = 2-S_u(\thisx),$ i.e. $\nextx\ge 5$ and $\thisx \le 5$ with $\nextx\neq\thisx.$
    By \cref{lemma:main-convergence} we have $\nextu - S_u(\thisx) \to 0$ and thus for $k$ large enough also $\nextEstE\approx\targetE.$
    However, this implies $\norm{\nextx-\thisx} = \norm{\sigma \nextEstE} \approx \sigma (5 - 3) = 2\sigma,$ which is contradiction to $\norm{\nextx-\thisx} \le \delta.$ The case $\COsubdiffS E(\nextx) = x-3 = 2-S_u(\thisx)$ and $\targetE = 0$ produces contradiction similarly.
    Therefore, \cref{ineq:set-valued-descent-cont-ex} must hold for some $k_N\in \N$ for any $k \ge k_N.$
    We have proved that the assumptions of \cref{thm:main-convergence} hold, and thus the claim follows from it.
\end{proof}

\begin{remark}
    Note, however, that since $E=J\circ S_u$ is nonconvex, the convergence given by \cref{thm:main-convergence}, $\inf_{\bar x^*\in [\COsubdiffS E+\subdiff R](\nextx)}\norm{\bar x^*}\to 0,$ could be satisfied in local optima, in $[5,\infty)$ where $E$ is constant.
\end{remark}

\section{Total variation regularised inverse problems}
\label{sec:tv}

In this section, we base the inner problem on the ill-posed inverse problem
\[
    A_x u = m + \nu,
\]
where $A_x$ is a forward operator corresponding to the (hyper)parameters $x$.
It maps the unknown $u$ into corresponding measurable data.
The measurement is $m$, however, it may deviate from the true data by noise $\nu$.
Since the dimension of the measurements $m$ is likely smaller than the dimension of $x$, the system will not have a unique solution. The solution may also be highly sensitive to the noise.
Incorporating a data fidelity $d$ to model the noise, and adding (discretised) total variation regularisation to introduce our preconceptions of a good solution, we obtain the inner optimisation problem
\begin{equation}
    \label{eq:tv:inverse-problem}
    \min_u d(A_x u, m) + C(x)\norm{Ku}_{2,1}
\end{equation}
Here $K$ is a discretised differential operator.
The bilevel problem with \cref{eq:tv:inverse-problem} as the inner problem arises from the need to choose an optimal regularisation parameter $C(x)$ and a measurement operator $A_x$ with respect to the outer objective. The corresponding outer problem reads as
\begin{equation}
    \label{eq:bilevel-inverse-problem}
    \min_x J(u) + R(x)  \quad \text{such that $u$ solves \cref{eq:tv:inverse-problem}.}
\end{equation}
We recall that we write $F \defeq J \circ S$, where $S: x \mapsto u$ is the solution mapping of \eqref{eq:tv:inverse-problem}.

The regularisation term $\norm{Ku}_{2,1}$ is nonsmooth in $u$, and although convex, its proximal map is hard to evaluate.
We therefore use the primal-dual algorithm of \cref{ex:inner-pdps} for the solution of \eqref{eq:tv:inverse-problem}.
\Cref{ex:inner-pdps,ex:outer-fb} provide generic inner and outer algorithms for the bilevel problem \cref{eq:bilevel-inverse-problem}.
However, the adjoint algorithm needs to be specifically developed for this case.
That is the goal of this section.

First, in \cref{sec:tv:general} we provide general results on the adjoint inclusion, where both the data and regularisation terms in the inner problem can depend on the parameter $x$.
Then, in \cref{sec:tv:data}, we refine the results for the case where only the data term depends on $x$. This makes the differential transformation \cref{ass:tracking:main}\,\cref{item:tracking:main:differential-transformation} easier to satisfy as discussed in
\cref{sec:differential-transformation}.
All the spaces throughout this section are Hilbert spaces, so we are justified in identifying the spaces with their dual spaces. In particular, the inner optimality condition $G:U\times X\setto W_*$ where we can take $W_*=W=U$, because $G(\freevar, x)$ is a subdifferential in a Hilbert space. Thus also the coderivatives $\anyCod G(u,x|w): U \setto U\times X.$

\subsection{General results for the adjoint inclusion}
\label{sec:tv:general}

We consider first more general inner problem (primal) formulation that encompasses \cref{eq:tv:inverse-problem},
\begin{equation}
    \label{eq:sum-of-convex-objective}
    \min_{u_p} f(u_p, x) + g(Ku_p, x),
\end{equation}
with $f$ and $g$ convex functions.
The corresponding bilevel problem is
\begin{equation}
    \label{eq:primal-dual-bilevel-problem}
    \min_{u\in U_p\times U_d, x\in X} J(u) + R(x)\quad\text{subject to}\quad 0 \in G(u, x) \defeq
    \begin{pmatrix}
        \grad_{u_p} f(u_p, x) + K^*u_d
        \\
        -Ku_p + \subdiff_{u_d} g^*(u_d, x)
    \end{pmatrix},
\end{equation}
where the primal-dual optimality condition $0 \in G(u,x)$ arises from \cref{eq:sum-of-convex-objective} as discussed in \cref{ex:inner-pdps}.

The next lemma analyses the adjoint inclusion
\begin{equation}
    \label{eq:tv:adjoint-incl}
    (-u^*, x^*)\in \anyCod G(u,x|\Delta w_*)(w)
\end{equation}
for $\anyCod = \frechetCod$ or $\anyCod=D^*$.
For the basic adjoint inclusion in \eqref{eq:algorithm:full-oc}, we take
\begin{equation}
    \label{eq:tv:ustar-choices}
    u^*=J'(u).
\end{equation}
For the general step \eqref{eq:algorithm:abstract:adjoint} with $u=\nextu$, we would have $u^*=J'(\nextu)-\Delta u_{k+1}^*$ for a perturbation $\Delta u_{k+1}^* \approx 0$.
In practise, we will have $\Delta u_{k+1}^*$ arise implicitly from the inexact solution of \eqref{eq:tv:adjoint-incl} for $u^*=J'(\nextu)$.

\begin{lemma}
    \label{lemma:spesific-adjoint-general}
    Let $U = U_p\times U_d$ and $X$ be Hilbert spaces, $K\in \linear(U_p;U_d)$ and $G:U\times X \setto U$.
    Let $G$ be as in \eqref{eq:primal-dual-bilevel-problem} for $f:U_p\times X \to \R$ twice continuously differentiable and $g^*:U_d\times X \setto \extR$ convex, proper, and lower semicontinuous.
    Pick $x, x^*\in X$,
    \[
        u = (u_p, u_d) \in U,
        \quad
        \Delta w_* = (\Delta w_p, \Delta w_d)\in G(u,x),
        \quad\text{and}\quad
        w=(w_p, w_d)\in U.
    \]
    Then, for any $u^*\in U$ and $\anyCod = \frechetCod$ or $\anyCod=D^*$, the adjoint inclusion \eqref{eq:tv:adjoint-incl} holds, i.e., $(-u^*, x^*)\in \anyCod G(u,x|\Delta w_*)(w),$ if and only if
    \begin{subequations}
        \label{eq:PD-adjoint}
        \begin{align}
            \label{eq:PD-adjoint-inner}
            0
             &
            =
            u^*
            +
            \begin{pmatrix}
                \grad_{u_p}^2f(u_p, x)w_p - K^*w_d
                \\
                Kw_p
            \end{pmatrix}
            +
            \begin{pmatrix}
                0
                \\
                u_ g^*
            \end{pmatrix}
            \quad\text{and}
            \\
            \label{eq:PD-adjoint-outer}
            x^*
             &
            =
            \grad_{xu_p}f(u_p, x)w_p
            +
            x_g^*
            \\
            \shortintertext{for some}
            \label{eq:PD-adjoint-g}
            (u_g^*, x_g^*)
             &
            \in \anyCod \subdiff_{u_d} g^*(u_d,x|\Delta w_{d} +  Ku_p)(w_d) \subset U_d\times X.
        \end{align}
    \end{subequations}
\end{lemma}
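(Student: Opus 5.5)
The plan is to split $G$ into a smooth single-valued part and a set-valued part that depends only on $(u_d,x)$. Then we apply a sum rule for coderivatives with a $C^1$ summand, followed by a chain rule for composition with a projection. Concretely, write
\[
    G(u,x) = H(u,x) + Q(u,x),
    \quad
    H(u,x) \defeq \begin{pmatrix} \grad_{u_p} f(u_p,x) + K^*u_d \\ -Ku_p \end{pmatrix},
    \quad
    Q(u,x) \defeq \begin{pmatrix} 0 \\ \subdiff_{u_d} g^*(u_d,x)\end{pmatrix}.
\]
Because $f$ is twice continuously differentiable, $H$ is continuously Fréchet differentiable.

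For both $\anyCod=\frechetCod$ and $\anyCod=D^*$, the standard sum rule for a $C^1$ single-valued perturbation holds with equality, e.g., \cite[Theorem 20.17]{clason2020introduction} or \cite[Theorem 1.62]{mordukhovich2006variational}:
\[
    \anyCod [H+Q](u,x|\Delta w_*)(w) = H'(u,x)^*w + \anyCod Q(u,x|\Delta w_* - H(u,x))(w).
\]
If this needs checking directly, I would use the graph map $(u,x,y)\mapsto (u,x,y+H(u,x))$, which is a local $C^1$ diffeomorphism of graphs. Fréchet normals transform through the adjoint of its derivative, up to $o(\cdot)$ terms that vanish by continuity of $H'$, so the same identity passes to limits for $D^*$. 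Next I compute $H'(u,x)^*w$ for $w=(w_p,w_d)$. Its $u_p$-component is $\grad^2_{u_p}f\,w_p - K^*w_d$, its $u_d$-component is $Kw_p$, and its $x$-component is $\grad_{xu_p}f\,w_p$. This matches the explicit terms in \eqref{eq:PD-adjoint-inner} and \eqref{eq:PD-adjoint-outer}, with the sign convention that $(-u^*,x^*)$ is the output.

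It remains to handle $Q$. Its first component is identically $\{0\}$ and it does not depend on $u_p$, so $\graph Q$ is, up to reordering, $U_p\times\{0\}\times\graph\subdiff_{u_d}g^*$. Since the normal cone of a product is the product of normal cones (Fréchet, and limiting in the $\weaktostar$ sense), we get that $(a_p,a_d,b)\in\anyCod Q(u,x|(0,z))(w_p,w_d)$ iff $a_p=0$ and $(a_d,b)\in\anyCod\subdiff_{u_d}g^*(u_d,x|z)(w_d)$. The $w_p$ entry is unconstrained because the normal cone to $\{0\}$ is the whole space. The base point is $z=\Delta w_d - (-Ku_p) = \Delta w_d + Ku_p$, while the first component requires $\Delta w_p - \grad_{u_p}f - K^*u_d = 0$, which holds by $\Delta w_*\in G(u,x)$. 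This produces the pair $(u_g^*,x_g^*)$ of \eqref{eq:PD-adjoint-g}, entering only the $u_d$- and $x$-components.

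Collecting components, $(-u^*,x^*) = H'(u,x)^*w + (0,u_g^*,x_g^*)$ is exactly \eqref{eq:PD-adjoint}, and each step is an equivalence, which gives the ``if and only if''. The main obstacle is justifying the sum rule as an equality for the limiting coderivative in infinite dimensions. There we need sequences $\tilde y^*\weaktostar$ and $\epsilon\downto0$, and the $C^1$ term must converge: $H'(\tilde u,\tilde x)^*\tilde w \weaktostar H'(u,x)^*w$. This requires boundedness of $\tilde w$, which holds because weak-$*$ convergent sequences are bounded, together with norm continuity of $H'$. The product-rule step for limiting normals needs a similar but routine check.
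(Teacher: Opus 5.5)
Your proposal is correct and follows essentially the same route as the paper: the same splitting of $G$ into a $C^1$ part and a set-valued part depending only on $(u_d,x)$, the same coderivative sum rule for a $C^1$ summand (Mordukhovich's Theorem 1.62), and the same computation of the adjoint derivative term. The only difference is in the set-valued part. You read off $u_p^*=0$ and the reduction to $\anyCod\subdiff_{u_d}g^*(u_d,x|\Delta w_d+Ku_p)(w_d)$ from the product structure of $\graph G_2$ and the product formula for Fréchet and limiting normal cones. The paper instead verifies this directly from the limsup definition and covers the limiting case with a one-line remark, so your version treats both coderivatives uniformly and a bit more cleanly.
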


\begin{proof}
    Split
    \[
        G=G_1+G_2
        \quad\text{for}\quad
        G_1(u, x)
        \defeq
        \begin{pmatrix}
            \grad_{u_p} f(u_p, x) + K^*u_d
            \\
            -Ku_p
        \end{pmatrix}
        \quad
        \text{ and }
        \quad
        G_2(u, x)
        \defeq
        \begin{pmatrix}
            0
            \\
            \subdiff_{u_d} g^*(u_d, x)
        \end{pmatrix}.
    \]
    The coderivative sum rule
    \cite[Theorem 1.62]{mordukhovich2006variational}
    gives
    \begin{equation}
        \label{eq:frechedCod-sum}
        \anyCod G(u,x|\Delta w_*)(w)
        =
        \anyCod G_2(u,x|\Delta w_* - G_1(u,x))(w)
        +
        [G_1'(u,x)]^*w.
    \end{equation}
    We have
    $$[G_1'(u,x)]^*w = ([G^{(u)}_{1}(u,x)]^*w, [G^{(x)}_{1}(u,x)]^*w)$$
    with
    \begin{align}
        \label{eq:PD-G1-derivative-inner}
        [G^{(u)}_{1}(u,x)]^*w
         &
        =
        \begin{pmatrix}
            \grad_{u_p}^2f(u_p, x) & -K^*
            \\
            K                              & 0
        \end{pmatrix}
        \begin{pmatrix}
            w_p
            \\
            w_d
        \end{pmatrix}
        =
        \begin{pmatrix}
            \grad_{u_p}^2f(u_p, x)w_p - K^*w_d
            \\
            Kw_p
        \end{pmatrix}
        \quad
        \text{ and }
        \\
        \label{eq:PD-G1-derivative-outer}
        [G^{(x)}_{1}(u,x)]^*w
         &
        =
        \begin{pmatrix}
            \grad_{xu_p}f(u_p, x) & 0
        \end{pmatrix}
        \begin{pmatrix}
            w_p
            \\
            w_d
        \end{pmatrix}
        =
        \grad_{x u_p}f(u_p, x)w_p.
    \end{align}
    We finish the proof by showing
    \begin{equation*}
        \anyCod G_2(u,x|\Delta w_* - G_1(u,x))(w)
        = \{
        ((0, u_g^*), x_g^*)\in U\times X \mid
        (u_g^*, x_g^*)\in \anyCod \subdiff_{u_d} g^*(u_d,x|\Delta w_{d} +  Ku_p)(w_d)
        \},
    \end{equation*}
    which combined with \cref{eq:frechedCod-sum,eq:PD-G1-derivative-inner,eq:PD-G1-derivative-outer} yields the claim \cref{eq:PD-adjoint}.

    By the definition of Fréchet coderivative $((u_p^*, u_g^*), x_g^*)\in  \anyCod G_2(u,x|\Delta w_* - G_1(u,x))(w)$  is equivalent to
    \begin{align}
        \label{eq:coderivative-def-G2}
        \limsup_{\this x \to x,\, \this u \to u,\, G_2(\this u, \thisx) \ni \this w_* \to w_*}
        \frac{-\iprod{w}{\this w_* - w_*}_U + \iprod{x^*_g}{\thisx-x}_X + \iprod{(u_p^*,u_g^*)}{\this u - u}_U}
        {\norm{(\thisx-x,\this u - u,\this w_* - w_*)}}
        \le 0
    \end{align}
    with $w_*=\Delta w_* - G_1(u,x).$ The operator $G_2$ doesn't depend on $u^*_p$ and thus $w_*\in G_2(u, x)$ implies $w_*\in G_2(u + \tau_k (u^*_p), x).$ Therefore we can take $u^k=u + \tau_k (u^*_p)$ with $\tau_k\to 0, \thisx=x$ and $w^k_* = w_*.$ We get that the limsup in \cref{eq:coderivative-def-G2} is at least $\iprod{u^*_p}{\tau_k u^*_p}/\norm{\tau_k u^*_p} = \norm{u^*_p},$ and thus $u^*_p=0$ is necessary for \cref{eq:coderivative-def-G2} to hold.

    Because $\Delta w_*\in G(u,x)$ it holds $w_*\in G(u,x) - G_1(u,x) = G_2(u,x)$ and $w_* = (0, \Delta w_d + Ku_p).$ Similarly the primal component of every $w^k_*$ has to be zero. Thus we obtain \cref{eq:coderivative-def-G2} is equivalent to

    \begin{align*}
        \limsup_{\substack{\this x \to x,\, \this u_d \to u_d,
        \\
                \subdiff_{u_d}g^*(u_d^k, \thisx) \ni \this w_{*,d} \to \Delta w_d + Ku_p}}
                \frac{-\iprod{w_d}{\this w_{*,d}  - (\Delta w_d + Ku_p)}_{U_d} + \iprod{x^*_g}{\thisx-x}_X + \iprod{u_g^*}{\this u_d - u_d}_{U_d}}
                {\norm{(\thisx-x,\this u_d - u_d,\this w_{*,d}  - (\Delta w_d + Ku_p))}}
        \le 0,
    \end{align*}
    which by the definition equals $(u_g^*, x_g^*)\in \frechetCod \subdiff_{u_d} g^*(u_d,x|\Delta w_{d} +  Ku_p)(w_d).$
    Since $u^*_p=0$ for all Fréchet coderivatives (of $G_2$) it also has to hold for limiting coderivatives.
\end{proof}

\begin{remark}
    To solve \eqref{eq:PD-adjoint}, we only need to find $w=(w_p, w_d)$, $u^*_g$, and $x^*_g$ satisfying \cref{eq:PD-adjoint-inner,eq:PD-adjoint-g}.
    Then \cref{eq:PD-adjoint-outer} directly produces $x^*$.
    The former two have a solution if and only if there exists a solution -- with the optimal value equal to zero -- to the optimisation problem
    \begin{align}
        \label{min:adjoint-PDPS}
        & \min_{w_p\in U_p,w_d, u^*_g\in U_d, x^*_g\in X} \frac{1}{2}\norm{u^*_p +\grad_{u_p}^2f(u_p, x)w_p - K^*w_d}^2_{U_p}
        +
        \frac{1}{2}\norm{u^*_d + Kw_p + u^*_g}^2_{U_d}
        \\
        & \text{subject to}
        \quad
        (u_g^*, x_g^*)\in \anyCod \subdiff_{u_d} g^*(u_d,x|\Delta w_{d} +  Ku_p)(w_d). \nonumber
    \end{align}
    The structure of this problem suggests to solve it with forward-backward splitting.
    We return to this in \cref{sec:tv:summary}, after forming $\anyCod \subdiff_{u_d} g^*$.
\end{remark}

\subsection{Adjoint inclusion for non-parametric regularisation term}
\label{sec:tv:data}

In the numerical experiments of \cref{sec:numerical}, we concentrate on \eqref{eq:tv:inverse-problem} with only the forward operator $A_x$ dependent on the parameter $x$.
That is, $C(x) \equiv C$ is a constant.
To apply \cref{sec:tv:general} to this setting, we take
\[
    g(u_d; x) \defeq C\norm{u_d}_{2,1}
    \defeq
    C \sum_{j=1}^n\norm{u_{d,j}}_2.
\]
Then the Fenchel conjugate
\begin{equation}
    \label{def:g-conjugate-2}
    g^*(u_d, x)
    = \sum_{j=1}^{n} \delta_{B_{\R^2}(0,C)}(u_{d,j}).
\end{equation}
Its subdifferential
\begin{equation*}
    \subdiff_{u_{d,j}} g^*(u_d, x)
    =
    \begin{cases}
        \{0\}              & \text{if } \norm{u_{d,j}} < C,
        \\
        u_{d,j}[0, \infty) & \text{if } \norm{u_{d,j}} = C,
        \\
        \emptyset          & \text{otherwise.}
    \end{cases}
\end{equation*}
Recalling \cref{lemma:spesific-adjoint-general}, we need the coderivatives of $\subdiff_u g^*$.
By separability, it suffices to study the component coderivatives.

\begin{lemmaE}
    \label{lemma:subdiff-coderivative}
    Let $z, y_*, y\in \R^2, C>0$. Then
    \begin{equation}
        \label{eq:disc-coderivative}
        \frechetCod \subdiff \delta_{B_{\R^2}(0,C)}(z| y_*)(y)
        =
        \begin{cases}
            \{0\}     & \text{if } \norm{z} < C, y_* = 0, y\in\R^2,
            \\
            z\R + \frac{\norm{y_*}}{\norm{z}}y
                      & \text{if } \norm{z} = C, y_*\in  z(0, \infty), \iprod{y}{z}=0,
            \\
            z[0,\infty)
                      & \text{if } \norm{z} = C,  y_*=0, \iprod{y}{z}\ge 0,
            \\
            \emptyset & \text{otherwise }
        \end{cases}
    \end{equation}
    and
    \begin{equation}
        \label{eq:disc-coderivative-limiting}
        D^* \subdiff \delta_{B_{\R^2}(0,C)}(z| y_*)(y)
        =
        \begin{cases}
            \{0\}     & \text{if } \norm{z} < C, y_* = 0, y\in\R^2,
            \\
            z\R + \frac{\norm{y_*}}{\norm{z}}y
                      & \text{if } \norm{z} = C, y_*\in  z[0, \infty), \iprod{y}{z}=0,
            \\
            z[0,\infty)
                      & \text{if } \norm{z} = C,  y_*=0, \iprod{y}{z}> 0,
            \\
            \{0\}
                      & \text{if } \norm{z} = C,  y_*=0, \iprod{y}{z}< 0,
            \\
            \emptyset & \text{otherwise.}
        \end{cases}
    \end{equation}
\end{lemmaE}

\begin{proofE}
    We first prove \eqref{eq:disc-coderivative}, and then use it to prove \eqref{eq:disc-coderivative-limiting}.
    By definition, we have  $z^*\in \frechetCod \subdiff  \delta_{B_{\R^2}(0,C)}(z| y_* )(y)$ if and only if
    \begin{equation}
        \label{ineq:Frechet-codiff-proof}
        \limsup_{\graph \subdiff  \delta_{B_{\R^2}(0,C)} \ni (\tilde z, \tilde y_*) \to (z, y_*)}
        \frac{
            \iprod{z^*}{\tilde z-z}
            - \iprod{y}{\tilde y_* - y_*}
        }{
            \norm{\tilde z-z}
            + \norm{\tilde y_* - y_*}
        }
        \le 0.
    \end{equation}
    We analyse this limit in various cases:

    \begin{figure}[t]
        \centering
        \begin{tikzpicture}
            \draw [black,line width=0.6mm,domain=0:360] plot[smooth] ({1.5*cos(\x)}, {1.5*sin(\x)});
            \fill (0,0) circle[radius=3pt];
            \node[inner sep=0pt] (point2) at (0.85, 1.55)
            {$\tilde z$};
            \node[inner sep=0pt] (point3) at (1.65,0.6)
            {$z$};
            \draw [red,line width=1mm,domain=20:60] plot ({1.5*cos(\x)}, {1.5*sin(\x)});
            \draw [black,line width=0.6mm,domain=0:360] plot[smooth] ({5+2*cos(\x)}, {2*sin(\x)});
            \draw [red,line width=1mm,domain=20:60] plot ({5+2*cos(\x)}, {2*sin(\x)});
            \draw [black,line width=0.5mm,dashed] (5,0) -- (6,1.7321);
            \draw [red,line width=1mm] (5.5,0.8660) -- (6.025,1.7754);
            \fill (5,0) circle[radius=3pt];
            \node[inner sep=0pt] (point4) at (5.2,0.9)
            {$\tilde y_*$};
            \node[inner sep=0pt] (point5) at (6.9,2)
            {$y'_*= \frac{\norm{y_*}}{\norm{z}}\tilde z$};
            \node[inner sep=0pt] (point6) at (7.85,0.8)
            {$y_* = \frac{\norm{y_*}}{\norm{z}}z$};
        \end{tikzpicture}
        \caption{Demonstrating the paths of the limiting progress $\graph \subdiff  \delta_{B_{\R^2}(0,C)} \ni (\tilde z, \tilde y_*) \to (z, y_*).$}
        \label{fig:cone-base}
    \end{figure}

    \begin{enumerate}
        \item
              $\norm{z} < C$ and $y_* = 0$, in which case we need to prove that $z^*=0$ and $y\in\R^2$:
              We must have $\tilde y_*=0$ to stay in $\graph \subdiff  \delta_{B_{\R^2}(0,C)}$ and thus $- \iprod{w}{\tilde y_* - y_*}=0$ for any $y\in\R^2.$
              Since we have $B(z, \varepsilon)\times\{0\}\subset \graph \subdiff  \delta_{B_{\R^2}(0,C)}$ for small enough $\varepsilon,$ and $z+\frac{1}{n}z^*\in B(z, \varepsilon)$ for $n>\inv{\epsilon}\norm{z^*},$ taking $\tilde z = z+\frac{1}{n}z^*$ and $\tilde y_* = 0$ we get
              \begin{equation*}
                  \limsup_{\graph \subdiff  \delta_{B_{\R^2}(0,C)} \ni (\tilde z, \tilde y_*) \to (z, y_*)}
                  \frac{
                      \iprod{z^*}{\tilde z-z}
                      - \iprod{y}{\tilde y_* - y_*}
                  }{
                      \norm{\tilde z-z}
                      + \norm{\tilde y_* - y_*}
                  }
                  \ge
                  \limsup_{n\to 0}\frac{\iprod{z^*}{\frac{1}{n}z^*}}{\norm{\frac{1}{n}z^*}} = \norm{z^*}.
              \end{equation*}
              Therefore, we must have $z^*=0$ for \cref{ineq:Frechet-codiff-proof} to hold.

        \item
              $\norm{z} = C$ and $y_*\in z(0, \infty)$, in which case we need to prove that $\iprod{y}{z}=0$ and $z^*\in z\R + \frac{\norm{y_*}}{\norm{z}}y$:
              We must in \eqref{ineq:Frechet-codiff-proof} have $\iprod{y}{z}=0$ for \cref{ineq:Frechet-codiff-proof} to hold because otherwise either $\tilde y_* = (1 - \varepsilon)y_*$ or $\tilde y_* = (1 + \varepsilon)y_*$ for some $\varepsilon>0$ gives $- \iprod{y}{\tilde y_* - y_*}>0$ regardless of $\tilde z$.
              Since $\iprod{y}{z}=0$, the vectors $y$ and $z$ form basis for $\R^2$ and we can write $z^* = sz + ty$ for some $s,t\in\R.$

              \cref{fig:cone-base} demonstrates the path from $\tilde z$ to $z,$ along the circle $\{v\in\R^2 \mid \norm{v} = C\},$ and how the path from $\tilde y_*$ to $y_*$ can be divided into two components, $\tilde y_* - y'_*$ and $y'_* - y_*$. For $(z,y_*), (\tilde z, \tilde y_*)\in \graph \subdiff  \delta_{B_{\R^2}(0,C)}$ it holds that
              $y'_*= \frac{\norm{y_*}}{\norm{z}}z'$ and
              $y_* = \frac{\norm{y_*}}{\norm{z}}z.$
              The limiting behaviour $(\tilde z, \tilde y_*) \to (z, y_*)$ is such that
              \begin{align*}
                   & \limsup_{\graph \subdiff  \delta_{B_{\R^2}(0,C)} \ni (\tilde z, \tilde y_*)\to (z, y_*)} \frac{\iprod{z}{\tilde z - z}}{\norm{\tilde z - z}}
                  =
                  \limsup_{\norm{\tilde z}=C, \tilde z \to z} \frac{\iprod{z}{\tilde z - z}}{\norm{\tilde z - z}}
                  =0
                  \quad
                  \text{and }
                  \\
                   & \limsup_{\graph \subdiff  \delta_{B_{\R^2}(0,C)} \ni (\tilde z, \tilde y_*)\to (z, y_*)} \frac{\iprod{y}{\tilde y_* - y'_*}}{\norm{\tilde y_* - y_*}} =
                  \limsup_{\graph \subdiff  \delta_{B_{\R^2}(0,C)} \ni (\tilde z, \tilde y_*)\to (z, y_*)} \frac{\iprod{y}{z}}{\norm{\tilde y_* - y_*}}  = 0.
              \end{align*}
              Therefore we have
              \begin{align*}
                   & \limsup_{\graph \subdiff  \delta_{B_{\R^2}(0,C)} \ni (\tilde z, \tilde y_*) \to (z, y_*)}
                  \frac{
                      \iprod{z^*}{\tilde z-z}
                      - \iprod{y}{\tilde y_* - y_*}
                  }{
                      \norm{\tilde z-z}
                      + \norm{\tilde y_* - y_*}
                  }
                  \\
                  =\,
                   &
                  \limsup_{\graph \subdiff  \delta_{B_{\R^2}(0,C)} \ni (\tilde z, \tilde y_*) \to (z, y_*)}
                  \frac{
                      \iprod{sz + ty}{\tilde z - z}
                      - \iprod{y}{\tilde y_* - y'_* + y'_* - y_*}
                  }{
                      \norm{\tilde z-z}
                      + \norm{\tilde y_* - y_*}
                  }
                  \\
                  =\,
                   &
                  \limsup_{\graph \subdiff  \delta_{B_{\R^2}(0,C)} \ni (\tilde z, \tilde y_*) \to (z, y_*)}
                  \frac{
                      \iprod{ty}{\tilde z - z}
                      - \iprod{y}{ y'_* - y_*}
                  }{
                      \norm{\tilde z-z}
                      + \norm{\tilde y_* - y_*}
                  }
                  \\
                  =\,
                   &
                  \limsup_{\graph \subdiff  \delta_{B_{\R^2}(0,C)} \ni (\tilde z, \tilde y_*) \to (z, y_*)}
                  \left(
                  t -\frac{\norm{y_*}}{\norm{z}}
                  \right)
                  \frac{
                      \iprod{y}{\tilde z - z}
                  }{
                      \norm{\tilde z-z}
                      + \norm{\tilde y_* - y_*}
                  }.
              \end{align*}
              Since
              $\limsup_{\graph \subdiff  \delta_{B_{\R^2}(0,C)} \ni (\tilde z, \tilde y_*) \to (z, y_*)}\frac{\iprod{y}{\tilde z - z}}{\norm{\tilde z - z}} = \norm{y}$
              we must have $t=\norm{y_*}/\norm{z}$ for \cref{ineq:Frechet-codiff-proof} to hold.


        \item
              $\norm{z} = C$, $x > 0$, and $y_*=0$, in which case we need to prove that $\iprod{y}{z}\ge 0$ and $z^*\in z[0,\infty)$:
              Write $z^* = sz + tv$ for some $s,t\in\R$ and $v$ such that $\iprod{z}{v}=0.$ Taking $\tilde{y}_* = y_*$ we obtain that the limsup in \cref{ineq:Frechet-codiff-proof} is bounded below by $\norm{t v}\norm{\tilde z - z}/\norm{\tilde z - z} = \norm{t v}$ and thus we must have $t=0.$ On the other hand, taking $\tilde z=z$ and $\iprod{y}{\tilde y_* - y_*} < 0$ we find again that \cref{ineq:Frechet-codiff-proof} is bounded below and thus we must have  $\iprod{y}{\tilde y_* - y_*} \ge 0.$ Since $\tilde y_* - y_* = \tilde y_* \in z[0,\infty)$ we get the equivalent condition $\iprod{y}{z} \ge 0.$
              Similarly to $\tilde y_* - y_*$ in the previous case and \cref{fig:cone-base}, we can split  $\tilde z - z$ into the  components $\tilde z - z'$ and $z' - z.$ 
              Using this split, $\tilde y_* - y_*\in z[0,\infty)$  and $\iprod{y}{z}\ge 0$, we get
              \begin{align*}
                  \limsup_{\graph \subdiff  \delta_{B_{\R^2}(0,C)} \ni (\tilde z, \tilde y_*) \to (z, y_*)}
                  \frac{
                      \iprod{z^*}{\tilde z-z}
                      - \iprod{y}{\tilde y_* - y_*}
                  }{
                      \norm{\tilde z-z}
                      + \norm{\tilde y_* - y_*}
                  }
                  =
                  \limsup_{\graph \subdiff  \delta_{B_{\R^2}(0,C)} \ni (\tilde z, \tilde y_*) \to (z, y_*)}
                  \frac{
                      -s\norm{z}\norm{z'-z}
                  }{
                      \norm{\tilde z-z}
                  }.
              \end{align*}
              This is less than zero, i.e., \cref{ineq:Frechet-codiff-proof} holds, if and only if $s\ge 0$.
        \item Otherwise: the point $(z, y_*)$ does not belong to the closed $\graph \subdiff  \delta_{B_{\R^2}(0,C)}$ and thus \cref{ineq:Frechet-codiff-proof} cannot be satisfied.
    \end{enumerate}

    The limiting coderivative equals the Fréchet coderivative with the following exceptions:
    \begin{itemize}
        \item when $\norm{z}=C$ and $\iprod{y}{z}=0$, we have $\frechetCod \subdiff\delta_{B_{\R^2}(0,C)}(z|0)(y)=z[0,\infty)$ but taking $y^k_*\in z(0,\infty)$ such that $y^k_*\to 0$ we obtain $\coderivative \subdiff\delta_{B_{\R^2}(0,C)}(z|0)(y)=z\R$ from the definition of (finite dimensional) limiting coderivative and $\frechetCod \subdiff\delta_{B_{\R^2}(0,C)}(z|y^k_*)(y)=z\R + \frac{\norm{y_*^k}}{\norm{z}}y,$

        \item when $\norm{z}=C, y_*=0$ and $\iprod{y}{z}<0$, the limiting coderivative is the singleton $\{0\}$ since $0\in \frechetCod \subdiff\delta_{B_{\R^2}(0,C)}(z^k|0)(y)$ for $z^k$ such that $\norm{z^k}<C$ and $z^k\to z.$
              \qedhere
    \end{itemize}
\end{proofE}

\begin{lemma}
    \label{lemma:multivar-coderivative}
    Let $U_d = \prod_{j=1}^n\R^2, X$ be a Hilbert space, $u_d, w_*, w_d\in U_d, x\in X.$ Moreover, let function $g^*:U_d\times X\to \extR$ be defined as in \cref{def:g-conjugate-2} 
    for some $C>0$. Then
    \begin{equation*}
        \frechetCod \subdiff_{u_d} g^*(u_d, x| w_* )(w_d)
        =
        \left(
        \prod_{j=1}^n  \frechetCod \subdiff \delta_{B_{\R^2}(0,C)}(u_{d,j}| w_{*,j})(w_{d,j})
        \right)
        \times \{0\}.
    \end{equation*}
\end{lemma}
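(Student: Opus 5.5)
The plan is to reduce everything to the definition \eqref{eq:intro:frechet-coderivative} of the Fréchet coderivative. Two structural facts do the work: $g^*$ in \eqref{def:g-conjugate-2} does not depend on $x$, and it is a separable sum over the blocks $u_{d,j}\in\R^2$.

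First I record the graph structure. The set $\subdiff_{u_d} g^*(u_d,x)$ equals $\prod_{j=1}^n \subdiff\delta_{B_{\R^2}(0,C)}(u_{d,j})$ for every $x$, so
\[
    \graph \subdiff_{u_d} g^* = \{(u_d,x,w_*) \mid (u_{d,j},w_{*,j})\in \graph\subdiff\delta_{B_{\R^2}(0,C)} \text{ for all } j,\ x\in X\}.
\]
This is a product of the block graphs with the whole space $X$.

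Next I use the standard product rule for Fréchet normal cones. For closed sets $A_i$ in finite-dimensional (or Hilbert) spaces,
\[
    \frechetNormal_{\prod_i A_i}(a)=\prod_i \frechetNormal_{A_i}(a_i),
\]
and $\frechetNormal_X(x)=\{0\}$. I would prove this directly from the limsup definition. For $\supset$, split the numerator into a sum over blocks and bound each block quotient using $\norm{\tilde a_i-a_i}\le\norm{\tilde a-a}$, with all norms on finite products taken equivalent. For $\subset$, restrict the limsup to perturbations in a single component while keeping the others fixed; this forces each block to be a Fréchet normal.

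Applied with the component order $(u_d,x,w_*)$, this shows that $(u_d^*,x^*,-w_d)$ is normal if and only if:
\begin{itemize}
    \item $x^*=0$, which comes from the free $X$ factor, taking $\tilde x=x+t x^*$, $t\downarrow 0$;
    \item $(u_{d,j}^*,-w_{d,j})\in\frechetNormal_{\graph\subdiff\delta_B}(u_{d,j},w_{*,j})$ for each $j$.
\end{itemize}
By definition, the second condition is exactly $u_{d,j}^*\in \frechetCod\subdiff\delta_{B_{\R^2}(0,C)}(u_{d,j}|w_{*,j})(w_{d,j})$.

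One bookkeeping point needs care: the coordinates $(u_{d,j},w_{*,j})$ must be regrouped pairwise, which is a permutation of coordinates and preserves normals. I also need to make sure empty cases line up: if some block point is outside the graph, both sides are empty.

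The main obstacle is the $\subset$ direction of the product rule, together with justifying the block-wise restriction. With only finitely many blocks this is a routine single-component restriction, so the proof is essentially definitional. I expect it to be short and to cite \cref{lemma:subdiff-coderivative} only for the explicit form of the block coderivatives.
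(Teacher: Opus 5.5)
Your proposal is correct and follows the same route as the paper, whose proof is the one-line remark that the result follows from the definition of the Fréchet coderivative and the separable structure of $g^*$. You make that remark explicit: the graph is a product of the block graphs of $\subdiff\delta_{B_{\R^2}(0,C)}$ with the whole space $X$, the Fréchet normal cone of a finite product is the product of the cones (with $\frechetNormal_X(x)=\{0\}$ giving $x^*=0$), and each block matches empty-graph cases consistently with the paper's convention.
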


\begin{proof}
    Follows from the definition of the Fréchet coderivative and the separable structure
\end{proof}

With \cref{lemma:subdiff-coderivative,lemma:multivar-coderivative} we can expand \cref{eq:PD-adjoint-g}, and thus write the optimisation problem \cref{min:adjoint-PDPS} in a more explicit form.

\begin{lemma}
    \label{lemma:PD-adjoint-g-alt}
    Let $U_p$ and  $X$ be Hilbert spaces, $U_d = \prod_{j=1}^n\R^2, U = U_p\times U_d$ and $K\in \linear(U_p;U_d).$
    Let $G$ be as in \eqref{eq:primal-dual-bilevel-problem} for $f:U_p\times X \to \R$ twice continuously differentiable and $g^*:U_d\times X\to \extR$ as in \cref{def:g-conjugate-2} for some $C>0$.
    Suppose $u_p\in U_p, u_d, w_d\in U_d$ and $x\in X$ as well as
    \begin{equation*}
        \Delta w_* = (\Delta w_p, \Delta w_d)\in G(u,x),
        \quad\text{i.e.,}\quad
        \,
        \Delta w_d\in-Ku_p +\subdiff_{u_d}g^*(u_d,x)
        \quad
        \text{and}
        \quad
        w_*=\Delta w_{d} +  Ku_p.
    \end{equation*}
    Moreover, let $\lambda \odot:U_d \to U_d$ and $V:U_d \to U_d,$ for fixed $u_d$ be defined by
    \begin{equation}
        \label{eq:useful-operators}
        [\lambda \odot u_d]_j = \lambda_j u_{d,j}
        \quad
        \text{and}
        \quad
        [V w_d]_j
        \defeq
        \begin{cases}
            \frac{\norm{w_{*,j}}}{\norm{u_{d,j}}}w_{d,j}
             &
            \text{if } \norm{u_{d,j}} = C, w_{*,j}\in  u_{d,j}(0, \infty),
            \\
            0
             &
            \text{otherwise}.
        \end{cases}
    \end{equation}
    Then the adjoint inclusion \eqref{eq:tv:adjoint-incl} holds for $\anyCod = \frechetCod$, i.e., $(-u^*, x^*)\in \frechetCod G(u,x|\Delta w_*)(w)$, if and only if \cref{eq:PD-adjoint-inner,eq:PD-adjoint-outer} hold with
    \begin{equation}
        \label{eq:PD-adjoint-g-alt}
        x^*_g = 0, \, u^*_g = \lambda \odot u_d + Vw_d
        \quad
        \text{with }
        \begin{cases}
            \lambda_j = 0
             & \text{if } \,\norm{u_{d,j}} < C,  w_{*,j}=0
            \\
            \lambda_j \in \R,\, \iprod{w_{d,j}}{u_{d,j}}=0
             & \text{if } \,\norm{u_{d,j}} = C, w_{*,j}\in  u_{d,j}(0, \infty),
            \\
            \iprod{w_{d,j}}{u_{d,j}}\ge 0, \lambda_j \ge 0
             & \text{if } \,\norm{u_{d,j}} = C,  w_{*,j}=0
        \end{cases}
    \end{equation}
    For $\anyCod = \frechetCod$, the adjoint inclusion \eqref{eq:tv:adjoint-incl} holds if and only if \cref{eq:PD-adjoint-inner,eq:PD-adjoint-outer} hold and
    \begin{equation}
        \label{eq:PD-adjoint-g-alt-limiting}
        \begin{cases}
            \lambda_j = 0
             & \text{if } \,\norm{u_{d,j}} < C,  w_{*,j}=0
            \\
            \iprod{w_{d,j}}{u_{d,j}}=0
             & \text{if } \,\norm{u_{d,j}} = C, w_{*,j}\in  u_{d,j}(0, \infty),
            \\
            \left\{
            \begin{array}{c}
                \lambda_j\iprod{w_{d,j}}{u_{d,j}}\ge 0
                \text{ and }
                \\
                \max(\iprod{w_{d,j}}{u_{d,j}}, \lambda_j)\ge0
            \end{array}
            \right\}
             & \text{if } \,\norm{u_{d,j}} = C,  w_{*,j}=0
        \end{cases}
    \end{equation}
\end{lemma}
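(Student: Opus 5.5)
The plan is to chain the three preceding results. First apply \cref{lemma:spesific-adjoint-general}. It shows that \eqref{eq:tv:adjoint-incl} is equivalent to \cref{eq:PD-adjoint-inner,eq:PD-adjoint-outer} together with \eqref{eq:PD-adjoint-g}. With $w_* = \Delta w_d + Ku_p$, the latter is $(u_g^*, x_g^*) \in \anyCod \subdiff_{u_d} g^*(u_d,x|w_*)(w_d)$. The assumption $\Delta w_* \in G(u,x)$ gives exactly $w_* \in \subdiff_{u_d} g^*(u_d,x)$. So it only remains to describe the coderivative of $\subdiff_{u_d} g^*$ at $(u_d, x)$ for $w_*$ in the direction $w_d$.

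Since $g^*$ in \eqref{def:g-conjugate-2} does not depend on $x$, \cref{lemma:multivar-coderivative} gives, for the Fréchet case, $x_g^* = 0$ and $u_g^* \in \prod_j \frechetCod \subdiff\delta_{B_{\R^2}(0,C)}(u_{d,j}|w_{*,j})(w_{d,j})$. Then I would read off each component from \eqref{eq:disc-coderivative}:
\begin{itemize}
\item If $\norm{u_{d,j}}<C$, we must have $w_{*,j}=0$ and the component is $\{0\}$, so $\lambda_j=0$.
\item If $\norm{u_{d,j}}=C$ with $w_{*,j}\in u_{d,j}(0,\infty)$, we need $\iprod{w_{d,j}}{u_{d,j}}=0$, and the set is $u_{d,j}\R + \frac{\norm{w_{*,j}}}{\norm{u_{d,j}}} w_{d,j}$, which is $\lambda_j u_{d,j} + [Vw_d]_j$ with $\lambda_j\in\R$.
\item If $\norm{u_{d,j}}=C$ with $w_{*,j}=0$, we need $\iprod{w_{d,j}}{u_{d,j}}\ge 0$, and the set is $u_{d,j}[0,\infty)$, which is $\lambda_j u_{d,j}$ with $\lambda_j\ge0$ and $[Vw_d]_j=0$.
\end{itemize}
Any other combination is excluded because $w_*\in\subdiff g^*(u_d,x)$. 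Collecting the components gives \eqref{eq:PD-adjoint-g-alt}.

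For the limiting case (the second claim, which I read as $\anyCod=D^*$), I would first prove a limiting analogue of \cref{lemma:multivar-coderivative}. The route is through normal cones: $\graph \subdiff_{u_d} g^*$ is a product over $j$ of the component graphs, times $X$ in the $x$-variable. In finite dimensions the limiting normal cone of a product of closed sets is the product of the limiting normal cones, and the normal cone to $X$ is $\{0\}$. This again gives $x_g^*=0$ and componentwise limiting coderivatives. I expect this product rule to be the main technical point. If $X$ is infinite-dimensional, the factor is the whole space $X$, whose Fréchet normals vanish identically, so the rule still goes through.

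Then I would use \eqref{eq:disc-coderivative-limiting} componentwise:
\begin{itemize}
\item The interior case is unchanged.
\item On the open ray $w_{*,j}\in u_{d,j}(0,\infty)$ the Fréchet and limiting sets agree.
\item At $w_{*,j}=0$, $\norm{u_{d,j}}=C$, there are three sub-cases. If $\iprod{w_{d,j}}{u_{d,j}}>0$, then $\lambda_j\ge0$. If $\iprod{w_{d,j}}{u_{d,j}}=0$, the second line of \eqref{eq:disc-coderivative-limiting} applies with $y_*=0$, so $\lambda_j\in\R$ and $[Vw_d]_j = 0$. If $\iprod{w_{d,j}}{u_{d,j}}<0$, then $\lambda_j=0$.
\end{itemize}
Finally I would check that the union of these three sub-cases is exactly the pair of conditions $\lambda_j\iprod{w_{d,j}}{u_{d,j}}\ge 0$ and $\max(\iprod{w_{d,j}}{u_{d,j}},\lambda_j)\ge 0$. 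The sign bookkeeping at this degenerate boundary point is where an error is most likely, so I would check each sign combination of $(\lambda_j, \iprod{w_{d,j}}{u_{d,j}})$ explicitly.
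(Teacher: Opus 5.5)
Your proposal is correct and follows essentially the same route as the paper. You reduce via \cref{lemma:spesific-adjoint-general} and the separable structure to the componentwise coderivatives of \cref{lemma:subdiff-coderivative}, and read off the Fréchet cases. In the degenerate limiting case $\norm{u_{d,j}}=C$, $w_{*,j}=0$, you then split on the sign of $\iprod{w_{d,j}}{u_{d,j}}$ to match exactly the constraints in \eqref{eq:PD-adjoint-g-alt-limiting}. Your explicit product rule for limiting normal cones is a small improvement: the paper uses \cref{lemma:multivar-coderivative}, which is stated only for $\frechetCod$, in the limiting case without comment.
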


\begin{proof}
    By \cref{lemma:spesific-adjoint-general}, we only need to prove the equivalence of \cref{eq:PD-adjoint-g}, i.e.,
    \[
        (u_g^*, x_g^*)
        \in \anyCod \subdiff_{u_d} g^*(u_d,x|\Delta w_{d} +  Ku_p)(w_d)
    \]
    to \eqref{eq:PD-adjoint-g-alt} or \eqref{eq:PD-adjoint-g-alt-limiting}, depending on the choice of the coderivative.
    By assumption, $\Delta w_{d} +  Ku_p \in \subdiff_{u_d}g^*(u_d,x)$.
    Therefore, \cref{lemma:multivar-coderivative} establishes that $x_g^*=0$
    and $[u_g^*]_j \in \frechetCod \subdiff \delta_{B_{\R^2}(0,C)}(u_{d,j}| w_{*,j})(w_{d,j})$ for all $j=1,\ldots,n$, when \cref{eq:PD-adjoint-g} holds.
    This reduces our task to proving that \eqref{eq:PD-adjoint-g-alt} or \eqref{eq:PD-adjoint-g-alt-limiting} gives for all $j=1,\ldots,n$ the same expression for $[u_g^*]_j$ as \cref{lemma:subdiff-coderivative} does.

    We start with the Fréchet coderivative.
    \begin{enumerate}[label=(\alph*)]
        \item $\norm{u_{d,j}} < C$ and $w_{*,j} = 0$.
              Then, by the definition in \eqref{eq:useful-operators}, $[\lambda \odot u_d + Vw_d]_j = \lambda_j u_{d,j}$.
              Now \eqref{eq:PD-adjoint-g-alt} yields $x_g^*=0$ and $u_g^*=0$.
              The equivalence now follows from \cref{lemma:subdiff-coderivative,lemma:multivar-coderivative}, which also give the only option $[u_g^*]_j=0$ for \cref{eq:PD-adjoint-g} to hold.
        \item $\norm{u_{d,j}} = C$ and $w_{*,j} \in u_{d,j}(0,\infty)$.
              Then, by the definition in \eqref{eq:useful-operators},
              $
                  [\lambda \odot u_d + Vw_d]_j = \lambda_j u_{d,j} + \frac{\norm{w_{*,j}}}{\norm{u_{d,j}}}w_{d,j}.
              $
              Now \eqref{eq:PD-adjoint-g-alt} yields in agreement with \cref{lemma:subdiff-coderivative} that
              $
                  [u_g^*]_j \in \R u_{d,j} + \tfrac{\norm{w_{*,j}}}{\norm{u_{d,j}}}w_{d,j}
              $
              provided that $\iprod{w^*_{d,j}}{u_{d,j}}=0$, with no solution otherwise.
        \item $\norm{u_{d,j}} = C$ and $w_{*,j} = 0$.
              Then, by the definition in \eqref{eq:useful-operators}, $[\lambda \odot u_d + Vw_d]_j = \lambda_j u_{d,j}$.
              Now \eqref{eq:PD-adjoint-g-alt} yields in agreement with \cref{lemma:subdiff-coderivative} that
              $
                  [u_g^*]_j
                  \in
                  [0, \infty) u_{d,j}
              $
              provided that $\iprod{w_{d,j}}{u_{d,j}}\ge0$, with no solution otherwise.
    \end{enumerate}
    For the limiting coderivative cases (a) and (b) work out exactly in the same way, while case (c) is somewhat more tedious to prove.
    Write
    \begin{equation}
        \label{eq:A-constraint-set-proof}
        \begin{split}
            A
             &
            \defeq
            \{
            [\lambda \odot u_d + Vw_d]_j \mid
            \lambda_j\iprod{w_{d,j}}{u_{d,j}}\ge 0
            \text{ and }
            \max(\iprod{w_{d,j}}{u_{d,j}}, \lambda_j)\ge0
            \}
            \\
             &
            =
            \{
            \lambda_ju_{d,j} \mid
            \lambda_j>0
            \text{ and }
            \iprod{w_{d,j}}{u_{d,j}}>0,
            \text{ or }
            \lambda_j=0,
            \text{ or }
            \iprod{w_{d,j}}{u_{d,j}}=0
            \}
        \end{split}
    \end{equation}
    and
    \begin{equation*}
        B
        \defeq
        \{
        u_{g,j}^*
        \mid
        (u_g^*, x_g^*)
        \in \anyCod \subdiff_{u_d} g^*(u_d,x|\Delta w_{d} +  Ku_p)(w_d)
        \}.
    \end{equation*}
    We have to show that $A\subset B$ and $B\subset A,$ which both require going through three cases.
    To show that $A \subset B$, let $t \in A$, and consider the cases:
    \begin{enumerate}[label=(c.\arabic*),noitemsep]
        \item $\lambda_j>0$ and  $\iprod{w_{d,j}}{u_{d,j}}>0$: then $t=\lambda_j u_{d,j}\in u_{d,j}[0,\infty)$. Third row of \cref{eq:disc-coderivative-limiting} now yields $t\in B$.
        \item $\lambda_j=0$: then $t=0$ and, since $t\in\{0\}, t\in u_{d,j}[0,\infty)$ and $t\in u_{d,j}\R + 0$, \cref{eq:disc-coderivative-limiting} yields $t\in B$.
        \item $\iprod{w_{d,j}}{u_{d,j}}=0$: then $t=\lambda_j u_{d,j} \in u_{d,j}\R$,  o the second row of \cref{eq:disc-coderivative-limiting} yields $t\in B$.
    \end{enumerate}
    To show that $B \subset A$, let $t \in B$, and consider the cases:
    \begin{enumerate}[resume*]
        \item $\iprod{w_{d,j}}{u_{d,j}}=0$: because $w_{*,j} = 0,$ \cref{eq:disc-coderivative-limiting} gives $t = \lambda_j u_{d,j}$ for $\lambda_j\in\R$. But now $t\in A$ by \cref{eq:A-constraint-set-proof}.
        \item $\iprod{w_{d,j}}{u_{d,j}}>0$: again \cref{eq:disc-coderivative-limiting} gives $t = \lambda_j u_{d,j}$ for $\lambda_j\ge 0,$ and therefore $t\in A$ by \cref{eq:A-constraint-set-proof}.
        \item $\iprod{w_{d,j}}{u_{d,j}}<0$: then \cref{eq:disc-coderivative-limiting} gives $t=0$ and so \cref{eq:A-constraint-set-proof} yields $t\in A$.
    \end{enumerate}
    We have thus proved that $u^*_g=\lambda \odot u_d + Vw_d$ with $\lambda_j$ and $w_{d,j}$ constrained as in \cref{eq:PD-adjoint-g-alt} for $\anyCod=\frechetCod$, and as in \cref{eq:PD-adjoint-g-alt-limiting} for $\anyCod=D^*$.
\end{proof}

Reformulating the third case of \eqref{eq:PD-adjoint-g-alt-limiting} into two branches, as well as replacing  \cref{eq:PD-adjoint-inner,eq:PD-adjoint-g} with the optimisation problem
\cref{min:adjoint-PDPS}, we obtain:

\begin{corollary}
    \label{cor:tv:adjoint-optim}
    Suppose the assumptions of \cref{lemma:PD-adjoint-g-alt} hold.
    Define the adjoint objective function $\mathscr{L}$ as
    \begin{equation}
        \label{eq:adjoint-objective}
        \mathscr{L}(w_p, w_d, \lambda)
        \defeq
        \frac{1}{2}\norm{u^*_p +\grad_{u_p}^2f(u_p, x)w_p - K^*w_d}^2_{U_p}
        +
        \frac{1}{2}\norm{u^*_d + Kw_p + \lambda \odot u_d + Vw_d}^2_{U_d},
    \end{equation}
    the constraint set corresponding to the Fréchet coderivative as
    \begin{equation}
        \label{eq:adjoint-constraint:frechet}
        \widehat{\mathscr{C}}_d
        \defeq
        \left\{
        \begin{array}{l}(w_d,\lambda)
            \\
            \ \in U_d \times \R^n
        \end{array}
        \,\middle|\,
        \begin{array}{ll}
            \lambda_j = 0
            & \text{if } \norm{u_{d,j}} < C,  [\Delta w_{d} +  Ku_p]_j=0
            \\
            \iprod{w_{d,j}}{u_{d,j}}=0,
            & \text{if } \norm{u_{d,j}} = C, [\Delta w_{d} +  Ku_p]_j\in  u_{d,j}(0, \infty)
            \\
            \iprod{w_{d,j}}{u_{d,j}}\ge 0, \lambda_j \ge 0
            & \text{if } \norm{u_{d,j}} = C,  [\Delta w_{d} +  Ku_p]_j=0
        \end{array}
        \right\}
    \end{equation}
    and the constraint set corresponding to limiting coderivative as
    {\small
    \begin{equation}
        \label{eq:adjoint-constraint:limiting}
        \mathscr{C}_d
        \defeq
        \left\{
        \begin{array}{l}
            (w_d,\lambda)
            \\
            \ \in U_d \times \R^n
        \end{array}
        \,\middle|\,
        \begin{array}{ll}
            \lambda_j = 0
            & \text{if } \norm{u_{d,j}} < C,  [\Delta w_{d} +  Ku_p]_j=0
            \\
            \iprod{w_{d,j}}{u_{d,j}}=0,
            & \text{if } \norm{u_{d,j}} = C, [\Delta w_{d} +  Ku_p]_j\in  u_{d,j}(0, \infty)
            \\
            \left\{
            \begin{array}{l}
                \lambda_j \ge 0 \text{ and } \iprod{w_{d,j}}{u_{d,j}}\ge 0;
                \text{ or}
                \\
                \lambda_j \le 0 \text{ and } \iprod{w_{d,j}}{u_{d,j}}=0
            \end{array}
            \right\}
            & \text{if } \norm{u_{d,j}} = C,  [\Delta w_{d} +  Ku_p]_j=0
        \end{array}
        \right\}.
    \end{equation}}%
    In the definitions of both constraint sets, $j$ runs over \, $1,\ldots,n$ in the qualifier.
    Then \eqref{eq:tv:adjoint-incl} holds, i.e., $(-u^*, x^*)\in \anyCod G(u,x|\Delta w_*)(w)$, if and only if, $x^*=\grad_{xu_p}f(u_p,x)w_p$ for some $w=(w_p,w_d)$ and $\lambda\in\R^n$ that achieve
    \begin{align}
        \label{min:adjoint-PDPS-g}
        0 = \min_{w_p\in U_p,w_d\in U_d, \lambda\in \R^n}
        \mathscr{L}(w_p, w_d, \lambda)
        \quad
        \text{subject to}
        \quad
        (w_d, \lambda) \in \breve{\mathscr{C}}_d,
    \end{align}
    where $\breve{\mathscr{C}}_d = \mathscr{C}_d$ or $\breve{\mathscr{C}}_d = \widehat{\mathscr{C}}_d$ according to the choice of the coderivative $\anyCod=D^*$ or $\anyCod=\frechetCod$.
\end{corollary}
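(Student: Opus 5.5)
This is essentially a repackaging of \cref{lemma:spesific-adjoint-general} and \cref{lemma:PD-adjoint-g-alt}, so I will combine them directly. First, \cref{lemma:spesific-adjoint-general} shows that $(-u^*, x^*)\in \anyCod G(u,x|\Delta w_*)(w)$ holds exactly when \eqref{eq:PD-adjoint-inner} and \eqref{eq:PD-adjoint-outer} hold for some $(u_g^*,x_g^*)$ satisfying \eqref{eq:PD-adjoint-g}. Second, \cref{lemma:PD-adjoint-g-alt} shows that \eqref{eq:PD-adjoint-g} is equivalent to $x_g^*=0$ and $u_g^*=\lambda\odot u_d+Vw_d$ for some $\lambda\in\R^n$, with the componentwise constraints \eqref{eq:PD-adjoint-g-alt} in the Fréchet case and \eqref{eq:PD-adjoint-g-alt-limiting} in the limiting case. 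Substituting $x_g^*=0$ into \eqref{eq:PD-adjoint-outer} gives $x^*=\grad_{xu_p}f(u_p,x)w_p$. Substituting $u_g^*$ into \eqref{eq:PD-adjoint-inner} and splitting $u^*=(u^*_p,u^*_d)$ gives two equations: $u^*_p+\grad^2_{u_p}f(u_p,x)w_p-K^*w_d=0$ and $u^*_d+Kw_p+\lambda\odot u_d+Vw_d=0$.

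Next I identify the constraint sets. The cases in \eqref{eq:adjoint-constraint:frechet} are those of \eqref{eq:PD-adjoint-g-alt} with $w_{*}=\Delta w_d+Ku_p$, so $\widehat{\mathscr C}_d$ coincides with the Fréchet constraints verbatim. For $\mathscr C_d$, the third case of \eqref{eq:PD-adjoint-g-alt-limiting} says $\lambda_j\iprod{w_{d,j}}{u_{d,j}}\ge0$ and $\max(\iprod{w_{d,j}}{u_{d,j}},\lambda_j)\ge0$. I will show this equals the union of the branch $\lambda_j\ge0,\ \iprod{w_{d,j}}{u_{d,j}}\ge0$ and the branch $\lambda_j\le0,\ \iprod{w_{d,j}}{u_{d,j}}=0$, using a sign analysis as in \eqref{eq:A-constraint-set-proof}:
\begin{enumerate}[label=(\roman*),noitemsep]
    \item If both quantities are nonnegative, both conditions hold, and this is the first branch.
    \item If exactly one quantity is negative, the product condition forces the other to vanish. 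If $\iprod{w_{d,j}}{u_{d,j}}<0$, then $\lambda_j=0$, which the max condition rejects. If $\lambda_j<0$, then $\iprod{w_{d,j}}{u_{d,j}}=0$, which is the second branch.
    \item If both are negative, the max condition fails.
\end{enumerate}
Conversely, each branch clearly satisfies the two original conditions.

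There is one point I expect to be the main obstacle. The statement of \cref{lemma:PD-adjoint-g-alt} only records $\lambda_j=0$ in the first case and leaves the second case's $\lambda_j$ free. In the limiting case \eqref{eq:PD-adjoint-g-alt-limiting}, I therefore need to check that the intended parametrisation $u_g^*=\lambda\odot u_d+Vw_d$ with $\lambda_j\in\R$ is implicit there. I will confirm this from the proof of that lemma, where cases (a) and (b) work out as in the Fréchet case.

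Finally, the two component equations hold if and only if the nonnegative functional $\mathscr L(w_p,w_d,\lambda)$ in \eqref{eq:adjoint-objective} equals zero, since $\mathscr L$ is a sum of squared norms. Hence the existence of $(w,\lambda)$ with $(w_d,\lambda)\in\breve{\mathscr C}_d$ satisfying the adjoint system is equivalent to the minimum in \eqref{min:adjoint-PDPS-g} being $0$ and attained. I will note explicitly that ``achieve'' means attainment of the value $0$ at the given $(w_p,w_d,\lambda)$. This avoids any existence or closedness issue for the infimum, because a minimiser with value $0$ is exactly a solution of the system.
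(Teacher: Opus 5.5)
Your route is the paper's own: combine \cref{lemma:spesific-adjoint-general} with \cref{lemma:PD-adjoint-g-alt}, rewrite the third limiting case as a union of branches, and use that $\mathscr{L}=0$ iff both residuals vanish. Your treatment of the Fréchet case and of the ``if'' direction for $\anyCod=D^*$ is fine.

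The gap is in case (ii) of your sign analysis. If $\iprod{w_{d,j}}{u_{d,j}}<0$ and $\lambda_j=0$, then $\lambda_j\iprod{w_{d,j}}{u_{d,j}}=0\ge 0$ and $\max(\iprod{w_{d,j}}{u_{d,j}},\lambda_j)=0\ge 0$. So the max condition does \emph{not} reject this point. It is a genuine element, not an artefact of the formula. It is the case $\norm{z}=C$, $y_*=0$, $\iprod{y}{z}<0$ of the limiting formula in \cref{lemma:subdiff-coderivative}, where the coderivative is $\{0\}$. It is also the stand-alone alternative ``$\lambda_j=0$'' in the set $A$ of \eqref{eq:A-constraint-set-proof}.

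Hence the third case of \eqref{eq:PD-adjoint-g-alt-limiting} is the union of three pieces: $\{\lambda_j\ge 0,\ \iprod{w_{d,j}}{u_{d,j}}\ge 0\}$, $\{\lambda_j\le 0,\ \iprod{w_{d,j}}{u_{d,j}}=0\}$ and $\{\lambda_j=0,\ \iprod{w_{d,j}}{u_{d,j}}\le 0\}$. The set $\mathscr{C}_d$ in \eqref{eq:adjoint-constraint:limiting} keeps only the first two. Your claimed identity of constraint sets is therefore false, and the ``only if'' direction for $D^*$ does not follow. The paper's one-line justification before the corollary makes the same two-branch rewriting, so the defect is in the statement, not only in your write-up.

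It cannot be patched while keeping $\mathscr{C}_d$ as written. Take $n=1$, $U_p=X=\R$, $Ku_p=(u_p,0)$, $f(u_p,x)=\frac{x}{2}(u_p-m)^2$, $x=C=1$, $u_d=(1,0)$, $\Delta w_d=-Ku_p$ and $u^*=(-1,(0,0))$. Then the third case is active and $V=0$.

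\begin{itemize}
\item With $w=(0,(-1,0))$ and $(u_g^*,x_g^*)=(0,0)$, the system \eqref{eq:PD-adjoint} holds for $D^*$, so $x^*=0$ is admissible.
\item On the other hand, $\mathscr{L}=0$ forces $w_p=-\lambda_1$ and $w_{d,1}=-1-\lambda_1$. Membership in $\mathscr{C}_d$ then forces $\lambda_1=-1$ and $w_p=1$.
\item Hence $x^*=\grad_{xu_p}f(u_p,x)w_p=u_p-m$, which differs from $0$ whenever $u_p\ne m$.
\end{itemize}

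What your argument does prove, verbatim, is the corollary with the branch $\lambda_j=0,\ \iprod{w_{d,j}}{u_{d,j}}\le 0$ added to the third case of $\mathscr{C}_d$. That set is still a finite union of convex sets, so \cref{rem:tv:adjoint-optim} survives. With the two-branch $\mathscr{C}_d$, only the ``if'' direction holds.
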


\begin{remark}
    \label{rem:tv:adjoint-optim}
    For the Fréchet coderivative, \eqref{min:adjoint-PDPS-g} is a finite-dimensional and convex constrained quadratic problem, so has a minimiser.
    For the limiting coderivative, the problem may be nonconvex in the third case of \eqref{eq:adjoint-constraint:limiting}, but again, by considering the two convex branches individually, we obtain the existence of a minimiser.
    The main question, therefore, is if the minimum value is zero.
    This is tied to the satisfaction of \cref{eq:adjoint-tk}, which we use to guarantee the differential transformation \cref{ass:tracking:main}\,\cref{item:tracking:main:differential-transformation}.
    The latter guarantees the existence of a solution to \eqref{eq:tv:adjoint-incl}, hence to \eqref{min:adjoint-PDPS-g}.
\end{remark}

\begin{example}[Denoising as the inner problem]
    \label{ex:tv:denoising}
    Take $f(u_p,x) = \frac{x}{2}\norm{u_p - m}^2$ for some noisy measurement $m\in U_ p$ and therefore $\grad_{u_p}^2f(u_p, x)w_p = xw_p.$ Taking $J(u)=\frac{1}{2}\norm{u_p - b}^2$ for a target “true solution” $b\in U_p$, recalling \eqref{eq:tv:ustar-choices}, we have $u^*_p = u_p - b$ and $u^*_d = 0.$
    Since the optimal value of the adjoint problem \eqref{min:adjoint-PDPS-g} is zero when the original adjoint inclusion \eqref{eq:tv:adjoint-incl} has a solution, we can solve the problem for $w_p$ by setting the first term in $\mathscr{L}(w_p, w_d, \lambda)$ equal to zero.
    This yields
    \[
        w_p = \inv x(K^*w_d + b - u_p).
    \]
    The rest of the adjoint problem \cref{min:adjoint-PDPS-g} now reduces to
    \begin{align}
        \min_{w_d\in U_d, \lambda\in \R^n}
        \frac{1}{2}\norm{\inv xK(K^*w_d + b - u_p) + \lambda \odot u_d + Vw_d}^2_{U_d}
        \quad
        \text{subject to}
        \quad
        (w_d, \lambda) \in \mathscr{C}_d,
    \end{align}
    where $V$ is defined in \eqref{eq:useful-operators}.
\end{example}

\subsection{Summary}
\label{sec:tv:summary}

\begin{algorithm}
    \caption{Nonsmooth Bilevel Primal-Dual method}
    \label{alg:fb-adjoint-solver}
    \begin{algorithmic}[1]
        \Require
        On Hilbert spaces $U =U_p\times U_d$ and $X$, functions $J:U\to\R, R:X\to\extR, f =f_0 + e$ for $f_0, e: U_p \times X \to \extR$ and $g^*:U_d\times X \setto \extR$, as well as linear operator $K\in \linear(U_p;U_d),$ such that $J, R, f_0, e, g$ are convex, proper, and lower semicontinuous in their first parameter. Additionally, $J$ and $e$ have Lipschitz gradients, the latter with respect to  first parameter, with constants $L$ and $L(x)$ respectively.
        For each iteration, a way to construct the adjoint objective functions $\mathscr{L}_k:U \times \R^n\to\R,$ and constraint sets $\breve{\mathscr{C}}_{d,k}\subset U_d \times \R^n$.
        Moreover, an outer step length parameter $\tau>0,$ an adjoint step length parameter $\theta,$ inner step length parameters $\tau_p, \tau_d>0$ satisfying $\tau_p L(x^k)/2 + \tau_p\tau_d\norm{K}^2\le 1$ and a parameter $\omega>0.$
        We are also given iteration numbers  $\Ninner,\Nadjoint\in\N$ for the inner and adjoint loops respectively.
        \State Pick initial iterates $u^0\defeq(u_p^0,u_d^0)\in U,$ $v^0\defeq(w_p^0, w_d^0, \lambda^0)\in U\times\R^n,$ and $x^{0}\in X.$
        \For{$k \in \N$}
            \State
            $
            u^{k,0}
                \defeq
                u^k
            $
            \For{$i = 0,\ldots,\Ninner-1$}
                \Comment{inner PDPS}
                \State
                $u_p^{k,i+1}
                    \defeq
                    \prox_{\tau_p f_0(\freevar; \thisx)}(u_p^{k,i} - \tau_p(K^*u_d^{k,i} + \grad_{u_p} e(u_p^{k,i};\thisx))$
                \State
                $u_d^{k,i+1}
                    \defeq
                    \prox_{\tau_d g^*(\freevar; \thisx)} (u_d^{k,i} + \tau_d K((1+\omega)u_p^{k,i+1} - \omega u_p^{k,i}))$
            \EndFor
            \State
            $
                \nextu
                \defeq
                u^{k,\Ninner}
                ,
                \,
                v^{k,0}
                \defeq
                v^k
            $
            \For{$j = 0,\ldots, \Nadjoint-1 $}
                \Comment{Forward-backward adjoint solver}
                \State\label{step:alg:adjoint-loop}\vspace{-1.5ex}
                $
                    v^{k,j+1}
                    \defeq
                    \proj_{\breve{\mathscr{C}}_{d,k}}(
                    v^{k,j}
                    - \theta\grad\mathscr{L}_k(v^{k,j};u^{k+1}))$
            \EndFor
            \State
            $
                v^{k+1}
                \defeq
                v^{k,\Nadjoint}
            $
            \State
            $\nextEstF
                \defeq
                [f^{(u_p,2)}(u_p^{k+1}, \thisx)]^*w_p^{k+1}
            $
            \Comment{Form the differential estimate}
            \State
            $\nextx \defeq \prox_{\tau R}(x^k - \tau \nextEstF)$
            \Comment{Forward-backward outer step}
        \EndFor
    \end{algorithmic}
\end{algorithm}

We present in \cref{alg:fb-adjoint-solver} our overall proposed algorithm for bilevel problems \eqref{eq:intro:bilevel-problem} where $G$ arises from the primal-dual optimality conditions of \eqref{eq:sum-of-convex-objective}.
In it, on each outer iteration
\begin{enumerate}[noitemsep]
    \item we work towards a solution of the inner problem, by taking (single or multiple) primal-dual proximal splitting (PDPS) steps,
    \item work towards a root of the adjoint inclusion \eqref{eq:PD-adjoint} (with $\Delta w^{k+1}_*=-M(\nextu-\thisu)$ for $M$ defined in \cref{PDPS-M}) by taking (single or multiple) forward-backward steps on \eqref{min:adjoint-PDPS} (when $g$ is a non-parametric total variation regularisation term, \eqref{min:adjoint-PDPS-g}), and follow this by
    \item taking forward-backward steps to update the outer variable.
\end{enumerate}
Corresponding to the construction of $\nextEstF=x^*$ as an approximate solution of \eqref{eq:tv:adjoint-incl} (equivalently \eqref{eq:PD-adjoint}) for $\Delta w_*=-M(\nextu-\thisu)$ for $M$ defined in \cref{PDPS-M}) for a choice of coderivative $\anyCod=D^*,\frechetCod$, we work with the target sets $\COsubdiffS F(\thisx) = \anyCOsubdiffS_2 F(\thisx)$ (see \cref{rem:composition-subdiff}) and the target $\targetF$, solved as $x^*$ from \eqref{eq:PD-adjoint} with $\Delta w_*=0$.
The convergence of the method, in the sense that $\inf_{\bar x^*\in [\anyCOsubdiffS_2 F+\subdiff R](\nextx)}\norm{\bar x^*}\to 0$, thus follows from \cref{thm:main-convergence} subject to the satisfaction of:
\begin{enumerate}[noitemsep,label=(\roman*)]
    \item The tracking \cref{ass:tracking:main} for the inner PDPS and the forward-backward adjoint solver.
    \item The pseudo-smoothness property \eqref{ineq:set-valued-descent-inexact} on the optimistic selection $E=\infmap{F}$ or the pessimistic selection $E=\supmap{F}$, where $F= J \circ S$ for the inner solution mapping $S$.
    \item The continuity condition \cref{item:main-convergence:F} or \cref{item:main-convergence:R} in the statement of the theorem.
    \item The local initialisation and level set boundedness condition \cref{eq:main-convergence:locality-lemma}.
\end{enumerate}
These properties need to be analysed for each specific problem, and their complete verification remains outside the scope of the present already long work, where we were able to provide a suggestion of the feasibility of their satisfaction through the example of \cref{sec:simple-example}.
The continuity condition \cref{item:main-convergence:R} can always be made to hold for Lipschitz-differentiable $R$, or, e.g. barrier functions (see \cref{rem:convergence:barrier}).

As a modification of \cref{alg:fb-adjoint-solver}, recalling \cref{rem:tracking:exact}, \textbf{if} both the inner problem and adjoint are solved exactly, both \cref{ass:tracking:main}\,\cref{item:tracking:main:inner-tracking,item:tracking:main:adjoint-tracking} hold.
We can also adaptively take $\Ninner$ and $\Nadjoint$ sufficiently large that these conditions hold for prescribed parameters.

Moreover,
\begin{enumerate}
    \item
          \cref{ass:tracking:main}\,\cref{item:tracking:main:inner-tracking} follows from \cref{thm:subreg:convergence-result-sub-peb:pdps,thm:inner-tracking} subject to
          \begin{enumerate}[label=(\Roman*)]
            \item The inner solution mapping satisfying the Lipschitz-type property \eqref{eq:inner-tracking:lip-cond}, by, e.g., having the Aubin property or being Pompeui--Hausdorff--Lipschitz; see \cref{lemma:inner-tracking:aubin}, and the discussion preceding it.

            It is not uncommon for the Aubin property to hold for primal-only formulations of the inner problem, see \cite[§28]{clason2020introduction} and \cref{sec:simple-example}.
            However, the primal-dual formulation can add additional difficulties and locality restrictions.

            \item The mapping $G(\freevar; \thisx)$, as defined in \eqref{eq:primal-dual-bilevel-problem}, being metrically subregular for all $k \in \N$ with compatible neighbourhoods and moduli, as discussed in \cref{rem:subreg:chaining}.

            It is enough that each $G(\freevar, \thisx)$ is strongly monotone (outside a kernel subspace) with uniform factors.
        \end{enumerate}
    \item
          \cref{ass:tracking:main}\,\cref{item:tracking:main:adjoint-tracking} follows from \cref{thm:adjoint-ZZ} subject to:
          \begin{enumerate}[resume*]
            \item\label{item:summary:adjoint-lipschitz}
                The adjoint solution mapping satisfying the bi-Lipschitz-type property \eqref{ineq:adjoint-ZZ}.
                This requires that exact adjoint solutions exist: that \eqref{eq:adjoint-t-exact} holds.

                \Cref{lemma:adjoint-tracking:aubin} provides one simplified tool if the inner solution mapping $S$ is single-valued and Lipschitz.
                The general condition \eqref{ineq:adjoint-ZZ} makes no such requirement, as we saw in the example of \cref{sec:simple-example}.
                However, a detailed study of the condition for the total variation regularised problems remains outside the scope of the present, already long, work.

                \begin{remark}
                    \label{rem:tv:special-adjoint-lip}
                    Nevertheless, we note that, presently, by \cref{lemma:spesific-adjoint-general},
                    \[
                        \begin{split}
                        \breve Z(u, x, w_*)
                        &
                        \defeq
                        \{
                            w\in W
                            \mid
                            0 \in \anyCod G(u,x|w_*)(w) +  (J'(u), -x^*),\,
                            x^*\in X^*
                        \}
                        \\
                        &
                        =
                        \{
                            w\in W
                            \mid
                            \text{\cref{eq:PD-adjoint-inner,eq:PD-adjoint-g} hold with } u^*=J'(u)
                        \}.
                        \end{split}
                    \]
                    In the strictly complementary case, (i.e., only the first two cases of \eqref{eq:adjoint-constraint:frechet} or \eqref{eq:adjoint-constraint:frechet} occur), this can be reduced to a simple matrix system.
                    If we replace $g$ by its Moreau–Yosida regularisation (which is less than the second-order regularisation commonly required), i.e., add a quadratic to $g^*$, then it is not difficult to see that the system is non-singular provided $\grad_{u_p}^2 f(u_p, x)w_p$ and $Kw_p=0$ imply $w_p=0$. If $K$ is a discretised differential operator, this essentially means that $f$ must be sensitive to each component of $w_p$ changing by the same constant.
                    Then $\breve Z$ is single-valued and Lipschitz.
                \end{remark}

            \item
                The contractivity \eqref{ineq:linear-adjoint-convergence-lem} of the adjoint update on \cref{step:alg:adjoint-loop} of  \cref{alg:fb-adjoint-solver}.
                This requires that algorithmic adjoint solutions exist: that \eqref{eq:adjoint-tk} holds and hence that zero is reached in \eqref{min:adjoint-PDPS-g}.

                For the Fréchet coderivative, $\widehat{\mathscr{C}}_d$ of \eqref{eq:adjoint-constraint:frechet} is convex.
                Since $\mathscr{L}=\frac{1}{2}\norm{A\freevar -b}^2$ for some $A$ is linear-quadratic, $P \defeq \grad\mathscr{L} + \subdiff\delta_{\mathscr{C}}$ is $A^*A$-strongly monotone in the sense of \cref{lemma:subreg:monotonicity-subregularity}.
                Thus, that lemma with $M=\inv\theta\Id$ verifies the metric subregularity of $P$ at any of its minimisers with the global radius $\delta=\infty$.
                Now \cref{thm:subreg:convergence-result-sub-peb:fb,rem:subreg:chaining} prove \eqref{ineq:linear-adjoint-convergence-lem}.

                For the limiting coderivative, $\mathscr{C}_d$ of \eqref{eq:adjoint-constraint:limiting} is nonconvex in the third case.
                However, it is only the union of two convex sets. We call them here branches.
                We can, as in the Fréchet case, obtain contractivity on the active branch, where $\dist(\anyZunext, w^k)$ is achieved. The left hand side of \eqref{ineq:linear-adjoint-convergence-lem} can then only be improved by taking the minimum over the solutions on both branches, individually.
                Thus \eqref{ineq:linear-adjoint-convergence-lem} still holds.
        \end{enumerate}
    \item
          \cref{ass:tracking:main}\,\cref{item:tracking:main:differential-transformation} follows from \cref{thm:tracking:diff-transformation} subject to:
          \begin{enumerate}[resume*]
            \item The Lipschitz-type assumption \eqref{eq:tracking:diff-transformation:lipschitz}.

                Presently, from \cref{lemma:spesific-adjoint-general},
                \begin{equation}
                    \label{eq:tv:breve-t}
                    \begin{split}
                    \breve T(u,w,\Delta u^*,\Delta w_*; x)
                    &
                    \defeq
                    \{ x^* \in X^* \mid 0 \in \anyCod G(u,x|\Delta w_*)(w) +  (J'(u)- \Delta u^*, -x^*) \}
                    \\
                    &
                    = \{\grad_{xu_p}f(u_p, x)w_p \mid  \text{\cref{eq:PD-adjoint-inner,eq:PD-adjoint-g} hold with } u^*=J'(u)\}.
                    \end{split}
                \end{equation}
                Therefore, \eqref{eq:tracking:diff-transformation:lipschitz} can be verified from the properties of $f$.
                This may require the variables $u_p$, $w_p$, and $x$ to be bounded.
                In \cref{ass:tracking:main}, we already allow the restriction $x \in \Omega_X$ and $u \in \Omega_U$, which are then ensured by the convergence \cref{thm:main-convergence} (via \cref{lemma:main-convergence}); see \cref{remark:main-convergence}.
                The variable $w_p$ can be bounded through the adjoint optimisation problem  \eqref{min:adjoint-PDPS-g} by comparing against $(w_p, w_d, \lambda)=0$, recalling that $u^*=J'(\nextu)$.

                \item The existence of exact and algorithmic adjoint solutions, i.e., again, \cref{eq:adjoint-tk,eq:adjoint-t-exact}.

                This, again, depends on a careful analysis of the specific problem.
                In particular, under the special conditions of \cref{rem:tv:special-adjoint-lip} of \cref{item:summary:adjoint-lipschitz}, we can guarantee the existence of $w=S_w(\nextu)$ or $w=\nexxt w$.
                Then \eqref{eq:tv:breve-t} immediately establishes  \cref{eq:adjoint-tk,eq:adjoint-t-exact}.

          \end{enumerate}

          Otherwise, the structural assumptions of \cref{thm:tracking:diff-transformation} correspond to those of this section, as stated in \cref{lemma:spesific-adjoint-general}.
\end{enumerate}

The main challenge is, therefore, \eqref{ineq:set-valued-descent-inexact}, adjoint existence  \cref{eq:adjoint-tk,eq:adjoint-t-exact}, and the Lipschitz-type properties \cref{eq:inner-tracking:lip-cond,ineq:adjoint-ZZ} of the inner and adjoint solution mappings.
These need to be analysed on a case-by-case basis.

\section{Numerical results}
\label{sec:numerical}

We present two total variation regularised imaging problems and assess the performance of our \cref{alg:fb-adjoint-solver} on them.
The example problems are:
\begin{description}
    \item[denoise] Find an optimal regularisation parameter to denoise a photographic image with simulated (Gaussian) noise.
    \item[deblur] Find an optimal parametrisation of convolution kernel and regularisation parameter to
          deblur/deconvolve a photographic image with simulated blur and noise.
\end{description}
We call the evaluated algorithms:
\begin{description}
    \item[implicit] \cref{alg:fb-adjoint-solver}  with $\Ninner=500$ and $\Nadjoint  = 2000.$ With near exact initialisation of the inner and adjoint variables this algorithm solves the inner and adjoint problems near exactly within each outer iterate.

    \item[single-loop] \cref{alg:fb-adjoint-solver}  with $\Ninner=1$ and $\Nadjoint  = 3,$ i.e. the version that allows inexact solution of the inner and adjoint problems, but it also uses the same near exact initialisation of the inner and adjoint variables.
\end{description}
The goal of these numerical results is to show that an implementation our algorithm works in practice, not to demonstrate the use in actual applications, where the size and the amount of the target images would be higher.

In \cref{sec:objective functions}, we present the inner, adjoint and outer objective functions of our numerical example bilevel problems. We present numerical details in \cref{sec:numerical-setup}, and discuss the results in \cref{sec:results}.

\begin{remark}[Algorithms in the literature]
    The “implicit” method roughly corresponds to the best applicable variant of first-order algorithms found in the literature.
    Aside from stochastic single-loop algorithms for smooth and strongly convex problems developed for machine learning applications (see \cite{suonpera2022bilevel} for an overview), these methods generally depend on exact or near-exact inner and adjoint solutions, by any available means.
    Apart from \cite{villacis2025variational}, the convergence theories, including that of SparseHO \cite{bertrand2022implicit}, generally require additional smoothing.

    Also, the specific formulations treated in the literature are not always the same for the same application problem.
    Therefore, the algorithms are not necessarily directly comparable.
    In particular, our primal-dual formulation of the inner problem is rarely found in the literature. The closest practical match to our approach is given by \cite{bogensperger2025adaptively} that has significant smoothness and strong convexity requirements.

    The primal-dual transformation of the inner problem is, however, not critical in case of algorithms that require exact or near-exact inner and adjoint solutions.
    The allowed placement of the hyperparameters $x$ is much more critical.
    In particular, \cite{villacis2025variational} that has the least smoothness assumptions, only applies to inner problems of the form $\min_u f(u) + \sum_i \psi(x_i) g_i(u)$, where $f$ is smooth, and $g_i$ possibly nonsmooth.
    This excludes even our formulation \eqref{eg:denoise-inner-objective} of the denoising problem, that places $x$ in front of $f$.
    Our deblurring problem is not covered by the model of \cite{villacis2025variational}.

    As for second-order methods, \cite{villacis2025variational} also considers a nonsmooth trust region method based on the general locally Lipschitz approach of \cite{qi1994trust}.
    The specific approach of \cite{villacis2025variational} is unapplicable for the same reasons as above.
    The authors also do not compare the forward-backward and trust region methods in terms of CPU time or total computational complexity, only in terms of iterations.
    The overall method of \cite{qi1994trust} might be applicable, if we were able to show the required Lipschitz assumptions.
    Judging from our earlier experience \cite{tuomov-tgvlearn,dizonvalkonen2024tracking}, semismooth Newton's methods have significant performance challenges on the type of problems considered here, even with appropriate smoothing.
\end{remark}

\subsection{Objective functions}
\label{sec:objective functions}
For both problems we take the outer fidelity term as $J(u) = \frac{1}{2}\norm{u_p - b}_2^2$ for $b\in\R^{N^2}$ the “ground truth” image of dimensions $N\times N$.
The latter is a cropped portion of image 07 or 02 -- for denoising and deblurring respectively -- of the free Kodak dataset \cite{franzenkodak}  converted to grey values in $[0,1]$; see \cref{original-denoise,original-blur}.

For the denoising experiment our inner problem reads as
\begin{equation}
    \label{eg:denoise-inner-objective}
    \min_{u_p\in\R^{256^2}} \frac{x}{2}\norm{u_p-m}^2 + C\norm{Ku_p}_{2,1}
    \quad (x\in\R_+)
\end{equation}
with simulated measurement $m,$ see \cref{noisy-denoise}, obtained from $b$ by adding Gaussian noise of standard deviation $0.08$
and the matrix $K$ is a (discrete) backward difference operator with Dirichlet boundary conditions. We used the choice $C=0.1,$ and the outer regulariser $R=0.$
In \cref{alg:fb-adjoint-solver} we take $f_0 = 0, e(u_p,x) = \frac{x}{2}\norm{u_p-m}^2$, and $g(u,x) =  C\norm{Ku_p}_{2,1}.$
The adjoint objective is given by \cref{eq:adjoint-objective} with $u^*=\grad J(u),$ for the constraint set of \cref{eq:adjoint-constraint:limiting} corresponding to the limiting coderivative (of $G$).


For the deblurring experiment, we take as the forward operator $A_x$ the convolution with $5\times5$-kernel parametrised by $x$ as visualised in \cref{fig:numerical:deblurring-param}.
With this and the $K$ from above, the inner problem reads as
\begin{equation}
    \label{eg:deblur-inner-objective}
    \min_{u_p\in\R^{128^2}} \frac{x_1}{2}\norm{A_xu_p-m}^2 + C\norm{Ku_p}_{2,1}
    \quad (x\in\R^4_+).
\end{equation}
Write $r_{\phi}$ be the operator for rotating the image $\phi$ degrees, clockwise for $\phi>0$ and counterclockwise for $\phi<0.$
We then use in \cref{eg:deblur-inner-objective} the simulated measurement $m=r_{-1}(A_x r_1(b)) + \epsilon$, see \cref{blurred-noisy} for the true kernel weights $x_2=0.15,x_3=0.1$ and $x_4=0.75,$ as well as Gaussian noise $\epsilon$ from distribution with standard deviation $0.01.$
The aim of rotations in simulating the measurement is to provide additional modelling error to avoid inverse crime.
We take the constant $C=0.1$ and the outer regulariser $R(x) = \beta(x_2 + x_3 + x_4 - 1)^2$ with $\beta=10^4.$
The functions for the adjoint objective \cref{eq:adjoint-objective} are analogous to the denoising experiment with the difference of $f(u_p,x) = f_0(u_p,x) + e(u_p,x) = e(u_p,x) = \frac{x_1}{2}\norm{A_xu_p-m}^2.$

\begin{figure}[t]
    \centering
    \begin{subfigure}[t]{0.32\textwidth}
        \centering
        \includegraphics[width=\textwidth]{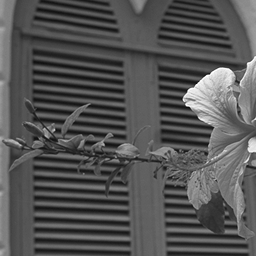}
        \caption{Original}
        \label{original-denoise}
    \end{subfigure}
    \hfill%
    \begin{subfigure}[t]{0.32\textwidth}
        \centering
        \includegraphics[width=\textwidth]{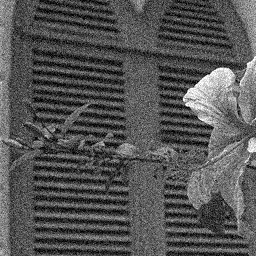}
        \caption{Noisy; error 22.1\%}
        \label{noisy-denoise}
    \end{subfigure}
    \hfill%
    \begin{subfigure}[t]{0.32\textwidth}
        \centering
        \includegraphics[width=\textwidth]{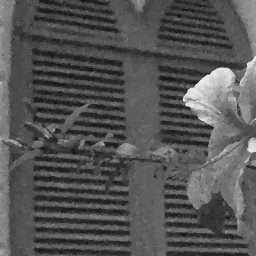}
        \caption{Result; error 10.7\%}
        \label{reg-denoise}
    \end{subfigure}
    \caption{Denoising data and result for the "single-loop" method ($N=256$). The errors in (\subref{noisy-denoise}) and (\subref{reg-denoise}) are $\norm{\text{image} - \text{target} }_2/\norm{\text{target}}_2$.}
    \label{fig:denoising}
\end{figure}

\begin{figure}[t]
    \begin{subfigure}[t]{0.24\textwidth}%
        \resizebox{\linewidth}{!}{%
            \begin{tikzpicture}[inner sep=0pt,outer sep=0pt]%
                \fill[gray!40!white] (0.6,0) rectangle (2.4,3.0);
                \fill[gray!40!white] (0,0.6) rectangle (3.0,2.4);
                \fill[blue!40!white] (0.6,1.2) rectangle (2.4,1.8);
                \fill[blue!40!white] (1.2,0.6) rectangle (1.8,2.4);
                \fill[red!40!white] (1.2,1.2) rectangle (1.8,1.8);
                \node at (1.5, 1.5) {$x_2$};
                \node at (1.5, 2.1) {$x_3$};
                \node at (2.1, 2.1) {$x_4$};
                \node at (0.3, 0.3) {$0$};
                \node at (0.3, 2.7) {$0$};
                \node at (2.7, 0.3) {$0$};
                \node at (2.7, 2.7) {$0$};
                \draw[step=0.6cm,black,very thin] (0,0) grid (3.0,3.0);
            \end{tikzpicture}
        }%
        \caption{Kernel structure}
        \label{fig:numerical:deblurring-param}
    \end{subfigure}
    \hfill%
    \begin{subfigure}[t]{0.24\textwidth}%
        \centering
        \includegraphics[width=\textwidth]{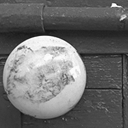}
        \caption{Original}
        \label{original-blur}
    \end{subfigure}
    \hfill%
    \begin{subfigure}[t]{0.24\textwidth}
        \centering
        \includegraphics[width=\textwidth]{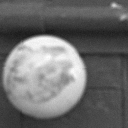}
        \caption{Blurry; error $9.8\%$}
        \label{blurred-noisy}
    \end{subfigure}
    \hfill%
    \begin{subfigure}[t]{0.24\textwidth}
        \centering
        \includegraphics[width=\textwidth]{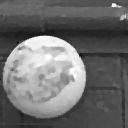}
        \caption{Result; error $6.9\%$}
        \label{reg-blur}
    \end{subfigure}
    \caption{Deconvolution kernel parametrisation, data, and result for the "single-loop" method ($N=128$). The different colours in (\subref{fig:numerical:deblurring-param}) represent different components of $x$ such that elements of kernel with same colour have same value. The errors in (\subref{blurred-noisy}) and (\subref{reg-blur}) are $\norm{\text{image} - \text{target} }_2/\norm{\text{target}}_2$.
    }
    \label{fig:deblurring}
\end{figure}

\subsection{Numerical setup}
\label{sec:numerical-setup}

Our algorithm implementation (code files) will be available on Zenodo for the publication of this article. The choices for adjoint and outer step lengths for both experiments are listed in \cref{tab:setup}. We picked inner step lengths $\tau_p, \tau_d > 0$ satisfying $\tau_p L(x)/2 + \tau_p\tau_d \norm{K}^2\le 1$ for PDPS with the upper bound of $\norm{K}^2\le 8$ from \cite{chambolle2004algorithm}. The Lipschitz constant $L(x)$ depends on the outer variable (iterates) since the first terms of the inner objectives from \cref{eg:denoise-inner-objective,eg:deblur-inner-objective} also has dependency on it.
All the other step lengths, numbers of steps
for solving inner problem and adjoint system to a high precision in implicit method, and parameters,
such as $\omega = 1,$ are chosen by trial and error to obtain an apparently stable, but as efficient as possible, algorithm. We do not attempt to verify their theoretical conditions.

We choose the adjoint system that corresponds to limiting coderivative (of $G$) and essentially the constraint set $\mathscr{C}_{d,k}$ given by \cref{eq:adjoint-constraint:limiting}. We also tried the system corresponding to Fréchet coderivative, but observed no significant difference. We took the former since we considered the potentially empty outer objective differential more serious issue than the potential issues with the convergence of the adjoint algorithm from nonconvexity of the constraint set.

\begin{remark}
    To compute the constraint set  $\mathscr{C}_{d,k}$ given by \cref{eq:adjoint-constraint:limiting}, we require $\Delta w_d^{k+1} + Ku_p^{k+1}$.
    We can avoid calculating this explicitly:
    We have $[\Delta w^{k+1}_d + K\nextu_p]_j \in \subdiff \delta_{B_{\R^2}(0,C)}(\nextu_{d,j}),$  which implies that $[\Delta w^{k+1}_d + K\nextu_p]_j= v_j \nextu_{d,j}$ for some $v_j\ge 0$.
    From the treatment of the PDPS as inner method \cref{ex:inner-pdps}, we have $(\Delta \nexxt w_p, \Delta \nexxt w_d) = \Delta w_*^{k+1}=-M(\nextu - \thisu)$. Thus
    \[
        (u_d^k + \tau K(2u_p^{k+1}- u_p^k))_j
        =
        (1 + \tau v_j)u_{d,j}^{k+1}.
    \]
    On the other hand, the dual update of the inner PDPS yields
    \[
        u_{d,j}^{k+1}
        = \proj_{B(0,1)}([u_d^k + \tau K(2u_p^{k+1}- u_p^k)]_j)
    \]
    These two equations allow us to solve for $v_j$ and then $[\Delta w^{k+1}_d + K\nextu_p]_j= v_j \nextu_{d,j}$.
    This is practical because we have to compute $\norm{[u_d^k + \tau K(2u_p^{k+1}- u_p^k)]_j}$ within the inner step to perform the projection onto $B(0,1).$
\end{remark}

The initial iterates for the outer variable were taken as $x^{(0)}=4$ for the denoise experiment and $x^{(0)}=(10, 0.25, 0.25, 0.5)$ for the deblurring experiment. The initial iterates for the inner and adjoint variables were obtained by running one iteration of outer loop of \cref{alg:fb-adjoint-solver} with zero initialisation, $\Ninner = 10 000$ and $\Nadjoint =50 000.$

\begin{table}[tb]
    \caption{
        Algorithm parametrisation, time multiplier, and total outer steps to reach threshold computational resources (20000 CPU time) value.
        The time multipliers allow conversion from computational resources to seconds.
        It varies between algorithms and problems due to different levels of parallelisability.
        The ‘inner steps’ and ‘adjoint steps’ refer to $\Ninner$ and $\Nadjoint $ in \cref{alg:fb-adjoint-solver}, which indicate the number of iterations taken towards a solution of the inner problem or the adjoint inclusion on every outer iteration.
        Step lengths for PDPS (inner steps) varied for iterations, because the relevant Lipschitz variable depends on outer iterate.
    }
    \label{tab:setup}
    \centering
    \begin{NiceTabular}{ccrrrrcr}
         &             & outer           & inner & adjoint      & time   &                  &
        \\
         & method      & steps           & steps & steps        & mult.  & $\theta$         & $\tau$
        \\
        \toprule
        \Block{2-1}{denoise}
         & implicit    & $7.3\cdot10^2$  & $500$ & $2\cdot10^3$ & $0.31$ & $5\cdot 10^{-3}$ & $3\cdot 10^{-4}$
        \\
         & single-loop & $4.4\cdot 10^5$ & $1$   & $3$          & $0.29$ & $5\cdot 10^{-3}$ & $1\cdot 10^{-6}$
        \\
        \midrule
        \Block{2-1}{deblur}
         & implicit    & $1.7\cdot 10^3$ & $500$ & $2\cdot10^3$ & $0.23$ & $5\cdot 10^{-4}$ & $1\cdot 10^{-2}$
        \\
         & single-loop & $7.8\cdot 10^5$ & $1$   & $3$          & $0.23$ & $5\cdot 10^{-4}$ & $5\cdot 10^{-4}$
        \\
        \bottomrule
    \end{NiceTabular}
\end{table}

To compare algorithm performance, we plot norms of differential estimates and values of outer objective as function
of the CPU time value of Matlab on an AMD Ryzen 5 5600H CPU. We call this value “computational
resources”, since it measures the use of
several CPU cores by Matlab’s internal linear algebra.
Thus CPU time is more accurate indicator of the actual resources than the
elapsed real time.

\begin{figure}[t]
    \centering
    \input{Tikz_image3.tex}
    \caption{Graph of composed objective for the denoising problem along with near-optimal $\tilde x$ found by recursive subdivision.
        Subfigure (\subref{fig:denoising:diffest}) presents the estimate of the differential target set $\COsubdiffS_2 F$ defined in \cref{def:set-valued-target}. The estimate is obtained as
        $\widetilde x^*_1$ given by \cref{alg:fb-adjoint-solver} with $\Ninner = 10000, \Nadjoint =50000$, and zero initialisation of iterates. The values for composed objective function in subfigure (\subref{fig:denoising:values}) are obtained using the same inner solution estimate than for the differential target estimate.
        In (\subref{fig:denoising:diffest}), the dashed blue line indicates negative values, and the red continuous line, positive values. This numerical evidence also confirms that the minimum of $J\circ S_u$ is characterised by a zero of $\COsubdiffS_2 F.$
    }
    \label{fig:numerical:J_grid128}
\end{figure}

\subsection{Results}
\label{sec:results}

The numerical evidence presented in \cref{fig:numerical:performace} suggest that both methods converge in terms of both function values, and the infimal norm of the coderivative. This numerical convergence is observed even if we can't confirm all of the assumptions for the convergence \cref{thm:main-convergence}. Especially for the single-loop method the validation of the assumptions would require additional work. The convergence seems to be even linear, at least in the infimal coderivative norm. This is reasonable for denoising since by \cref{fig:numerical:J_grid128} the outer objective resembles a locally strongly convex function around the minimum.

With limiting coderivatives, the possibly nonconvex constraint set $\mathscr{C}_{d,k}$ could in principle be problematic in the adjoint algorithm. However, in practise, we did not observe any problems: the convergence behaviour of adjoint algorithms with limiting coderivatives was not significantly different from   the Fréchet coderivative, for which the set $\mathscr{C}_{d,k}$ is convex.

\begin{figure}[t]
    \centering
    \input{Tikz_image_plots.tex}
    \caption{Performance of the compared methods. Top row: (norm of) differential estimate convergence, for denoising $R=0,$ and bottom row: outer function value convergence.
        The “computational resources” is the spent CPU time over multiple cores, parallelisation level depending on algorithm.
    }
    \label{fig:numerical:performace}
\end{figure}

To conclude, our numerical experiments confirm that our proposed single-loop method converges, and that the convergence is faster than that of the (near-exact) implicit method.
The theoretical gaps outlined in \cref{sec:tv:summary} are a rich source of research questions for the future.


\input{nonsmooth-bilevel.bbl}

\appendix

\AtEndtrue

\section{Contractivity under metric subregularity}

\input{inner-lemmas}

\section{Proofs of auxiliary results for total variation}

Here we prove further auxiliary results that we did not prove in the main text.

\printProofs

\section{Scalar tracking results}
\label{sec:scalar-tracking}

We recall from \cite{dizonvalkonen2024tracking} the following scalar tracking results.
In our work, we take $\distU{k} = \norm{\thisu - S_u(\prev x)}_U + \norm{\Delta w_*^{k}}_* , \distW{k} = \norm{w^{k} - S_w(\prev x)}_W, \thisDistXprev = \norm{\thisx - \prev x}$ and $  \scalarTrackingErrorThis =  \norm{\nextEstE - \targetE}$.

\input{appendix-tracking}

\end{document}

%% file: symbols.tex
\newcommand{\term}{\emph}

\newcommand{\field}[1]{\mathbb{#1}}
\newcommand{\N}{\mathbb{N}}

\newcommand{\R}{\field{R}}
\newcommand{\extR}{\overline \R}
\newcommand{\B}{B}

\newcommand{\norm}[1]{\|#1\|}

\newcommand{\abs}[1]{|#1|}
\newcommand{\adaptabs}[1]{\left|#1\right|}

\newcommand{\inv}[1]{#1^{-1}}
\newcommand{\grad}{\nabla}
\newcommand{\freevar}{\,\boldsymbol\cdot\,}
\newcommand{\Union}\bigcup
\newcommand{\Isect}\bigcap
\newcommand{\union}\cup
\newcommand{\isect}\cap
\newcommand{\bigunion}\bigcup
\newcommand{\bigisect}\bigcap

\newcommand{\defeq}{:=}

\newcommand{\downto}{\searrow}
\newcommand{\upto}{\nearrow}
\newcommand{\subdiff}{\partial}

\DeclareMathOperator*{\argmin}{arg\,min}

\DeclareMathOperator{\interior}{int}

\DeclareMathOperator{\closure}{cl}
\DeclareMathOperator{\bd}{bd}

\DeclareMathOperator{\Dom}{dom}

\DeclareMathOperator{\graph}{Graph}

\DeclareMathOperator{\dist}{dist}
\DeclareMathOperator{\prox}{prox}
\DeclareMathOperator{\proj}{proj}
\DeclareMathOperator{\range}{ran}

\newcommand{\iprod}[2]{\langle #1,#2\rangle}

\def \weaktostarSym{\setbox0=\hbox{$\rightharpoonup$}\rlap{\hbox
        to\wd0{\hss\raise1ex\hbox{$\scriptscriptstyle{*\,}$}\hss}}\box0}
\def \weaktostar    {\mathrel{\weaktostarSym}}

\def\linear{\mathbb{L}}
\newcommand{\setto}{\rightrightarrows}

\def\extR{\overline \R}

\def\dualprod#1#2{\langle #1|#2\rangle}

\let\phi=\varphi
\let\epsilon=\varepsilon

\DeclareMathOperator{\Id}{Id}

\def\nexxt#1{#1^{k+1}}
\def\this#1{#1^k}
\def\prev#1{#1^{k-1}}
\let\opt\widehat

\def\nextu{\nexxt{u}}
\def\nextx{\nexxt{x}}

\def\thisu{\this{u}}
\def\thisx{\this{x}}

\def\dir#1{\Delta#1}

\newcommand{\frechetNormal}{\widehat N}
\newcommand{\frechetCod}{\widehat D^*}

\newcommand{\coderivative}{D^*}

\def\diffwrt#1#2{#1^{(#2)}}

\def\MAX#1{\overline{#1}}

\def\primaldifffact{\mu_u}

\def\trackingres{\psi}
\newcommand{\trackingressum}[1][p]{\varsigma_{#1}}

\def\localset{\Omega_X}

\newcommand{\dis}[4][]{%
    d_{#2}%
    \ifthenelse{\equal{#1}{2}}{^2}{}%
    (#3, #4)%
}

\newcommand{\adaptdis}[4][]{%
    d_{#2}%
    \ifthenelse{\equal{#1}{2}}{^2}{}%
    \left(#3, #4\right)%
}

\def \approxinSym{\setbox0=\hbox{$\in$}\rlap{\hbox
        to\wd0{\hss\raise1ex\hbox{$\sim$}\hss}}\box0}

\def\COsubdiffS{\mathscr{d}}

\def\nextEstF{\widetilde x^*_{k+1}}
\def\nextEstE{\widetilde x^*_{k+1}}
\def\targetF{x^*_{k+1}}
\def\targetE{x^*_{k+1}}

\def\anyCod{{\breve D^*}}
\def\anyCOsubdiffS{{\breve{\mathscr{d}}}}

\def\weakstarlimsup{\mathop{\operatorname{w-\!\ast\!-\kern.07em lim\,sup}\,}}

\def\optx{\opt x}
\def\optu{\opt u}

\def\Ninner{N_{\mathrm{inner}}}
\def\Nadjoint{N_{\mathrm{adjoint}}}

\def\varitys#1{}

\def\distU#1{{\varitys{SteelBlue!70}d^u_{#1}}}
\def\distUsq#1{{\varitys{SteelBlue!70}(d^u_{#1})^2}}
\newcommand{\nextDistU}[1][k]{\distU{#1+1}}
\newcommand{\thisDistU}[1][k]{\distU{#1}}
\def\initDistU{\distU{1}}
\def\initDistUsq{\distUsq{1}}

\def\distW#1{{\varitys{ForestGreen!70}d^w_{#1}}}
\def\distWsq#1{{\varitys{ForestGreen!70}(d^w_{#1})^2}}
\newcommand{\nextDistW}[1][k]{\distW{#1+1}}
\newcommand{\thisDistW}[1][k]{\distW{#1}}
\def\initDistW{\distW{1}}
\def\initDistWsq{\distWsq{1}}

\newcommand{\distX}[1]{{\varitys{orange!70}\varrho_{#1}}}
\newcommand{\thisDistXprev}[1][k]{\distX{#1}}
\newcommand{\nextDistXthis}[1][k]{\distX{#1+1}}
\newcommand{\nextDistXthisSq}[1][k]{{\varitys{orange!70}\varrho_{#1+1}^2}}
\newcommand{\InstthisDistXprev}[1][k]{{\varitys{orange!70}b_X(\thisx, \prev x)}}
\newcommand{\InstnextDistXthis}[1][k]{{\varitys{orange!70}b_X(\nextx, \thisx)}}
\newcommand{\InstnextDistXthisSq}[1][k]{{\varitys{orange!70}b_X^2(\nextx, \thisx)}}
\newcommand{\InstnextDistZthis}[1][k]{{\varitys{orange!70}b_Z(\nexxt z, \this z)}}

\def\scalarTrackingError#1{{\varitys{magenta!70}\widetilde d_{#1}}}
\def\scalarTrackingErrorSq#1{{\varitys{magenta!70}\widetilde d_{#1}^2}}
\def\scalarTrackingErrorThis{\scalarTrackingError{k}}
\def\scalarTrackingErrorThisSq{\scalarTrackingErrorSq{k}}

\def\infmap#1{\mathop{\mathrm{Inf}}\nolimits_{#1}}
\def\supmap#1{\mathop{\mathrm{Sup}}\nolimits_{#1}}
\def\Sopt{S^\sharp}
\def\Spess{S^\flat}

\def\frechetSubdiff{\partial_F}
\def\limitingSubdiff{\partial_M}
\def\anySubdiff{\breve\partial}
\def\lsubdiff#1{\subdiff_{|#1}}

%% file: symbols-texonly.tex
\renewrobustcmd{\downto}{{{\mathchoice%
            {\rotatebox[origin=c]{-20}{$\to$}}
            {\rotatebox[origin=c]{-20}{$\to$}}
            {\rotatebox[origin=c]{-20}{\scalebox{0.75}{$\to$}}}
            {\rotatebox[origin=c]{-20}{\scalebox{0.6}{$\to$}}}
}}}

\renewrobustcmd{\upto}{{{\mathchoice%
            {\rotatebox[origin=c]{20}{$\to$}}
            {\rotatebox[origin=c]{20}{$\to$}}
            {\rotatebox[origin=c]{20}{\scalebox{0.75}{$\to$}}}
            {\rotatebox[origin=c]{20}{\scalebox{0.6}{$\to$}}}
}}}

%% file: Tikz_image1.tex
	\begin{subfigure}[b]{0.49\textwidth}
	\centering
	\begin{tikzpicture}[
		declare function={
			func(\x)=  and(\x >= -0, \x < 5) * (5-abs(\x))     +
			(\x >= 5) * (0);
		}
		]
		\begin{axis}[
			axis x line=middle, axis y line=middle,
			ymin=-2, ymax=7, ytick={-1,...,6}, ylabel=$S_u(x)$,
			xmin=-2, xmax=8, xtick={-1,...,7}, xlabel=$x$,
			domain=-0:7,samples=101, 
			]
			\addplot [blue,line width=2pt] {func(x)};
            \addplot[->,line width=1pt, samples=3, smooth,domain=-5:7,black] coordinates {(6,0)(6,2)};
            \fill[color=black] (axis cs:6.0, 2.0) circle(0.1pt) node [above] {$(x^*_2,-J'(S_u(x_2)))$};
            \addplot[<-,line width=1pt, samples=3, smooth,domain=-1:1,black] coordinates {(-1,2)(1,4)};
            \fill[color=black] (axis cs:-1.94, 2.0) circle(0.01pt) node [below right] {$(x^*_1,-J'(S_u(x_1)))$};
		\end{axis}
	\end{tikzpicture}
    \caption{solution mapping $S_u$}
\end{subfigure}
\begin{subfigure}[b]{0.49\textwidth}
	\centering
	\begin{tikzpicture}[
        declare function={
            func(\x)= and(\x >= -0, \x <= 5) * (3 -\x) +
            (\x > 5) * (-2);
        }
        ]
	\begin{axis}[
        axis x line=middle, axis y line=middle,
        ymin=-3, ymax=4, ytick={-2,...,3}, ylabel=$J'(S_u(x))$,
        xmin=-1, xmax=8, xtick={-1,...,7}, xlabel=$x$,
        domain=-0:5,samples=101, 
        ]
        \addplot [magenta,line width=2pt] {func(x)};
        \addplot[line width=2pt, samples=10, smooth,domain=0:7,magenta] coordinates {(4.97,-2)(7,-2)};
    \end{axis}
	\end{tikzpicture}
    \caption{"direction" for differential $D^* S_u(x|S_u(x))$}
\end{subfigure}
\begin{subfigure}[b]{0.49\textwidth}
	\centering
	\begin{tikzpicture}[
		declare function={
			func(\x)=  
			and(\x >= -0, \x < 5) * 1/2*(\x-3)*(\x-3) +
			(\x >= 5) * (2);
		}
		]
		\begin{axis}[
			axis x line=middle, axis y line=middle,
			ymin=-2, ymax=7, ytick={-1,...,6},
            ylabel=$E(x)$,
			xmin=-2, xmax=8, xtick={-1,...,7}, xlabel=$x$,
			domain=-0:7,samples=101, 
			]
			\addplot [blue,line width=2pt] {func(x)};
            \addplot[<-,line width=1pt, samples=3, smooth,domain=-1:1,black] coordinates {(-1,1)(1,2)};
            \addplot[->,line width=1pt, samples=3, smooth,domain=-5:7,black] coordinates {(6,2)(6,1)};
            \fill[color=black] (axis cs:6.0, 1.0) circle(0.1pt) node [right] {$(x^*_2,-1)$};
            \fill[color=black] (axis cs:-1, 1.0) circle(0.1pt) node [below] {$(x^*_1,-1)$};
		\end{axis}
	\end{tikzpicture}
    \caption{outer objective $E=J\circ S_u$}
\end{subfigure}
\hspace{0.15cm}
\begin{subfigure}[b]{0.49\textwidth}
	\centering
	\begin{tikzpicture}[
		declare function={
			func(\x)= and(\x >= -0, \x <= 5) * (\x-3) +
			(\x > 5) * (0);
		}
		]
		\begin{axis}[
			axis x line=middle, axis y line=middle,
			ymin=-4, ymax=3, ytick={-3,...,2}, ylabel=$D^* E(x|E(x))(1)$,
			xmin=-1, xmax=8, xtick={-1,...,7}, xlabel=$x$,
			domain=-0:5,samples=101, 
			]
			\addplot [magenta,line width=2pt] {func(x)};
			\addplot[line width=2pt, samples=10, smooth,domain=0:7,magenta] coordinates {(5,0)(7,0)};
            \fill[color=magenta] (axis cs:5.0, 0.0) circle(2.5pt);
            \fill[color=magenta] (axis cs:5.0, 2.0) circle(2.5pt);
            \fill[color=black] (axis cs:1.0, -2.0) circle(2.0pt) node [below right] {$(x_1,x^*_1)$};
            \fill[color=black] (axis cs:6.0, 0) circle(2.0pt) node [above] {$(x_2,x^*_2)$};
		\end{axis}
	\end{tikzpicture}
    \caption{a differential for the outer objective}
\end{subfigure}

%% file: Tikz_image3.tex
\begin{subfigure}[b]{0.49\textwidth}
    \centering
    \begin{tikzpicture}
        \begin{axis}[%
                width=\linewidth,
                height=0.75\linewidth,
                xlabel near ticks,
                ylabel near ticks,
                scaled y ticks=false,
                yminorticks=true,
                minor y tick num=1,
                xminorticks=true,
                minor x tick num=3,
                axis x line*=bottom,
                axis y line*=left,
                xlabel={$x$},
                ylabel={$J\circ S_u(x)$},
                y tick label style={/pgf/number format/fixed},
                outer sep=0pt,
                font=\footnotesize,
            ]
            \addplot [color=Set2-C, line width=1pt] table[x=x,y=Fvalue]{results/denoise/bilevelFvalues.txt};
            \fill[color=red] (axis cs:2.0, 48.0) circle(1.5pt) node [above] {$\tilde x$};
        \end{axis}
    \end{tikzpicture}
    \caption{function value}
    \label{fig:denoising:values}
\end{subfigure}
\hfill%
\begin{subfigure}[b]{0.49\textwidth}
    \centering
    \begin{tikzpicture}
        \begin{axis}[%
                width=\linewidth,
                height=0.75\linewidth,
                ymode=log,
                xlabel near ticks,
                ylabel near ticks,
                scaled y ticks=false,
                yminorticks=true,
                minor y tick num=1,
                xminorticks=true,
                minor x tick num=3,
                axis x line*=bottom,
                axis y line*=left,
                xlabel={$x$},
                ylabel={$\COsubdiffS_2 F(x)$},
                outer sep=0pt,
                font=\footnotesize,
            ]

            \addplot[
                color=Set2-B,
                line width=1pt,
            ]
            table[
                    x=x,
                    y expr=\thisrow{Fdifferential} > 0 ? \thisrow{Fdifferential} : nan
                ]{results/denoise/bilevelFdifferentials.txt};

            \addplot[
                color=Set2-C,dashed,
                line width=1pt,
            ]
            table[
                    x=x,
                    y expr=\thisrow{Fdifferential} < 0 ? -\thisrow{Fdifferential} : nan
                ]{results/denoise/bilevelFdifferentials.txt};

        \end{axis}
    \end{tikzpicture}
    \caption{estimate of differential}
    \label{fig:denoising:diffest}
\end{subfigure}

%% file: Tikz_image_plots.tex
\pgfplotslegendfromname{leg:performance}\\
\medskip
\begin{subfigure}[b]{0.45\textwidth}
    \begin{tikzpicture}
        \begin{axis}[%
            width=\linewidth,
            height=0.75\linewidth,
            ymode=log,
            xlabel near ticks,
            ylabel near ticks,
            scaled y ticks=false,
            yminorticks=true,
            minor y tick num=1,
            xminorticks=true,
            minor x tick num=3,
            axis x line*=bottom,
            axis y line*=left,
            legend columns=4,
            legend style={draw=none,font=\small},
            xlabel={computational resources},
            ylabel={$\norm{\widetilde x^*_{k+1}}$},
            y tick label style={/pgf/number format/fixed},
            outer sep=0pt,
            font=\footnotesize,
            ]

            \addplot [color=Set2-C, line width=1pt] table[x=cputime,y=xstarNorm]{results/denoise/bilevelDenoiseNonsmoothImplicit.txt};

            \addplot [color=Set2-B, line width=1pt] table[x=cputime,y=xstarNorm]{results/denoise/bilevelDenoiseNonsmoothSingle.txt};
        \end{axis}
    \end{tikzpicture}
    \caption{denoise}
    \label{fig:numerical:denoise-xstar-norm}
\end{subfigure}%
\begin{subfigure}[b]{0.45\textwidth}
    \begin{tikzpicture}
        \begin{axis}[%
            width=\linewidth,
            height=0.75\linewidth,
            ymode=log,
            xlabel near ticks,
            ylabel near ticks,
            scaled y ticks=false,
            yminorticks=true,
            minor y tick num=1,
            xminorticks=true,
            minor x tick num=3,
            axis x line*=bottom,
            axis y line*=left,
            legend columns=4,
            xlabel={computational resources},
            ylabel={$\norm{\widetilde x^*_{k+1} + \grad R(\thisx)}$},
            y tick label style={/pgf/number format/fixed},
            outer sep=0pt,
            font=\footnotesize,
            legend columns=4,
            legend style={
                draw=none,
                font=\small,
                /tikz/column 2/.style={column sep=1em,},
                /tikz/column 4/.style={column sep=1em,},
                /tikz/column 6/.style={column sep=1em,},
            },
            legend to name = {leg:performance},
            ]

            \addplot [color=Set2-C, line width=1pt] table[x=cputime,y=xstarNorm]{results/deblur/bilevelDeblurNonsmoothImplicit.txt};
            \addlegendentry{implicit}

            \addplot [color=Set2-B, line width=1pt] table[x=cputime,y=xstarNorm]{results/deblur/bilevelDeblurNonsmoothSingle.txt};
            \addlegendentry{single}

            \end{axis}
    \end{tikzpicture}
    \caption{deblur}
    \label{fig:numerical:deblur-xstar-norm}
\end{subfigure}%
\\%
\begin{subfigure}[b]{0.45\textwidth}
    \begin{tikzpicture}
        \begin{axis}[%
            width=\linewidth,
            height=0.75\linewidth,
            xlabel near ticks,
            ylabel near ticks,
            scaled y ticks=false,
            yminorticks=true,
            minor y tick num=1,
            xminorticks=true,
            minor x tick num=3,
            axis x line*=bottom,
            axis y line*=left,
            xlabel={computational resources},
            ylabel={$J\circ S_u + R$},
            y tick label style={/pgf/number format/fixed},
            outer sep=0pt,
            font=\footnotesize,
            ]

            \addplot [color=Set2-C, line width=1pt] table[x=cputime,y=JplusR]{results/denoise/bilevelDenoiseNonsmoothImplicit.txt};

            \addplot [color=Set2-B, line width=1pt] table[x=cputime,y=JplusR]{results/denoise/bilevelDenoiseNonsmoothSingle.txt};

        \end{axis}
    \end{tikzpicture}
    \caption{denoise}
    \label{fig:numerical:outer-value-denoise}
\end{subfigure}%
\begin{subfigure}[b]{0.45\textwidth}
    \begin{tikzpicture}
        \begin{axis}[%
            width=\linewidth,
            height=0.75\linewidth,
            xlabel near ticks,
            ylabel near ticks,
            scaled y ticks=false,
            yminorticks=true,
            minor y tick num=1,
            xminorticks=true,
            minor x tick num=3,
            axis x line*=bottom,
            axis y line*=left,
            xlabel={computational resources},
            ylabel={$J\circ S_u + R$},
            y tick label style={/pgf/number format/fixed},
            outer sep=0pt,
            font=\footnotesize,
            ]

            \addplot [color=Set2-C, line width=1pt] table[x=cputime,y=JplusR]{results/deblur/bilevelDeblurNonsmoothImplicit.txt};

            \addplot [color=Set2-B, line width=1pt] table[x=cputime,y=JplusR]{results/deblur/bilevelDeblurNonsmoothSingle.txt};

        \end{axis}
    \end{tikzpicture}
    \caption{deblur}
    \label{fig:numerical:outer-value-deblur}
\end{subfigure}%

%% file: inner-lemmas.tex

With the goal of verifying the inner tracking property for standard inner algorithms, subject to metric subregularity, we prove here the linear convergence of forward-backward type methods to the entire set of minimisers of a convex problem.
Our approach includes primal-dual methods, and relaxes the standard assumption of strong convexity.
Similar results first appeared in \cite{tuomov-partial-subreg}, however, focussing on basic primal-dual methods, they did not include forward steps.
Our proof here adapts those in \cite[Theorem 29.8 and Lemmas 29.6 and 29.7]{clason2020introduction} to (a) treat primal-dual methods, (b) include the residual $\norm{\nexxt u-\this u}^2$ in the claim, and (c) to remove the squares.


We consider here, in a Hilbert space $U$, general forward-backward type methods that solve optimality conditions of the form
\[
    0 \in T(\nextu) \defeq \subdiff g(\nextu) + \grad f(\nextu) + \Xi\nextu,
\]
and algorithms of the form
\begin{equation}
    \label{eq:subreg:fb}
    0 \in \subdiff g(\nextu) + \grad f(\thisu) + \Xi\nextu + M(\nextu-\thisu),
\end{equation}
where $\Xi \in \linear(U; U)$ is skew-adjoint; $M \in \linear(U; U)$ is self-adjoint and positive definite; $g: U \to \extR$ is convex, proper, and lower semicontinous; and $f: U \to \extR$ is convex and Fréchet differentiable with an $L_M$-Lipschitz gradient with respect to the $M$ and $\inv M$-norms: $\norm{\grad f(u)-\grad f(\tilde u)}_{\inv M} \le L_M\norm{u-\tilde u}_M$.
For the basic forward-backward method, $M=\inv\tau\Id$. For the PDPS, see \cref{ex:inner-pdps}.

\begin{lemma}
    \label{lemma:subreg:error-bound-first-estimate-fb}
    For $\nextu$ generated by  \eqref{eq:subreg:fb} from $\thisu$,
    we have
    $
        2(1+L_M^2)\norm{\nextu-\thisu}_M^2
        \ge
        \dist_{\inv M}^2(0, T(\nextu)).
    $
\end{lemma}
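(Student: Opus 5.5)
From \eqref{eq:subreg:fb} I will read off an explicit element of $T(\nextu)$, then estimate its $\inv M$-norm with the triangle inequality. Rearranging \eqref{eq:subreg:fb} gives
\[
    v \defeq -M(\nextu-\thisu) - \grad f(\thisu) + \grad f(\nextu) \in \subdiff g(\nextu) + \grad f(\nextu) + \Xi\nextu = T(\nextu).
\]
Hence $\dist_{\inv M}(0, T(\nextu)) \le \norm{v}_{\inv M}$.

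For the estimate, the first term satisfies $\norm{M(\nextu-\thisu)}_{\inv M} = \norm{\nextu-\thisu}_M$. This holds because $\norm{Mz}_{\inv M}^2 = \iprod{\inv M M z}{Mz} = \norm{z}_M^2$ for self-adjoint positive definite $M$. The gradient difference is bounded by the assumed Lipschitz property, $\norm{\grad f(\nextu)-\grad f(\thisu)}_{\inv M} \le L_M\norm{\nextu-\thisu}_M$. Adding these gives $\norm{v}_{\inv M} \le (1+L_M)\norm{\nextu-\thisu}_M$.

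It remains to square this bound. Using $(a+b)^2 \le 2a^2+2b^2$, I get $(1+L_M)^2 \le 2(1+L_M^2)$, and therefore
\[
    \dist_{\inv M}^2(0,T(\nextu)) \le \norm{v}_{\inv M}^2 \le 2(1+L_M^2)\norm{\nextu-\thisu}_M^2,
\]
which is the claim. Every step is routine. The only points that need care are getting the sign right in the expression for $v$ and checking the norm identity for $M(\nextu-\thisu)$. The skew-adjoint term $\Xi\nextu$ appears identically in the algorithm and in $T$, so it causes no difficulty.
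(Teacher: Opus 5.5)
Your proof is correct and is essentially the paper's argument. The paper also uses that $-M(\nextu-\thisu)\in\subdiff g(\nextu)+\grad f(\thisu)+\Xi\nextu$ and shifts by $\grad f(\nextu)-\grad f(\thisu)$. It then applies Young's inequality with $\alpha=1/2$ to the squared $\inv M$-norms, which amounts to your triangle inequality followed by $(1+L_M)^2\le 2(1+L_M^2)$; your explicit element of $T(\nextu)$ even gives the slightly sharper intermediate constant $(1+L_M)^2$.
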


\begin{proof}
    Using \eqref{eq:subreg:fb}, we have
    \begin{equation}
        \label{eq:subreg:error-bound-first-estimate}
        \frac{1}{2}\norm{\nextu-\thisu}_M^2
        =
        \frac{1}{2}\dist_{\inv M}^2(0, \{-M(\nextu-\thisu)\})
        \ge
        \frac{1}{2}\dist_{\inv M}^2(0, \subdiff g(\nextu)+\grad f(\thisu)+\Xi\nextu).
    \end{equation}
    Young's inequality for any $\alpha \in (0, 1)$ then yields
    \begin{multline*}
        \frac{1}{2}\dist_{\inv M}^2(0, \subdiff g(\nextu)+\grad f(\thisu)+\Xi\nextu)
        \\
        \begin{aligned}
             &
            =
            \frac{1}{2}\dist_{\inv M}^2(\grad f(\nextu)-\grad f(\thisu), \subdiff g(\nextu)+\grad f(\nextu)+\Xi\nextu)
            \\
             &
            =
            \inf_{q \in \subdiff g(\nextu)}
            \frac{1}{2}\norm{(\grad f(\nextu)-\grad f(\thisu))-(q+\grad f(\nextu)+\Xi\nextu)}_{\inv M}^2
            \\
             &
            \ge
            \frac{1-\inv\alpha}{2}\norm{\grad f(\nextu)-\grad f(\thisu)}_{\inv M}^2+\inf_{q \in \subdiff g(\nextu)} \frac{1-\alpha}{2}\norm{q+\grad f(\nextu)+\Xi\nextu}_{\inv M}^2
            \\
             &
            \ge
            \frac{(1-\inv\alpha)L_M^2}{2}\norm{\nextu-\thisu}_{M}^2 + \frac{1-\alpha}{2}\dist_{\inv M}^2(0, T(\nextu)),
        \end{aligned}
    \end{multline*}
    where we have used in the last step that $\alpha \in (0, 1)$ and that $\grad f$ is Lipschitz continuous between the $\inv M$ and $M$-norms.
    Combining this estimate with \eqref{eq:subreg:error-bound-first-estimate} and taking $\alpha=1/2$, we obtain that
    \[
        \frac{1+L_M^2}{2}\norm{\nextu-\thisu}_{M}^2 \ge \frac{1}{4}\dist_{\inv M}^2(0, T(\nextu)).
        \qedhere
    \]
\end{proof}

\begin{theorem}
    \label{thm:subreg:convergence-result-sub-peb:general}
    With the above setup, suppose $0 < \epsilon \le 1-L_M/2$, and that $T$ is metrically subregular with respect to the $M$ and $\inv M$-norms at $\opt u$ for $\opt w=0$ with the modulus $\kappa_M>0$ and radius $\delta>0$, i.e.,
    \begin{equation}
        \label{eq:subreg:subregularity}
        \dist_M(u, \inv T(0)) \le \kappa_M \dist_{\inv M}(0, T(u))
        \quad (u \in \B_M(\optu, \delta)).
    \end{equation}
    Take $\thisu \in \B_M(\opt u, \delta)$ for a $\optu \in \inv T(0)$, and generate $\nextu$ through the satisfaction of \eqref{eq:subreg:fb}.
    Then $\nextu \in \B_M(\opt u, \delta)$, and, for $\rho=(1-L_M/2 - \epsilon)/(2(1+L_M^2)\kappa_M^2)>0$ and any $\alpha>0$, we have
    \begin{equation}
        \label{eq:subreg:contractivity}
        2\sqrt{\frac{1+\rho}{2+\alpha}}\dist_M(\nextu, \inv T(0))
        +
        2\sqrt{\frac{\epsilon}{2+\inv\alpha}}\norm{\nextu-\thisu}_M
        \le
        \dist_M(\thisu, \inv T(0)).
    \end{equation}
\end{theorem}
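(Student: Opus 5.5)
The plan is to establish a squared Fejér-type descent inequality with respect to the solution set $\inv T(0)$, upgrade it with metric subregularity via \cref{lemma:subreg:error-bound-first-estimate-fb}, and finally remove the squares.

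\textbf{Step 1 (Fejér inequality).} Fix any $\bar u\in\inv T(0)$; later it will be either $\optu$ or an $M$-projection of $\thisu$ onto $\inv T(0)$, which exists because $T$ is maximally monotone and hence $\inv T(0)$ is closed and convex.
\begin{itemize}
\item Since $-\grad f(\bar u)-\Xi\bar u\in\subdiff g(\bar u)$, the algorithm \eqref{eq:subreg:fb}, the monotonicity of $\subdiff g$ and the skew-adjointness of $\Xi$ give
\[
\iprod{\grad f(\thisu)-\grad f(\bar u)+M(\nextu-\thisu)}{\nextu-\bar u}\le 0 .
\]
\item For the gradient term I will use the standard three-point estimate for convex $L_M$-smooth $f$, measured in the $M$/$\inv M$ norms:
\[
\iprod{\grad f(\thisu)-\grad f(\bar u)}{\nextu-\bar u}\ge -\tfrac{L_M}{4}\norm{\nextu-\thisu}_M^2 .
\]
It follows from co-coercivity combined with Young's inequality.
\item Combining these with the polarisation identity for $2\iprod{M(\nextu-\thisu)}{\nextu-\bar u}$ yields
\[
\norm{\thisu-\bar u}_M^2\ge\norm{\nextu-\bar u}_M^2+(1-L_M/2)\norm{\nextu-\thisu}_M^2 .
\]
\end{itemize}
Taking $\bar u=\optu$ gives $\nextu\in\B_M(\optu,\delta)$, so subregularity may be applied at $\nextu$.

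\textbf{Step 2 (subregularity).} Now take $\bar u$ to be the $M$-projection of $\thisu$ onto $\inv T(0)$, so that $\norm{\thisu-\bar u}_M=\dist_M(\thisu,\inv T(0))$. Also $\dist_M(\nextu,\inv T(0))\le\norm{\nextu-\bar u}_M$.
\begin{itemize}
\item Split $1-L_M/2=\epsilon+(1-L_M/2-\epsilon)$.
\item On the second part, apply \cref{lemma:subreg:error-bound-first-estimate-fb} and then \eqref{eq:subreg:subregularity}:
\[
\norm{\nextu-\thisu}_M^2\ge\frac{\dist_{\inv M}^2(0,T(\nextu))}{2(1+L_M^2)}\ge\frac{\dist_M^2(\nextu,\inv T(0))}{2(1+L_M^2)\kappa_M^2}.
\]
\item This gives the squared contraction
\[
\dist_M^2(\thisu,\inv T(0))\ge(1+\rho)\dist_M^2(\nextu,\inv T(0))+\epsilon\norm{\nextu-\thisu}_M^2
\]
with exactly the stated $\rho$.
\end{itemize}

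\textbf{Step 3 (removing squares).} Write $A=\dist_M(\nextu,\inv T(0))$ and $B=\norm{\nextu-\thisu}_M$. The claim requires
\[
2\sqrt{\tfrac{1+\rho}{2+\alpha}}\,A+2\sqrt{\tfrac{\epsilon}{2+\inv\alpha}}\,B\le\sqrt{(1+\rho)A^2+\epsilon B^2}.
\]
This is the step I expect to be the main obstacle. A direct Cauchy--Schwarz estimate needs $\frac{4}{2+\alpha}+\frac{4}{2+\inv\alpha}\le1$, which fails; for instance, at $\alpha=1$ the sum is $8/3$. So the squared inequality of Step 2 alone does not deliver these constants.

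What I would try first is not to square at the end. Instead, I would keep the unsquared triangle-inequality information $\dist_M(\thisu,\inv T(0))\ge\dots$ available from Step 1, and combine the two bounds with weights chosen via Young's inequality with parameter $\alpha$. The aim is to trade each of the terms $(1+\rho)A^2$ and $\epsilon B^2$ against the cross term.

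If the constants still do not close, I would check whether the statement is meant with the inner-tracking convention $\kappa_u>1$ only needed for small $\alpha$. In that case I would first prove the version with constants $\sqrt{(1+\rho)/2}$ and $\sqrt{\epsilon/2}$, obtained from $\sqrt{a^2+b^2}\ge(a+b)/\sqrt2$, and then adjust.
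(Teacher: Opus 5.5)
Your Steps 1 and 2 are correct and are the paper's argument: the three-point estimate, the Fejér inequality, and then the error-bound lemma together with subregularity, giving $\dist_M^2(\thisu,\inv T(0))\ge(1+\rho)\dist_M^2(\nextu,\inv T(0))+\epsilon\norm{\nextu-\thisu}_M^2$. (The paper takes the infimum over $\bar u\in\inv T(0)$ instead of an $M$-projection, which avoids the existence question.) The gap is Step 3, and your diagnosis there is right. Neither of your suggested repairs can close it, however, because \eqref{eq:subreg:contractivity} is false as stated.

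Take $U=\R$, $M=\Id$, $\Xi=0$, $f=0$ (so $L_M=0$) and $g(u)=\frac{\gamma}{2}u^2$. Then $T(u)=\gamma u$, $\inv T(0)=\{0\}$, subregularity holds with $\kappa_M=\inv\gamma$ and any $\delta$, and \eqref{eq:subreg:fb} gives $\nextu=\thisu/(1+\gamma)$. Set $\thisu=1$ and $\epsilon\in(0,1)$, so that $\rho=(1-\epsilon)\gamma^2/2$.
\begin{itemize}
\item For $\alpha<2$: as $\gamma\to 0$, the left-hand side of \eqref{eq:subreg:contractivity} tends to $2/\sqrt{2+\alpha}>1$. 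Numerically it is about $1.15$ for $\alpha=1$, $\gamma=0.01$, $\epsilon=1/2$.
\item For $\alpha>1/2$: sending $\gamma\to\infty$ and then $\epsilon\to 1$ drives it to $2\sqrt{\alpha/(2\alpha+1)}>1$.
\end{itemize}
So the claim fails for every $\alpha>0$. No extra unsquared information from Step 1 can rescue it, since this instance satisfies all the hypotheses.

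The paper's proof breaks at the same point. Its Step 3 invokes the ``Young inequality'' $\frac{1+\rho}{2}a^2+\frac{\epsilon}{2}b^2\ge\bigl(\sqrt{\frac{1+\rho}{2+\alpha}}\,a+\sqrt{\frac{\epsilon}{2+\inv\alpha}}\,b\bigr)^2$, which is false: for $a=b=1$, $\alpha=\epsilon=1$ and $\rho\to 0$ it reads $1\ge 4/3$. Even granting it, taking square roots in $\frac12\dist_M^2(\thisu,\inv T(0))\ge(\cdots)^2$ would give the factor $\sqrt2$, not $2$.

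What your Steps 1--2 do yield is the following. Write $A=\dist_M(\nextu,\inv T(0))$, $B=\norm{\nextu-\thisu}_M$, and use $(xA+yB)^2\le(1+\alpha)x^2A^2+(1+\inv\alpha)y^2B^2$ with $x^2=\frac{1+\rho}{1+\alpha}$ and $y^2=\frac{\epsilon\alpha}{1+\alpha}$. This gives
\[
    \sqrt{\frac{1+\rho}{1+\alpha}}\,\dist_M(\nextu,\inv T(0))
    +\sqrt{\frac{\epsilon\alpha}{1+\alpha}}\,\norm{\nextu-\thisu}_M
    \le\dist_M(\thisu,\inv T(0)).
\]
By Cauchy--Schwarz this is sharp given the squared inequality. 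It is the statement with $2+\alpha$ and $2+\inv\alpha$ replaced by $4+4\alpha$ and $4+4\inv\alpha$.

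Your fallback with constants $\sqrt{(1+\rho)/2}$ and $\sqrt{\epsilon/2}$ is exactly the case $\alpha=1$ of this bound. It cannot be ``adjusted'' up to the stated constants.

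For the paper's intended use (inner tracking with $\kappa_u>1$), the corrected bound still suffices if you take $\alpha<\rho$ rather than $0<\alpha<2$. The then small second coefficient $\sqrt{\epsilon\alpha/(1+\alpha)}$ is absorbed into the auxiliary norm on $W_*$, as the paper already proposes.
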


\begin{proof}
    \textbf{Step 1:}
    We first show that  $\nextu \in \B_M(\opt u, \delta)$ using standard Féjer monotonicity.
    In fact, take any element $\bar u \in \inv T(0)$.
    Then for some $\bar w \in \subdiff g(\bar u)$ we have $0=\bar w + \grad f(\bar u) + \Xi\bar u$.
    Also let $\nexxt{w} \in \subdiff g(\nextu)$ be the element that satisfies \eqref{eq:subreg:fb} holds.
    By \cite[Theorem 7.2]{clason2020introduction}, equipping $U$ with the norm $\norm{\freevar}_M$ and $U^*$ with norm $\norm{R\freevar}_{\inv M}$, where $R: U^* \to U$ is the Riesz map (for the original inner product of $U$, unrelated to $M$), the Lipschitz continuity of $\grad f$ implies the three-point monotonicity
    \[
        \iprod{\grad f(\thisu)-\grad f(\bar u)}{\nextu-\bar u}_U
        =
        \dualprod{f'(\thisu)-f'(\bar u)}{\nextu-\bar u} \ge -\frac{L_M}{4}\norm{\nextu-\thisu}_M^2.
    \]
    The first equality here holds because the special choice of norms does not affect the duality pairing, hence not the Riesz representation using the original inner product of $U$.
    Combining this with the monotonicity of $\subdiff g$, and the fact that by skew-symmetricity $\iprod{\Xi u}{u}=0$ for all $u$, we obtain
    \begin{equation}
        \label{eq:subreg:mono}
        \iprod{\nexxt{w}+\grad f(\thisu)+\Xi\nextu}{\nextu-\bar u}
        =
        \iprod{\nexxt{w}-\bar w+\grad f(\thisu)- \grad f(\bar u)}{\nextu-\bar u}
        \ge -\frac{L_M}{4}\norm{\nextu-\thisu}_M^2.
    \end{equation}
    Applying \eqref{eq:subreg:fb}, this gives
    $
        -\iprod{M(\nextu-\thisu)}{\nextu-\bar u}
        \ge -\frac{L_M}{4}\norm{\nextu-\thisu}_M^2.
    $
    Pythagoras' identity then establishes
    \begin{equation}
        \label{eq:subreg:fejer-extra}
        \frac{1}{2}\norm{\thisu-\bar u}_M^2
        \ge
        \frac{1-L_M/2}{2}\norm{\nextu-\thisu}_M^2 + \frac{1}{2}\norm{\nextu-\bar u}_M^2.
    \end{equation}
    Since $L_M \le 2$ by assumption, we obtain the Féjer monotonicity $\norm{\nextu-\bar u}_M \le \norm{\thisu-\bar u}_M$.
    Taking $\bar u = \opt u$, our assumption $\thisu \in \B_M(\opt u, \delta)$, thus, implies $\nextu \in \B_M(\opt u, \delta)$.

    \textbf{Step 2:}
    We then prove that
    \begin{equation}
        \label{eq:subreg:convergence-result-sub-peb:squared}
        \frac{1}{2}\dist_{M}^2(\thisu, \inv T(0))
        \ge
        \frac{1+\rho}{2}\dist_{M}^2(\nextu, \inv T(0))
        +
        \frac{\epsilon}{2}\norm{\nextu-\thisu}_M^2.
    \end{equation}
    Indeed, combining \cref{lemma:subreg:error-bound-first-estimate-fb} and the definition of metric subregularity yields the \emph{error bound}
    \begin{equation*}
        2(1+L_M^2)\norm{\nextu-\thisu}_M^2
        \ge
        \dist_{\inv M}^2(0, T(\nextu))
        \ge
        \kappa_M^{-2}
        \dist_M^2(\nextu, \inv T(0)).
    \end{equation*}
    It follows for all $\bar u \in \inv T(0)$ that
    \begin{equation}
        \label{eq:subreg:growth}
        \begin{split}
            \frac{1-L_M/2-\epsilon}{2}\norm{\nextu-\thisu}_M^2
            + \frac{1}{2}\norm{\nextu-\bar u}_M^2
             &
            \ge
            \frac{\rho}{2}\dist_M^2(\nextu; \inv T(0))
            + \frac{1}{2}\norm{\nextu-\bar u}_M^2
            \\
             &
            \ge
            \frac{1+\rho}{2}\dist_M^2(\nextu; \inv T(0)).
        \end{split}
    \end{equation}
    Summing \cref{eq:subreg:fejer-extra,eq:subreg:growth} yields
    \[
        \frac{1}{2}\norm{\thisu-\bar u}_M^2
        \ge
        \frac{\epsilon}{2}\norm{\nextu-\thisu}_M^2
        + \frac{1+\rho}{2}\dist_M^2(\nextu; \inv T(0)).
    \]
    Taking the infimum over $\bar u \in \inv T(0)$, we obtain  \eqref{eq:subreg:convergence-result-sub-peb:squared}

    \textbf{Step 3:}
    We now remove the squares from \eqref{eq:subreg:convergence-result-sub-peb:squared}.
    By Young's inequality, for any $\alpha>0$ and $a,b \in \R$, we have
    \[
        \frac{1+\rho}{2}a^2
        +
        \frac{\epsilon}{2}b^2
        \ge
        \left(
        \sqrt{\frac{1+\rho}{2+\alpha}}a
        +
        \sqrt{\frac{\epsilon}{2+\inv\alpha}}b
        \right)^2.
    \]
    Taking $a=\dist_M^2(\nextu, \inv T(0))$ and $b=\norm{\nextu-\thisu}_M$, and combining with  \eqref{eq:subreg:convergence-result-sub-peb:squared} finishes the proof.
\end{proof}

The following lemmas verify the required metric subregularity subject to strong monotonicity on subspaces; see also \cite{tuomov-partial-subreg}.
We recall that we assume $M$ to be self-adjoint and positive definite.

\begin{lemma}
    \label{lemma:subreg:monotonicity-subregularity}
    Suppose $T$ is $\Gamma$-strongly monotone for a self-adjoint $\Gamma \in \linear(U; U)$ at a $\opt u \in \inv T(0)$ in the sense that, for some $\gamma>0$,
    \begin{equation}
        \label{eq:subreg:monotonicity}
        \norm{u-\opt u}_\Gamma^2 \le \iprod{u-\opt u}{q - \opt q}
        \quad
        \text{for all}
        \quad
        u \in U,\, q \in T(u),\, \opt q \in T(\opt u).
    \end{equation}
    Suppose, moreover, that $V \defeq \inv T(0) - \opt u$ is a closed subspace of $U$, and, that we have $\gamma M \le \Gamma$ on $V_M^\perp$ (i.e., $\iprod{v^\perp}{[\Gamma - \gamma M]v^\perp} \ge 0$ for all $v^\perp \in V_M^\perp$) for some $\gamma > 0$ and
    $
        V_M^\perp \defeq \{ v^\perp \in U \mid \iprod{Mv}{v^\perp} = 0 \text{ for all } v \in V \}.
    $
    Then the metric subregularity \eqref{eq:subreg:subregularity} holds with $\delta=\infty$ and $\kappa_M=\inv\gamma$.
\end{lemma}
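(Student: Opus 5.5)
We need to prove $\dist_M(u,\inv T(0)) \le \inv\gamma \dist_{\inv M}(0,T(u))$ for all $u\in U$. The plan is to project $u - \opt u$ onto $V$ in the $M$-inner product and use the strong monotonicity \eqref{eq:subreg:monotonicity} with $\opt q=0\in T(\opt u)$. Throughout we write $\iprod{a}{b}_M\defeq\iprod{Ma}{b}$.

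\textbf{Step 1 (decomposition).} Since $V$ is a closed subspace and $\norm{\freevar}_M$ is an equivalent Hilbert norm ($M$ self-adjoint, positive definite), we have the $M$-orthogonal decomposition $U=V\oplus V_M^\perp$. For given $u$, write $u-\opt u = v + v^\perp$ with $v\in V$ and $v^\perp\in V_M^\perp$. Then $\opt u+v\in\inv T(0)$, so
\[
\dist_M(u,\inv T(0))\le \norm{v^\perp}_M .
\]
In fact equality holds by $M$-orthogonality, but the inequality suffices.

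\textbf{Step 2 (monotonicity gives an estimate for $v^\perp$).} Take any $q\in T(u)$; if $T(u)=\emptyset$ the claim is trivial. Apply \eqref{eq:subreg:monotonicity} twice with $\opt q=0$:
\begin{itemize}
\item at $u$, giving $\norm{u-\opt u}_\Gamma^2\le\iprod{u-\opt u}{q}$;
\item at the point $\opt u+v\in\inv T(0)$ with $q=0$, giving $\norm{v}_\Gamma^2\le 0$, so $\Gamma v=0$ (using $\Gamma\ge0$, which the hypothesis forces).
\end{itemize}
Since $\Gamma$ is self-adjoint, $\Gamma v=0$ yields $\norm{v+v^\perp}_\Gamma^2=\norm{v^\perp}_\Gamma^2$. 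Combining with $\gamma M\le\Gamma$ on $V_M^\perp$,
\[
\gamma\norm{v^\perp}_M^2\le\norm{v^\perp}_\Gamma^2\le\iprod{v+v^\perp}{q}.
\]

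\textbf{Step 3 (removing $v$ from the right-hand side).} This is the main obstacle: we need $\iprod{v}{q}=0$. I would try the point $\opt u+v+t\,v'$ for $v'\in V$, or use monotonicity between $u$ and $\opt u+v$, to get $\iprod{u-\opt u-v}{q}\ge\norm{v^\perp}_\Gamma^2$ directly. The hypothesis allows any base point in $\inv T(0)$, since $\inv T(0)=\opt u+V$ and the estimate should be translation invariant along $V$; I would justify this by re-applying the assumption with $\opt u$ replaced by $\opt u+v$, which is legitimate if the strong monotonicity holds at all zeros. If the assumption is only available at $\opt u$, the fallback is to apply monotonicity at $u-v$ and $\opt u$, provided $T(u-v)$ relates to $T(u)$. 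Once this is settled, we obtain $\gamma\norm{v^\perp}_M^2\le\iprod{v^\perp}{q}\le\norm{v^\perp}_M\norm{q}_{\inv M}$.

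\textbf{Step 4 (conclusion).} Divide by $\norm{v^\perp}_M$ and take the infimum over $q\in T(u)$. This gives
\[
\dist_M(u,\inv T(0))\le\inv\gamma\dist_{\inv M}(0,T(u))
\]
for all $u$, i.e., metric subregularity with $\delta=\infty$ and $\kappa_M=\inv\gamma$.
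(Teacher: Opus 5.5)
Your Steps 1, 2 and 4 are fine, but Step~3 is a genuine gap. It cannot be closed from the hypotheses as literally stated. You need $\iprod{v}{q}=0$ for $v\in V$ and $q\in T(u)$. The strong monotonicity \eqref{eq:subreg:monotonicity} is assumed only at the single point $\opt u$, so re-centring it at $\opt u+v$ (your first suggestion) would be an extra hypothesis, not a consequence. Nothing relates $T(u-v)$ to $T(u)$ either. For an arbitrary set-valued $T$ the conclusion actually fails. Take $U=\R^2$, $M=\Id$, $\opt u=0$, $\Gamma=\mathrm{diag}(0,1)$, $\gamma=1$ and
\[
T(u_1,u_2)=
\begin{cases}
\{(u_2^2/u_1,\,0)\}, & u_1\ne 0,\\
\{(0,\,u_2)\}, & u_1=0.
\end{cases}
\]
Then $\inv T(0)=\R\times\{0\}=V$, $\Gamma=M$ on $V_M^\perp=\{0\}\times\R$, $T(\opt u)=\{0\}$, and $\iprod{u}{q}=u_2^2=\norm{u}_\Gamma^2$ for all $q\in T(u)$, so every hypothesis holds. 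Yet at $u=(\epsilon,\epsilon^2)$ we get $\dist_M(u,\inv T(0))=\epsilon^2$ but $\dist_{\inv M}(0,T(u))=\epsilon^3$, so no modulus works, not even locally.

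The missing ingredient is plain monotonicity of $T$. The appendix setting supplies it: there $T=\subdiff g+\grad f+\Xi$ with $g,f$ convex and $\Xi$ skew-adjoint, and the same holds for the adjoint operator to which the lemma is applied in \cref{sec:tv:summary}. Comparing $(u,q)$ with the zeros $(\opt u+tv,0)$, $t\in\R$, gives $\iprod{u-\opt u}{q}\ge t\iprod{v}{q}$ for all $t$. Since $V$ is a subspace, this forces $\iprod{v}{q}=0$. This is the rigorous form of your ``$\opt u+v+tv'$'' idea, but it rests on monotonicity against every zero, not on the one-point strong monotonicity. With it, Step~2 yields $\gamma\norm{v^\perp}_M^2\le\iprod{v^\perp}{q}\le\norm{v^\perp}_M\norm{q}_{\inv M}$, and Step~4 concludes.

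The paper's proof passes over the same point. Its chain ends with $\norm{u-\opt u}_M\norm{q}_{\inv M}=\dist_M(u,\inv T(0))\norm{q}_{\inv M}$, which is valid only when $u-\opt u\in V_M^\perp$. It also uses $\norm{u^\perp-\opt u^\perp}_\Gamma=\norm{u-\opt u}_\Gamma$ without comment. Your derivation of $\Gamma V=\{0\}$ justifies that step, given that $\Gamma$ is positive semi-definite; note this is presupposed by the notation $\norm{\cdot}_\Gamma$, not forced by the hypothesis as you claim. So you located the real difficulty correctly, but your proposal leaves it unresolved.
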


\begin{proof}
    We can decompose\footnote{Solving the problem $\min_{v \in V} \norm{u-v}_M^2$ gives $u= v + \inv M z$ for some $z \in N_V(v)$. Since $V$ is a closed subspace, $\inv M z \in V_M^\perp$.}
    any $u \in U$ as $u=u_V + u^\perp \in V + V_M^\perp$.
    Hence, for any $q \in T(u)$,
    \[
        \begin{split}
        \gamma \dist_M(u, \inv T(0))^2
        &
        =
        \inf_{\bar u \in \inv T(0)} \gamma\norm{u-\bar u}_M^2
        =
        \inf_{v \in V} \gamma\norm{v+\opt u-\bar u}_M^2
        =
        \gamma \norm{u^\perp -\opt u^\perp}_M^2
        \le
        \gamma \norm{u^\perp -\opt u^\perp}_\Gamma^2
        \\
        &
        =
        \gamma \norm{u -\opt u}_\Gamma^2
        \le
        \iprod{u-\opt u}{q - 0}
        \le
        \norm{u-\opt u}_M \norm{q}_{\inv M}
        =
        \dist_M(u, \inv T(0))\norm{q}_{\inv M}.
        \end{split}
    \]
    Taking the infimum over $q \in T(u)$, and dividing by $\dist_M(u, \inv T(0))$, we obtain \eqref{eq:subreg:subregularity} with $\kappa_u=\inv\gamma$.
\end{proof}

\begin{lemma}
    \label{lemma:subreg:monotonicity-subregularity:basic}
    Suppose $T$ is $\Gamma=\gamma M$-strongly monotone in the sense \eqref{eq:subreg:monotonicity}.
    Then the metric subregularity \eqref{eq:subreg:subregularity} holds with $\delta=\infty$ and $\kappa_M=\inv\gamma$.
\end{lemma}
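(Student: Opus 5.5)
The plan is to reduce \cref{lemma:subreg:monotonicity-subregularity:basic} to \cref{lemma:subreg:monotonicity-subregularity}, or, if the subspace hypothesis there is unavailable, to repeat its final chain of inequalities directly. The direct route is cleaner because with $\Gamma=\gamma M$ no decomposition is needed.

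First, strong monotonicity with $\Gamma=\gamma M$ forces $\inv T(0)=\{\opt u\}$. Indeed, take $\bar u\in\inv T(0)$ and $q=0\in T(\bar u)$, $\opt q=0\in T(\opt u)$ in \eqref{eq:subreg:monotonicity}. This gives $\gamma\norm{\bar u-\opt u}_M^2\le 0$, hence $\bar u=\opt u$ by positive definiteness of $M$. Consequently $\dist_M(u,\inv T(0))=\norm{u-\opt u}_M$ for every $u\in U$.

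Next, take any $u\in U$ and $q\in T(u)$, and use $\opt q=0\in T(\opt u)$ in \eqref{eq:subreg:monotonicity}. Combined with the Cauchy--Schwarz inequality in the $M$/$\inv M$ pairing, $\iprod{v}{q}\le\norm{v}_M\norm{q}_{\inv M}$, this yields
\[
    \gamma\norm{u-\opt u}_M^2\le\iprod{u-\opt u}{q}\le\norm{u-\opt u}_M\norm{q}_{\inv M}.
\]
If $u\ne\opt u$, divide by $\norm{u-\opt u}_M$ and take the infimum over $q\in T(u)$ to get $\dist_M(u,\inv T(0))\le\inv\gamma\dist_{\inv M}(0,T(u))$. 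If $u=\opt u$ the inequality is trivial. If $T(u)=\emptyset$, the right-hand side is $+\infty$ and again nothing needs proving. No ball restriction was used, so $\delta=\infty$ and $\kappa_M=\inv\gamma$.

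Alternatively, one can invoke \cref{lemma:subreg:monotonicity-subregularity} directly. Here $V=\{0\}$ is a closed subspace, $V_M^\perp=U$, and $\Gamma-\gamma M=0\ge0$ holds trivially. The only point needing care is the step showing that $\inv T(0)$ is a singleton (equivalently, that $V$ is a subspace), which is the short argument in the second paragraph. I do not expect any genuine obstacle.
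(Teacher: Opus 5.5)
Your proof is correct and matches the paper. The paper's proof is exactly your ``alternative'': it applies \cref{lemma:subreg:monotonicity-subregularity} with $V=\{0\}$, after noting that strong monotonicity and positive definiteness of $M$ force $\inv T(0)=\{\opt u\}$. Your direct route is that lemma's chain of inequalities specialised to $V=\{0\}$, and it has the small advantage of being self-contained: since $\dist_M(u,\inv T(0))=\norm{u-\opt u}_M$, you need neither an $M$-orthogonal decomposition nor a comparison of $\Gamma$ with $\gamma M$ on $V_M^\perp$.
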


\begin{proof}
    We apply \cref{lemma:subreg:monotonicity-subregularity}, where now $V=\{0\}$, because \eqref{eq:subreg:monotonicity} implies $\inv T(0)=\optu$ through the positive definiteness of $M$.
\end{proof}

\printProofs[inner-fb]

\begin{corollary}[Nonconvex $g$]
    \label{cor:subreg:nonconvex-g}
    Suppose that $g$ is globally nonconvex, however, convex within $B(\optu, \delta')$ for some $\delta'$.
    For a convex $f: U \to \R$ with an $L$-Lipschitz gradient, let
    \[
        T(u)
        =
        \begin{cases}
            \partial[g + f +\delta_{B(\optu, \delta')}](u), & u \in \interior B(\optu, \delta'), \\
            \emptyset, & \text{otherwise}.
        \end{cases}
    \]
    Assume that $\optu \in \inv T(0)$, and that $T$ is metrically subregular at $\optu$ with factor $\kappa>0$ and radius $\delta \in (0, \delta')$.
    Also assume the lower bound $\inf(g+f) \ge -C$ for some $C \in \R$, and take $\tau>0$ small enough that $\tau L <1$ as well as $2\tau([g+f](u^0) + C) < (1-\tau L)(\delta'-\delta)^2$.
    Given $\thisu \in B(\optu, \delta)$, update
    $
        \nextu = \argmin_u g(u) + \iprod{\grad f(u)}{u-\thisu} + \frac{1}{2\tau}\norm{u-\thisu}^2.
    $
    Then $\nextu \in B(\optu, \delta)$ and \eqref{eq:ubreg:convergence-result-sub-peb:fb} holds with $\epsilon=1/2$.
\end{corollary}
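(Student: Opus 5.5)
The plan is to reduce the claim to \cref{thm:subreg:convergence-result-sub-peb:fb}, applied to the convexified function $\tilde g \defeq g + \delta_{B(\optu,\delta')}$. This $\tilde g$ is convex, proper and lower semicontinuous because $g$ is convex on $B(\optu,\delta')$. For $u$ in the interior of $B(\optu,\delta')$ we have $\subdiff \tilde g(u) + \grad f(u) = \subdiff[g+f+\delta_{B(\optu,\delta')}](u)$, so $T$ agrees there with $\subdiff\tilde g + \grad f$. I read the update as the usual forward-backward step with $\grad f(\thisu)$ in the linear term. The condition $\tau L<1$ gives $L\tau \le 2(1-\epsilon)$ with $\epsilon = 1/2$. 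So once we know that the nonconvex step $\nextu$ coincides with $\prox_{\tau\tilde g}(\thisu - \tau\grad f(\thisu))$, the theorem yields both $\nextu\in B(\optu,\delta)$ and \eqref{eq:ubreg:convergence-result-sub-peb:fb}. The only subtlety is that metric subregularity of $T$ is used only inside $B(\optu,\delta)$, where $T$ and $\subdiff\tilde g+\grad f$ agree.

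The key step is a localisation argument: the global minimiser $\nextu$ of the nonconvex subproblem lies in $\interior B(\optu,\delta')$. I would proceed by induction over the iterations and carry along the sufficient decrease
\[
    [g+f](\nextu) + \frac{1-\tau L}{2\tau}\norm{\nextu-\thisu}^2 \le [g+f](\thisu).
\]
This follows by comparing the subproblem objective at $\nextu$ with its value at $u=\thisu$, and then applying the descent inequality for $f$. Summing the decrease and using $\inf(g+f)\ge -C$ gives
\[
    \norm{\nextu-\thisu}^2 \le \frac{2\tau([g+f](u^0)+C)}{1-\tau L} < (\delta'-\delta)^2.
\]
Since $\thisu\in B(\optu,\delta)$, the triangle inequality gives $\nextu\in\interior B(\optu,\delta')$.

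It remains to identify the step. The restricted subproblem $\min_u \tilde g(u) + \iprod{\grad f(\thisu)}{u-\thisu} + \frac{1}{2\tau}\norm{u-\thisu}^2$ is strongly convex, so its unique minimiser is $\prox_{\tau\tilde g}(\thisu-\tau\grad f(\thisu))$. The global minimiser $\nextu$ of the unrestricted problem lies inside the ball, where the two objectives coincide, so it also minimises the restricted problem and therefore equals this prox point. \Cref{thm:subreg:convergence-result-sub-peb:fb} then applies as described above.

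I expect the main obstacle to be making the induction rigorous. The hypothesis only controls $[g+f](u^0)$, so the decrease must be propagated from $u^0$ through all previous iterates, and at each stage one must first know that the previous iterate lies in $B(\optu,\delta)$; that membership is exactly what the theorem supplies at the preceding step. A second point is existence of a global minimiser of the nonconvex subproblem; I would simply assume it is given, since the update is defined through the $\argmin$.
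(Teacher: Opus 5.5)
Your proposal is correct and follows the paper's proof. The paper also combines the sufficient decrease with $\inf(g+f)\ge -C$ and the step-length condition to get $\norm{\nextu-\thisu}<\delta'-\delta$, hence $\nextu\in\interior B(\optu,\delta')$, and then invokes \cref{thm:subreg:convergence-result-sub-peb:fb}. Your identification of $\nextu$ with $\prox_{\tau\tilde g}(\thisu-\tau\grad f(\thisu))$ for $\tilde g=g+\delta_{B(\optu,\delta')}$ simply makes precise the paper's remark that the proof ``goes through a posteriori''.

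Two small remarks. First, the induction you flag as the main obstacle is unnecessary. The decrease uses only the global $\argmin$ and the global descent inequality for $f$, so $[g+f](\thisu)\le[g+f](u^0)$ holds for every iterate wherever it lies. Second, the zero set of $\subdiff\tilde g+\grad f$ may contain points of the sphere $\{\norm{u-\optu}=\delta'\}$ that are missing from $\inv T(0)$. Since that zero set is convex and contains $\optu$, it lies in the closure of $\inv T(0)$, so the distances in the subregularity hypothesis and in the conclusion coincide.
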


The minimisation-form update here is, in the convex case, equivalent to the subdifferential form \eqref{eq:subreg:fb} with $\Xi=0$ and $M=\inv\tau\Id$.

\begin{proof}
    The update gives
    $
        g(\nextu) + \iprod{\grad f(\thisu)}{\nextu-\thisu} + \frac{1}{2\tau}\norm{\nextu-\thisu}^2 \le g(\thisu).
    $
    Using here the descent inequality for $f$, equivalent to the $L$-Lipschitz gradient (see, e.g., \cite[Theorem 7.1]{clason2020introduction}), we obtain
    $
        [g+f](\nextu) +  \frac{1-\tau L}{2\tau}\norm{\nextu-\thisu}^2 \le [g+f](\thisu).
    $
    Thus $\{[g+f](\thisu)\}_{k \in \N}$ is non-increasing, and, also using our step length bound, $\norm{\nextu-\thisu}^2 \le \frac{2\tau}{1-\tau L}([g+f](u^0) + C) < (\delta'-\delta)^2$.
    Since $\thisu \in B(\optu, \delta)$, it follows that $\nextu \in \interior B(\optu, \delta')$.
    We also have $L\tau \le 2(1-\epsilon)$ for $\epsilon=1/2$.
    Now, since all our points of interest, $\optu, \thisu, \nextu$ are in the interior of the region of convexity $B(\optu, \delta')$ “a priori”, the proof of  \cref{thm:subreg:convergence-result-sub-peb:fb} goes through “a posteriori”.
\end{proof}

\printProofs[inner-pdps]

%% file: appendix-tracking.tex

\begin{assumption}[{\cite[Assumption A.1]{dizonvalkonen2024tracking}}]
    \label{ass:scalar-tracking:main}
    For a given $k \ge 0$ and scalars $\distU{0}, \ldots, \distU{k+1}, \thisDistW[0], \ldots,  \thisDistW[k+1], \distX{1}, \ldots, \distX{k} \ge 0$, and $\scalarTrackingError{0}, \ldots, \scalarTrackingError{k} \in \R$, there exist $\pi_u, \pi_w, \primaldifffact \alpha_w,\alpha_u>0$, such that
    \begin{align}
        \label{item:scalar-tracking:main:inner-tracking}
        \tag{i}
        \kappa_u \distU{j+1}
        &
        \le
        \distU{j}
        + \pi_u\distX{j},
        &&
        \quad\text{for all}\quad j=1,\ldots,k,
        \\
        \label{item:scalar-tracking:main:adjoint-tracking}
        \tag{ii}
        \kappa_w \distW{j+1}
        &
        \le
        \thisDistW[j]
        + {\primaldifffact} \nextDistU
        + \pi_w\distX{j},
        &&
        \quad\text{for all}\quad j=1,\ldots,k,
        \quad\text{and}
        \\
        \label{item:scalar-tracking:main:differential-transformation}
        \tag{iii}
        \scalarTrackingError{j}
        &
        \le
        \alpha_u \distU{j+1}
        + \alpha_w \distW{j+1}
        &&
        \quad\text{for all}\quad j=0,\ldots,k.
    \end{align}
\end{assumption}

To simplify the following results, we introduce $\kappa\defeq \min\{\kappa_u, \kappa_w\}$ and  $\overline\kappa \defeq \max\{\kappa_u, \kappa_w\}$.

\begin{lemma}[{\cite[Lemma A.3]{dizonvalkonen2024tracking}}]
    \label{lemma:scalar-tracking:inner-product-error-estimate}
    Suppose \cref{ass:scalar-tracking:main} holds for a $k \ge 0$.
    Then for any $p \in (0, \kappa)$, we have
    \begin{equation}
        \label{eq:scalar-tracking:inner-product-error-estimate}
        \scalarTrackingErrorThisSq
        \le
        (\alpha_u d^u_{k+1} + \alpha_w d^w_{k+1})^2
        \le
        \breve e_{p,k},
    \end{equation}
    where, for
    $
        \trackingres_j
        \defeq \alpha_u\kappa_u^{-j}\pi_u + \alpha_w[\iota_j{\primaldifffact}\pi_u + \kappa_w^{-j}\pi_w]
    $
    and, we set
        {\allowdisplaybreaks
            \begin{align}
                \label{eq:scalar-tracking:ressum}
                \trackingressum
                 &
                \defeq
                \frac{\overline\kappa}{p}
                \sum_{j=0}^\infty p^j \trackingres_j
                \le
                \frac{(\alpha_u\pi_u+\alpha_w\pi_w)\kappa\overline\kappa}{p(\kappa-p)}
                + \frac{\alpha_w {\primaldifffact}\pi_u\overline\kappa}{p^2(\kappa-p)^2}
                \quad\text{and}
                \\
                \label{eq:scalar-tracking:ek:0}
                \breve e_{p,k}
                 &
                \defeq
                \frac{\trackingressum(\alpha_u\kappa_u^{-k} + \alpha_w \iota_k {\primaldifffact})}{\pi_u p^k}\initDistUsq
                +
                \frac{\trackingressum\alpha_w\kappa_w^{-k}}{\pi_w p^k}\initDistWsq
                +
                \sum_{j=0}^{k-1} \frac{\trackingressum\trackingres_{k-j}}{ p^{k-j}} \nextDistXthis[j].
            \end{align}
        }
        The second inequality of \cref{eq:scalar-tracking:inner-product-error-estimate} holds even if \cref{ass:scalar-tracking:main}\,\cref{item:scalar-tracking:main:differential-transformation} does not.
\end{lemma}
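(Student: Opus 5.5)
The first inequality in \eqref{eq:scalar-tracking:inner-product-error-estimate} is just \cref{ass:scalar-tracking:main}\,\cref{item:scalar-tracking:main:differential-transformation} with $j=k$, squared. This is legitimate because $\scalarTrackingErrorThis$ is a norm in our application; in general we square $\max\{\scalarTrackingErrorThis,0\}$. Everything therefore rests on the second inequality, which uses only \cref{item:scalar-tracking:main:inner-tracking,item:scalar-tracking:main:adjoint-tracking}. That explains the final sentence of the lemma.

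\textbf{Step 1: unroll the inner recursion.} Iterating \cref{item:scalar-tracking:main:inner-tracking} from $j=k$ down to $j=1$ gives
\[
\distU{k+1} \le \kappa_u^{-k}\distU{1} + \sum_{j=1}^k \kappa_u^{-(k+1-j)}\pi_u\distX{j}.
\]

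\textbf{Step 2: unroll the adjoint recursion.} Iterating \cref{item:scalar-tracking:main:adjoint-tracking} gives $\distW{k+1}\le \kappa_w^{-k}\distW{1}+\sum_{j=1}^k\kappa_w^{-(k+1-j)}(\primaldifffact\distU{j+1}+\pi_w\distX{j})$. We then insert the bound of Step 1 for each $\distU{j+1}$. Collecting coefficients produces discrete convolutions of $\kappa_u^{-i}$ and $\kappa_w^{-i}$. These are exactly the $\iota_j$, which I take to be $\iota_j=\sum_{i}\kappa_u^{-i}\kappa_w^{-(j-i)}$ over the appropriate index range, so that $\iota_j\lesssim j\kappa^{-j}$.

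\textbf{Step 3: combine.} Multiplying by $\alpha_u,\alpha_w$ and adding yields
\[
\alpha_u\distU{k+1}+\alpha_w\distW{k+1}\le (\alpha_u\kappa_u^{-k}+\alpha_w\iota_k\primaldifffact)\distU{1}+\alpha_w\kappa_w^{-k}\distW{1}+\sum_{j=0}^{k-1}\trackingres_{k-j}\nextDistXthis[j],
\]
with $\trackingres_j$ as in the statement. The index conventions for $\iota$ and the shift $j\mapsto j+1$ have to be matched carefully here.

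\textbf{Step 4: square.} Write the right-hand side as $\sum_i c_i x_i$ and apply the weighted Cauchy--Schwarz inequality
\[
\Bigl(\sum_i c_ix_i\Bigr)^2\le\Bigl(\sum_i c_i p^{m_i}\Bigr)\Bigl(\sum_i c_ip^{-m_i}x_i^2\Bigr),
\]
with exponent $m_i=k-j$ on the $\distX{}$-terms and $m=k$ on the two initial terms. For the initial terms we rewrite, for example, $c\,\distU{1}=(c/\pi_u)\cdot\pi_u\distU{1}$. They then behave like a "$j=0$" term with weight $\trackingres_k$-type coefficients, which produces the $1/\pi_u$ and $1/\pi_w$ factors in $\breve e_{p,k}$. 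The first factor is bounded by $\frac{\overline\kappa}{p}\sum_{j\ge0}p^j\trackingres_j=\trackingressum$; the factor $\overline\kappa$ absorbs the $\kappa^{-1}$ shifts in the exponents. The second factor is exactly the remaining part of $\breve e_{p,k}$.

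\textbf{Step 5: the closed-form bound on $\trackingressum$.} Since $p<\kappa$, the series converge geometrically. We have $\sum_j p^j\kappa_u^{-j}\le \kappa/(\kappa-p)$. For the convolution term, $\sum_j p^j\iota_j\le \kappa^2/(\kappa-p)^2$, because it is the product of two geometric series.

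I expect the main obstacle to be Step 2 together with Step 4: getting the indices of $\iota_j$ and the powers of $p$ to line up so that the weighted Cauchy--Schwarz yields exactly the stated $\trackingressum$ and $\breve e_{p,k}$, rather than an extra constant factor. If the weights do not line up, I would first try shifting the exponents $m_i$ by one and absorbing the resulting factor into the $\overline\kappa/p$ prefactor of $\trackingressum$.
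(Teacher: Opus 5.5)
Your strategy is the right one: unroll (i) and (ii) from the base index $1$, combine via (iii), square with a weighted Cauchy--Schwarz inequality using geometric weights, and let the prefactor $\overline\kappa/p$ absorb the boundary terms. The paper gives no proof of its own; it only quotes the lemma from \cite{dizonvalkonen2024tracking}. Your argument reproduces the stated form of $\varsigma_p$ and $\breve e_{p,k}$, and your remark that squaring (iii) needs $\tilde d_k\ge 0$ is correct. As written, however, three computations do not produce the stated constants.

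\emph{The definition of $\iota_j$.} The unrolling fixes it exactly: $\iota_j=\sum_{i=1}^{j}\kappa_u^{-i}\kappa_w^{-(j+1-i)}$. The total exponent is $j+1$, not $j$, so $\iota_0=0$, $\iota_j\le j\kappa^{-(j+1)}$ and $\psi_0=\alpha_u\pi_u+\alpha_w\pi_w$. With this $\iota_j$ your Step 3 holds verbatim.

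\emph{The rescaling of the initial terms is inverted.} Write $c\,d^u_1=(c\pi_u)\cdot(d^u_1/\pi_u)$, i.e.\ put Cauchy--Schwarz weights $\pi_u p^k$ and $\pi_w p^k$ on the $d^u_1$ and $d^w_1$ terms. Your $(c/\pi_u)\cdot\pi_u d^u_1$ would put $c\,p^k/\pi_u$ into the first factor and $\pi_u$ into the numerator of $\breve e_{p,k}$. With the correct weights, the two initial terms contribute exactly $p^k\psi_k$, so the first factor is $\sum_{m=1}^{k}p^m\psi_m+p^k\psi_k$. The duplicated $p^k\psi_k$ is absorbed by $\psi_k\le\overline\kappa\,\psi_{k+1}$. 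This follows from $\kappa_u^{-k}\le\overline\kappa\kappa_u^{-(k+1)}$, $\kappa_w^{-k}\le\overline\kappa\kappa_w^{-(k+1)}$, and $\iota_k\le\kappa_w\iota_{k+1}$ (indeed $\kappa_w\iota_{k+1}=\iota_k+\kappa_u^{-(k+1)}$). Then $p^k\psi_k\le\frac{\overline\kappa}{p}p^{k+1}\psi_{k+1}$, and since $\overline\kappa/p>1$ the first factor is at most $\varsigma_p$; no other shift of exponents is needed.

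\emph{Step 5.} Your estimate $\sum_j p^j\iota_j\le\kappa^2/(\kappa-p)^2$ gives the second term $\alpha_w\mu_u\pi_u\kappa^2\overline\kappa/(p(\kappa-p)^2)$. This differs from the stated term by the factor $\kappa^2p$ and exceeds it at $p=1$. With the sharp $\iota_j$ above, the two geometric series give $\sum_j p^j\iota_j=p/((\kappa_u-p)(\kappa_w-p))\le p/(\kappa-p)^2$, hence the term $\alpha_w\mu_u\pi_u\overline\kappa/(\kappa-p)^2$. This is dominated by the stated $\alpha_w\mu_u\pi_u\overline\kappa/(p^2(\kappa-p)^2)$ for $p\le1$, which covers $p=1$, the only case the paper uses; for $p>1$ this route does not give the stated form.

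\emph{The $\varrho$ terms.} The printed $\breve e_{p,k}$ has $\varrho_{j+1}$ where your Cauchy--Schwarz step produces $\varrho_{j+1}^2$. The printed version cannot hold: take $k=1$, $d^u_1=d^w_1=0$ and equality in (i)--(ii). Then the left-hand side is $(\psi_1\varrho_1)^2$ while the right-hand side is $\varsigma_p\psi_1\varrho_1/p$, which fails for large $\varrho_1$. So you are proving the squared version, which is evidently the intended one. You should say so explicitly rather than calling your second factor exactly the remainder of $\breve e_{p,k}$.
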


\begin{lemma}[{\cite[Lemma A.4]{dizonvalkonen2024tracking}}]
    \label{lemma:scalar-tracking:error-sum}
    Suppose \cref{ass:scalar-tracking:main} holds for a $k \ge 0$.
    Then for any $p \in (0, \kappa)$, we have
    \begin{equation}
        \label{eq:scalar-tracking:inner-product-error-estimate:x}
        \scalarTrackingErrorThisSq
        \le
        \trackingressum^2 \nextDistXthisSq
        +
        e_{p,k},
    \end{equation}
    where, for $\breve e_{p,k}$ defined \eqref{eq:scalar-tracking:ek:0},
    \begin{equation}
        \label{eq:scalar-tracking:ek}
        e_{p,k} \defeq \breve e_{p,k} - \trackingressum^2 \nextDistXthisSq
    \end{equation}
    satisfies
    \begin{equation}
        \label{eq:scalar-tracking:ressum:sm}
        \begin{split}
            \sup_{N \in \N}
            \sum_{k=0}^{N-1} p^k e_{p,k}
             &
            \le
            \Psi_p
            \defeq
            \frac{\initDistUsq}{\pi_u} \bigg(\frac{\trackingressum\alpha_u\kappa}{\kappa-1} + \frac{\trackingressum\alpha_w{\primaldifffact}}{(\kappa-1)^2}\bigg)
            +
            \frac{\initDistWsq}{\pi_w} \bigg(\frac{\trackingressum\alpha_w\kappa}{\kappa-1}\bigg).
        \end{split}
    \end{equation}
\end{lemma}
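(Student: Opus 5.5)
The plan is to take the bound from \cref{lemma:scalar-tracking:inner-product-error-estimate} as given and treat the present statement as pure bookkeeping on top of it. The first claim \eqref{eq:scalar-tracking:inner-product-error-estimate:x} then needs no work. By \eqref{eq:scalar-tracking:inner-product-error-estimate} we have $\scalarTrackingErrorThisSq \le \breve e_{p,k}$, and by the definition \eqref{eq:scalar-tracking:ek}, $\breve e_{p,k} = \trackingressum^2\nextDistXthisSq + e_{p,k}$. So the substance is the summability estimate \eqref{eq:scalar-tracking:ressum:sm}. I will multiply \eqref{eq:scalar-tracking:ek:0} by $p^k$, sum over $k=0,\ldots,N-1$, and treat separately (a) the initial-distance terms and (b) the $\distX{}$-terms together with the subtracted $\trackingressum^2\nextDistXthisSq$.

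For (a), the factors $p^{-k}$ in \eqref{eq:scalar-tracking:ek:0} cancel against the weight $p^k$. We are left with $\trackingressum\initDistUsq\pi_u^{-1}\sum_k(\alpha_u\kappa_u^{-k}+\alpha_w\iota_k\primaldifffact)$ and $\trackingressum\alpha_w\initDistWsq\pi_w^{-1}\sum_k\kappa_w^{-k}$. Since $\kappa_u,\kappa_w\ge\kappa>1$, the geometric sums are bounded by $\kappa/(\kappa-1)$. For $\iota_k$, which comes from unrolling the coupled recursion (i)--(ii) of \cref{ass:scalar-tracking:main}, I will use the convolution bound $\iota_k\le\sum_{i}\kappa_u^{-i}\kappa_w^{-(k-i)}\lesssim k\kappa^{-k}$. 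Its series is then bounded by a constant of order $(\kappa-1)^{-2}$, which matches the two terms of $\Psi_p$ up to the exact indexing of $\iota$.

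For (b), $p^k\cdot p^{-(k-j)}=p^j$, so the $\distX{}$-part becomes $\trackingressum\sum_k\sum_{j<k}p^j\trackingres_{k-j}\nextDistXthis[j]$. I will swap the order of summation, index by $i=k-j\ge1$, and use $\trackingressum=(\overline\kappa/p)\sum_i p^i\trackingres_i$. The aim is to show that, for each fixed $j$, the accumulated coefficient of the $\distX{}$-term is dominated by the subtracted term $p^j\trackingressum^2\nextDistXthisSq$, so that this whole part is nonpositive and only (a) survives.

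The main obstacle is this cancellation. As literally written, \eqref{eq:scalar-tracking:ek:0} pairs unsquared $\distX{}$-terms with a squared subtracted term, and the $p^{-(k-j)}$ weights do not obviously align with the $p^i$ weights in $\trackingressum$. My first attempt is to go back into the proof of \cref{lemma:scalar-tracking:inner-product-error-estimate} and recover the un-Cauchy--Schwarzed form $\bigl(\sum_i\trackingres_i\distX{k+1-i}+\dots\bigr)^2$. I would then apply the weighted Cauchy--Schwarz inequality with weights $p^{i}$, which yields exactly the factor $\trackingressum$ times $\sum_i\trackingres_i p^{-i}\distX{}^2$, and include the $i=0$ term as the one giving $\trackingressum^2\nextDistXthisSq$. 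With that bookkeeping done consistently, the double sum in (b) should telescope against the subtracted terms, leaving $\Psi_p$ as the only bound, uniformly in $N$.
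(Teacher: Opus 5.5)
Your treatment of \eqref{eq:scalar-tracking:inner-product-error-estimate:x} is fine: it is just $\scalarTrackingErrorThisSq\le\breve e_{p,k}$ rewritten. Your handling of the initial-distance terms (a) is also fine.

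The gap is in (b), the only nontrivial part, and the mechanism you propose there does not exist. Unrolling (i)--(ii) of \cref{ass:scalar-tracking:main} bounds $\alpha_u d^u_{k+1}+\alpha_w d^w_{k+1}$ in terms of $\varrho_1,\ldots,\varrho_k$ only. So no ``$i=0$ term'' of any Cauchy--Schwarz step produces $\varsigma_p^2\varrho_{k+1}^2$; the index $\psi_0$ is mere slack in the definition of $\varsigma_p$. The subtracted term in \eqref{eq:scalar-tracking:ek} is an artificial prepayment for the later appearances of $\varrho_{k+1}^2$ in $\breve e_{p,k'}$, $k'>k$, and nothing telescopes by itself.

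Going back into the proof of \cref{lemma:scalar-tracking:inner-product-error-estimate} is also unnecessary. The summability claim concerns only the explicit quantity $\breve e_{p,k}$, once $\varrho_{j+1}$ in \eqref{eq:scalar-tracking:ek:0} is read as $\varrho_{j+1}^2$. As literally written, the claim fails for every $p$: take $d^u_1=d^w_1=0$ and only $\varrho_1>0$, small.

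With that reading, multiply by $p^k$, sum over $k<N$ and exchange the sums. The $\varrho$-part is then exactly
\[
    \varsigma_p\sum_{j=0}^{N-1}p^j\varrho_{j+1}^2\Bigl(\sum_{i=1}^{N-1-j}\psi_i-\varsigma_p\Bigr).
\]
So the missing step is the explicit inequality $\sum_{i\ge1}\psi_i\le\varsigma_p$.

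Since $\varsigma_p\ge\overline\kappa\sum_{i\ge1}p^{i-1}\psi_i$ and $\overline\kappa>1$, this inequality holds for $p\ge1$. In particular it holds for $p=1$, the only case used in \cref{lemma:main-convergence}.

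For $p<1$ it does not follow from the definition of $\varsigma_p$, and it can fail. Take $\kappa_u=\kappa_w=\kappa<5/4$, $p=\kappa/2$ and $\alpha_w\to0$. Then $\sum_{i\ge1}\psi_i\to\alpha_u\pi_u/(\kappa-1)>4\alpha_u\pi_u$, while $\varsigma_p\to\alpha_u\pi_u\kappa^2/(p(\kappa-p))=4\alpha_u\pi_u$. Choosing $d^u_1=d^w_1=0$, $\varrho_1=1$ and $\varrho_j=0$ for $j\ge2$ gives $\sum_{k<N}p^ke_{p,k}>0=\Psi_p$ for large $N$.

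So your argument can only be completed by proving this inequality explicitly and restricting to $p\ge1$, or by changing the constants.
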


\begin{lemma}[{\cite[Lemma A.5]{dizonvalkonen2024tracking}}]
    \label{lemma:scalar-tracking:error-one}
    Suppose \cref{ass:scalar-tracking:main} holds for a $k \in \N$.
    Then, for $\breve e_{1,n}$ given in \eqref{eq:scalar-tracking:ek:0} we have $\sum_{n=0}^{k-1} \breve e_{1,n} \le \Psi_1 + \varsigma_1^2\sum_{n=0}^{k-1}\nextDistXthisSq[n] .$
\end{lemma}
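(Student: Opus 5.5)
The plan is to obtain the claim directly from \cref{lemma:scalar-tracking:error-sum} with the choice $p=1$, using the definition \eqref{eq:scalar-tracking:ek} to turn the bound on $\sum e_{1,n}$ into a bound on $\sum \breve e_{1,n}$.

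\textbf{Step 1: $p=1$ is admissible.} By assumption $\kappa_u,\kappa_w>1$, so $\kappa=\min\{\kappa_u,\kappa_w\}>1$ and hence $p=1\in(0,\kappa)$. Thus the quantities $\trackingressum[1]$, $\breve e_{1,n}$, $e_{1,n}$ and $\Psi_1$ are well defined through \eqref{eq:scalar-tracking:ressum}, \eqref{eq:scalar-tracking:ek:0}, \eqref{eq:scalar-tracking:ek} and \eqref{eq:scalar-tracking:ressum:sm}. The constant $\Psi_1$ coincides with the one in \eqref{eq:main-convergence:e}.

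\textbf{Step 2: reduce to the $e_{1,n}$.} By the definition \eqref{eq:scalar-tracking:ek},
\[
    \breve e_{1,n} = e_{1,n} + \trackingressum[1]^2\nextDistXthisSq[n]
    \quad\text{for each}\quad n=0,\ldots,k-1.
\]
Summing over these $n$ gives
\[
    \sum_{n=0}^{k-1}\breve e_{1,n}
    = \sum_{n=0}^{k-1} e_{1,n} + \trackingressum[1]^2\sum_{n=0}^{k-1}\nextDistXthisSq[n].
\]
It then suffices to show that $\sum_{n=0}^{k-1} e_{1,n}\le\Psi_1$. This is \eqref{eq:scalar-tracking:ressum:sm} with $p=1$, taking $N=k$, since then $p^n=1$.

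\textbf{Step 3: justify applying \cref{lemma:scalar-tracking:error-sum}.} This is the only delicate point. \Cref{ass:scalar-tracking:main} is stated for a fixed $k$, whereas \eqref{eq:scalar-tracking:ressum:sm} is a supremum over all $N$. The assumption for $k$ contains the inequalities for all indices $j\le k$, so it implies the assumption for every $k'\le k$. Moreover, each $e_{1,n}$ with $n\le k-1$ depends only on $d^u_1,d^w_1$ and $\distX{1},\ldots,\distX{n+1}$ through \eqref{eq:scalar-tracking:ek:0}.

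I would therefore apply \cref{lemma:scalar-tracking:error-sum} with the index $k-1$, or with $k$ itself. I would check that the summation argument behind \eqref{eq:scalar-tracking:ressum:sm} uses only these terms: it regroups the geometric weights $\trackingres_{n-j}$ and the initial terms $\initDistUsq,\initDistWsq$ and sums them with $\sum_j \trackingres_j\le \trackingressum[1]/\overline\kappa$. Hence the truncated sum up to $N=k$ is controlled under the assumption for $k$. Should the quoted statement be read as requiring more, I would instead redo that regrouping directly for $p=1$. The only ingredients are exchanging the order of the double sum over $j<n$ in the last term of \eqref{eq:scalar-tracking:ek:0} and summing the geometric series in $\kappa_u^{-n}$ and $\kappa_w^{-n}$, which give the factors $\kappa/(\kappa-1)$ and $1/(\kappa-1)^2$ appearing in $\Psi_1$.
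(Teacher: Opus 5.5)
Your proof is correct: since $\kappa>1$, the choice $p=1$ is admissible in \cref{lemma:scalar-tracking:error-sum}. Its bound $\sum_{n=0}^{k-1}e_{1,n}\le\Psi_1$, combined with the definition \eqref{eq:scalar-tracking:ek}, gives the claim immediately, and you also correctly handle the loose quantification over $N$ by noting that this bound is just a regrouping of the definitions involving only $\varrho_1,\ldots,\varrho_k$. The paper gives no proof of its own, quoting the lemma from \cite{dizonvalkonen2024tracking}, and specialising the preceding quoted lemma to $p=1$ is the natural derivation.
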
